\documentclass[11pt,reqno]{amsart}

\usepackage[text={160mm,240mm},centering]{geometry}            % See geometry.pdf to learn the layout options. There are lots.
\usepackage{graphicx}
\usepackage{listings}
\usepackage{amssymb}
\usepackage{epstopdf}
\usepackage{dsfont}
\usepackage{url}
\usepackage{verbatim}

\usepackage{amssymb,amsmath,setspace,geometry,indentfirst,changepage,inputenc}
\usepackage{mathrsfs,amsfonts,savesym,graphicx,bm,amsthm}

\usepackage[mathscr]{eucal}
\usepackage[all]{xy}
\usepackage{mathrsfs}
\usepackage{hyperref}
\usepackage{color}

\usepackage{dcolumn}

\usepackage{booktabs}%加载 booktabs 宏包之后可以使用 \toprule 和 \bottomrule 命令分别画出表格头和表格底的粗横线，而用 \midrule 画出表格中的横线
\usepackage{diagbox} %加载 diagbox 宏包之后可以绘制斜线表头
\usepackage{multirow} %加载 multirow 宏包后可以让表格单元实现合并与分割
\usepackage{makecell} %加载 makecell 宏包可以在表项中使用"\\"命令自由的换行。在不打算固定表列宽度时，它比p{宽度}选项更为灵活。
\usepackage{longtable} %加载 longtable 宏包可以处理行数非常多的长表格
\usepackage{array}
\usepackage{tabularx}
\usepackage{color}

\usepackage{amsmath,amsthm}

\usepackage[mathscr]{eucal}
\usepackage[all]{xy}
\usepackage{mathrsfs}
\usepackage{hyperref}
\usepackage{color}

\newtheorem{theorem}{Theorem}[section]
\newtheorem{lemma}[theorem]{Lemma}

\newtheorem{conjecture}[theorem]{Conjecture}
\newtheorem{corollary}[theorem]{Corollary}
\newtheorem{proposition}[theorem]{Proposition}

\newtheorem{definition-lemma}[theorem]{Definition-Lemma}
\newtheorem{definition-theorem}[theorem]{Definition-Theorem}
\newtheorem{algorithm}[theorem]{Algorithm}

\theoremstyle{definition}

\newtheorem{definition}[theorem]{Definition}

\newtheorem{remark}[theorem]{Remark}

\allowdisplaybreaks

\title{Spectrum, Tjurina Spectrum, and Hertling Conjecture for Singularities of Modality $\leq 3$}

\usepackage{fancyhdr}
\author{Quan Shi}
\address{
	Zhili College,
	Tsinghua University,
	Beijing, 100084, P. R. China.}
\email{shiq20@mails.tsinghua.edu.cn}
\author{Yang Wang}
\address{Department of Mathematical Sciences,
	Tsinghua University,
	Beijing, 100084, P. R. China.}
\email{wangyang21@mails.tsinghua.edu.cn}

\author{Huaiqing Zuo}
\address{Department of Mathematical Sciences,
Tsinghua University,
Beijing, 100084, P. R. China.}
\email{hqzuo@mail.tsinghua.edu.cn}
\thanks{Zuo is supported by NSFC Grant 12271280.}
\begin{document}
\maketitle

\begin{abstract}
	Spectrum is an important numerical invariant of an isolated hypersurface singularity, connecting its topological and analytic structures. The well-known Hertling conjecture tells the relation of range and variance of exponents i.e. elements of spectrum. For trimodal singularities, we compute their spectra and verify Hertling conjecture for them. 
	
	Jung, Kim, Saito and Yoon recently defined Tjurina spectrum, stemming from Hodge ideals. This set of numerical invariants is a subset of spectrum in Steenbrink's sense. We give an estimation of exponents not in Tjurina spectrum and propose a similar Generalized Hertling Conjecture for Tjurina Spectrum. Moreover, we prove the conjecture for singularities of modality $\leq 3$. 
	%Here, we compute this invariant for singularities of modality $\leq 3$. Moreover, we propose ``disappearance conjecture'' that the biggest exponent of spectrum is not in Tjurina spectrum and ``generalzied Hertling conjecture for Tjurina spectrum (GHCTS)'' that Tjurina spectrum fits the same inequality as in generalized Hertling conjecture. As main theorems, we provide an estimation for exponents of Tjurina
	%spectrum not in the spectrum and we prove ``disappearance conjecture'' and ``GHCTS'' for singularities of modality $\leq 3$ except for four cases to be verified.\\
	
	\vspace{0.5em}
	\noindent Key words: Isolated Singularity, Spectrum, Hodge Ideal, Tjurina Spectrum, Hertling Conjecture
	\vspace{0.5em}
	
	\noindent MSC(2020) 14B05, 32S05.
\end{abstract}
\tableofcontents

\newpage
%\subsection*{Acknowledgement}
%
%We thank Morihiko Saito for his valuable comments and suggestions. They are key to the improvement of this paper.  

\subsection*{Notations}

Throughout the paper, we adopt multi-index. That is, for $\bm \alpha = (\alpha_0,...,\alpha_n)\in \mathbb N^{n+1}$, $\bm x^{\bm \alpha}$ refers to monomial $x_0^{\alpha_0}...x_n^{\alpha_n}$. $\bm x$ denotes the sequence of $n+1$ variables $x_0,...,x_n$. $d\bm x$ means the $(n+1)$-form $dx_0\wedge...\wedge dx_n$. For an $\bm \alpha\in \mathbb N^{n+1}$, we define $\vert \bm \alpha \vert := \sum_{i=0}^n \alpha_i$. $\mathbb S_\varepsilon$ and $\mathbb B_\varepsilon$ will denote the $(2n+1)$-sphere and $(2n+2)$-ball in $\mathbb C^{n+1}$ of radius $\varepsilon$ respectively. $\mathbb S_\delta^1$ and $\mathbb D_\delta$ will denote the circle and disk in $\mathbb C$ of radius $\delta$ respectively.

\section{Introduction}

Spectrum is an important invariant for isolated hypersurface singularities. This invariant is first defined by Steenbrink (see \cite{MR0485870} or \cite{https://doi.org/10.48550/arxiv.2003.00519}). For a holomorphic function germ $f$ at $0\in \mathbb C^{n+1}$ that defines an isolated singularity, the spectrum of $f$ is $\mu$ rational numbers $\mathrm{sp}(f) = \{\alpha_1 \leq \alpha_2 \leq ... \leq \alpha_\mu\}$ (called \textit{exponents} or \textit{spectrum numbers}) or a polynomial $\mathrm{Sp}(f) = \sum_{i=1}^\mu t^{\alpha_i}$, where $\mu = \mu(f)$ is the Milnor number of $f$. In Steenbrink's original definition, those rational numbers come from the mixed Hodge structure of Milnor fibration. There are many beautiful properties of spectrum, for example, symmetry and invariant under $\mu$-constant deformation. We review this part in \textbf{Subsection \ref{Spectrum and Hertling Conjecture subsection}}. 

The computation for spectrum is largely credited to M. Saito's theorem that when $f$ is Newton non-degenerated, then its exponents are exactly the jumping numbers of Newton filtration on $\Omega_f^{n+1}$ (see \cite{MR954149}). We review this part in \textbf{Subsection \ref{Newton Filtration}} and give a detailed explanation on computation.

In 2000, Hertling proposed a well-known conjecture when studying $F$-manifolds. It says the variance of all spectrum numbers are no greater than a twelfth of their range (see \cite{MR1849311}) i.e. 
\begin{conjecture}[Hertling]\label{Hertling Conjecture}
	Let $f\in \mathcal O_{n+1}$ be a germ which defines an isolated singularity at the origin, with Milnor number $\mu$. Suppose $\alpha_1\leq ...\leq \alpha_\mu$ are spectrum numbers of $f$, then the following inequality holds:
	\begin{displaymath}
		\frac{1}{\mu}\sum_{i = 1}^{\mu} (\alpha_i-\frac{n+1}{2})^2 \leq \frac{\alpha_\mu-\alpha_1}{12}.
	\end{displaymath}
\end{conjecture}

The 25-year-old problem has witnessed some progresses (see \textbf{Subsection \ref{Spectrum and Hertling Conjecture subsection}}). In this paper, we first push it a step forward, proving the conjecture for trimodal singularities.

\vspace{0.5em}
\noindent\textbf{Theorem A} (Theorem \ref{main theorem A})\textbf{.} \textit{Hertling Conjecture holds for singularities of modality $3$.}
\vspace{0.5em}

%Besides, K. Saito gave two conjectures for the range of spectrum numbers, we also prove them for singularities of modality $\leq 3$.
%
%\begin{conjecture}[\cite{MR0715651}]\label{range conjecture}
%	Let $\alpha_1\leq ...\leq \alpha_\mu$ be the spectrum numbers of $f$. Then:
%	
%	\noindent(1) $\alpha_\mu-\alpha_1 < 1$ if and only if $(V(f),0)$ is a simple singularity i.e. $ADE$ singularity.
%	
%	\noindent(2) $\alpha_\mu-\alpha_1 = 1$ if and only if $(V(f),0)$ is simple elliptic or cusp (i.e. $\tilde E_6,\tilde E_7,\tilde E_8,T_{p,q,r}$, see \cite{MR3146513}).
%\end{conjecture}
%
%
%\noindent\textbf{Theorem B} \textit{(Theorem \ref{K. Saito's range conjecture})\textbf{.}
	%	For all singularities of modality $\leq 3$, the ranges of their spectra are all greater than $1$. That is, \textbf{Conjecture \ref{range conjecture}} holds for them.}
%\vspace{0.5em}

Most recently, S. Jung, I. Kim, M. Saito, and Y. Yoon defined Hodge ideal spectrum and Tjurina spectrum for isolated singularities (see \cite{MR4448602}). Those new spectra come from Hodge ideals and are highly related to the original object. It is rather hard to describe Hodge ideals in a few lines. Roughly speaking, let $Z = \mathrm{div}(f)$ be principal divisor defined by $f$, then Hodge ideal $I_k(\alpha Z)$ is an ideal of $\mathcal O_{n+1}$ indexed by $k\in \mathbb N$ and $\alpha \in \mathbb Q\cap (0,1]$. Hodge ideals induce a filtration on $\mathcal O_{n+1}$ by
\begin{displaymath}
	V_{HI}^\beta := \sum_{\alpha+p\geq \beta} I_p(\alpha Z).
\end{displaymath}
Naturally, we have induced filtrations on Milnor algebra $\mathcal O_{n+1}/(\frac{\partial f}{\partial x_0},...,\frac{\partial f}{\partial x_n})$ and Tjurina algebra  $\mathcal O_{n+1}/(f,\frac{\partial f}{\partial x_0},...,\frac{\partial f}{\partial x_n})$ by canonical surjections. We mainly focus on the filtration $V_{HI}^\bullet$ on Tjurina algebra. The Poincar\'e polynomial with respsect to this filtered vector space is called Tjurina spectrum, denoted as $\mathrm{Sp}^\tau(f)$. Or equivalently, written as $\tau$ rational numbers $\mathrm{sp}^\tau(f) = \{\beta_1 \leq \beta_2 \leq ... \leq \beta_\tau\}$, where $\tau = \tau(f)$ is the Tjurina number of $f$. We call $\beta_1,...,\beta_\tau$ exponents of Tjurina spectrum. We will review in \textbf{Subsection \ref{Brieskorn Lattice}} that Tjurina spectrum is a subset of spectrum.

The study of Tjurina spectrum is a brand new direction in mathematics, which may play a significant role in understanding of modern $\mathscr D$-module theory, especially Hodge ideal and mixed Hodge module.

Besides, to obtain data for concrete examples, we propose a method and a code to compute the Tjurina spectrum through $V$-filtration, and complete the computation for singularities of modality $\leq 3$ except for four cases (see \textbf{Subsection \ref{Computation for Tjurina spectrum}}). Results of the four cases are left unkown since these cases are Newton degenerate, and so far there is not yet an effective method to compute $V$-filtration for Newton degenerate singularities.

The study of Tjurina spectrum is equivalent to the estimation of spectrum numbers not in Tjurina spectrum. We hence provide a way to estimate them. The following is the estimation.

\vspace{0.5em}
\noindent\textbf{Theorem B} (Theorem \ref{f induces fitration lemma})\textbf{.} \textit{Given an isolated hypersurface $(V(f), 0)$, Let $\mu$ and $\tau$ be its Milnor and Tjurina number respectively. Let $d = \mu-\tau$
	, $\alpha_1\leq \alpha_2 \leq ... \leq \alpha_\mu$ be all spectrum numbers, $\alpha_{j_1} < ... < \alpha_{j_t}$ be all jumpings i.e. $\alpha_{j_i} < \alpha_{j_{i}+1}$ and $h_l = \mathrm{Gr}_V^{\alpha_{j_l}}\Omega_f^{n+1} = j_l -j_{l-1}$($j_0 = 0$) be the multiplicity of $\alpha_{j_l}$.
	Besides, let $\alpha_{i_1} \leq \alpha_{i_2} \leq ... \leq \alpha_{i_d}$ be all spectrum numbers not in Tjurina spectrum.}

\textit{For each $1\leq k\leq d$, let $r_k$ be the maximal integer such that $\#\{j\mid \alpha_j\leq \alpha_{r_k}\} \leq k$, then $\alpha_{i_k} \geq \alpha_{j_{r_k}}+1$. In particular, we have $\alpha_{i_1} \geq \alpha_1+1$ and $\alpha_{i_2} \geq \alpha_2+1$ if $d \geq 2$.}
\vspace{0.5em}

In the first version of this paper, we conjectured the greatest spectrum number is not in the Tjurina spectrum and proved it for normal forms of singularities of modality $\leq 3$ (see \textbf{Conjecture \ref{DConjecture}} below). Recently, Jung, Kim, Saito, and Yoon gave a proof (see \textbf{Subsection \ref{Maximal Exponent and Generalized Hertling Conjecture for Tjurina Spectrum}}).

\vspace{0.5em}

\begin{conjecture}\label{DConjecture}
	Suppose $f \in \mathcal{O}_{n+1}$ defines an isolated singularity at the origin and the difference between its Milnor number and Tjurina number is positive i.e. $\mu-\tau \geq 1$, then the maximal spectrum number of $f$ is not contained in the Tjurina spectrum of $f$.
\end{conjecture}
\vspace{0.5em}

Furthermore, we propose a conjecture for Tjurina spectrum parallel to Hertling conjecture, called \textit{Generalized Hertling Conjecture for Tjurina Spectrum}.

\begin{conjecture}
	\label{GHCTS}
	Suppose $f$ is itself a $\tau$-max form. Let $\beta_1,...,\beta_\tau$ be exponents of Tjurina spectrum. Then the following inequality holds:
	\begin{align*}
		\frac{1}{\tau}\sum_{i = 1}^{\tau} (\beta_i-\bar \beta)^2 \leq \frac{\beta_\tau-\beta_1}{12},\tag{GHCTS}
	\end{align*}
	where $\bar \beta$ is the average of $\beta_1,...,\beta_\tau$ i.e. $\bar \beta = \frac{1}{\tau}\sum_{j=1}^\tau \beta_j$.
\end{conjecture}
\vspace{0.5em}

%\noindent\textbf{\textit{\textbf{Generalized Hertling Conjecture for Tjurina Spectrum \ref*{GHCTS}}}.} 
%\textit{Suppose $f$ is itself a $\tau$-max form. Let $\beta_1,...,\beta_\tau$ be exponents of Tjurina spectrum. Then the following inequality holds:}
%\begin{align*}
%	\frac{1}{\tau}\sum_{i = 1}^{\tau} (\beta_i-\bar \beta)^2 \leq \frac{\beta_\tau-\beta_1}{12},\tag{GHTS}
%\end{align*}
%\textit{where $\bar \beta$ is the average of $\beta,...,\beta_\tau$ i.e. $\bar \beta = \frac{1}{\tau}\sum_{j=1}^\tau \beta_j$.}
%\vspace{0.5em}

The $\tau$-max form is defined in \textbf{Definition \ref{tau-max}}. For an arbitrary isolated singularity $(V(f),0)$, we say the conjecture holds for $f$ if it holds for all $\tau$-max form of $f$. In the last three subsections of this paper, we prove the conjecture for singularities of modality $\leq 3$.

\vspace{0.5em}
\noindent\textbf{Theorem C.} \textit{\textbf{Conjecture \ref{GHCTS}} holds for singularities of modality $\leq 3$}.
\vspace{0.5em}

%\vspace{0.5em}
%\noindent\textbf{\textit{\textbf{Generalized Hertling Conjecture for Tjurina Spectrum \ref*{GHCTS}}}.} \textit{Let $f\in \mathcal O_{n+1}$ be a germ which defines an isolated singularity at the origin, with Tjurina number $\tau$. Suppose $\alpha_1\leq ...\leq \alpha_\tau$ are exponents of Tjurina spectrum of $f$, then the following inequality holds:}
%\begin{displaymath}
%	\frac{1}{\tau}\sum_{i = 1}^{\mu} (\alpha_i-\bar \alpha)^2 \leq \frac{\alpha_\tau-\alpha_1}{12},
%\end{displaymath}
%\textit{where $\bar \alpha$ is the average of $\alpha_1,...,\alpha_\tau$ i.e. $\bar \alpha = \frac{1}{\tau}\sum_{j=1}^\tau \alpha_j$.}

%\vspace{0.5em}
%\noindent\textbf{Theorem D} \textit{(Theorem \ref{HCTS1},\ref{HCTS2},\ref{HCTS3})\textbf{.} \textbf{\textit{\textbf{Generalized Hertling Conjecture for Tjurina Spectrum \ref*{GHCTS}}}} holds for singularities of modality $\leq 3$ except for four execptional cases to be verified.}
%\vspace{0.5em}

\section{Preliminaries}
\subsection{Rings of Local Analytic Germs, Hypersurface Singularities and Modalities}
\label{basic analytic germs}
Let $\mathbb C\{\bm x\} = \mathbb C\{x_0,...,x_n\}$ denote the ring of convergent power series of $n+1$ variables. $\mathbb C\{\bm x\}$ is often denoted as $\mathcal O_{n+1}$. It coincides with the ring of complex analytic germs at the origin. $\mathcal O_{n+1}$ is a local ring by definition, with  maximal ideal
$\mathfrak m = (\bm x)$. In this paper, we will mainly use notation $\mathcal O_{n+1}$ and sometimes use $\mathbb C\{\bm x\}$ during review of results from references, in compliance with original notations.

An analytic space germ $(X,0) \subset (\mathbb C^{n+1},0)$ is defined to be the common zero locus of some analytic germs $f_1,...,f_r\in \mathcal O_{n+1}$. A morphism between two analytic space germs $(X,0)\subset (\mathbb C^{m+1},0)$ and $(Y,0)\subset (\mathbb C^{m+1},0)$ is a restriction of a holomorphic map germ $f:(\mathbb C^{n+1},0) \to (\mathbb C^{m+1},0)$ with $(f(X),0)\subset (Y,0)$. Such definition provides the category of analytic space germs. In particular, analytic space germs $(X,0)$ and $(Y,0)$ are isomorphic if there are two morphisms inverse to each other.

An analytic germ $(X,0) \subset (\mathbb C^{n+1},0)$ is called a hypersurface singularity if $X$ is the zero locus of one analytic germ $f\in \mathfrak m \subset \mathcal O_{n+1}$. Such a $(X,0)$ is often denoted as $(V(f),0)$. The singular locus of $(X,0)$ is defined to be the common zero locus of $f,\frac{\partial f}{\partial x_0},...,\frac{\partial f}{\partial x_n}$, often denoted as $(\mathrm{Sing}(X),0)$. We say $X$ is an \textit{isolated singularity} or $f$ defines an \textit{isolated singularity} if $(\mathrm{Sing}(X),0)$ consists of one point. 

There is an algebraic characterization for isolation of singularities by considering the Milnor number and Tjurina number of $f$. Let $J(f) = (\frac{\partial f}{\partial x_0},...,\frac{\partial f}{\partial x_n})$ be the Jacobian ideal of $f$ and $T(f) = (f,J(f))$ be the Tjurina algera of $f$. $\mu(f) := \dim_{\mathbb C} \mathcal O_{n+1}/J(f)$ and $\tau(f) := \dim_{\mathbb C} \mathcal O_{n+1}/T(f)$ are defined to be the Milnor number and Tjurina number of $f$ respectively. $M_f:= \mathcal O_{n+1}/J(f)$ is called the Milnor algebra of $f$ while $T_f := \mathcal O_{n+1}/T(f)$ is called the Tjurina ideal of $f$ (also called the moduli algebra of $f$).

\begin{lemma}[\cite{MR2290112}, Lemma 2.3] \label{gls tjurina and milnor}$U\subseteq \mathbb C^n$ is an open neighborhood of $0$. Let $f:U\to \mathbb C$ be holomorphic, then the following are equivalent:\\
	(a) $0$ is an isolated critical point of $f$.\\
	(b) $\mu(f,0) < \infty$.\\
	(c) $0$ is an isolated singularity of $f^{-1}(f(0)) = V(f-f(0))$.\\
	(d) $\tau(f-f(0),0) < \infty$.
\end{lemma}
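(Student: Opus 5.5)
The plan is to prove the cycle of implications (a)$\Rightarrow$(b)$\Rightarrow$(a) and (a)$\Leftrightarrow$(c)$\Leftrightarrow$(d), all by relating finiteness of the relevant quotient algebra to the germ of its zero locus. After translating coordinates we may assume $f(0)=0$, so $f-f(0)=f$. The basic tool is the analytic Nullstellensatz (R\"uckert): for an ideal $I\subseteq\mathcal O_{n}$, the germ $(V(I),0)$ is contained in $\{0\}$ if and only if $\sqrt I\supseteq\mathfrak m$, if and only if $\mathfrak m^k\subseteq I$ for some $k$. The latter is equivalent to $\dim_{\mathbb C}\mathcal O_n/I<\infty$: one direction is clear because $\mathcal O_n/\mathfrak m^k$ is finite dimensional; for the other, the descending chain $\mathfrak m^k+I$ stabilizes, and Nakayama's lemma then gives $\mathfrak m^k\subseteq I$.

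Applying this with $I=J(f)$ gives (a)$\Leftrightarrow$(b). The germ of the critical locus $V(J(f))$ at $0$ is $\{0\}$ exactly when $0$ is an isolated critical point, and this happens exactly when $\mu(f,0)<\infty$. Applying it with $I=T(f)=(f,J(f))$ gives (c)$\Leftrightarrow$(d). Here $V(T(f))$ is precisely the singular locus of $V(f)$, so isolatedness of the singularity is equivalent to $\tau(f,0)<\infty$.

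The remaining step, and the main obstacle, is (c)$\Rightarrow$(a). The direction (a)$\Rightarrow$(c) is trivial, since $V(T(f))\subseteq V(J(f))$. For the converse we must show that the critical locus $C=V(J(f))$ near $0$ lies inside $f^{-1}(0)$. To do this I would use the curve selection lemma. Suppose $C$ is not $\{0\}$ near $0$. Then there is a holomorphic arc $\gamma:(\mathbb D_\delta,0)\to(C,0)$ that is not constant. Along $\gamma$ we have $\frac{d}{dt}f(\gamma(t))=\sum_i \partial_i f(\gamma(t))\gamma_i'(t)=0$, so $f\circ\gamma\equiv f(0)=0$. Thus every arc in $C$ through $0$ lies in $V(f)$, and hence in $V(T(f))$. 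This contradicts (c). Equivalently, $f$ is constant on each irreducible component of the germ $(C,0)$, so $f\in\sqrt{J(f)}$ locally. That last statement is exactly the classical fact that $f$ is integral over, and in particular lies in the radical of, $J(f)$.

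With this step in hand, the chain (a)$\Leftrightarrow$(b), (a)$\Leftrightarrow$(c), (c)$\Leftrightarrow$(d) completes the proof. I would cite the curve selection lemma for analytic germs, as in Milnor's book or Greuel--Lossen--Shustin, rather than reprove it.
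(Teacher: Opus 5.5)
Your proof is correct. The paper gives no argument of its own and simply quotes the lemma from Greuel--Lossen--Shustin; what you wrote is essentially the standard proof of that cited result: the R\"uckert Nullstellensatz for (a)$\Leftrightarrow$(b) and (c)$\Leftrightarrow$(d), the inclusion $V(T(f))\subseteq V(J(f))$ for (a)$\Rightarrow$(c), and an arc in the critical locus plus the chain rule for (c)$\Rightarrow$(a).

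Two small remarks. First, (c)$\Leftrightarrow$(d) relies on reading the singular locus as $V(f,J(f))$, as the paper does. For the reduced zero set of a non-reduced $f$, such as $x^2$ on $\mathbb C^2$, (c) would hold while (d) fails. Second, Milnor's curve selection lemma is stated for real algebraic sets, so for holomorphic $f$ you should cite the semi-analytic version, or simply take a Puiseux-parametrized branch of a curve germ inside $(C,0)$.
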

\begin{remark}
	Here $(f,0)$ means the analytic germ at $0\in \mathbb C^n$ given by $f$.
\end{remark}

Two analytic germs $f,g \in \mathcal O_{n+1}$ are called \textit{right equivalent} if there exists a $\mathbb C$-algebra automorphism such that $\varphi(f) = g$, called \textit{contact equivalent} if $\varphi(f) = ug$, where $\varphi$ is a $\mathbb C$-algebra automorphism and $u \in \mathcal O_{n+1}^*$ is a unit. Note that the two types of equivalence induce isomorphisms of hypersurface singularities since $\varphi$ is always given by an isomorphism of analytic space germs. Mather and Yau proved in \cite{MR674404} that two hypersurface singularities are isomorphic if and only if their moduli algebras are isomorphic. The Mather-Yau theorem is slightly generalized in \cite{MR2290112}, stated as below:
\begin{theorem}
	(\cite{MR2290112}, Theorem 2.26) Let $f,g\in \mathfrak m\subset \mathcal O_{n+1}$, the following are equivalent:\\
	(1) $f$ is contact equivalent to $g$.\\
	(2) For all $k\geq 1$, $\mathcal O_{n+1}/T_k(f) \simeq \mathcal O_{n+1}/T_k(g)$.\\
	(3) There is some $k\geq 1$ such that $\mathcal O_{n+1}/T_k(f) \simeq \mathcal O_{n+1}/T_k(g)$.
	
	Here, $T_k$ is the $k$-th Tjurina ideal $T_k(f) := (f)+\mathfrak m^k J(f)$. In particular $T_0(f) = T(f)$.
	
	Moreover, if $f$ has an isolated singularity, then $f$ is contact equivalent to $g$ if and only if $T(f) \simeq T(g)$.
\end{theorem}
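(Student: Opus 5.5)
The plan is to prove the chain (1)$\Rightarrow$(2)$\Rightarrow$(3)$\Rightarrow$(1), and then deduce the isolated-singularity statement from the case $k=0$.

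\emph{(1)$\Rightarrow$(2).} Suppose $\varphi(f)=ug$ with $\varphi$ an automorphism and $u$ a unit. By the chain rule, $\varphi(J(f))$ is the ideal generated by the $\varphi(\partial f/\partial x_i)$. Differentiating $\varphi(f)$ gives $\partial(\varphi(f))/\partial x_j=\sum_i \varphi(\partial f/\partial x_i)\,\partial\varphi_i/\partial x_j$, where the matrix $(\partial\varphi_i/\partial x_j)$ is invertible. Hence
\[
J(\varphi(f)) = \varphi(J(f)).
\]
Next, $J(ug)\subseteq (g)+J(g)$, and symmetrically $J(g)\subseteq (ug)+J(ug)$. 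After multiplying by $\mathfrak m^k$ we get
\[
(ug)+\mathfrak m^kJ(ug)\subseteq (g)+\mathfrak m^kJ(g)+\mathfrak m^k g\subseteq (g)+\mathfrak m^kJ(g).
\]
The reverse inclusion holds by the same argument with $u^{-1}$. Since $\varphi(\mathfrak m)=\mathfrak m$, it follows that $\varphi(T_k(f))=T_k(g)$, so $\varphi$ induces the required isomorphism of algebras.

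\emph{(2)$\Rightarrow$(3)} is trivial.

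\emph{(3)$\Rightarrow$(1).} This is the main obstacle; it is the Mather--Yau type argument. Given an isomorphism $\mathcal O/T_k(f)\cong\mathcal O/T_k(g)$, we first lift it to an automorphism $\psi$ of $\mathcal O_{n+1}$ with $\psi(T_k(f))=T_k(g)$; such a lift exists because any $\mathbb C$-algebra map between quotients of $\mathcal O_{n+1}$ lifts. Replacing $f$ by $\psi(f)$, we may assume $T_k(f)=T_k(g)=:I$.

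Consider the family $f_t=(1-t)f+tg$ for $t\in\mathbb C$. The goal is to show that $T_k(f_t)$ is constant along the segment, so that $g-f\in T_k(f_t)$ for all $t$. The standard route is Nakayama's lemma: from $g-f\in I$ one shows $I\subseteq T_k(f_t)+\mathfrak m I$, using the fact that $f$ and $g$ lie in $\mathfrak m$ so that the perturbation terms land in $\mathfrak m I$. This first works for generic $t$, and then for all $t$ after reparametrizing along a path avoiding a finite bad set.

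Once $g-f\in(f_t)+\mathfrak m^kJ(f_t)$ holds uniformly in $t$ (with coefficients analytic in $t$), we write
\[
g-f=a_t f_t+\sum_i b_{i,t}\,\partial f_t/\partial x_i,\qquad b_{i,t}\in\mathfrak m^k\subseteq\mathfrak m .
\]
We then integrate the vector field $\dot x=-b_t(x)$ together with the unit equation $\dot u=a_t u$. This produces $\varphi_t$ and $u_t$ with $\frac{d}{dt}\bigl(u_t\varphi_t^*(f_t)\bigr)=0$. Because $b_{i,t}$ vanishes at $0$, the flow fixes the origin, and the homotopy method yields $f$ contact equivalent to $g$.

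\emph{The isolated-singularity statement.} Here the criterion uses $T(f)=T_0(f)$. If $f$ has an isolated singularity, then $\tau<\infty$ by Lemma \ref{gls tjurina and milnor}, so $\mathfrak m^N\subseteq T(f)$ for some $N$. In that case one can run the same argument with $k=0$: isomorphic moduli algebras give, after the lifted automorphism, $T(f)=T(g)$, and the homotopy argument applies verbatim. The only extra point is that $b_{i,t}$ may now be nonconstant without lying in $\mathfrak m$. To handle this, I would first use finite determinacy (from $\mathfrak m^N\subseteq T(f)$) to reduce to the case where $g-f\in\mathfrak m^{N+1}$, and then apply $k\ge1$ style arguments.
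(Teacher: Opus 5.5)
The paper gives no proof of this statement; it is quoted from Greuel--Lossen--Shustin, so the benchmark is the standard Mather--Yau homotopy argument there. Your (1)$\Rightarrow$(2) is correct. Your (3)$\Rightarrow$(1) for $k\geq 1$ is that standard argument and works, but three steps need repair.

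\emph{The lift.} Lifting a $\mathbb C$-algebra map only gives an endomorphism of $\mathcal O_{n+1}$. You need the standard lemma that an isomorphism $\mathcal O_{n+1}/I\cong\mathcal O_{n+1}/J$ lifts to an automorphism $\psi$ with $\psi(I)=J$.

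\emph{The Nakayama step.} The perturbation terms $t(g-f)$ and $t\,x^\alpha\partial_i(g-f)$ lie in $I$, not in $\mathfrak m I$. The correct reason that $T_k(f_t)=I$ off a finite set is this: the images in $I/\mathfrak m I$ of the finitely many generators of $T_k(f_t)$ are affine in $t$ and span at $t=0$. Hence they span except where certain minors vanish, and Nakayama applies. The analytic dependence of $a_t,b_{i,t}$ on $t$ comes from Nakayama over $R=\mathbb C\{x,t-t_0\}$. Generators of $T_k(f_t)R$ differ from generators of $IR$ by multiples of $t-t_0$, so $T_k(f_t)R=IR$.

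\emph{The sign.} With $\dot x=-b_t(x)$, the unit must solve $\dot u=-a_t(\varphi_t)\,u$.

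The genuine gap is the isolated-singularity statement ($k=0$). There the $b_{i,t}$ need not vanish at $0$, so the flow of $-b_t$ need not fix the origin, and the argument does not apply verbatim. Your repair does not work. $T(f)=T(g)$ only gives $g-f\in T(f)$, and finite determinacy offers no mechanism for moving $g$ into $f+\mathfrak m^{N+1}$. Indeed, $\mathfrak m^N\subseteq T(f)$ gives $\mathfrak m^{N+2}\subseteq\mathfrak m(f)+\mathfrak m^2J(f)$. So knowing $g-f\in\mathfrak m^{N+2}$ would by itself yield $f\sim_c g$, and the ``reduction'' presupposes the conclusion.

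What is missing is an argument that the flow fixes $0$ anyway, which is where isolatedness is used:
\begin{itemize}
\item If $f\notin\mathfrak m^2$, then $T(f)=T(g)=\mathcal O_{n+1}$ and both germs are smooth. So assume $f\in\mathfrak m^2$. Then $I=T(f)\subseteq\mathfrak m$ forces $g\in\mathfrak m^2$, hence every $f_t\in\mathfrak m^2$.
\item Evaluating $u_t\cdot(f_t\circ\varphi_t)=f_{t_0}$ and its gradient at $x=0$ gives $f_t(\varphi_t(0))=0$ and $\nabla f_t(\varphi_t(0))=0$, since $f_{t_0}\in\mathfrak m^2$.
\item On the other hand, $\mathfrak m^N\subseteq I$ and $T(f_t)R=IR$ show that near $(t_0,0)$ the common zero set of $f_t$ and the $\partial f_t/\partial x_i$ lies in $\{x=0\}$.
\item Since $t\mapsto\varphi_t(0)$ is continuous with $\varphi_{t_0}(0)=0$, this forces $\varphi_t(0)=0$ for $t$ near $t_0$.
\end{itemize}
Your connectedness argument along a path in $\mathbb C$ minus the finite bad set then finishes the proof. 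Without isolatedness the base point could drift along the singular locus, which is why the $k=0$ case carries that hypothesis.
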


The introduction of Mather-Yau theorem initiates the massive study of classification of hypersurface singularities since it proves the existence of a convenient complete invariant. The classification is essentially an invariant theory. One hopes to distinguish all singularities by finding enough invariants under isomorphisms of hypersurface singularities. 

So far, there are many well-known invariants of hypersurface singularities. Milnor number and Tjurina number introduced above are two important ones. Apart from invariants given by analytic algebra structures, some invariants may come from other structures. (Local) Berstein-Sato polynomial (see \cite{MR4394404}) is from the $\mathcal D$-module structure. (Local) Igusa Zeta function (see \cite{MR1743467}) is from the local integration. Topological Zeta function (also see \cite{MR1743467}) is from the embedded resolution. The spectrum of singularities (see \cite{MR1042806}, \cite{MR453735} or \cite{https://doi.org/10.48550/arxiv.2003.00519}) rise from the mixed Hodge structures of singularities. We will give a more detailed introduction to spectrum in the next subsection. We point out here that these invariants are highly related through the Monodromy conjecture (see \cite{vanproeyen2009monodromy}), which is, possibly, a century problem.

We call $f\in \mathcal O_{n+1}$ \textit{right $k$-determined} (\textit{contact $k$-determined} resp.) if $f$ is right equivalent to (contact equivalent to resp.) any germ in the set $f+\mathfrak m^{k+1}$. 

\begin{theorem}[Finite Determinancy Theorem, \cite{MR2290112}]\label{Finite Determinacy Theorem} 
	Let $f\in \mathfrak m \subset  \mathcal O_{n+1}$, then
	
	\noindent(1) $f$ is right $k$-determined if
	\begin{displaymath}
		\mathfrak m^{k+1} \subset \mathfrak m^2 J(f).
	\end{displaymath}
	\noindent(1) $f$ is contact $k$-determined if
	\begin{displaymath}
		\mathfrak m^{k+1} \subset \mathfrak m^2 J(f)+\mathfrak m(f).
	\end{displaymath}
\end{theorem}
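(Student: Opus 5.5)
The plan is the classical homotopy (Moser path) method, applied first to right equivalence and then adapted to contact equivalence. Fix $g\in f+\mathfrak m^{k+1}$ and put $h=g-f\in\mathfrak m^{k+1}$. Consider the family $F_t=f+th$ for $t\in\mathbb C$. Right equivalence of $f$ and $g$ follows once we find a family of automorphisms $\varphi_t$, analytic in $t$ on a neighbourhood of every $t_0\in[0,1]$, with $\varphi_t^*F_t=F_{t_0}$ near $t_0$. Differentiating in $t$ reduces this to finding a time-dependent vector field $\xi(\bm x,t)=\sum_i\xi_i(\bm x,t)\partial_{x_i}$ with $\xi_i\in\mathfrak m^2\mathcal O_{n+1}\{t-t_0\}$ and
\begin{displaymath}
	h=\sum_{i=0}^n \xi_i\,\frac{\partial F_t}{\partial x_i}.
\end{displaymath}
The condition $\xi_i\in\mathfrak m^2$ (indeed $\xi_i\in\mathfrak m$ is enough) makes the flow fix the origin and exist for $t$ in a neighbourhood of $t_0$. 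Compactness of $[0,1]$ then glues finitely many local equivalences into one equivalence between $F_0=f$ and $F_1=g$.

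The key step is to show that $\mathfrak m^{k+1}\subset\mathfrak m^2J(F_t)$ holds in the ring $\mathcal O_{n+1}\{t-t_0\}$ for each $t_0$. We have $\partial_iF_t=\partial_if+t\,\partial_ih$ with $\partial_ih\in\mathfrak m^k$. The hypothesis gives $\mathfrak m^{k+1}\subset\mathfrak m^2J(f)$. Hence
\begin{displaymath}
	\mathfrak m^{k+1}\subset\mathfrak m^2J(F_t)+\mathfrak m^2\cdot\mathfrak m^{k}\subset \mathfrak m^2J(F_t)+\mathfrak m\cdot\mathfrak m^{k+1},
\end{displaymath}
where the last summand is taken with coefficients in $\mathcal O_{n+1}\{t-t_0\}$. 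Nakayama's lemma, applied to the finitely generated $\mathcal O_{n+1}\{t-t_0\}$-module $\mathfrak m^{k+1}$ modulo $\mathfrak m^2J(F_t)$ over the local ring whose maximal ideal contains $\mathfrak m$, then gives $\mathfrak m^{k+1}\subset\mathfrak m^2J(F_t)$. Since $h\in\mathfrak m^{k+1}$, the required $\xi_i\in\mathfrak m^2$ exist and depend analytically on $t$.

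I expect the main obstacle to be the Nakayama step. It needs care because the ring is $\mathcal O_{n+1}\{t-t_0\}$ rather than $\mathcal O_{n+1}$, and $\mathfrak m$ is not the maximal ideal there. To handle this I will work in the local ring $\mathbb C\{\bm x,t-t_0\}$. The module $M=\mathfrak m^{k+1}/(\mathfrak m^{k+1}\cap\mathfrak m^2J(F_t))$ satisfies $M=\mathfrak mM$, and $\mathfrak m$ lies in the maximal ideal of that ring, so Nakayama gives $M=0$.

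For part (2) I will run the same scheme with the contact group. Now I seek $\xi_i\in\mathfrak m^2$ and a unit path $u_t$ satisfying
\begin{displaymath}
	h=\sum_i\xi_i\,\partial_iF_t+a\,F_t, \qquad a\in\mathfrak m.
\end{displaymath}
The ODE for $u_t$ is linear, so $u_t$ stays a unit. Using the hypothesis $\mathfrak m^{k+1}\subset\mathfrak m^2J(f)+\mathfrak m(f)$, the same estimate gives
\begin{displaymath}
	\mathfrak m^{k+1}\subset\mathfrak m^2J(F_t)+\mathfrak m F_t+\mathfrak m\cdot\mathfrak m^{k+1},
\end{displaymath}
because $f=F_t-th$ and $\mathfrak m h\subset\mathfrak m^{k+2}$. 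Nakayama applies as before, and integrating the resulting vector field together with the unit path completes the proof.
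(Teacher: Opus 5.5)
Your proof is correct. The paper gives no proof of this statement and only cites Greuel--Lossen--Shustin, and your argument is essentially the standard proof given there: you deform along $F_t=f+t(g-f)$, use Nakayama over $\mathbb C\{x,t-t_0\}$ to upgrade $\mathfrak m^{k+1}\subset\mathfrak m^2J(F_t)+\mathfrak m\cdot\mathfrak m^{k+1}$ (resp.\ with $\mathfrak m F_t$ added) to $\mathfrak m^{k+1}\subset\mathfrak m^2J(F_t)$ (resp.\ $+\,\mathfrak m F_t$), then integrate the resulting vector field (and the linear ODE for the unit) locally in $t$ and glue over $[0,1]$.
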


As a corollary, we have:
\begin{corollary}\label{Corollary of Finite Determinacy Theorem} 
	Let $f\in \mathfrak m\subset  \mathcal O_{n+1}$ and $\mu$ be its Milnor number, then $f$ is right equivalent to any germ in $f+\mathfrak m^{\mu+2}$. Since Milnor number is a right (equivalence) invariant, we therefore know the Milnor numbers of $f$ and any $\tilde f\in f+\mathfrak m^{\mu+2}$ coincide.
\end{corollary}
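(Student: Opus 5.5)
The plan is to derive the corollary directly from the right-determinacy criterion in Theorem \ref{Finite Determinacy Theorem}(1), taking $k=\mu+1$. Then it suffices to show
\begin{displaymath}
	\mathfrak m^{\mu+2}\subset \mathfrak m^2 J(f),
\end{displaymath}
where $\mu=\mu(f)<\infty$ (we assume $f$ has an isolated critical point; otherwise $\mu=\infty$ and the statement is vacuous). The right equivalence of $f$ and $\tilde f\in f+\mathfrak m^{\mu+2}$ then follows, and so does equality of Milnor numbers, since $\mu$ is invariant under $\mathbb C$-algebra automorphisms: $\varphi$ carries $J(f)$ to $J(\varphi(f))$, because the chain rule multiplies the gradient by the invertible Jacobian matrix of $\varphi$. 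Hence $\varphi$ induces an isomorphism of Milnor algebras.

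The key step is the inclusion $\mathfrak m^{\mu}\subset J(f)$, which I will prove by a Nakayama-type chain argument. Consider the descending chain of ideals
\begin{displaymath}
	\mathcal O_{n+1}=J(f)+\mathfrak m^0\supseteq J(f)+\mathfrak m\supseteq J(f)+\mathfrak m^2\supseteq\cdots
\end{displaymath}
in the $\mu$-dimensional algebra $\mathcal O_{n+1}/J(f)$. If $J(f)+\mathfrak m^i=J(f)+\mathfrak m^{i+1}$ for some $i$, then $\mathfrak m^i\subset J(f)+\mathfrak m\cdot\mathfrak m^i$. Nakayama's lemma, applied to the finitely generated module $(\mathfrak m^i+J(f))/J(f)$, then gives $\mathfrak m^i\subset J(f)$. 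Before stabilization, each inclusion is strict, so the codimension drops by at least one at each step. Since $\dim \mathcal O_{n+1}/(J(f)+\mathfrak m)\le 1$ and the total dimension is $\mu$, the chain stabilizes at some $i\le \mu$, giving $\mathfrak m^\mu\subset J(f)$. (If $f\notin\mathfrak m^2$, then $J(f)=\mathcal O_{n+1}$, $\mu=0$, and everything is trivial.)

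Multiplying by $\mathfrak m^2$ gives $\mathfrak m^{\mu+2}\subset \mathfrak m^2J(f)$, which is exactly the hypothesis of Theorem \ref{Finite Determinacy Theorem}(1) with $k=\mu+1$. Therefore $f$ is right $(\mu+1)$-determined, so every $\tilde f\in f+\mathfrak m^{\mu+2}$ is right equivalent to $f$.

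The only delicate point is the counting in the chain argument: I must check that strict inclusion forces a drop in dimension and that the index bound is $\mu$ rather than $\mu+1$. A bound of $\mu+1$ would also suffice, at the cost of a weaker exponent, but the bound of $\mu$ is what gives $\mu+2$ in the statement.
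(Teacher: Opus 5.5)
Your proof is correct and follows the intended route: the paper gives no separate argument and simply derives the corollary from Theorem \ref{Finite Determinacy Theorem}(1) with $k=\mu+1$, using the standard inclusion $\mathfrak m^{\mu}\subset J(f)$, which is exactly what you establish. The counting point you flag is fine: the image of $\mathfrak m^{i}$ in the $\mu$-dimensional algebra $\mathcal O_{n+1}/J(f)$ has dimension $\mu$ at $i=0$ and, by your Nakayama step, strictly decreases until it is zero, so it vanishes by $i=\mu$. (It is this dimension that drops, not the codimension.)
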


For $f\in \mathcal O_{n+1}$, we call it quasihomogeneous if $f\in J(f)$ (equivalently, $T(f) = J(f)$). Next, we define the semi-quasihomogeneous germ. For $f\in \mathcal O_{n+1}$, suppose $f = \sum_{v\in \mathbb N^{n+1}} a_{\bm v} \bm x^{\bm v}$ and the support of $f$ is defined to be $\mathrm{Supp}(f) = \{\bm v\in \mathbb N^{n+1}\mid a_{\bm v} \neq 0\}$. Let $\bm w\in \mathbb Q_{> 0}^{n+1}$ and $d = \min_{\bm v\in \mathrm{Supp}(f)}\{\bm w\cdot \bm v\}$. Call $f_{\bm w} = \sum_{\bm v\cdot \bm w = d} a_{\bm v} {\bm x}^{\bm v}$ the principal part of $f$ with respect to $\bm w$. We say $f$ is semi-quasihomogeneous with weight $\bm w$ if $f_{\bm w} \in \mathcal O_{n+1}$ defines an isolated singularity at the origin. The following theorem shows that the Milnor numbers of $f$ and $f_{\bm w}$ coincide.

\begin{theorem}[\cite{MR2290112}, \textit{Corollary 2.18}]
	Let $f\in \mathcal O_{n+1}$ be semiquasihomogeneous with weight $\bm w$. Then $f$ has an
	isolated singularity at 0 and $\mu(f) = \mu(f_{\bm w})$.
\end{theorem}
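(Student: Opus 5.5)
The plan is to deform the principal part $f_{\bm w}$ to $f$ by a one-parameter family and to show that the critical point at the origin stays isolated with constant Milnor number. Write $\bm w = (w_0,\dots,w_n)$ and $d = \min_{\bm v\in \mathrm{Supp}(f)} \bm w\cdot\bm v$. Decompose $f = f_{\bm w} + g$, where every monomial of $g$ has weighted degree $>d$. After clearing denominators I may assume $\bm w\in \mathbb Z_{>0}^{n+1}$, with $\mathbb Q_{>0}$-valued degrees.

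\textbf{Step 1 (weighted rescaling).} For $t\in\mathbb C^*$, set $f_t(\bm x) := t^{-d} f(t^{w_0}x_0,\dots,t^{w_n}x_n) = f_{\bm w}(\bm x) + g_t(\bm x)$. Here $g_t = \sum_{\bm v} a_{\bm v} t^{\bm w\cdot\bm v - d}\bm x^{\bm v}$, so $g_t\to 0$ as $t\to 0$, uniformly with all derivatives on a fixed small polydisc; for $t\ne 0$ the germ $f_t$ is right equivalent to $f$ up to a nonzero constant. Thus $\mu(f_t)=\mu(f)$ for $t\neq 0$, and $f_0 = f_{\bm w}$. If weighted degrees are fractional, I will use $t^{N}$ with $N$ large so that all exponents become integers.

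\textbf{Step 2 (isolatedness via a weighted Łojasiewicz-type estimate).} Since $f_{\bm w}$ has an isolated critical point and is quasihomogeneous, consider the weighted "norm" $\rho(\bm x) = \sum |x_i|^{1/w_i}$. On the compact set $\{\rho=1\}$, the function $\Phi(\bm x)=\sum_i |\partial_i f_{\bm w}(\bm x)|$ is bounded below by some $c>0$. By weighted homogeneity, $\partial_i f_{\bm w}$ is quasihomogeneous of degree $d-w_i$, so scaling gives $\sum_i|\partial_i f_{\bm w}(\bm x)|\rho^{w_i-d}\ge c$. The partials of $g$ satisfy $|\partial_i g(\bm x)| = o(\rho^{d-w_i})$ near $0$, because their weighted orders are strictly larger. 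Hence $\nabla f \neq 0$ on a punctured neighborhood of $0$, so $0$ is an isolated critical point. By Lemma \ref{gls tjurina and milnor}, $\mu(f)<\infty$ and $V(f)$ has an isolated singularity at $0$.

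\textbf{Step 3 (constancy of $\mu$).} The Milnor number equals the local degree of the map $\nabla f:(\mathbb C^{n+1},0)\to(\mathbb C^{n+1},0)$, i.e. the number of preimages of a generic small regular value, or equivalently the degree of $\nabla f/|\nabla f|$ on a small sphere. Step 2 gives a uniform statement for the family $f_t$, $t\in[0,1]$: on a fixed small sphere $\mathbb S_\varepsilon$ (or weighted sphere $\rho=\varepsilon$), $\nabla f_{\bm w}$ dominates $\nabla g_t$, since $\nabla g_t$ is even smaller for $|t|\le 1$. The linear homotopy $\nabla f_{\bm w} + s\nabla g_t$ therefore has no zeros on the sphere, so the degrees agree: $\mu(f_t)=\mu(f_{\bm w})$. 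Alternatively, the Rouché principle for holomorphic maps shows that $\nabla f$ and $\nabla f_{\bm w}$ have the same number of zeros, counted with multiplicity, inside the weighted ball. Combined with Step 1, this gives $\mu(f)=\mu(f_{\bm w})$.

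The main obstacle is Step 2: making the comparison of $\nabla g$ against $\nabla f_{\bm w}$ uniform. The argument must use weighted, not Euclidean, size, because individual partials scale with different degrees. The crucial point is to estimate each component separately with the factor $\rho^{w_i-d}$, and to make sure that the higher-order tail of $g$, a convergent power series, satisfies the $o(\rho^{d-w_i})$ bound termwise on the weighted ball. I will handle this by bounding $|\bm x^{\bm v}|\le \rho^{\bm w\cdot \bm v}$ and summing using convergence.
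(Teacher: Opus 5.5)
Your argument is correct, but it takes a genuinely different route from the paper, because the paper gives no proof here. It quotes the statement from \cite{MR2290112} and only uses it, for instance to see that $t^{-d}f(t^{w_0}x_0,\dots,t^{w_n}x_n)$ is a $\mu$-constant family.

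The standard proof is algebraic. The partials $\partial_0 f_{\bm w},\dots,\partial_n f_{\bm w}$ are weighted homogeneous with only common zero $0$, so they form a regular sequence. They are also the lowest weighted-order parts of $\partial_0 f,\dots,\partial_n f$. Hence they generate the ideal of lowest weighted-order parts of $J(f)$. This gives $\dim_{\mathbb C}\mathcal O_{n+1}/J(f)=\dim_{\mathbb C}\mathcal O_{n+1}/J(f_{\bm w})<\infty$, and in fact every weighted-homogeneous basis of $M_{f_{\bm w}}$ is also a basis of $M_f$.

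You replace this with a weighted {\L}ojasiewicz estimate plus homotopy invariance of the degree. Your route buys an elementary, quantitative proof of isolatedness. The cost is that the whole content of $\mu(f)=\mu(f_{\bm w})$ is moved into the theorem that $\dim_{\mathbb C}\mathcal O_{n+1}/J(f)$ equals the local topological degree of $\nabla f$ (equivalently, the Rouch\'e or conservation-of-number principle). Since $\mu$ is defined algebraically in the paper, you should cite that theorem explicitly as a black box. The algebraic route avoids topology, works for formal power series, and yields the basis statement, which your argument does not give.

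Two minor points.
\begin{itemize}
\item The bound $\Phi\geq c>0$ on $\{\rho=1\}$ needs $\nabla f_{\bm w}\neq 0$ on all of $\mathbb C^{n+1}\setminus\{0\}$, not just near $0$. Say explicitly that the critical locus of $f_{\bm w}$ is invariant under $\bm x\mapsto(\lambda^{w_0}x_0,\dots,\lambda^{w_n}x_n)$. A critical point $p\neq 0$ would then produce critical points tending to $0$, contradicting isolatedness.
\item Step~1 is superfluous. Your estimate $\sum_i|\partial_i f_{\bm w}+s\,\partial_i g|\,\rho^{w_i-d}\geq c-C\rho^{\delta}$ holds uniformly for $s\in[0,1]$. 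So the homotopy $\nabla f_{\bm w}+s\nabla g$ compares $f$ and $f_{\bm w}$ directly on $\{\rho=\varepsilon\}$. That set bounds a topological ball, since $\rho(\lambda^{w_0}x_0,\dots,\lambda^{w_n}x_n)=\lambda\rho(\bm x)$ for real $\lambda>0$.
\end{itemize}
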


The usual classification of hypersurface singularities is according to modality, which is an invariant introduced by Arnol'd etc. (see \cite{MR0777682}). It is roughly the dimension of moduli space of a hypersurface singularity. Here is a precise definition.
\begin{definition}
	Let $G$ be a Lie group and $X$ be a smooth manifold (real or complex), and $G$ acts on $X$. For a point in $x\in X$, orbits of the action of $G$ on $X$ may form finite stratification near $x$. The modality of $x$ is defined to be the least number of parameters so that the family covers orbits near $x$.
\end{definition}

Hypersurface singularities of modality $0,1,2,3$ have been completely classified (see \cite{MR0356124},\\ 
\cite{MR0397777}, \cite{MR0553709}, and \cite{MR1721628}). We also refer to \cite{MR3146513}, which lists the classification for modality $\leq 2$.

\subsection{Spectrum Numbers and Hertling Conjecture}
\label{Spectrum and Hertling Conjecture subsection}
%	Spectrum is also called exponents.

Roughly speaking, spectrum numbers of an isolated singularity $(V(f),0)$ come from the mixed Hodge structure on the $n$-th cohomology group of the canonical Milnor fiber. Or equivalently, they can be derived from the Briekorn module. In this subsection, we will give a brief review of Milnor fibration, monodromy, mixed Hodge structure, Briekorn module, spectrum numbers and Hertling conjecture. 

We point out that spectrum numbers are sometimes called exponents.

Suppose $f : (\mathbb C^{n+1},0) \to (\mathbb C,0)$ is an analytic germ that defines an isolated singularity at the origin. Let $\varepsilon >0$ be a sufficiently small real number and $\delta > 0$ is sufficiently small with respect to $\varepsilon$. In \cite{MR0239612}, $f/\vert f\vert$ gives a $C^{\infty}$-fiber bundle over $\mathbb S^1$. 

\begin{theorem}[Milnor Fibration I, \cite{MR0239612}]\label{fibration theorem}
	Let $K = \mathbb S_\varepsilon\cap f^{-1}(0)$, then the following map is a $C^\infty$-bundle bundle.
	\begin{displaymath}
		\pi := \frac{f}{\vert f\vert} : \mathbb S_\varepsilon\setminus K \longrightarrow \mathbb S^1
	\end{displaymath}
\end{theorem}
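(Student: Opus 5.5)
The plan is to follow Milnor's original argument. The map $\pi = f/\vert f\vert$ is smooth on $\mathbb S_\varepsilon\setminus K$, since $f$ does not vanish there. By Ehresmann's fibration theorem, a proper surjective submersion is a locally trivial bundle. Here $\mathbb S_\varepsilon\setminus K$ is not compact, so properness fails, and I will instead construct a vector field that plays the role of a lift of $\partial/\partial\theta$ on $\mathbb S^1$ and whose flow is complete.

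\textbf{Step 1 (submersion).} Write $\theta = \arg f$. Then $\pi$ is a submersion at $z$ exactly when the restriction of $d\theta$ to $T_z\mathbb S_\varepsilon$ is nonzero. Writing $\log f = \log\vert f\vert + i\theta$, the real gradient of $\theta$ is
\begin{displaymath}
\mathrm{grad}\,\theta = i\,\overline{\mathrm{grad}\log f}, \qquad \mathrm{grad}\log f = \overline{\left(\frac{\partial f/\partial z_j}{f}\right)}_{j}.
\end{displaymath}
So $\pi$ fails to be a submersion at $z$ only if $\mathrm{grad}\log f(z) = \lambda z$ for some $\lambda$ with $\mathrm{Re}\,\lambda = 0$. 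By a small perturbation, one can also arrange that $\mathrm{Re}\,\lambda$ is not too large.

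\textbf{Step 2 (curve selection lemma; main obstacle).} This step rules out such points for small $\varepsilon$, and it is where the real work lies. Suppose the bad set, a real semi-analytic set, has points on spheres of arbitrarily small radius. The curve selection lemma then gives a real analytic curve $p(t)\to 0$ along which $\mathrm{grad}\log f(p(t)) = \lambda(t)\,p(t)$. Comparing Puiseux expansions of $\vert p\vert^2$, $\log f(p(t))$ and $\lambda(t)$, one finds that $d\log f(p(t))/dt$ is asymptotically a positive real multiple of $d\vert p\vert^2/dt$. This forces $\mathrm{Re}\,\lambda>0$ and in fact $\vert\arg\lambda\vert<\pi/4$ near $0$, a contradiction. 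The isolatedness hypothesis, via Lemma \ref{gls tjurina and milnor}, is used to guarantee that $\mathrm{grad}f\neq 0$ off $0$ near the origin, so the whole analysis takes place on $\mathbb S_\varepsilon\setminus K$.

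\textbf{Step 3 (complete lift).} Step 2 gives the sharper statement that $\langle \mathrm{grad}\log f, z\rangle$ has argument in $(-\pi/4,\pi/4)$. Using this, I construct a smooth tangent vector field $v$ on $\mathbb S_\varepsilon\setminus K$ with $\mathrm{Re}\langle v, i\,\mathrm{grad}\log f\rangle = 1$, that is $d\theta(v)=1$, and with $\vert \mathrm{Re}\langle v,\mathrm{grad}\log f\rangle\vert$ bounded. I build it locally from suitable combinations of $i z$ and $i\,\mathrm{grad}\log f$ and then patch with a partition of unity. The bound means $\log\vert f\vert$ changes at bounded rate along trajectories, so a trajectory cannot reach $K$ in finite time, and the flow is complete.

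\textbf{Step 4 (trivialization).} The time-$s$ flow $\phi_s$ of $v$ maps $\pi^{-1}(e^{i\theta})$ diffeomorphically onto $\pi^{-1}(e^{i(\theta+s)})$. The map $(s,z)\mapsto \phi_s(z)$, defined on $(\theta_0-\pi,\theta_0+\pi)\times\pi^{-1}(e^{i\theta_0})$, then gives a local trivialization. Hence $\pi$ is a $C^\infty$ fiber bundle.
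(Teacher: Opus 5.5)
The paper gives no proof of this statement; it only cites Milnor. Your outline is Milnor's own argument (his Lemmas 4.2--4.4 and Theorem 4.8). Steps 1, 2 and 4 are acceptable sketches, but Step 3 rests on a claim that is false.

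The curve selection argument says something only at points where $\mathrm{grad}\log f(z)$ is a complex multiple of $z$. What it yields is Milnor's dichotomy: for small $\varepsilon$ and every $z\in\mathbb S_\varepsilon\setminus K$, either $z$ and $\mathrm{grad}\log f(z)$ are $\mathbb C$-linearly independent, or $\mathrm{grad}\log f(z)=\lambda z$ with $\lambda\neq 0$ and $\vert\arg\lambda\vert<\pi/4$. It does not give $\vert\arg\langle \mathrm{grad}\log f(z),z\rangle\vert<\pi/4$ on the whole sphere, and that statement is false. For $f=z_1^2+z_2^3$ one has $\langle \mathrm{grad}\log f,z\rangle=\overline{2+z_2^3/f}$. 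At $z=(2it^{3/2}/\sqrt{3},\,t)$ with $t>0$ small, $f=-t^3/3\neq 0$, so this number equals $-1$. At such points $d\theta(iz)=-1<0$: the Hopf direction moves $\arg f$ backwards, so a vector field justified by that bound fails arbitrarily close to $K$.

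The fix is to use both branches of the dichotomy.
\begin{itemize}
\item Near a point where $z$ and $\mathrm{grad}\log f(z)$ are independent, take $v$ with $\langle v,z\rangle=0$ and $\langle v,i\,\mathrm{grad}\log f\rangle=1$. This is solvable because the Gram determinant is positive, and it gives a tangent $v$ with $d\theta(v)=1$ and $d\log\vert f\vert(v)=0$.
\item Near a dependent point take $v=iz$. Then $\langle iz,i\,\mathrm{grad}\log f\rangle=\bar\lambda\vert z\vert^2$ has argument in $(-\pi/4,\pi/4)$.
\end{itemize}
The condition $\vert\arg\langle v,i\,\mathrm{grad}\log f\rangle\vert<\pi/4$ is open in $z$ and convex in $v$, so these local fields glue by a partition of unity. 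Normalizing to $d\theta(v)=1$ then gives $\vert d\log\vert f\vert(v)\vert<1$, which is exactly the bound your completeness argument needs.

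Minor points:
\begin{itemize}
\item With $\langle a,b\rangle=\sum a_j\bar b_j$, the real gradient of $\theta$ is $i\,\mathrm{grad}\log f$, not $i\,\overline{\mathrm{grad}\log f}$. Your conclusion $\mathrm{Re}\,\lambda=0$ is nevertheless correct.
\item The sentence about a ``small perturbation'' controlling $\mathrm{Re}\,\lambda$ has no content and should be dropped.
\item Isolatedness is not needed. The case $\lambda=0$ is also excluded by the curve selection argument, and Milnor's fibration theorem holds for every $f$.
\end{itemize}
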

Since $\pi_1(\mathbb S^1) = \mathbb Z$, we have natural monodromy action up to sign on each fiber $F_\theta := \pi^{-1}(e^{i\theta})$, $h(-,2\pi)$ (see \cite{MR4261550}, Chapter 6, or \cite{MR4367434}, Chapter 6). Milnor also proved the that each fiber $F_\theta := (f/\vert f\vert)^{-1}(e^{i\theta})$ is homotopic to the wedge sum of $\mu$ $n$-spheres, where $\mu$ is the algebraically Milnor number of $f$.
\begin{theorem}[Bouquet Theorem, \cite{MR0239612}]\label{boquet theorem}
	Each fiber $F_\theta$ is homotopy equivalent to $\dot{\bigvee}_\mu \mathbb S^n$, where $\mu$ is the Milnor number of $f$.
\end{theorem}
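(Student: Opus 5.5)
The plan follows Milnor's original argument. It has three parts: realize the fiber $F_\theta$ up to homotopy as a smooth Stein manifold, show that it is $(n-1)$-connected, and then read off its top homology from the Milnor number.

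\textbf{Step 1 (Stein model).} Replace $F_\theta$ by the tube fiber $X_t = f^{-1}(t)\cap \mathbb B_\varepsilon$ with $0<\vert t\vert\leq\delta$. The equivalence of the two fibrations follows from the standard comparison underlying Theorem \ref{fibration theorem}. On $f^{-1}(\mathbb D_\delta)\setminus f^{-1}(0)$ one constructs a vector field that increases $\vert f\vert$ while keeping $\arg f$ fixed and remaining transverse to the spheres. It exists because of the curve selection lemma, since $0$ is an isolated critical point (Lemma \ref{gls tjurina and milnor}). Its flow gives a diffeomorphism between the open fiber $F_\theta$ and the interior of $X_t$, so the two are homotopy equivalent. $X_t$ is a smooth complex $n$-dimensional manifold, because $f$ has no critical points other than $0$ near the origin.

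\textbf{Step 2 (Connectivity and dimension).} Use the function $\phi(z)=\vert z\vert^2$ on $X_t$. Perturbed to $\vert z-a\vert^2$ for generic small $a$, it is a Morse function. Since $X_t$ is a complex submanifold of $\mathbb C^{n+1}$, every critical point of this function has index at most $n$: this is the Andreotti--Frankel/Milnor lemma that the Hessian of a strictly plurisubharmonic function on an $n$-dimensional complex manifold has at most $n$ negative directions. Near the boundary sphere the gradient points outward, by the same transversality used in Step 1. Hence $X_t$ has the homotopy type of a finite CW complex of dimension $\leq n$. Next, show that $X_t$ is $(n-1)$-connected. Milnor's route is via the fibration: $\mathbb S_\varepsilon\setminus K$ retracts onto a space in which $F_\theta$ sits with a trivial normal structure, and Morse theory applied to $-\log\vert f\vert$ (or to $\mathrm{Re}\log f$) on the complement shows that $F_\theta$ is obtained from a contractible piece by attaching cells of index $\geq n$. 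An alternative is to deform $f$ to a Morsification $f_s$ with $\mu$ nondegenerate critical points. $X_t$ is then obtained from a ball by attaching $\mu$ handles of index $n+1$ to the total space, and the Lefschetz vanishing cycles show that the fiber is obtained from a contractible fiber by attaching $\mu$ $n$-cells. I would use this Morsification/Picard--Lefschetz argument because it simultaneously yields the count.

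\textbf{Step 3 (Counting spheres and concluding).} A simply connected ($n\geq 2$), $(n-1)$-connected CW complex of dimension $n$ with free $H_n$ is homotopy equivalent to a bouquet of $n$-spheres, by the Hurewicz and Whitehead theorems. The number of spheres is $\operatorname{rank} H_n(F_\theta)$. To identify this rank with $\mu=\dim_{\mathbb C}\mathcal O_{n+1}/J(f)$, use the Morsification. A generic linear perturbation $f-\sum a_ix_i$ has exactly $\mu$ nondegenerate critical points in $\mathbb B_\varepsilon$, because the map $\nabla f$ is finite of local degree $\mu$ (the algebraic multiplicity of $J(f)$). Each critical point contributes one $n$-cell, i.e. one vanishing cycle. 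Alternatively, the degree of the Gauss map $\nabla f/\vert\nabla f\vert$ on $\mathbb S_\varepsilon$ equals the Euler characteristic defect, giving $\chi(F_\theta)=1+(-1)^n\mu$. The case $n=1$ is handled directly: a connected open Riemann surface is homotopy equivalent to a wedge of circles, and $\chi$ determines how many. The case $n=0$ is trivial: the fiber is $\mu+1$ points.

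\textbf{Main obstacle.} I expect the hardest part to be the $(n-1)$-connectivity in Step 2, specifically proving that no cells of index $<n$ are needed. The Morse index bound only gives the upper bound on the dimension. I would first try the Morsification/Lefschetz handle argument, since it also settles the identification of the rank with $\mu$.
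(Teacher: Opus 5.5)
Your overall architecture is a legitimate route. Since the paper proves nothing here and simply cites Milnor, there is no internal proof to compare against. But the step you yourself flag as the crux, $(n-1)$-connectivity, is where the proposal breaks. The Morsification statements in Step 2 are not correct as written, and nothing else in Step 2 supplies connectivity: the Andreotti--Frankel bound only bounds the dimension, and the sketch you attribute to Milnor via $-\log\vert f\vert$ is too vague to check.

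In the Picard--Lefschetz picture for $f_s=f-\sum a_ix_i$ there is no ``contractible fiber'' from which $X_t$ could be built. All fibres of $f_s$ over $\mathbb D_\delta$ minus the $\mu$ critical values are diffeomorphic to $X_t$. A singular fibre is, up to homotopy, $X_t$ with one $(n+1)$-cell attached along a vanishing sphere. Nor is $X_t$ obtained from a ball by attaching handles. Each Morse point contributes an $(n+1)$-cell to the \emph{total space}, not an $n$-cell to the fibre. The same confusion reappears in Step 3 (``each critical point contributes one $n$-cell'').

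The correct mechanism runs in the opposite direction. Put $T_s=f_s^{-1}(\mathbb D_\delta)\cap\mathbb B_\varepsilon$ and pick $b\in\partial\mathbb D_\delta$. Then $T_s$ deformation retracts onto $X_b\cup_{\Delta_1}D^{n+1}\cup\cdots\cup_{\Delta_\mu}D^{n+1}$: these are Lefschetz thimbles over a distinguished system of paths, attached along the vanishing spheres $\Delta_i\cong\mathbb S^n$.

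You then need a geometric input your plan omits: $T_s$ is contractible. The unperturbed tube $f^{-1}(\mathbb D_\delta)\cap\mathbb B_\varepsilon$ retracts onto $f^{-1}(0)\cap\mathbb B_\varepsilon$, which is a cone over $K$ by the conic structure theorem. For small $s$ the fibres stay transverse to $\mathbb S_\varepsilon$, so $T_s\simeq T_0$.

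With that in hand, the pair $(T_s,X_b)$ is $n$-connected. The homotopy exact sequence gives $\pi_j(X_b)=0$ for $j\le n-1$. Also $\tilde H_*(X_b)\cong H_{*+1}(T_s,X_b)$ is free of rank $\mu$ in degree $n$ and zero elsewhere, so the vanishing cycles form a basis of $H_n$. Your count of $\mu$ Morse points from the local degree of $\nabla f$ is correct, and Hurewicz--Whitehead then finishes for $n\ge 2$. The Andreotti--Frankel bound becomes superfluous except for $n=1$.

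So repaired, this is a genuinely different route from Milnor's. Milnor obtains connectivity without Picard--Lefschetz theory. He counts the spheres through the degree of $\nabla f/\vert\nabla f\vert$ on $\mathbb S_\varepsilon$ via an Euler characteristic computation, then identifies that degree with $\dim_{\mathbb C}\mathcal O_{n+1}/J(f)$ algebraically. Your route trades this for contractibility of the Milnor tube plus the vanishing-cycle cell decomposition, which yields connectivity and the rank $\mu$ in one stroke.
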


Therefore, we know the reduced cohomology group of $F_\theta$ is 
\begin{displaymath}
	\tilde H_q(F_\theta;\mathbb Z) = \begin{cases}
		\mathbb Z^\mu,\text{if }q = n,\\
		0,\text{else}.
	\end{cases}
\end{displaymath}
We hence consider the monodromy action on $\tilde H_n(F_\theta;\mathbb Z)$ and this gives an invertible $\mu\times \mu$ matrix $h_*$. The conjugacy class of this matrix is independent of the choice of the flow and hence all eigenvalues of $h_*$ are invariants of the hypersurface singularity $(V(f),0)$ (see also \cite{MR4367434}, Chapter 6).

For $c\in \mathbb C,0< \vert c\vert < \delta$, Milnor showed the following.
\begin{theorem}[\cite{MR0239612}]\label{fiber type}
	The fiber of $c$ i.e. $F_{\arg c}$ is diffeomorphic to $f^{-1}(c) \cap \mathbb B_{\varepsilon}$.
\end{theorem}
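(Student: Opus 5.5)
The plan is to compare the two spaces directly, using the conic structure of $f^{-1}(0)$ near the origin together with Milnor's curve selection lemma. Fix $\varepsilon$ small enough that the following three things hold. First, $0$ is the only critical point of $f$ in $\mathbb B_\varepsilon$; this is possible by \textbf{Lemma \ref{gls tjurina and milnor}}. Second, for every $0<r\leq\varepsilon$ the sphere $\mathbb S_r$ meets $f^{-1}(0)$ transversally. Third, the transversality estimate below holds. Throughout, $\delta$ is taken small with respect to $\varepsilon$.

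The key estimate is that for $x\in\mathbb B_\varepsilon\setminus f^{-1}(0)$ close to $f^{-1}(0)$, the vectors $x$ and $\overline{\mathrm{grad}\, f}(x)$ are not dependent in the bad way. Precisely, $\mathrm{grad}\log f = \overline{\mathrm{grad} f}/\bar f$ is never a nonpositive real multiple of $x$. We prove this by contradiction. If it failed near $0$, the curve selection lemma would give a real analytic arc $p(t)\to 0$ along which $\mathrm{grad} f(p(t))=\lambda(t)p(t)$ with $\lambda$ of the bad sign. Expanding $f(p(t))$ in powers of $t$ then contradicts $\frac{d}{dt}f(p(t))=\langle p'(t),\mathrm{grad} f\rangle$. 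This is Milnor's Lemma 4.3/5.x argument, and it is the main technical obstacle. The rest is soft.

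Using the estimate, we build a smooth vector field $v$ on $\mathbb B_\varepsilon\setminus f^{-1}(0)$ with two properties: $\langle v, x\rangle$ has positive real part, and $\langle v, \mathrm{grad}\log f\rangle$ is real and positive. The field is obtained by gluing local choices with a partition of unity, which works because both conditions are convex. Along the flow of $v$, the norm $\|x\|$ increases and $\arg f$ stays constant, while $|f|$ increases. Starting from a point of $f^{-1}(c)\cap\mathbb B_\varepsilon$ with $\arg c=\theta$, we follow the flow until it hits $\mathbb S_\varepsilon$. This defines a map into $F_\theta$, which we identify with $\{x\in \mathbb S_\varepsilon : \arg f=\theta\}$.

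Rather than proving this map is a diffeomorphism outright, I would argue in two steps. First, $f:\mathbb B_\varepsilon\cap f^{-1}(\mathbb D_\delta\setminus 0)\to\mathbb D_\delta\setminus 0$ is a locally trivial fibration, by Ehresmann's theorem for manifolds with boundary, using transversality of the fibres to $\mathbb S_\varepsilon$ for small $|c|$. Second, the region $\{x\in\mathbb B_\varepsilon:\arg f=\theta,\ 0<|f|\leq|c|\}$ is a collar $f^{-1}(c)\cap\mathbb B_\varepsilon\times(0,1]$. Flowing along $v$ then deforms the set $\{\arg f=\theta\}\cap\mathbb B_\varepsilon$ onto its boundary part on $\mathbb S_\varepsilon$, and the end result is a diffeomorphism between $F_\theta$ and $f^{-1}(c)\cap\mathbb B_\varepsilon$, with interiors matched. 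Smoothing the corner where $\mathbb S_\varepsilon$ meets $|f|=|c|$ needs care but is standard.
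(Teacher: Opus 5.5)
The paper gives no argument of its own here; it cites Milnor. Your ingredients are Milnor's: curve selection, and a field $v$ on $\mathbb B_\varepsilon\setminus f^{-1}(0)$ with $\mathrm{Re}\langle v,x\rangle>0$ and $\langle v,\mathrm{grad}\log f\rangle>0$. The gap is in how you finish.

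\textbf{The flow map already does the work.} The map you set aside sends $x\in f^{-1}(c)$ to the point where its orbit first meets $\mathbb S_\varepsilon$. This is precisely the diffeomorphism you need, except that its image is $F_\theta\cap\{|f|>|c|\}$, not $F_\theta$, because points of $F_\theta$ near $K$ have $|f|<|c|$. The justification runs as follows.
\begin{itemize}
\item Each orbit meets $f^{-1}(c)$ at most once, since $|f|$ strictly increases along it.
\item A forward orbit from $f^{-1}(c)$ stays in the compact set $\{|f|\ge|c|\}\cap\mathbb B_\varepsilon$, where $\mathrm{Re}\langle v,x\rangle$ is bounded below by a positive constant. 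So it reaches $\mathbb S_\varepsilon$ in finite time; you left this unjustified.
\item The backward orbit of $y\in F_\theta$ with $|f(y)|>|c|$ stays in $\{|c|\le|f|\le|f(y)|\}\cap\mathbb B_\varepsilon$, where $\langle v,\mathrm{grad}\log f\rangle$ is bounded below. So it reaches $\{|f|=|c|\}$.
\item Both hitting times are smooth, because $v$ is transverse to $\mathbb S_\varepsilon$ and to $\{|f|=|c|\}$ inside the $(2n+1)$-manifold $\{\arg f=\theta\}$.
\end{itemize}

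\textbf{What remains is a collar of $F_\theta$ at $K$.} You still need $F_\theta\cap\{|f|>|c|\}\cong F_\theta$. Take $|c|<\eta$ with $\eta$ small relative to $\varepsilon$. Then $|f|$ has no critical points on $F_\theta\cap\{0<|f|<\eta\}$; this follows from the transversality of $f^{-1}(c')$ to $\mathbb S_\varepsilon$ for $0<|c'|<\eta$ that you already use in Step 1. By properness this set is $\cong(f^{-1}(c)\cap\mathbb S_\varepsilon)\times(0,\eta)$, and you push the collar in. Your Step 1, restricted to the boundary fibration on $\mathbb S_\varepsilon$, gives exactly this. But you place the collar inside the ball, on $\{\arg f=\theta,\ 0<|f|\le|c|\}$, which is not what is needed. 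Your concluding sentence also does not follow. Knowing that $\{\arg f=\theta\}\cap\mathbb B_\varepsilon\setminus f^{-1}(0)\cong F_\theta\times(-\infty,0]$, and that it contains a piece $\cong(f^{-1}(c)\cap\mathbb B_\varepsilon)\times(0,1]$, does not let you cancel interval factors. Done as above, no corner smoothing is needed.

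\textbf{Open versus closed ball.} The statement must be read with the open ball. For $n\ge1$, $F_\theta$ is a noncompact manifold without boundary, while $f^{-1}(c)\cap\mathbb B_\varepsilon$ for the closed ball is compact with boundary. The argument above gives $f^{-1}(c)\cap\mathring{\mathbb B}_\varepsilon\cong F_\theta$. Its closed-ball version is $f^{-1}(c)\cap\mathbb B_\varepsilon\cong F_\theta\cap\{|f|\ge|c|\}$. Your phrase ``with interiors matched'' should be made into this.

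\textbf{The key estimate is stated too weakly.} If $\mathrm{grad}\log f(x)=\lambda x$, then $\langle v,\mathrm{grad}\log f\rangle=\bar\lambda\langle v,x\rangle$. Requiring this to be real and positive forces $\langle v,x\rangle\in\mathbb R_{>0}\lambda$. So you need $\mathrm{Re}\,\lambda>0$ for every complex $\lambda$ that occurs, not merely that $\lambda$ is not a nonpositive real. Curve selection does give this, since the leading term of $\lambda(t)$ along the arc is real and positive, so $\arg\lambda(t)\to0$. However:
\begin{itemize}
\item you must run it with $\mathrm{grad}\log f$ rather than $\mathrm{grad} f$, since the factor $1/\bar f$ changes $\arg\lambda$;
\item you need the estimate on all of $\mathbb B_\varepsilon\setminus f^{-1}(0)$, not only near $f^{-1}(0)$, since that is where $v$ is built.
\end{itemize}
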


We have a way to construct `canonical fiber' as below. Let $\mathbb H =\{z\in \mathbb C\mid \Im z> 0\}$ be the universal covering of $\mathbb D_\delta$ though the covering map $z \mapsto \delta e^{2\pi iz}$. Take $X_\infty$ be the fiber product $X\times_{D} \mathbb H$, where $X = f^{-1}(\mathbb D_\delta\setminus\{0\})$ and $D = \mathbb D_{\delta}\setminus\{0\}$. 
\begin{displaymath}
	\xymatrix{
		X_\infty \ar[rr] \ar[d] & & X \ar^f[d]\\
		\mathbb H \ar^{\delta e^{2\pi iz}}[rr] & & D
	}
\end{displaymath}
Since $\mathbb H$ is contractible, $X_\infty$ is homotopic equivalent to each $f^{-1}(c)\cap \mathbb B_{\varepsilon}$. Then by \textbf{Theorem \ref{fiber type}} and \textbf{Theorem \ref{boquet theorem}}, $H^n:=H^{n}(X_\infty;\mathbb C)$ is a $\mu$-dimensional $\mathbb C$-vector space. The monodromy $h_*$ also acts on $H^n$. In \cite{MR0485870}, Steenbrink introduced a mixed Hodge structure on $ H^n$. We denote it as $(H^n,F^\bullet,W_\bullet)$, where $F^\bullet$ is decreasing and $W_\bullet$ is increasing. $F^\bullet$ and $W_\bullet$ are called Hodge filtration and weighted filtration respectively. Set
\begin{align*}
	H^n_\lambda & := \{u\in H^n\mid (h_*-\lambda)^ku = 0,\text{ for some } k\in \mathbb N\},\\
	H^{p,q}_\lambda & := \mathrm{Gr}_F^p\mathrm{Gr}_{p+q}^W H^n, h_{p,q}^\lambda := \dim_{\mathbb C} H^{p,q}_\lambda.
\end{align*}
Since the decomposition $H^n = \bigoplus_{\lambda \in \mathbb S^1} H^n_{\lambda}$ is compatible with both filtrations, we see that $\sum_{p,q,\lambda} h^{p,q}_\lambda = \mu$. Next we provide the definition of spectrum numbers.

\begin{definition}[Spectrum Numbers, \cite{MR0485870}]
	Suppose $f : (\mathbb C^{n+1},0) \to (\mathbb C,0)$ is an analytic germ that defines an isolated singularity at the origin. Then the specturm numbers of $f$ are $\mu$ rational numbers $0 < \alpha_1 \leq ... \leq \alpha_\mu < n+1$ satisfying the following.
	
	\noindent(1) For all $1\neq \lambda\in \mathbb C,p\in \mathbb Z$, $\#\{j\mid e^{-2\pi i\alpha_j} = \lambda,[\alpha_j]] = n-p\} = \sum_q h^{p,q}_{\lambda}$.
	
	\noindent(2) For all $p\in \mathbb Z$, $\#\{j\mid \alpha_j = n-p+1\} = \sum_q h_1^{p,q}$.
	
	Spectrum numbers can be recorded in a `polynomial' in one variable: 
	\begin{displaymath}
		\mathrm{Sp}(f) = \sum_{j = 1}^\mu t^{\alpha_j}.
	\end{displaymath}
	It is called the spectrum of $f$.
\end{definition}

Here are some basic properties of spectrum numbers.

\begin{theorem}[Symmetry, \cite{MR0485870}]\label{symmetry of spectrum}
	For all $i = 1,...,\mu$, we have $\alpha_i+\alpha_{\mu-i+1} =n+1$.
\end{theorem}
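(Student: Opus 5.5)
The plan is to derive the symmetry $\alpha_i+\alpha_{\mu-i+1}=n+1$ from the Hodge symmetries of Steenbrink's mixed Hodge structure $(H^n,F^\bullet,W_\bullet)$ together with the definition of spectrum numbers. Since both sides of the claim concern only the multiset $\{\alpha_j\}$, the statement is equivalent to: for every rational $\alpha$, the multiplicity of $\alpha$ equals the multiplicity of $n+1-\alpha$. Using the definition, this becomes a statement about the Hodge numbers $h^{p,q}_\lambda$.

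\textbf{Case $\lambda\neq 1$.} Take $\alpha$ with $e^{-2\pi i\alpha}=\lambda\neq1$ and $[\alpha]=n-p$. Then $\alpha':=n+1-\alpha$ satisfies $e^{-2\pi i\alpha'}=\bar\lambda$. Because $\alpha\notin\mathbb Z$, we have $[\alpha']=n-[\alpha]=p$, so $[\alpha']=n-(n-p)$. Hence the multiplicity of $\alpha$ is $\sum_q h^{p,q}_\lambda$ and that of $\alpha'$ is $\sum_q h^{n-p,q}_{\bar\lambda}$. Two facts are needed for these to agree:
\begin{itemize}
\item Complex conjugation: the MHS is defined over $\mathbb R$ (indeed over $\mathbb Q$) and $h_*$ is real, which gives $h^{p,q}_\lambda=h^{q,p}_{\bar\lambda}$.
\item Weight symmetry: on $H^n_{\neq1}$, Steenbrink's weight filtration is the monodromy weight filtration of $N=\log h_u$ centered at $n$. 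Therefore $N^k:\mathrm{Gr}^W_{n+k}\to\mathrm{Gr}^W_{n-k}$ is an isomorphism of type $(-k,-k)$, which gives $h^{p,q}_\lambda=h^{n-q,n-p}_\lambda$.
\end{itemize}
Combining the two yields $h^{p,q}_\lambda=h^{n-p,n-q}_{\bar\lambda}$. Summing over $q$ gives the required equality of multiplicities.

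\textbf{Case $\lambda=1$.} The integer exponents $\alpha=n-p+1$ and $n+1-\alpha=p$ have multiplicities $\sum_q h^{p,q}_1$ and $\sum_q h^{n+1-p,q}_1$. On $H^n_1$ the weight filtration is the monodromy filtration centered at $n+1$, so the same argument now gives $h^{p,q}_1=h^{n+1-q,n+1-p}_1$. Combined with conjugation (where $\bar 1=1$), this gives $h^{p,q}_1=h^{n+1-p,n+1-q}_1$, and summing over $q$ finishes this case.

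Finally, equality of multiplicities for every $\alpha$ means the sorted list $\alpha_1\le\dots\le\alpha_\mu$ coincides with the sorted list $n+1-\alpha_\mu\le\dots\le n+1-\alpha_1$. This is exactly $\alpha_i+\alpha_{\mu-i+1}=n+1$.

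The main obstacle is justifying the weight symmetry: identifying Steenbrink's $W_\bullet$ with the relative monodromy weight filtration, with the correct centers $n$ and $n+1$, and showing that $N$ is a morphism of MHS of type $(-1,-1)$. This is deep input from Steenbrink's construction, or alternatively from Saito's theory, and it would be cited rather than reproved. A secondary point is to track carefully the floor conventions $[\alpha]$ in the definition, since they are exactly why the two eigenvalue cases need different centers.
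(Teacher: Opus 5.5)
Your argument is correct. The paper gives no proof of its own and only cites Steenbrink, and your derivation is the standard argument behind that citation: you reduce to equality of multiplicities, then combine $h^{p,q}_\lambda=h^{q,p}_{\bar\lambda}$ with the monodromy weight filtration (centered at $n$ for $\lambda\neq1$ and at $n+1$ for $\lambda=1$) to get $h^{p,q}_\lambda=h^{n-p,n-q}_{\bar\lambda}$ and $h^{p,q}_1=h^{n+1-p,n+1-q}_1$, with the floor bookkeeping $[n+1-\alpha]=n-[\alpha]$ for $\alpha\notin\mathbb Z$ and the final multiset-to-sorted-list step handled correctly.
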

\begin{theorem}[Formula for Quasihomogeneous Singularities, \cite{MR1621831}]\label{Formula for Quasihomogeneous Singularities}
	We further suppose $f\in \mathcal O_{n+1}$ is a weighted homogeneous polynomial of weight type $\bm w = (w_0,...,w_n)$. That is $f(t^{w_0}x_0,...,t^{w_n}x_n) = tf(x_0,...,x_n)$. Let $\bm x^{\bm \alpha_i},i=1,...,\mu$ be a monomial basis of $\mathcal O_{n+1}/J(f)$. Set $\bm e = (1,...,1)\in \mathbb N^{n+1}$. Then the spectrum numbers of $f$ are $(\bm \alpha_i+\bm e)\cdot \bm w,i=1,...,\mu$.
\end{theorem}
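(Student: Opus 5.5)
The plan is to reduce everything to the Brieskorn lattice and the $\mathbb C^*$-action coming from the weights. For weighted homogeneous $f$ of type $\bm w$, the Euler field
\begin{displaymath}
	E=\sum_i w_i x_i\partial_{x_i}
\end{displaymath}
satisfies $E(f)=f$. The monomial forms $\omega_{\bm\alpha}:=\bm x^{\bm\alpha}d\bm x$ are eigenvectors of the Lie derivative $L_E$, with eigenvalue $\ell(\bm\alpha):=(\bm\alpha+\bm e)\cdot\bm w$.

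\textbf{Step 1: basis.} Since $\bm x^{\bm\alpha_i}$ is a monomial basis of $M_f=\mathcal O_{n+1}/J(f)$, the forms $\omega_{\bm\alpha_i}$ give a basis of $\Omega_f^{n+1}=\Omega^{n+1}/df\wedge\Omega^n$. Their Gelfand--Leray classes $[\omega_{\bm\alpha_i}/df]$ restrict to sections of the cohomology bundle $\bigcup_c H^n(f^{-1}(c)\cap\mathbb B_\varepsilon)$. Via \textbf{Theorem \ref{fiber type}} and \textbf{Theorem \ref{boquet theorem}}, these sections give a frame of the $\mu$-dimensional space $H^n$.

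\textbf{Step 2: monodromy.} I would use homogeneity. Under the flow $x_i\mapsto t^{w_i}x_i$, the form $\omega_{\bm\alpha}/df$ on $f^{-1}(c)$ pulls back to $t^{\ell(\bm\alpha)-1}\,\omega_{\bm\alpha}/df$ on $f^{-1}(t^{-1}c)$, up to the obvious normalisation. Equivalently, in the Brieskorn lattice,
\begin{displaymath}
	\partial_t t\,[\omega_{\bm\alpha}]=\ell(\bm\alpha)[\omega_{\bm\alpha}].
\end{displaymath}
This comes from $d(\iota_E\omega)=L_E\omega$ together with $df\wedge\iota_E\omega=f\omega$ for top-degree forms. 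Consequently the section $s(\omega_{\bm\alpha})$ is $t^{\ell(\bm\alpha)-1}$ times a multivalued flat section. So it lies in the generalized eigenspace $H^n_\lambda$ with $\lambda=e^{-2\pi i\ell(\bm\alpha)}$, and it has $V$-filtration order exactly $\ell(\bm\alpha)$. In this case the monodromy is semisimple (in fact of finite order, since rotating $t$ is realized by the finite group $\mu_d$ acting diagonally), so no nilpotent part interferes.

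\textbf{Step 3: Hodge level.} It remains to show that the class of $\omega_{\bm\alpha}$ lies in $\mathrm{Gr}_F^p$ with $p=n-\lfloor\ell(\bm\alpha)\rfloor$ when $\ell\notin\mathbb Z$, and that integral $\ell$ correspond to eigenvalue $1$ with $p=n+1-\ell$. I would follow Steenbrink's compactification. Close the affine Milnor fibre $\{f=1\}$ in weighted projective space $\mathbb P(\bm w,1)$ as the quasi-smooth hypersurface $\{f=x_{n+1}^d\}$. The finite cyclic group $\mu_d$ acts on $x_{n+1}$, and each monodromy eigenspace $H^n_\lambda$ becomes a character space of the primitive cohomology of a quasi-smooth weighted hypersurface, or of the complement. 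The Griffiths residue calculus for quasi-smooth weighted hypersurfaces (Dolgachev/Steenbrink) then identifies $\mathrm{Gr}_F$ of the Jacobian-ring graded piece containing $\bm x^{\bm\alpha}$ with pole order $\lceil\ell(\bm\alpha)\rceil$, giving exactly the floor rule above. For eigenvalue $1$, the contribution comes from the cohomology of the complement rather than the hypersurface, and the mixed Hodge structure is pure of the stated type. With Step 1 and Step 2 this yields the count conditions (1) and (2) in the definition of spectrum numbers. As a consistency check, one verifies $0<\ell(\bm\alpha_i)<n+1$ and the symmetry of \textbf{Theorem \ref{symmetry of spectrum}} via the Hessian socle degree $\sum_i(1-2w_i)$.

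I expect Step 3 to be the main obstacle. The difficulty is comparing Steenbrink's limit Hodge filtration on $H^n(X_\infty)$ with the Hodge filtration of the compactified fibre, and justifying the residue/pole-order reduction in the orbifold (weighted) setting. I would first attempt a direct route through Varchenko's characterization: the Hodge filtration equals the filtration by asymptotic order of oscillatory integrals $\int e^{\tau f}\omega$. Homogeneity makes that order exactly $\ell(\bm\alpha)$ by scaling, and the compactification argument would be kept as a fallback.
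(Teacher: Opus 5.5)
The paper does not prove this statement; it imports it from \cite{MR1621831}. Your proposal is therefore necessarily a different route, and it is essentially sound. Steps 1--2 are the standard Brieskorn-lattice computation, and the identity $\partial_t t[\omega_{\bm\alpha}]=\ell(\bm\alpha)[\omega_{\bm\alpha}]$ is correct.

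One justification in Step 1 is misplaced. Theorems \ref{fiber type} and \ref{boquet theorem} only give $\dim H^n=\mu$. They do not show that the Gelfand--Leray classes form a frame. For that you need that $\mathcal H^{(0)}$ is a free $\mathbb C\{t\}$-lattice of rank $\mu$ in the Gauss--Manin connection (Brieskorn, Sebastiani), together with Nakayama. Nakayama applies because your own Step 2 identity gives $t\mathcal H^{(0)}=\partial_t^{-1}\mathcal H^{(0)}$. To see this, rescale the coefficient of each $\omega_{\bm\beta}$ by $\ell(\bm\beta)^{\pm1}$, which is harmless since $\ell(\bm\beta)\geq\sum_i w_i>0$. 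Consequently $\mathcal H^{(0)}/t\mathcal H^{(0)}=\Omega_f^{n+1}$.

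More to the point, Step 3 is unnecessary within this paper's framework. Subsection \ref{Brieskorn Lattice} quotes from \cite{MR954149} that $\mathrm{Sp}(f)$ is the Poincar\'e polynomial of $V^\bullet$ on $\Omega_f^{n+1}$. Write $\ell_i:=\ell(\bm\alpha_i)$. Then $[\omega_{\bm\alpha_i}]\in\ker(\partial_t t-\ell_i)$ and $\partial_t^{-k}[\omega_{\bm\alpha_i}]\in\ker(\partial_t t-\ell_i-k)$. So any element of $V^\beta\mathcal H^{(0)}$ is, modulo $\partial_t^{-1}\mathcal H^{(0)}$, a combination of those $[\omega_{\bm\alpha_i}]$ with $\ell_i\geq\beta$. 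Hence $\dim\mathrm{Gr}_V^\beta\Omega_f^{n+1}=\#\{i:\ell_i=\beta\}$, which is exactly the theorem.

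This is what your ``direct route'' through Varchenko amounts to. Note that his theorem equates Hodge numbers rather than filtrations, but a count needs no more.

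The compactification in $\mathbb P(\bm w,1)$ with weighted Griffiths residues is the classical Steenbrink argument. Its advantage is an explicit description of $F$ on each $H^n_\lambda$. Its cost is exactly the comparison you flag: identifying the limit MHS on $H^n(X_\infty)$ with Deligne's MHS on $f^{-1}(1)$. That comparison is a genuine theorem, not a formality.

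If you pursue that route, one detail needs correcting. For $\ell=\ell(\bm\alpha)\notin\mathbb Z$, the Jacobian-ring element of $f-x_{n+1}^d$ that carries $\bm x^{\bm\alpha}$ is $\bm x^{\bm\alpha}x_{n+1}^{k}$ with $k=d(\lceil\ell\rceil-\ell)-1$, not $\bm x^{\bm\alpha}$ alone.
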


\begin{theorem}[Varchenko, \cite{MR648803}]\label{Varchenko's constant deformation}
	The exponents are constant under a deformation of isolated hypersurface
	singularities with constant Milnor number (under a $\mu$-constant deformation).
\end{theorem}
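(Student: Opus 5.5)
The plan is to follow Varchenko's original strategy: realize the spectrum as a lower semicontinuous, discrete-valued datum attached to a holomorphic family, and then combine semicontinuity in both directions with discreteness.

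\textbf{Setup.} Let $f_s$, $s\in S$ (with $S$ a small connected parameter germ), be a deformation with $\mu(f_s)=\mu$ constant. Since every exponent is rational and lies in $(0,n+1)$, and since $e^{-2\pi i\alpha_j}$ runs over the eigenvalues of the monodromy, it suffices to show two things: that the eigenvalues, with multiplicities, are locally constant, and that for each eigenvalue the Hodge numbers $\sum_q h^{p,q}_\lambda$ are locally constant in $s$.

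\textbf{Step 1: constancy of the Milnor fibration.} By Lê--Ramanujam (for $n\neq 2$), or more generally by the $\mu$-constant hypothesis alone, the Milnor fibrations of the $f_s$ fit together into a locally trivial family over $S$. Concretely, choose a uniform $\varepsilon$ and $\delta$ so that $f_s^{-1}(\mathbb D_\delta\setminus 0)\cap \mathbb B_\varepsilon$ is a fibration over $S\times(\mathbb D_\delta\setminus 0)$. The key point is that no critical points escape into $\partial\mathbb B_\varepsilon$ or split off from $0$; this is exactly what $\mu$-constancy guarantees. It follows that the cohomologies $H^n$ form a local system over $S$ carrying a fiberwise monodromy $h_*$. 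Hence the characteristic polynomial of $h_*$, and therefore the multiset $\{e^{-2\pi i\alpha_j}\}$, is constant.

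\textbf{Step 2: the Hodge filtration varies holomorphically.} Next, express $F^\bullet$ on $H^n_\lambda$ through the Brieskorn lattice and its $V$-filtration. The spectrum numbers are then the indices $\alpha$ at which $\dim\mathrm{Gr}_V^\alpha(\Omega_f^{n+1})$ jumps. In the family, the relative Brieskorn lattice is a locally free module over $\mathcal O_S$, because $\mu$ is constant. Via the relative Gauss--Manin connection, the filtration $F^p$ defines subbundles of the flat bundle $\bigcup_s H^n_\lambda(f_s)$, and their ranks are upper or lower semicontinuous in $s$. The resulting count $\#\{j:\alpha_j(s)\le \beta\}$ is therefore semicontinuous in one direction for each $\beta$, which is Varchenko's semicontinuity.

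\textbf{Step 3: closing the argument by symmetry.} Apply Theorem~\ref{symmetry of spectrum}, $\alpha_i+\alpha_{\mu-i+1}=n+1$, for every $s$. This converts the semicontinuity of the counts $\#\{\alpha_j\le\beta\}$ into the opposite semicontinuity of the counts $\#\{\alpha_j\ge n+1-\beta\}$. Because the total $\mu$ is fixed, the counts are then both upper and lower semicontinuous, hence locally constant. Connectedness of $S$ then gives constancy of the spectrum.

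The main obstacle is Step 2: proving that the Hodge filtration (equivalently, the $V$-filtration on the relative Brieskorn lattice) forms a family with semicontinuous jump dimensions. This requires controlling the regularity of the relative Gauss--Manin system and the nilpotent orbit and limit mixed Hodge structure uniformly in $s$. My first attempt would use the asymptotic expansions of period integrals $\int \omega/df$, in which the exponents appear as powers of $t$. For a holomorphic family of forms $\omega_s$, the leading exponent can only jump up under specialization, which yields the needed semicontinuity directly.
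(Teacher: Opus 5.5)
The paper gives no argument for this statement; it only cites Varchenko. So there is nothing in the paper to compare against, and your proposal has to stand on its own.

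The outer frame is sound, but the gap is in Step~2, which carries all the content. The frame runs as follows. Local triviality of the Milnor fibrations gives a flat bundle with constant monodromy. In all dimensions this local triviality is the L\^e--K.~Saito theorem that $\mu$-constancy implies Thom's $a_f$ condition, hence a uniform Milnor radius; L\^e--Ramanujam is a different, topological statement. Constant monodromy puts all exponents in a fixed finite set. Then semicontinuity in \emph{one fixed direction} for every $\beta$, together with symmetry and fixed $\mu$, would force constancy.

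The fact you propose to use in Step~2 is that the $V$-order of a holomorphic family of sections can only jump up under specialization. That holds for any holomorphic family of lattices in a fixed regular module with its $V$-filtration. But the exponents are not orders of individual sections. The count $\#\{j:\alpha_j(s)\ge\beta\}=\dim V^\beta\Omega_{f_s}$ is the dimension of the image of $V^\beta\cap\mathcal H''_s$ modulo $\partial_t^{-1}\mathcal H''_s$. That is a difference of two quantities, each only upper semicontinuous, and a $V$-adapted basis at generic $s$ can become dependent modulo $\partial_t^{-1}\mathcal H''_{s_0}$.

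Here is a concrete instance. Take $C^1=\mathbb C e_1\oplus\mathbb C e_2$ with $\partial_t t=1$ on it, and let $L_s$ be generated over $\mathbb C\{\{\partial_t^{-1}\}\}$ by $\partial_t^{-1}e_1+se_2$ and $\partial_t^{-1}e_2$. Each $L_s$ is $t$-stable and varies holomorphically. The first generator has order $1$ for $s\neq 0$ and order $2$ at $s=0$, jumping up as you say. Yet the spectrum is $\{1,3\}$ for $s\neq 0$ and $\{2,2\}$ at $s=0$, both symmetric about $2$. So $\#\{\alpha_j\le 1\}$ drops at $s=0$ while $\#\{\alpha_j\le 2\}$ jumps up. There is no uniform direction, and Step~3 cannot start. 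This is of course not a $\mu$-constant family of Brieskorn lattices, but it shows your mechanism cannot by itself exclude such behaviour.

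What is missing is genuinely geometric input specific to $\mu$-constant deformations. One option is M.~Saito's theory of mixed Hodge modules. For the total function $F$, $\phi_F\mathbb Q$ is supported on $\{0\}\times S$ and is a local system there. It therefore underlies an admissible variation of mixed Hodge structure whose fibres are Steenbrink's mixed Hodge structures of the $f_s$, by non-characteristic restriction. Then the $F^p$ really are holomorphic subbundles and the Hodge numbers are constant outright, with no semicontinuity or symmetry needed.

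Another option is Steenbrink's semicontinuity of the spectrum on half-open intervals $(a,a+1]$. For $-1<a\le 0$, apply it to the $n+2$ unit intervals partitioning $(a,a+n+2]\supset(0,n+1)$. Since $\mu$ is fixed, this forces equality on each interval, and hence equal spectra.

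Two smaller points. Your sentence ``$F^p$ defines subbundles'' would already be the theorem, and it contradicts your subsequent talk of semicontinuous ranks. In Step~3, symmetry identifies $\#\{\alpha_j\le\beta\}$ with $\#\{\alpha_j\ge n+1-\beta\}$, which is the same number, not one with opposite semicontinuity. The opposite direction belongs to the complement $\#\{\alpha_j<n+1-\beta\}$. Converting that back to a non-strict count uses the fixed finite set of possible exponents from Step~1.
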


In 2000, Hertling proposed a conjecture that the variance of all spectrum numbers is no greater than a twelfth of their range, as stated \textbf{Conjecture \ref{Hertling Conjecture}}.

%\noindent\textbf{Conjecture \ref{Hertling Conjecture}} (Hertling)\textbf{.} \textit{Let $f\in \mathcal O_{n+1}$ be a germ which defines an isolated singularity at the origin, with Milnor number $\mu$. Suppose $\alpha_1\leq ...\leq \alpha_\mu$ are spectrum number of $f$, then the following inequality holds:}
%\begin{displaymath}
%	\frac{1}{\mu}\sum_{i = 1}^{\mu} (\alpha_i-\frac{n+1}{2})^2 \leq \frac{\alpha_\mu-\alpha_1}{12}.
%\end{displaymath}
%\vspace{0.5em}

So far, there has been some progress in Hertling Conjecture. In 2001, Hertling himself proved the conjecture for quasihomogeneous singularities (\cite{MR1849311}). More than that, the inequality is actually equality in this case. In 2004, Br\'elivet proved the conjecture for irreducible curve singularities and in 2020, they gave a generalized description of the conjecture (\cite{MR4123594}). In 2013, Yau and Zuo proved the conjecture for singularities of modality no greater than $2$ by applying the existing classification (\cite{MR3146513}). We list all the above progress as follows.

\begin{theorem}[\cite{MR1849311}]\label{Hertlin Conjecture for quasihomogeneous singularities}
	\label{3.15}
	Equality in Hertling Conjecture holds for quasihomogeneous singularities. That is, the variance equates to one-twelfth of the range.
\end{theorem}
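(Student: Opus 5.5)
We prove Theorem \ref{3.15}: for a quasihomogeneous $f$ of weight type $\bm w=(w_0,\dots,w_n)$ the spectrum has variance exactly $(\alpha_\mu-\alpha_1)/12$. By Varchenko (Theorem \ref{Varchenko's constant deformation}) and the fact that quasihomogeneous isolated singularities are right equivalent to weighted homogeneous polynomials, we may assume $f$ is a weighted homogeneous polynomial with $0<w_i\le \frac12$. (Weights $w_i=\frac12$ correspond to quadratic variables; by stabilization they shift nothing essential, but we keep them.) The plan is to work with the spectral polynomial supplied by Theorem \ref{Formula for Quasihomogeneous Singularities}. Since the Milnor algebra of a weighted homogeneous germ has Poincar\'e series $\prod_i \frac{1-t^{1-w_i}}{1-t^{w_i}}$ (the partials form a regular sequence of weighted degrees $1-w_i$), we get
\[
\mathrm{Sp}(f)=\prod_{i=0}^n \frac{t^{w_i}-t}{1-t^{w_i}}.
\]
This is a product of $n+1$ factors $S_i(t)=\sum_{k=1}^{m_i}t^{k w_i}$ only when $w_i=1/m_i$. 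In general, $S_i$ is a formal quotient and we must argue with it directly.

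The key idea is that the spectrum, viewed as a multiset of $\mu$ numbers, behaves like a sum of independent random variables under the normalized measure $\frac1\mu\mathrm{Sp}$. We pass to the logarithm: set $t=e^{s}$ and $G(s)=\log\big(\mathrm{Sp}(f)(e^s)/\mu\big)$. Then the mean is $G'(0)$ and the variance is $G''(0)$, which is the second cumulant. Since $G$ is a sum over $i$ of $g_i(s)=\log\frac{e^{w_is}-e^{s}}{1-e^{w_is}}$ minus constants, the variance is additive:
\[
\sigma^2=\sum_{i=0}^n g_i''(0).
\]
Expanding each $g_i$ by writing $\frac{e^{w s}-e^s}{1-e^{ws}}$ through $\log\frac{\sinh((1-w)s/2)}{\sinh(ws/2)}$ plus a linear term, and using $\log\frac{\sinh x}{x}=\frac{x^2}{6}+O(x^4)$, we obtain
\[
g_i''(0)=\frac{(1-w_i)^2-w_i^2}{12}=\frac{1-2w_i}{12}.
\]
Similarly the mean is $\sum_i\frac12=\frac{n+1}{2}$, consistent with symmetry (Theorem \ref{symmetry of spectrum}). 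Hence
\[
\sigma^2=\frac{1}{12}\sum_i(1-2w_i).
\]

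It remains to identify the range. The smallest exponent is $\alpha_1=\sum_i w_i$, coming from the monomial $1$. By symmetry, $\alpha_\mu=n+1-\sum_i w_i$. Therefore
\[
\alpha_\mu-\alpha_1=\sum_i(1-2w_i)=12\sigma^2,
\]
which is the claimed equality.

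The main obstacle is justifying the cumulant calculation when the $S_i$ are not genuine polynomials, since the $g_i$ are only meromorphic factors. I would handle this by noting that $\mathrm{Sp}(f)(e^s)$ is an honest finite exponential sum that equals the product of the analytic functions $\frac{e^{w_is}-e^s}{1-e^{w_is}}$ near $s=0$. Each of these is analytic and nonzero at $0$, with value $\frac{1-w_i}{w_i}$, and the product of these values is $\mu$. So the logarithm is analytic near $s=0$, and differentiating twice termwise is legitimate. I would also need to confirm that $\alpha_1=\sum_i w_i$ is really the minimum, which holds because $1$ is part of every monomial basis and all other basis elements have positive weighted degree.
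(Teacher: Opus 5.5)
Your proof is correct. The paper gives no argument of its own here; it only quotes the result from Hertling \cite{MR1849311}. So your proposal is a self-contained replacement that uses only facts the paper already records: Theorem \ref{Formula for Quasihomogeneous Singularities} and the symmetry Theorem \ref{symmetry of spectrum}.

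The main steps all check out:
\begin{itemize}
\item The identity $\mathrm{Sp}(f)=\prod_i (t^{w_i}-t)/(1-t^{w_i})$ is an identity of rational functions in $u=t^{1/N}$, where $N$ is a common denominator of the $w_i$. So the worry about ``formal quotients'' disappears.
\item After substituting $t=e^s$, each factor equals $e^{s/2}\sinh((1-w_i)s/2)/\sinh(w_is/2)$, which is analytic and nonzero at $s=0$. Differentiating $\log\bigl(\mathrm{Sp}(f)(e^s)/\mu\bigr)$ twice is therefore legitimate.
\item The second cumulant is additive over the factors, and each factor contributes $(1-2w_i)/12$.
\item The range is $n+1-2\sum_i w_i=\sum_i(1-2w_i)$. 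This uses $\alpha_1=\sum_i w_i$, coming from the monomial $1$, together with symmetry.
\end{itemize}

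Compared with the bare citation, your route makes the equality transparent. Both the variance and the range are additive over the variables, and each variable separately satisfies variance $=\frac{1}{12}\cdot$range. For $w_i=1/m_i$ a factor is literally the uniform distribution on $\{k/m_i\}_{1\le k\le m_i-1}$, whose variance is exactly one twelfth of its range. General weights behave the same way at the level of cumulants.

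Two cosmetic remarks. In your side comment, the sum for $w_i=1/m_i$ should run to $m_i-1$, not $m_i$. Also, Varchenko's theorem is unnecessary in the reduction step, because a coordinate change already preserves the spectrum.
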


\begin{theorem}[\cite{MR4123594}] \label{Hertling Conjecture for curve singularities}
	Hertling Conjecture holds for irreducible curve singularities.
\end{theorem}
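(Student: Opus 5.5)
The plan is to reduce everything to explicit arithmetic on the Puiseux data. For an irreducible plane curve germ ($n=1$) the spectrum depends only on the topological type, so it is determined by the characteristic exponents. By Theorem \ref{Varchenko's constant deformation} we may replace $f$ by any $\mu$-constant deformation, and we take a representative built from the characteristic pairs $(n_1,w_1),\dots,(n_g,w_g)$. We then use M.~Saito's closed formula for the spectrum of an irreducible plane curve. It expresses the part of $\mathrm{Sp}(f)$ in $(0,1)$ as a union of $g$ blocks. The $i$-th block consists of the numbers
\begin{displaymath}
\frac{1}{e_i}\Big(\frac{a}{n_i}+\frac{b}{w_i}+j\Big),\qquad e_i:=n_{i+1}\cdots n_g,
\end{displaymath}
for $a,b\ge 1$ with $\frac{a}{n_i}+\frac{b}{w_i}<1$ and $0\le j<e_i$. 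The part in $(1,2)$ is obtained by the symmetry of Theorem \ref{symmetry of spectrum}. One checks that there is no exponent exactly $1$ in the irreducible case.

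Since the spectrum is symmetric about $1=\frac{n+1}{2}$, the inequality of Conjecture \ref{Hertling Conjecture} becomes
\begin{displaymath}
\frac{2}{\mu}\sum_{\alpha_j<1}(1-\alpha_j)^2\le \frac{1-\alpha_1}{6}.
\end{displaymath}
Here $\alpha_1=\frac{1}{n_1}+\frac{1}{w_1}$ is the smallest element, coming from the first block with $j=0$, $a=b=1$. I would argue by induction on $g$. The case $g=1$ is the quasihomogeneous germ $x^{n_1}+y^{w_1}$, where Theorem \ref{Hertlin Conjecture for quasihomogeneous singularities} gives equality. For the inductive step, pass from the curve $f'$ with pairs $(n_1,w_1),\dots,(n_{g-1},w_{g-1})$ to $f$. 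The blocks $1,\dots,g-1$ of $f$ are obtained from the spectrum of $f'$ below $1$ by the affine maps $\alpha\mapsto(\alpha+j)/n_g$, $0\le j<n_g$. The last block is a copy of the spectrum of $x^{n_g}+y^{w_g}$, with $e_g=1$. Both variances are computable from the lower-level data by the elementary identity for the second moment of a union of shifted, rescaled copies. The counts are controlled by $\mu=n_g\mu'+(n_g-1)(w_g-1)$.

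The key steps, in order, are the following. First, compute $S(f):=\sum_{\alpha_j<1}(1-\alpha_j)^2$ and $N(f):=\#\{\alpha_j<1\}=\mu/2$ recursively: $S(f)$ equals $\frac{1}{n_g^2}\sum_{j}\sum_{\alpha'}(n_g-j-\alpha')^2$ plus the quasihomogeneous contribution $S(x^{n_g}+y^{w_g})$, the latter given exactly by Hertling's equality. Second, show that the range does not increase: $\alpha_1(f)\le$ every rescaled minimum, so the right-hand side $\frac{1-\alpha_1}{6}$ is governed by the first pair. Third, feed the inductive hypothesis $S(f')\le \frac{\mu'}{12}(1-\alpha_1(f'))$ into the recursion, and reduce to a polynomial inequality in $n_g,w_g,\mu'$ and $\alpha_1$. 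This inequality should follow from the Zariski/semigroup constraint $w_g>n_g n_{g-1}w_{g-1}$ (in the appropriate normalization).

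The main obstacle is the third step. The shifted copies $(\alpha'+j)/n_g$ spread fairly uniformly over $(0,1)$ and contribute variance close to that of a uniform distribution, which is $\frac{1}{12}$ of the range $1$ for one side. Meanwhile the right-hand side is only $\frac{1-\alpha_1}{12}\cdot 2$, so the margin is thin. I would first try to use the characteristic-pair inequality to show that the new last block, whose exponents cluster near $1$ because $w_g$ is large, contributes strictly less than its share $\frac{(n_g-1)(w_g-1)}{12}(1-\alpha_1)$. This surplus should compensate the slight excess coming from the uniform-looking shifted copies. If a direct comparison fails, the fallback is to bound each block against a continuous uniform model and estimate the discretization error explicitly, as in the Euler--Maclaurin type estimates for lattice points under the segment $\frac{a}{n}+\frac{b}{w}<1$.
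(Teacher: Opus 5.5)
The paper gives no proof of this statement; it only quotes Br\'elivet's result, so your plan has to stand on its own. Your set-up is sound. Saito's formula, the splitting of $\mathrm{Sp}(f)\cap(0,1)$ into the $n_g$ rescaled copies $(\alpha'+j)/n_g$ of $\mathrm{Sp}(f')\cap(0,1)$ plus the lower half of $\mathrm{Sp}(x^{n_g}+y^{w_g})$, the relation $\mu=n_g\mu'+(n_g-1)(w_g-1)$ and $\alpha_1(f)=\alpha_1(f')/n_g$ are all correct. One slip: by your own formula, $\alpha_1=\frac{1}{e_1}(\frac1{n_1}+\frac1{w_1})$, not $\frac1{n_1}+\frac1{w_1}$, when $g\ge 2$.

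The gap is your third step, which carries the whole difficulty and cannot be carried out as you state it. Put $n=n_g$, $w=w_g$, $S(h)=\sum_{\alpha<1}(1-\alpha)^2$ and $B(h)=\sum_{\alpha<1}(1-\alpha)$. Expanding your recursion gives
\[
S(f)=\frac{\mu'(n-1)(2n-1)}{12n}+\frac{n-1}{n}\,B(f')+\frac{S(f')}{n}+\frac{(n-1)(w-1)}{12}\Bigl(1-\frac1n-\frac1w\Bigr).
\]
So the first moment $B(f')$ enters with a positive coefficient. It is not a function of $\mu'$ and $\alpha_1'$, so there is no closed inequality in $n_g,w_g,\mu',\alpha_1$. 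After inserting $12S(f')\le\mu'(1-\alpha_1')$, the inductive step is equivalent to
\[
\mu'(n-2+\alpha_1')+12B(f')\le (w-1)\Bigl(1-\alpha_1'+\frac{n}{w}\Bigr).
\]
This inequality is sharp to leading order. For $g=2$ one has $12B(f')=\mu'(2-\alpha_1'-\frac{1}{n_1w_1})$ and $\mu'=n_1w_1(1-\alpha_1')+1$. At the admissible value $w=nn_1w_1+1$, both sides are about $nn_1w_1(1-\alpha_1')$, yet they differ only by $\frac{\mu'}{n_1w_1}-\frac{n}{w}<1$.

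Hence the step cannot be closed from $\mu'$, $\alpha_1'$, the Hertling bound for $f'$ and the last semigroup inequality alone. The best bound these give, $B(f')\le\sqrt{N(f')S(f')}$, already fails for $f'=x^3+y^4$, $n=2$, $w=25$: it overestimates $12B(f')$ by about $1.49$, while the true slack is $0.42$. For the same reason, your fallback of a continuous model with an error of order one is too coarse.

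Your heuristic is also miscalibrated. On the shifted copies, the mean of $(1-\alpha)^2$ is close to $\frac13$ for large $n_g$, about twice the allowed $\frac{1-\alpha_1}{6}$. The copies overshoot badly, and the last block compensates them almost exactly, not with room to spare.

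What is missing is exact bookkeeping inside a strengthened induction. The first moment satisfies
\[
B(f)=\frac{(n-1)\mu'}{4}+B(f')+\frac{(n-1)(w-1)(2nw-n-w-1)}{12nw},
\]
where the last term is the exact first moment of the lower half of $\mathrm{Sp}(x^n+y^w)$. No Dedekind sums appear, because the fractional parts $\{wa/n\}$, $1\le a<n$, run through the values $k/n$. Then carry along two auxiliary quantities:
\[
P(h)=n_{\mathrm{last}}w_{\mathrm{last}}(1-\alpha_1(h))+1-\mu(h),\qquad Q(h)=\mu(h)(2-\alpha_1(h))-12B(h).
\]
For $x^{n_1}+y^{w_1}$ these equal $0$ and $\frac{(n_1-1)(w_1-1)}{n_1w_1}$ respectively. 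Using $w\ge nn_{g-1}w_{g-1}+1$, one finds $P(f)\ge nP(f')+1-\alpha_1'\ge 0$ and $Q(f)\ge Q(f')+(n-1)P(f')$. The right side minus the left side of the reduced inequality is then at least $nP(f')+Q(f')-\frac{n}{w}>0$, because $\frac{n}{w}<\frac{1}{n_{g-1}w_{g-1}}\le\frac{1}{n_1w_1}\le Q(f')$.

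So the semigroup inequality is needed at every level, through $P$ and $Q$, and not only at the last one. With these auxiliary inequalities your induction closes; without them it does not.
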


\begin{theorem}[\cite{MR3146513}]
	Hertling Conjecture holds for singularities of modality $\leq 2$.
\end{theorem}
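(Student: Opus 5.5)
The statement is a citation of Yau--Zuo \cite{MR3146513}, and I would reproduce their argument: use the complete classification of singularities of modality $\leq 2$ and check the inequality family by family. First, reduce to normal forms. Spectrum numbers are invariant under right equivalence and, by \textbf{Theorem \ref{Varchenko's constant deformation}}, under $\mu$-constant deformation. Stabilization $f\mapsto f+x_{n+1}^2$ shifts every exponent by $\frac12$. It therefore shifts $\frac{n+1}{2}$ by $\frac12$ as well and leaves both the variance and the range unchanged. So it suffices to take one representative per $\mu$-constant stratum in the lists of Arnol'd (simple, unimodal and bimodal singularities), with the minimal number of variables.

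For the quasihomogeneous cases (simple singularities, the exceptional unimodal and bimodal families, and the quasihomogeneous members of the series), \textbf{Theorem \ref{Hertlin Conjecture for quasihomogeneous singularities}} gives equality, so nothing is left to check. The remaining cases are the non-quasihomogeneous series: $T_{p,q,r}$, the unimodal series, and the bimodal series $J_{3,p}, Z_{1,p}, W_{1,p}, W^\sharp_{1,2q-1}, Q_{2,p}, S_{1,p}, U_{1,2q}$, and so on. These normal forms are Newton nondegenerate. I would compute their spectra with M.~Saito's Newton filtration theorem. Concretely, take a monomial basis of $\Omega_f^{n+1}$ and assign to each $\bm x^{\bm\alpha}d\bm x$ the Newton degree of $\bm\alpha+\bm e$. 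The result has a standard shape: the spectrum of the quasihomogeneous "base" singularity of the series, plus an arithmetic block such as $\{1+\frac{k}{p}\}$ or $\{a+\frac{k}{p}\}_{k}$ whose length grows with the series index $p$, symmetrised via \textbf{Theorem \ref{symmetry of spectrum}}.

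With explicit spectra in hand, the inequality for each series becomes an explicit rational-function inequality in the index $p$ (or in $p,q,r$ for $T_{p,q,r}$). The left side is a sum of squares computed with $\sum k$ and $\sum k^2$ formulas. The right side is $(\alpha_\mu-\alpha_1)/12$, where $\alpha_1$ is fixed and $\alpha_\mu = n+1-\alpha_1$. So the range is constant along the series while $\mu$ grows linearly in $p$. I expect the variance to tend to a limit strictly below $\frac{\alpha_\mu-\alpha_1}{12}$, so that clearing denominators leaves a polynomial inequality in $p$ that holds for large $p$. For example, for $T_{p,q,r}$ the range is $1$, and the inequality reduces to checking that the contribution of the blocks $\{1+\frac kp\}$ keeps the variance at most $\frac1{12}$. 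Finitely many small-index cases would then be checked directly.

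The main obstacle is bookkeeping rather than conceptual. One must get the correct spectrum for every series, including the subtle ones with two parities (e.g.\ $W^\sharp$, $U_{1,2q}$ vs.\ $U_{1,2q-1}$). One must also verify the resulting one-variable inequalities uniformly in the index. For that verification I would write each inequality as $P(p)\geq 0$ with $P$ an explicit polynomial, show that its leading coefficient is positive, and bound its roots.
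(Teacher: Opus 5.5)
Your overall route is the one the cited work takes: classification, one normal form per $\mu$-constant stratum, Hertling's equality for the (semi-)quasihomogeneous strata, and explicit verification for the series. But your assertion that all non-quasihomogeneous normal forms are Newton nondegenerate is false, and your method breaks exactly there. Take $W^\sharp_{1,2q-1}\colon (x^2+y^3)^2+axy^{4+q}$ and $W^\sharp_{1,2q}\colon (x^2+y^3)^2+ax^2y^{3+q}$. For both, the Newton boundary is the single edge from $(4,0)$ to $(0,6)$, with face polynomial $(x^2+y^3)^2$. Being a square, it has critical points on the torus; the paper lists these as Newton degenerate in Remark~\ref{a extra remark}. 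So M.~Saito's Newton-filtration theorem gives you no spectrum for either $W^\sharp$ family. Since your verification of the series rests on explicit spectra, these cases are not covered. This is not a parity-bookkeeping issue. Nor can you fix it by changing coordinates or deforming within the $\mu$-constant stratum. Both preserve the topological type of a plane curve, and $W^\sharp_{1,2q-1}$ is an irreducible curve with two Puiseux pairs, whereas an irreducible Newton nondegenerate plane curve has only one.

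The gap can be closed, but it needs extra input. The Puiseux expansion $x=\pm i\,y^{3/2}+c\,y^{(2q+5)/4}+\cdots$ shows that $W^\sharp_{1,2q-1}$ is irreducible with characteristic $(4;6,2q+5)$. Its semigroup is $\langle 4,6,2q+11\rangle$, giving $\mu=14+2q$ as it should. Hence Br\'elivet's Theorem~\ref{Hertling Conjecture for curve singularities} applies to it directly. That shortcut is unavailable for $W^\sharp_{1,2q}$, which has two cusp branches; for $q=1$, for instance, $(x^2+y^3)^2+ax^2y^4=(x^2+y^3-\sqrt{-a}\,xy^2)(x^2+y^3+\sqrt{-a}\,xy^2)$. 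For this family you must obtain the spectrum another way, for example from an embedded resolution or from the existing tables of spectra for modality $\leq 2$, before running your inequality in $q$.

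A smaller point concerns your computational step. The spectrum is the multiset of Newton degrees of $\bm\alpha+\bm e$ only for a monomial basis adapted to the Newton filtration (an admissible or maximal basis in the sense of Subsection~\ref{Newton Filtration}). An arbitrary monomial basis can give wrong exponents, as the exchange of $y^5$ for $x^{r+5}$ in the paper's $NA_{r,0}$ computation illustrates.
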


Hertling's theorem for quasihomogeneous singularities can be slightly generalized. We note that a semi-quasihomogeneous singularities (see \cite{MR2290112}) is actually the ending of $\mu$-constant deformation. In fact, suppose $f = f_{\bm w}+g$ defines a semi-quasihomogeneous singularity, where ${\bm w}\in \mathbb N_{>0}^{n+1}$ is a weight and $f_{\bm w}$ is the principal part of $f$. Consider $f(\bm x,t) = t^{-d}f(t^{w_0}x_0,...,t^{w_n}x_n)$. It is a deformation of $f$. Here $d$ is the integer such that $f_{\bm w}(t^{w_0}x_0,...,t^{w_n}x_n) = t^d f_{\bm w}$. All $f(\bm x,t)$ are semi-quasihomogeneous of weight $w$ with principal part $f_{\bm w}$ and hence by \cite{MR2290112}, they share the same Milnor number. So $f(\bm x,t)$ is a $\mu$-constant deformation. Since $f(\bm x,1) = f$ and $f(\bm x,0) = f_{\bm w}$, by \textbf{Theorem \ref{Varchenko's constant deformation}}, the spectrum of $f$ and $f_{\bm w}$ coincide. In particular, Hertling Conjecture holds for $f$.

\begin{corollary}
	Equality in Hertling Conjecture holds for semi-quasihomogeneous singularities. That is, the variance equates to one-twelfth of the range.
\end{corollary}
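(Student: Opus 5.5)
The plan is to reduce the statement to the quasihomogeneous case via a $\mu$-constant deformation and then apply Varchenko's theorem. Write $f = f_{\bm w} + g$, where $f_{\bm w}$ is the principal part with respect to a weight $\bm w$, and every monomial of $g$ has weighted degree strictly greater than $d$.

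\textbf{Step 1: a weighted rescaling family.} Clearing denominators, I may assume $\bm w \in \mathbb N_{>0}^{n+1}$ and $d\in\mathbb N$. For $t\neq 0$ set
\[
f_t(\bm x) := t^{-d} f(t^{w_0}x_0,\dots,t^{w_n}x_n).
\]
Then $f_t = f_{\bm w} + \sum_{\bm v} a_{\bm v} t^{\bm w\cdot\bm v - d}\bm x^{\bm v}$, where every exponent $\bm w\cdot \bm v - d$ is a positive integer. Hence the family extends holomorphically to $t=0$ with $f_0 = f_{\bm w}$, and $f_1 = f$. For every $t$, the principal part of $f_t$ with respect to $\bm w$ is $f_{\bm w}$, which defines an isolated singularity. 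So each $f_t$ is semi-quasihomogeneous, and by the theorem quoted from \cite{MR2290112} (Corollary 2.18) we get $\mu(f_t) = \mu(f_{\bm w})$ for all $t$.

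\textbf{Step 2: transfer the spectrum.} The family $f_t$ is a $\mu$-constant deformation, so \textbf{Theorem \ref{Varchenko's constant deformation}} gives $\mathrm{Sp}(f) = \mathrm{Sp}(f_{\bm w})$. Both sides of Hertling's inequality (the mean $(n+1)/2$, the variance and the range) depend only on the spectrum and on $n$. It therefore suffices to treat $f_{\bm w}$. This is a weighted homogeneous polynomial with an isolated singularity, and \textbf{Theorem \ref{Hertlin Conjecture for quasihomogeneous singularities}} gives equality there.

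\textbf{Main obstacle.} The only delicate point is Step 1's claim that the family is a genuine holomorphic $\mu$-constant deformation of isolated singularities in a uniform Milnor ball. This is what Varchenko's theorem requires, and pointwise constancy of $\mu$ is not enough on its own. I would handle it as follows. Each $f_t$ for $t\neq0$ is right equivalent to $f$ through the coordinate scaling, so only the special fibre $t=0$ is at issue. Constancy of $\mu$ in a holomorphic family of isolated singularities then yields upper semicontinuity-based uniformity, i.e.\ no splitting of critical points in a fixed small ball. Alternatively, one can truncate $g$ at high weighted degree, which is harmless by the finite determinacy corollary \textbf{Corollary \ref{Corollary of Finite Determinacy Theorem}}, so that the family becomes polynomial in $(t,\bm x)$.
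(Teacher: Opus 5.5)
Your proposal is correct and follows the same route as the paper. The weighted rescaling $t^{-d}f(t^{w_0}x_0,\dots,t^{w_n}x_n)$ is a $\mu$-constant deformation from $f$ to $f_{\bm w}$, Varchenko's theorem transfers the spectrum, and Hertling's equality for quasihomogeneous singularities finishes. Your extra care about a uniform Milnor ball addresses a point the paper passes over silently; the only small slip is that for $t\neq 0$ the germ $f_t$ is right equivalent to $t^{-d}f$ rather than to $f$, which is harmless because multiplying by a nonzero constant does not change the spectrum.
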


In this paper, we would like to push the conjecture a step forward, proving its correctness for singularities of modality $3$ (see \textbf{Theorem \ref{main theorem A}}). 

\subsection{Hodge Ideal and Tjurina Spectrum}
\label{Hodge Ideal and Tjurina Spectrum}

In \cite{MR3959075} and \cite{MR4089396}, the authors extend the notion of Hodge ideals to the case when \(D\) is an arbitrary effective \(\mathbb{Q}\)-divisor on $X$. Hodge ideals \(\left\{I_{k}(D)\right\}_{k \in \mathbb{N}}\) are defined in terms of the Hodge filtration \(F_{\bullet}\) on some \(\mathscr{D}_{X}\)-module associated with \(D\). When \(D\) is an integral and reduced divisor, this recovers the original definition of Hodge ideals \(I_{k}(D)\) in \cite{MR4044463}.

Let $X$ be a smooth complex variety,  and \(\mathscr{D}_{X}\) be the sheaf of differential operators on $X$.  
If $ H $ is an integral and reduced effective divisor on \(X\),  $ D = \alpha H, \alpha \in \mathbb{Q} \cap ( 0, 1], $
let \(\mathcal{O}_{X}(* D)\) be the sheaf of rational functions with poles along \(D\).  
{It } is also a left \(\mathscr{D}_{X}\)-module underlying the mixed Hodge module \(j_{*} \mathbb{Q}_{U}^{H}[n]\), 
where \(U=X \backslash D\) and \(j: U \hookrightarrow X\) is the inclusion map. Any \(\mathscr{D}_{X}\)-module associated {with} a mixed Hodge module has a good filtration \(F_{\bullet}\), the Hodge filtration of the mixed Hodge module \cite{MR1047415}.

{In order to } study the Hodge filtration of \(\mathcal{O}_{X}(* D)\), it seems easier to consider a series of ideal sheaves, defined by Musta\c{t}\v{a} and Popa \cite{MR4044463}, which can be considered to be a generalization of multiplier ideals of divisors. The Hodge ideals \(\left\{I_{k}(D)\right\}_{k \in \mathbb{N}}\) of the divisor \(D\) are defined by:
$$
F_k {\mathcal{O}}_X(*D) = I_k (D) \otimes {\mathcal{O}}_X \bigl( (k+1)D\bigr), \quad \text{for~all }  k \in \mathbb{N}.
$$
These are coherent sheaves of ideals. See \cite{MR4044463} for details {\color{black}and an extensive study} of the ideals $I_k (D)$. Hodge ideals are indexed by the non-negative integers;  at the 0-th step, they essentially coincide with multiplier ideals.
It turns out that \(I_{0}(D)=\mathcal{J}((1-\epsilon) D)\), the multiplier ideal of the divisor \((1-\epsilon) D,\;  0<\epsilon \ll 1 .\)  The multiplier ideal sheaves are ubiquitous objects in birational geometry, encoding local numerical invariants of singularities, and satisfying Kodaira-type vanishing theorems in the global setting. The Hodge ideals are interesting invariants of the singularities, they have similar properties as multiplier ideals. The reivew below follows the notations in \cite{MR4448602}.

We assume that $(X,0)$ is $(\mathbb{C}^{n+1},0)$, $Z$ is the local divisor defined by $f \in \mathbb C\{\bm x\}$, which has an isolated singularity at the origin. 
For $ \alpha \in \mathbb{Q} \cap (0, 1]$, let \(I_{k}(\alpha Z) \subset \mathbb{C}\{\bm x\}\) be the Hodge ideals for \( k \in \mathbb{N}\). 
Since \(f\) has an isolated singularity at 0 and the \(I_{k}(\alpha Z)\) are coherent, these are \(\mathfrak{m}_{X, 0}\)-primary ideals, 
that is, \(I_{k}(\alpha Z) \supset \mathfrak{m}_{X, 0}^{p}\) for some \(p \in \mathbb{Z}_{>0}\) (depending on \(k\) and $ \alpha$) 
with \(\mathfrak{m}_{X, 0} \subset \mathbb{C}\{\bm x\}\) the maximal ideal, and \(\mathbb{C}\{\bm x\} / I_{k}(\alpha Z)\) is a finite-dimensional $\mathbb C$-vector space.  
Equivalently,  {the Hodge ideal $I_k(\alpha Z)$ is Artinian}.
{The} sequence of ideals $I_k(\alpha Z)$ are refined invariant of singularities than the multiplier ideal $I_0(\alpha Z)$ alone (cf. \cite{MR3966789}, Remark 17.13). 
The Hodge spectrum $ \text{Sp}_f ^\text{HI} (t)$ is defined as the Poincar\'{e} series of the finite dimensional filtered vector space 
\begin{equation*}
	( \Omega_f ^n, V_{HI}) \simeq ( \mathbb{C}\{\bm x\} / ( J(f), V_{HI}),
\end{equation*} 
with $ V_{HI}$ defined by $ V_{HI} ^\beta ( \mathbb{C} \{\bm x\}/ J(f)) := \sum_{ \alpha + p \geq \beta} I_p (\alpha Z) \mod J( f)$. The Hodge spectrum is defined to be $ \mathrm{Sp}_f ^\mathrm{HI} (t) = \sum_{ i = 1 } ^{\mu_f} t^{\alpha_{f, i} ^\mathrm{HI}} $
with $ \# \{ i \mid \alpha_{f, i} ^\mathrm{HI} = \beta \} = \dim_\mathbb{C} \mathrm{Gr}_\mathrm{V_{HI}} ^\beta ( \mathbb{ C} \{\bm x\} /J (f))$. Here for a decreasingly filered vector space $G^\bullet V$, $\mathrm{Gr}^\beta V := V_\beta / \bigcup_{\gamma > \beta} G_\gamma$. Here the $ \alpha_{f, i} ^\mathrm{HI}$ are assumed weakly increasing.

The above definition of Hodge ideal spectrum differs from the original one in \cite{jung2018hodge}, where\\ $ V_\mathrm{HI} ^\beta ( \mathbb{C}\{ \bm x\} / J(f))$ was defined by $ I_p ( \alpha Z) \mod J(f)$
for $ \alpha + p = \beta $ with $ \alpha \in ( 0, 1], p \in \mathbb{ N}$ without taking the above summation. 
However, the Hodge ideals $ I_p ( \alpha Z) \mod J(f)$ are not necessarily weakly decreasing for $ \alpha \in (0, 1]$ (with $ p \geq 1$ fixed), see Example 4.2 in \cite{MR4448602}. The fact that $I_{p}(\alpha Z)$ is not necessarily a filtration was also observed in \cite{MR3959075} and in \cite{MR4456022}.

We can define also the Tjurina spectrum $ \mathrm{Sp}_f ^\mathrm{Tj} (t) $ by 
\begin{equation*}
	\mathrm{Sp}_f ^\mathrm{Tj} (t) = \sum_{ i =1} ^{ \tau_f} t^{\alpha_{f, i} ^\mathrm{Tj}} 
\end{equation*}
with $ \#  \{ i \mid \alpha_{ f,i } ^\mathrm{Tj} = \beta\} = \dim_\mathbb{C} \mathrm{Gr}_{V_\mathrm{HI}} ^\beta ( \mathbb{ C} \{ \bm x\} / ( J(f), f)),$
where $ \tau_f $ is the Tjurina number of $ f$, and the $ \alpha_{f, i} ^\mathrm{Tj}$ are assumed weakly increasing. This gives a link among $ \mathrm{Sp}_f (t), \mathrm{Sp}^{\mathrm{Tj}}$ and $ \mathrm{Sp}_f ^\mathrm{HI} (t). $
Indeed, there is a subset $ I \subseteq \{ 1, \ldots, \mu_f\}$ such that $ | I| = \tau_f $ and 
\begin{equation*}
	\mathrm{Sp}^\mathrm{Tj} (t) = \sum_{ i \in I}  t^{ \alpha_{f, i} ^\mathrm{HI}}, \qquad \mathrm{Sp}_f ^\mathrm{HI} (t ) - \mathrm{Sp}_f ^\mathrm{Tj} (t) = \sum_{ i \notin I} t^{ \alpha_{f, i} ^\mathrm{HI}}.
\end{equation*}
We will prove this fact in the next subsection. In this paper, we write $\mathrm{Sp}^{\tau}$ instead  of $\mathrm{Sp}^{\mathrm{Tj}}$ to mean the Tjurina spectrum. We adopt this notation in hope of avoiding too many letters appearing in a formula.

So far, the geometric meaning of the Tjurina spectrum is still mysterious. So it is worthwhile to do some systematical computation about this new object.

\subsection{Brieskorn Lattice and $V$-filtration}\label{Brieskorn Lattice}

In this subsection, we review an equivalent definition for the spectrum, given by Brieskorn lattice. This alternative definition can be used to relate the specturm and Tjurina spectrum. Let $\Omega = \Omega_{\mathbb C^{n+1},\bm 0} = \bigoplus_{j=0}^n\mathcal O_{n+1} dx_j$ be the local K\"ahler differential and $\Omega^k = \bigwedge^k \Omega$ be the module of local $k$-forms. As in \cite{MR954149}, Brieskorn lattice $\mathcal H^{(0)}$ of isolated hypersurface singularity $(V(f),\bm 0)$ is the quotient space of $\Omega^{n+1} = \mathcal O_{n+1} dx_0\wedge...\wedge dx_n$, modulo subspace $df \wedge d\Omega^{n-1}$. There is a natrual action named $\partial_t^{-1}$ on $\mathcal H^{(0)}$ defined by
\begin{displaymath}
	\partial_t^{-1} w = df\wedge \varpi,
\end{displaymath}  
where $w \in \Omega^{n+1}$ and $\varpi \in \Omega^n$ such that $d\varpi = w$. Besides, we have another action $t$ on $\mathcal H^{(0)}$ by $tw = fw$. One can easily check $[t,\partial_t^{-1}] = \partial_t^{-2}$. $\mathcal H^{(0)}$ is a free $\mathbb C\{\{\partial_t^{-1}\}\}$-module of rank $\mu = \mu(f)$ (see \cite{MR0790119} or \cite{MR553954}), where
\begin{displaymath}
	\mathbb C\{\{\partial_t^{-1}\}\} = \{\sum_{j=0}^{\infty} c_i \partial_t^{-i} \mid \sum_{j=0}^\infty \vert c_j\vert\varepsilon^j/j! < \infty,\text{ for some } \varepsilon>0\}.
\end{displaymath}
Let $\mathcal H$ be the localization of $\mathcal H^{(0)}$ by $\partial_t^{-1}$. It is called the Gau\ss-Manin system. $\mathcal H$ is a holonomic and regular $\mathcal D_{\mathbb C^{n+1},\bm 0}$-module and admits a natural $V$-filtration as below 
\begin{align*}
	H^{\alpha} & := \{u\in \mathcal H \mid (\partial_tt-\alpha)^m u = 0,\text{ for some }m >0\},\alpha\in \mathbb Q;\\
	V^{\alpha}\mathcal H & :=  \sum_{\beta \geq \alpha} \mathbb C\{\{\partial_t^{-1}\}\} H^{\beta}.
\end{align*}
$V^{\bullet}$ is a decreasing filtration with $\bigcap_{\alpha\in \mathbb Q} V^{\alpha} = 0$ and $\bigcup_{\alpha\in \mathbb Q} V^{\alpha}\mathcal H = \mathcal H$ (see \cite{MR954149}). 

Since $[t,\partial_t^{-1}] = \partial^{-2}_t$, we have $[\partial_t,t] = 1$ and hence $t(\partial_tt-\alpha)^m = (\partial_tt-\alpha-1)^mt$ and $\partial_t(\partial_tt-\alpha)^m = (\partial_tt-\alpha-1)^m \partial_t$. Therefore, $t H^{\alpha+1} \subseteq H^\alpha$ and $\partial_t H^{\alpha+1} \subseteq H^\alpha$. The latter is an isomorphism since $\partial_t$ is invertible in $\mathbb C\{\{\partial_t^{-1}\}\}[\partial_t]$. 

The filtration $V^{\bullet}$ can be induced to $\mathcal H^{(0)}$ and $\Omega_f^{n+1} := \Omega^{n+1}/df \wedge \Omega^n$ through canonical maps $\mathcal H^{(0)} \hookrightarrow \mathcal H$ and $\mathcal H^{(0)} \twoheadrightarrow \Omega^{n+1}_{f}$. The induced filtration is also called $V^{\bullet}$. Since $tH^{\beta} \subset H^{\beta+1}$, we have $f\cdot V^{\alpha}\mathcal H \subseteq \mathcal H^{\alpha+1}$ and hence 
\begin{displaymath}
	f\Omega_{f}^{n+1} \subseteq V^{\alpha+1}\Omega_{f}^{n+1}. \tag{2.1}
\end{displaymath}
By \cite{MR954149}, the spectrum $(V(f),\bm 0)$ coincides with the Poincar\'e polynomial of the filtration $V^{\bullet}$ on $\Omega_f$ i.e. 
\begin{displaymath}
	\mathrm{Sp}(f) = \sum_{\alpha\in \mathbb Q} \dim \mathrm{Gr}^\alpha_V(\Omega_f^{n+1}).
\end{displaymath}

The following theorems together show how Tjurina spectrum is derived from $V^\bullet\Omega_f^{n+1}$.

\begin{theorem}[\cite{MR4089396}]
	The Hodge ideal $I_k(\alpha Z)$ is equal to $\tilde V^{\alpha+k} \mathcal O_{n+1}$ if modulo $(f)$ i.e. 
	\begin{displaymath}
		I_k(\alpha Z) +(f) = \tilde V^{\alpha+k} \mathcal O_{n+1}+(f),
	\end{displaymath}
	whence $Z(f) = \mathrm{div}(f)$ is the divisor defined by $f$ and $\tilde V^\bullet$ is the microlocal $V$-fitration (for definition, also see \cite{MR4089396}).
\end{theorem}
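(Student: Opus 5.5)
The plan is to work on the graph embedding $i_f : X \hookrightarrow X\times\mathbb C$, $x\mapsto (x,f(x))$. There we compare three objects: the twisted module $\mathcal O_X(*Z)f^{-\alpha}$, the $\mathscr D$-module $\mathcal B_f = \mathcal O_X[\partial_t]\delta(t-f)$ with its Kashiwara--Malgrange filtration $V^\bullet$, and the microlocalization $\tilde{\mathcal B}_f = \mathcal O_X[\partial_t,\partial_t^{-1}]\delta(t-f)$ with its filtration $\tilde V^\bullet$. The microlocal $V$-filtration on $\mathcal O_{n+1}$ is then $\tilde V^\beta\mathcal O_{n+1}=\{g : g\delta \in \tilde V^\beta\tilde{\mathcal B}_f\}$. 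Since $\partial_t^{-1}$ shifts $\tilde V$ by $1$, we have $g\in \tilde V^{\alpha+k}$ if and only if $g\partial_t^k\delta\in \tilde V^{\alpha}\tilde{\mathcal B}_f$.

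\textbf{Step 1: formula for $I_k(\alpha Z)$.} Starting from the Hodge filtration on $\mathcal O_X(*Z)f^{-\alpha}$, which underlies a mixed Hodge module obtained from $\mathcal B_f$ by nearby-cycle type constructions, I would derive the standard description
\begin{displaymath}
	I_k(\alpha Z)=\Bigl\{\sum_{j=0}^k Q_j(\alpha)\, f^{k-j}u_j \;\Bigm|\; \sum_{j=0}^k u_j\partial_t^j\delta\in V^\alpha\mathcal B_f\Bigr\},
\end{displaymath}
with explicit polynomials $Q_j$ and $Q_k(\alpha)=\prod_{i=0}^{k-1}(\alpha+i)$. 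Here one uses $F_p\mathcal B_f=\sum_{j\le p}\mathcal O_X\partial_t^j\delta$, the strictness of $F$ with respect to $V$ (part of the mixed Hodge module structure), and the identification of $\mathcal O_X(*Z)f^{-\alpha}$ with a quotient $\mathrm{Gr}$-piece of $V^\bullet\mathcal B_f$ for $\alpha\in(0,1]$.

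\textbf{Step 2: reduce modulo $f$.} Every term with $j<k$ carries a factor $f^{k-j}$. Hence modulo $(f)$ only the term $Q_k(\alpha)u_k$ survives, and $Q_k(\alpha)\neq 0$ because $\alpha>0$. Therefore
\begin{displaymath}
	I_k(\alpha Z)+(f)=\bigl\{u_k : \exists\, u_0,\dots,u_{k-1} \text{ with } \textstyle\sum_j u_j\partial_t^j\delta\in V^\alpha\mathcal B_f\bigr\}+(f).
\end{displaymath}

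\textbf{Step 3: compare $V$ with $\tilde V$.} This is where I expect the main obstacle. Two inclusions are needed.
\begin{itemize}
	\item[$\subseteq$] If $\sum_j u_j\partial_t^j\delta\in V^\alpha\mathcal B_f\subset \tilde V^\alpha\tilde{\mathcal B}_f$, then I must show $u_k\partial_t^k\delta\in\tilde V^\alpha$ up to something coming from $(f)$. The lower terms $u_j\partial_t^j\delta=\partial_t^{k}(\partial_t^{j-k}u_j\delta)$ lie deeper in the $\partial_t^{-1}$-adic sense but not automatically in $\tilde V^\alpha$. To handle this I would use the relation $t\delta = f\delta$ together with $[\partial_t,t]=1$ to rewrite the error terms as $f\cdot(\ldots)$ plus terms of higher $\tilde V$-index, arguing by induction on $k-j$.
	\item[$\supseteq$] Conversely, given $g\partial_t^k\delta\in\tilde V^\alpha\tilde{\mathcal B}_f$, I need to produce lower-order corrections $u_j$ landing the element in $V^\alpha\mathcal B_f$, modulo $f$. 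For this I would invoke Saito's comparison: for $\alpha>0$, the natural map $\mathrm{Gr}_V^\alpha\mathcal B_f\to\mathrm{Gr}_{\tilde V}^\alpha\tilde{\mathcal B}_f$ is an isomorphism, since $\partial_t$ is bijective on $\mathrm{Gr}_V^\alpha$ for $\alpha\neq 0$. With that in hand, $\tilde V^\alpha\tilde{\mathcal B}_f\cap F_k$ can be lifted to $V^\alpha\mathcal B_f$ by successive approximation, and the terms with negative powers of $\partial_t$ that appear are absorbed using $t=f$.
\end{itemize}
Granting this comparison, Steps 1--3 give $I_k(\alpha Z)+(f)=\tilde V^{\alpha+k}\mathcal O_{n+1}+(f)$.
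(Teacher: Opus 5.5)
The paper does not prove this statement; it quotes it from \cite{MR4089396}. That source's argument has your architecture. Step~1 is their Theorem~A. Step~2, the reduction mod $(f)$, is fine, since the coefficient of the $j=k$ term is a product of factors $\alpha+i$ with $i\ge 0$. What remains is to identify
$A:=\{u_k : \sum_{j\le k}u_j\partial_t^j\delta\in V^\alpha\mathcal B_f \text{ for some } u_0,\dots,u_{k-1}\}$
with the microlocal filtration. Your Step~3 does not accomplish this, for two related reasons.

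\textbf{Wrong definition of $\tilde V$ on $\mathcal O_{n+1}$.} You use $\tilde V^\beta\mathcal O_{n+1}=\{g : g\delta\in\tilde V^\beta\tilde{\mathcal B}_f\}$, which is not Saito's microlocal $V$-filtration. In the statement, $\tilde V$ is the quotient filtration on $\mathrm{Gr}^F_0\tilde{\mathcal B}_f=\mathcal O_{n+1}$: $g\in\tilde V^\beta\mathcal O_{n+1}$ iff $g\delta+\sum_{i\ge1}g_i\partial_t^{-i}\delta\in\tilde V^\beta\tilde{\mathcal B}_f$ for some $g_i$.
\begin{itemize}
\item With your definition the lower-order terms really do obstruct the inclusion $\subseteq$. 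Take the node $f=uv$, $\alpha=1$, $k=2$. Since $\partial_t t$ is nilpotent, hence zero, on the simple module $\mathrm{Gr}^0_V\mathcal B_f$, we have $t\partial_t=-1$ there. Hence $uv\,\partial_t^2\delta-\partial_t\delta=t\partial_t^2\delta+\partial_t\delta\in V^{>0}\mathcal B_f=V^1\mathcal B_f$, so $uv\in A$.
\item On the other hand $uv\,\partial_t^2\delta=\partial_t^2(t\delta)\notin\tilde V^1\tilde{\mathcal B}_f$. This is because $\tilde\alpha_f=1$ gives $\delta\notin\tilde V^{>1}$, and $t:\mathrm{Gr}^1_{\tilde V}\to\mathrm{Gr}^2_{\tilde V}$ is bijective.
\item Your proposed repair does not handle this. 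The identity $j\,\partial_t^{j-1}\delta=f\partial_t^j\delta-t\partial_t^j\delta$ trades a lower term for an $f$-multiple plus $t\cdot(u\,\partial_t^j\delta)$. The $\tilde V$-index of the latter is only known to be $\ge\tilde\alpha_f-j+1$, not $\ge\alpha$, so nothing forces the induction on $k-j$ to close.
\end{itemize}

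\textbf{Misstated comparison.} The map $\mathrm{Gr}_V^\beta\mathcal B_f\to\mathrm{Gr}_{\tilde V}^\beta\tilde{\mathcal B}_f$ is an isomorphism for $\beta<1$, not for all $\beta>0$. At $\beta=1$ it is $\mathrm{can}:\psi_{f,1}\to\varphi_{f,1}$. For example, for $f$ smooth, $\mathrm{Gr}_V^1\mathcal B_f\neq 0$ while $\tilde V$ is trivial ($\tilde\alpha_f=\infty$). Bijectivity of $\partial_t:\mathrm{Gr}_V^{\beta+1}\to\mathrm{Gr}_V^{\beta}$ for $\beta\neq 0$ does not give the comparison. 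Note also that your Step~3 never uses $\alpha\le 1$, although the statement depends on it.

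\textbf{The fix.} With the correct definition, Step~3 is short and gives an exact equality.
\begin{itemize}
\item Applying $\partial_t^k$, $g\in\tilde V^{\alpha+k}\mathcal O_{n+1}$ iff $g\partial_t^k\delta+\sum_{j<k}g_j\partial_t^j\delta\in\tilde V^\alpha\tilde{\mathcal B}_f$ for some $g_j$.
\item The terms with $j<0$ lie in $\bigoplus_{i\ge1}\mathcal O_{n+1}\partial_t^{-i}\delta\subset\tilde V^{\tilde\alpha_f+1}\tilde{\mathcal B}_f\subset\tilde V^\alpha\tilde{\mathcal B}_f$, because $\tilde\alpha_f>0\ge\alpha-1$. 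So they can be dropped.
\item What remains is an element of $\mathcal B_f\cap\tilde V^\alpha\tilde{\mathcal B}_f$. For $\alpha\le 1$ this intersection equals $V^\alpha\mathcal B_f$, by injectivity of the $\mathrm{Gr}$-comparison in indices $<1$.
\end{itemize}
Hence $\tilde V^{\alpha+k}\mathcal O_{n+1}=A$ exactly, and the ``mod $(f)$'' in the theorem comes only from Step~2. No successive approximation and no use of $t\delta=f\delta$ is needed.
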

\begin{theorem}[\cite{MR4448602}]\label{microlocal V-filtration = V-filtration on Brieskorn lattice}
	Let $\tilde V^\bullet$ be the microlocal filtration on $\mathcal O_{n+1}$. Then the induced filtration $\tilde V^\bullet$ on $\mathcal O_{n+1}/(J(f))$ by canonical surjection $\mathcal O_{n+1} \twoheadrightarrow \mathcal O_{n+1}/(J(f))$ coincides with the filtration $V^{\bullet}$ on $\Omega^{n+1}_f$ under canonical isomorphism $\Omega_f^{{n+1}} \overset{\simeq}{\longrightarrow} \mathcal O_{n+1}/(J(f))$.
\end{theorem}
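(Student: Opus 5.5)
The plan is to realize both filtrations inside one microlocal object and then pass to quotients. First I recall the construction behind $\tilde V^\bullet$ from \cite{MR4089396}. Let $i_f:\mathbb C^{n+1}\to\mathbb C^{n+1}\times\mathbb C$ be the graph embedding, and let $\mathcal B_f=\mathcal O_{n+1}[\partial_t]\delta(t-f)$ be the corresponding direct image. Its microlocalization is
\begin{displaymath}
\tilde{\mathcal B}_f=\mathcal O_{n+1}[\partial_t,\partial_t^{-1}]\delta(t-f).
\end{displaymath}
It carries the Kashiwara--Malgrange $V$-filtration along $t$, indexed so that $\partial_t t-\alpha$ is nilpotent on $\mathrm{Gr}_V^\alpha$. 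By definition, $g\in\tilde V^\alpha\mathcal O_{n+1}$ if and only if $g\delta\in V^\alpha\tilde{\mathcal B}_f$.

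Second, I identify the relative de Rham cokernel of $\tilde{\mathcal B}_f$ with the Gau\ss--Manin system $\mathcal H$:
\begin{displaymath}
\tilde{\mathcal B}_f\Big/\sum_{i}\partial_{x_i}\tilde{\mathcal B}_f\;\overset{\simeq}{\longrightarrow}\;\mathcal H,\qquad g\,\partial_t^k\delta\longmapsto \partial_t^k[g\,d\bm x].
\end{displaymath}
The defining relation is $\partial_{x_i}(g\delta)=(\partial_{x_i}g)\delta-(\partial_{x_i}f)g\,\partial_t\delta$. This matches the relation $d\varpi=\partial_t\,df\wedge\varpi$ in $\mathcal H$, i.e.\ $\partial_t^{-1}d\varpi=df\wedge\varpi$. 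Under this map:
\begin{itemize}
\item the image of $\mathcal O_{n+1}\delta$ is exactly $\mathcal H^{(0)}$;
\item multiplication by $t$ corresponds to multiplication by $f$;
\item $\partial_t$ corresponds to $\partial_t$.
\end{itemize}
Since $f$ has an isolated singularity, the relative de Rham complex of $\tilde{\mathcal B}_f$ is concentrated in top degree.

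Third, I show that this isomorphism carries $V^\bullet\tilde{\mathcal B}_f$ onto $V^\bullet\mathcal H$ strictly, so that $V^\alpha\mathcal H$ is the image of $V^\alpha\tilde{\mathcal B}_f$. Both filtrations are characterized by the same property: the eigenvalues of $\partial_t t$ on $\mathrm{Gr}^\alpha$ equal $\alpha$, together with good $V_0\mathcal D$-type finiteness. The image filtration satisfies this characterization, so by uniqueness of the $V$-filtration it equals $V^\bullet\mathcal H$. Strictness of the direct image in Saito's theory of (mixed) Hodge modules with respect to $V$ supplies the needed exactness of $\mathrm{Gr}_V$ on the de Rham complex. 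I expect this strictness step to be the main obstacle; if a direct argument is needed, I would work on each $\mathrm{Gr}_V^\alpha$ and use that the de Rham complex stays acyclic in lower degrees there.

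Finally, I pass to $\Omega_f^{n+1}$. Intersecting with $\mathcal O_{n+1}\delta$ and using step three gives
\begin{displaymath}
V^\alpha\mathcal H\cap\mathcal H^{(0)}=\text{image of }\tilde V^\alpha\mathcal O_{n+1}.
\end{displaymath}
This step also needs a check that the image of $V^\alpha\tilde{\mathcal B}_f$ meeting $\mathcal H^{(0)}$ already comes from $\tilde V^\alpha\mathcal O_{n+1}\delta$. I would do this by $\partial_t^{-1}$-adic induction, using that $\tilde V$ is stable under $\partial_t^{-1}$ with a shift by one, i.e.\ $\partial_t^{-1}V^\alpha\subset V^{\alpha+1}$.

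Then compose with the surjection $\mathcal H^{(0)}\twoheadrightarrow\Omega_f^{n+1}=\mathcal H^{(0)}/\partial_t^{-1}\mathcal H^{(0)}$. Its kernel $df\wedge\Omega^n$ corresponds, under $\Omega_f^{n+1}\simeq\mathcal O_{n+1}/J(f)$, to $J(f)$. Therefore the filtration induced on $\mathcal O_{n+1}/J(f)$ by $\tilde V^\bullet$ coincides with the quotient filtration $V^\bullet\Omega_f^{n+1}$, which is the statement.
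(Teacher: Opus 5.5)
The paper gives no proof of this statement; it quotes it from \cite{MR4448602}. The argument there lives in the setting you chose: the microlocalized graph module $\tilde{\mathcal B}_f$, its relative de Rham cokernel $\mathcal H$, and the fact that $\mathcal O_{n+1}\delta$ maps onto $\mathcal H^{(0)}$. Your Steps 1--2 are fine. Step 4, however, has a genuine gap, and part of it comes from your definition. In Saito's setup the microlocal $V$-filtration is defined through the order (Hodge) filtration $F_p\tilde{\mathcal B}_f=\bigoplus_{k\le p}\mathcal O_{n+1}\partial_t^k\delta$, as $\tilde V^\alpha\mathcal O_{n+1}=\mathrm{Gr}^F_0V^\alpha\tilde{\mathcal B}_f$. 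That is, $g\in\tilde V^\alpha\mathcal O_{n+1}$ iff $g\delta+\sum_{k\ge1}g_k\partial_t^{-k}\delta\in V^\alpha\tilde{\mathcal B}_f$ for some $g_k$. This is not a priori the same as $\{g: g\delta\in V^\alpha\tilde{\mathcal B}_f\}$. With your definition you would also have to show that the tail $\sum_{k\ge1}g_k\partial_t^{-k}\delta$ can be dropped without leaving $V^\alpha$. Your only tool, $\partial_t^{-1}V^\alpha\subset V^{\alpha+1}$, does not control this: it only puts $g_k\partial_t^{-k}\delta$ in $V^{\beta+k}$ when $g_k\delta\in V^\beta$.

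The more serious problem is the other inclusion in Step 4. Step 3 only gives, for $\omega\in V^\alpha\mathcal H\cap\mathcal H^{(0)}$, some $u\in V^\alpha\tilde{\mathcal B}_f$ mapping to $\omega$, and such a $u$ may contain $\partial_t^p$ with $p>0$.
\begin{itemize}
\item Its top coefficient $g_p$ lies in $J(f)$, say $g_p=\sum_i(\partial_if)h_i$.
\item To lower the $\partial_t$-degree you must add $\sum_i\partial_{x_i}(h_i\partial_t^{p-1}\delta)$, or an element of $F_{p-1}\tilde{\mathcal B}_f$ with these leading coefficients.
\item Whether this can be done without leaving $V^\alpha$ is exactly the strictness of $F$ on the $V^\alpha$-filtered relative de Rham complex.
\end{itemize}
This obstruction sits in the positive $\partial_t$-direction, so no $\partial_t^{-1}$-adic induction reaches it. The $V$-strictness you invoke in Step 3 does not imply it either. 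What you need is the bistrictness $F_pV^\alpha\mathcal H=\mathrm{image}(F_pV^\alpha\tilde{\mathcal B}_f)$. This is a Hodge-theoretic input: it comes from Saito's theory of mixed Hodge modules, namely bistrictness of direct images with respect to $F$ and $V$.

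Once you have bistrictness, the statement is immediate. $\mathrm{Gr}^F_0$ of the relative de Rham complex of $\tilde{\mathcal B}_f$ is the Koszul complex of $\partial_0f,\dots,\partial_nf$ on $\mathcal O_{n+1}$. Its top cohomology is $\mathcal O_{n+1}/J(f)=\mathrm{Gr}^F_0\mathcal H=\mathcal H^{(0)}/\partial_t^{-1}\mathcal H^{(0)}=\Omega_f^{n+1}$. Bistrictness says the filtration induced there by $\mathrm{Gr}^F_0V^\alpha\tilde{\mathcal B}_f=\tilde V^\alpha\mathcal O_{n+1}$ equals the one induced by $V^\alpha\mathcal H$, which is $V^\alpha\Omega_f^{n+1}$.
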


Therefore, we have the following corollary.
\begin{corollary}\label{Tjurina spectrum is a subsepctrum}
	Tjurina spectrum is the Poincar\'e polynomial of the quotient filtration $V^{\bullet}(\Omega_{f}^{n+1}/f\cdot\Omega_{f}^{n+1})$. In particuler, Tjurina spectrum is a subspectrum of spectrum. In the sense of spectrum numbers, Tjurina spectrum is a subset of spectrum. 
\end{corollary}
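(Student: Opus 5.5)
The plan is to reduce the statement to the two preceding theorems. First, identify the Tjurina algebra with the quotient of the Milnor algebra. Under the canonical isomorphism $\Omega_f^{n+1}\simeq \mathcal O_{n+1}/J(f)$, $[g\,d\bm x]\mapsto [g]$, the submodule $f\cdot\Omega_f^{n+1}$ corresponds to $(f,J(f))/J(f)$. Hence
\begin{displaymath}
\Omega_f^{n+1}/f\cdot\Omega_f^{n+1}\simeq \mathcal O_{n+1}/(f,J(f)) = T_f .
\end{displaymath}

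Second, compare the filtrations on $T_f$. By definition, the Tjurina spectrum is the Poincar\'e polynomial of the filtration on $T_f$ induced from $V_{HI}^\beta=\sum_{\alpha+p\geq\beta}I_p(\alpha Z)$. By the theorem of \cite{MR4089396}, $I_p(\alpha Z)+(f)=\tilde V^{\alpha+p}\mathcal O_{n+1}+(f)$. Since $\tilde V^\bullet$ is decreasing, summing over $\alpha+p\ge\beta$ with $\alpha\in(0,1]$ and $p\in\mathbb N$ gives
\begin{displaymath}
V_{HI}^\beta+(f)=\tilde V^\beta\mathcal O_{n+1}+(f)
\end{displaymath}
for $\beta>0$. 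For $\beta\le 0$ both sides are everything, after checking $\tilde V^{\beta}\mathcal O_{n+1}=\mathcal O_{n+1}$ there, since all spectrum numbers are positive. So the filtration induced on $T_f$ by $V_{HI}$ equals the one induced by $\tilde V$. Because $T_f$ is a quotient of $\mathcal O_{n+1}/J(f)$, the filtration induced on $T_f$ directly from $\mathcal O_{n+1}$ agrees with the one obtained by first inducing on $\mathcal O_{n+1}/J(f)$ and then passing to the quotient. By Theorem \ref{microlocal V-filtration = V-filtration on Brieskorn lattice}, the filtration on $\mathcal O_{n+1}/J(f)$ is $V^\bullet\Omega_f^{n+1}$. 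Therefore the Tjurina filtration is the quotient filtration of $V^\bullet$ on $\Omega_f^{n+1}/f\Omega_f^{n+1}$, which proves the first claim.

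Third, prove the subspectrum claim with a linear-algebra lemma. Let $(W,V^\bullet)$ be a finite-dimensional filtered space with finitely many jumps, let $U\subseteq W$ be a subspace, and give $U$ the induced filtration $U\cap V^\bullet$ and $W/U$ the quotient filtration. Then for every $\beta$ there is a short exact sequence
\begin{displaymath}
0\to \mathrm{Gr}^\beta(U)\to \mathrm{Gr}^\beta_V W\to \mathrm{Gr}^\beta(W/U)\to 0,
\end{displaymath}
because strictness of the inclusion and projection is automatic for induced and quotient filtrations. Apply this with $W=\Omega_f^{n+1}$ and $U=f\Omega_f^{n+1}$. This gives
\begin{displaymath}
\mathrm{Sp}(f)=\mathrm{Sp}^\tau(f)+\sum_\beta \dim\mathrm{Gr}^\beta(U)\,t^\beta,
\end{displaymath}
where the last sum has nonnegative coefficients. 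Hence $\mathrm{Sp}^\tau(f)$ is a subspectrum, the multiset of Tjurina exponents is contained in that of spectrum numbers, and the complement has cardinality $\mu-\tau$. The index set $I$ mentioned in Subsection \ref{Hodge Ideal and Tjurina Spectrum} follows by matching multiplicities.

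The main obstacle is the second step: making sure the sum over $(\alpha,p)$ of Hodge ideals reproduces exactly $\tilde V^\beta$ modulo $(f)$, including the edge cases at $\alpha=1$ and $\beta\le 0$. Everything else is formal exactness of the graded pieces.
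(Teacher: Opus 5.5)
Your proposal is correct and follows the paper's route: the paper obtains the corollary directly by combining the identity $I_k(\alpha Z)+(f)=\tilde V^{\alpha+k}\mathcal O_{n+1}+(f)$ with the theorem identifying the microlocal $V$-filtration on $\mathcal O_{n+1}/J(f)$ with $V^\bullet\Omega_f^{n+1}$. You additionally spell out what the paper leaves implicit: the sum over $\alpha+p\geq\beta$ collapses to $\tilde V^\beta\mathcal O_{n+1}$ modulo $(f)$, and taking graded pieces is exact for induced and quotient filtrations, which gives the subspectrum statement.
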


\begin{remark}
	We point out that the filtration $V^\bullet$ on $\Omega_f^{n+1}$ is different from the $V$-filtration given by another kind of $\mathscr D$-module or multiplier ideals (for definition, see \cite{budur2004multiplier}) induced on $\mathcal O_{n+1}/(J(f))$. They give the same jumping numbers in $[0,1]$, but not equal in general when index is greater thatn $1$. Otherwise, there is no exponents greater than $1$ for Tjurina spectrum (see \cite{MR4448602}, (1.14) and (1.17)).
\end{remark}
%
%There is another useful tool on $\Omega_f^{n+1}$, called Grothendieck residue pairing. The general version of it is introduced in \cite{MR1011977}. In this paper, we consider the form described as follows. Properties below also comes from \cite{MR1011977}. 
%
%The Grothendieck residue pairing is a perfect symmetric $\mathbb C$-pairing
%\begin{displaymath}
%	\mathbb S : \Omega^{n+1}_f \times \Omega_f^{n+1} \to \mathbb C.
%\end{displaymath}
%Let $V^\bullet$ be the filtration on $\Omega_f^{n+1}$ shown above. We ommit $\Omega_f^{n+1}$ in $V^{\bullet}\Omega_f^{n+1}$. For each $p\in \mathbb Q$, we have  
%\begin{displaymath}
%	\mathbb S(V^{>p},V^{n+1-p}) = 0,\ \mathbb S(V^{p},V^{>n+1-p}) = 0,
%\end{displaymath}
%where $V^{>p}$ means $\bigcup_{\alpha > p} V^\alpha$. This tells $\mathbb S$ induces a pairing
%\begin{displaymath}
%	\mathbb S : \mathrm{Gr}^p_V \times \mathrm{Gr}_V^{n+1-p} \to \mathbb C,
%\end{displaymath}
%which is also perfect.
%
%\textbf{\color{red}{We can write more about the Grothendieck residue pair.}}

\subsection{Newton Filtration}
\label{Newton Filtration}

Let $f \in \mathcal O_{n+1}$ be an analytic germ which defines an isolated singularity at the origin. Here we give a brief review of Newton diagram and Newton filtration associated with $f$. It can also be seen in \cite{MR954149} and \cite{MR0704395}.

Let $\Gamma_+(f)$ be the Newton diagram of $f$, say the convex hull of $\bigcup_{\bm v\in \mathrm{Supp}(f)} (\bm v+\mathbb R_{\geq 0}^{n+1})$. Then $Fc(f)$ denotes the set of compact facets of $\Gamma_+(f)$. Recall that for a convex set $X$ in $\mathbb R^n$, a face of $X$ is the intersection of $X$ and a hyperplane $L$, where $X$ lies on one side of $L$. A facet of $X$ is a face of affine dimension $n-1$.

For a facet $\sigma$ in $Fc(f)$, define $A_\sigma$ to be the $\mathbb C$-subalgbra of $\mathbb C[\bm x]$ generated by $\bm x^{\bm v},\bm v\in \mathrm{Cone}(0,\sigma)$. Let $F_\sigma = \sum_{\bm v\in \mathrm{Cone}(0,\sigma)} a_{\bm v}\bm x^{\bm v}$ and $F_{\sigma,i} = x_i\frac{\partial F_\sigma}{\partial x_i},i=0,...,n$. $f$ is called \textit{Newton non-degenerate} (or ``\textit{non-degenerate}'') \textit{at facet} $\sigma \subset \Gamma(f)$, if $\dim_{\mathbb C} A_\sigma/\sum_{i} A_\sigma F_{\sigma,i} < \infty$. Note that if $\mathrm{Cone}(0,\sigma)$ is simplicial, that is, generated by $n+1$ linearly independent vectors, and $\#(\mathrm{Cone}(0,\sigma)\cap \mathrm{Supp}(f) \cap \partial \Gamma_+(f)) = n+1$, then $f$ is non-degenerate at $\sigma$. $f$ is called \textit{non-degenerate} if it is non-degenerate at every $\sigma\in Fc(f)$.

We further assume $f$ is non-degenerate and for every $i=0,...,n$, some power $x_i$ appears in $f$. We say such $f$ \textit{convenient}. Under this circumstance, $\mathbb R_+^{n+1} = \bigcup_{\sigma\in Fc(f)} \mathrm{Cone}(0,\sigma)$. For every $\sigma\in Fc(f)$, there exists a unique linear function $\varphi_{\sigma} : \mathbb R^n \to \mathbb R,  {\bm v}  \mapsto \sum_{i=0}^{n+1} a_{\sigma,i} v_i$ such that $\varphi_\sigma|_\sigma = 1$. Moreover, for any two facets $\sigma,\sigma'$ and $\bm w\in \mathrm{Cone}(0,\sigma) \cap \mathrm{Cone}(0,\sigma')$, $\varphi_{\sigma}(\bm w) = \varphi_{\sigma'}(\bm w)$. Therefore, we are able to define a function $\varphi : \mathbb R_{\geq 0}^{n+1}\to \mathbb R$ such that $\varphi|_{\mathrm{Cone}(0,\sigma)} = \varphi_{\sigma}$. It is called the \textit{valuation of }$\Gamma_+(f)$. Next, we define the \textit{shifted Newton filtration} of $f$.

Let $\bm e = (1,1,...,1)\in \mathbb R_+^{n+1}$. For $g\in \mathcal O_{n+1}$, also define $\varphi(g) := \min_{\bm v\in \mathrm{Supp}(g)} \varphi(v)$. Hence, let $N_{a}\mathcal O_{n+1} = \{g\in \mathcal O_{n+1}\mid \varphi(\bm x^{\bm e}\cdot g) \geq a\}$ and we obtain a decreasing $\mathbb Q$-indexed filtration:
\begin{displaymath}
	... \supset N_{a}\mathcal O_{n+1} \supset ...\supset N_b\mathcal O_{n+1} \supset ...(a< b).
\end{displaymath}

Let $M_f = \mathcal O_{n+1}/J(f)$ be the Milnor algbebra of $f$ and $\mu$ be the Milnor numebr. $N_\bullet\mathcal O_{n+1}$ induces a natural filtration on $M_f$, say
\begin{displaymath}
	N_a M_f = (N_a\mathcal O_{n+1}+J(f))/J(f).
\end{displaymath}

With everything clarified, we state the theorem of M. Saito(\cite{MR954149}):
\begin{theorem}[M. Saito, see \cite{MR954149} or \cite{https://doi.org/10.48550/arxiv.2003.00519}] \label{Saito's non-degenerated theorem}
	Suppose $f\in \mathcal O_{n+1}$ is convenient and Newton non-degenerate, then the $V$-filtration on $\Omega_f^{n+1}$ coincides with the shifted Newton filtration on $M_f$ under canonical isomorphism $\Omega_f^{n+1} \overset{\simeq}{\longrightarrow} M_f$. In particular.
	\begin{displaymath}
		\mathrm{Sp}(f) = \sum_{a\in \mathbb Q} \dim_{\mathbb C} \mathrm{Gr}_a^N(M_f) t^{a},
	\end{displaymath}
	where $\mathrm{Gr}_a^N(M_f) = N_aM_f/ \bigcup_{b> a}N_bM_f$, the grading of $N_\bullet M_f$.
\end{theorem}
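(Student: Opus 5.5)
The plan is to compare two decreasing filtrations on the same $\mu$-dimensional space $\Omega_f^{n+1}\simeq M_f$. I would first prove the inclusion $N_a M_f\subseteq V^a\Omega_f^{n+1}$ for every $a$, and then upgrade it to equality by a dimension count. The work is done on the Brieskorn lattice $\mathcal H^{(0)}$, which carries the two operators $t$ and $\partial_t^{-1}$.

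\textbf{Step 1: a Newton filtration on $\mathcal H^{(0)}$.} Set
\[
N_a\mathcal H^{(0)}=\sum_{k\geq 0}\partial_t^{-k}\,N_{a-k}\Omega^{n+1},
\]
where $N_b\Omega^{n+1}$ is spanned by the forms $g\,d\bm x$ with $g\in N_b\mathcal O_{n+1}$. Because $\varphi$ is piecewise linear and $\varphi(\mathrm{Supp} f)\geq 1$, the operator $t$ sends $N_a$ into $N_{a+1}$. Applying $\partial_t^{-1}$ also raises the filtration by $1$; this uses the relation $\partial_t^{-1}(d\varpi)=df\wedge\varpi$.

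\textbf{Step 2: the key identity.} Fix a facet $\sigma$ with linear form $\varphi_\sigma(\bm v)=\sum_i a_{\sigma,i}v_i$, and consider the Euler-type vector field $\xi_\sigma=\sum_i a_{\sigma,i}x_i\partial_{x_i}$. For a monomial form $\omega=\bm x^{\bm v}d\bm x$ with $\bm v+\bm e\in\mathrm{Cone}(0,\sigma)$, contract $\omega$ with $\xi_\sigma$ and use $\partial_t^{-1}d=df\wedge$. Since $\xi_\sigma(f)=f_\sigma+(\text{higher Newton order})$, I expect to obtain
\[
\partial_t t\,\omega=\varphi(\bm x^{\bm e}\bm x^{\bm v})\,\omega+\rho,\qquad \rho\in N_{>\varphi(\bm x^{\bm e}\bm x^{\bm v})}\mathcal H^{(0)}.
\]
Consequently $\partial_t t-a$ maps $N_a\mathcal H^{(0)}$ into $N_{>a}\mathcal H^{(0)}$.

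The monomials lying on a common face of several cones cause a problem, because different facets give different $\xi_\sigma$. I would handle this by working on the graded ring $\mathrm{gr}^N\mathcal O_{n+1}$ face by face. Non-degeneracy at each facet gives Kouchnirenko's result: the Koszul complex of $(F_{\sigma,0},\dots,F_{\sigma,n})$ on $A_\sigma$, and after gluing the one on $\mathrm{gr}^N\mathcal O_{n+1}$, is acyclic. From this I would deduce two facts. First, $\mathrm{gr}^N\mathcal H^{(0)}$ is a free $\mathbb C[\partial_t^{-1}]$-module of rank $\mu$. Second, $\mathrm{gr}^N(\Omega^{n+1}/df\wedge\Omega^n)\simeq \mathrm{gr}^N M_f$, that is, the Newton filtration is strict with respect to $df\wedge$.

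\textbf{Step 3: from the identity to $N_a\subseteq V^a$.} Step 2 shows that on a finite Newton truncation the element $\prod_{\beta}(\partial_t t-\beta)^{m}\omega$ lies in arbitrarily high $N$-level; the product runs over the finitely many Newton levels $\beta\in[a,a+K]$. Elements of sufficiently high Newton level lie in $V^{a}$, because $\mathfrak m^{p}\Omega^{n+1}\subseteq V^{a}$ for $p\gg 0$, by $\mathfrak m$-adic finiteness of the $V$-filtration. By the definition of $H^\alpha$, this forces the components of $\omega$ in $H^\alpha$ with $\alpha<a$ to vanish, so $\omega\in V^a\mathcal H$. Passing to $\Omega^{n+1}_f$ gives $N_aM_f\subseteq V^a\Omega_f^{n+1}$.

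I expect the main obstacle to be the convergence part of this step. The corrections $\rho$ generate an infinite process, and summing it requires the division by $\partial_t^{-1}$ to converge in $\mathbb C\{\{\partial_t^{-1}\}\}$. Here I would first try to use the freeness of $\mathrm{gr}^N\mathcal H^{(0)}$ together with a Newton-order estimate on the corrections.

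\textbf{Step 4: equality.} Both filtrations on the $\mu$-dimensional space have graded pieces whose dimensions sum to $\mu$. Kouchnirenko's theorem, again using non-degeneracy, shows that $\mu$ equals the Newton number. The Newton spectrum $\sum_a\dim\mathrm{Gr}^N_a M_f\,t^a$ is symmetric about $\frac{n+1}{2}$; this follows from the Newton-graded Koszul resolution and a Hilbert-series computation. The $V$-spectrum is symmetric by Theorem \ref{symmetry of spectrum}. Two symmetric filtrations with $N_a\subseteq V^a$ for all $a$ must then coincide. Indeed, the inclusion gives $\#\{\text{Newton exponents}\geq a\}\leq\#\{V\text{-exponents}\geq a\}$ for every $a$, and applying symmetry turns these into the reverse inequalities. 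Hence the two spectra agree, and therefore $N_a M_f=V^a\Omega_f^{n+1}$ for all $a$.
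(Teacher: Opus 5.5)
The paper gives no proof of this statement; it only cites Saito. So I judge your outline on its own terms. The strategy is the standard one, and the counting argument in Step 4 is a correct way to conclude, given the reciprocity behind the symmetry of the Newton spectrum. But Steps 2 and 3 each contain a step that fails as written.

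\textbf{Step 2: the claim $(\partial_tt-a)N_a\subseteq N_{>a}$ is false.} Your identity with $\rho$ of strictly higher order holds only when $\bm v+\bm e$ is interior to a single $\mathrm{Cone}(0,\sigma)$. Suppose instead it lies in the cone over a face $\tau$ shared by several facets. Then $\xi_\sigma(f)-f$ contains monomials $\bm x^{\bm w}$ with $\bm w\in\sigma'\setminus\sigma$ for facets $\sigma'\supseteq\tau$, and $\bm x^{\bm w+\bm v}d\bm x$ has Newton order exactly $\varphi(\bm x^{\bm e}\bm x^{\bm v})+1$. Convex combinations of the $\xi_\sigma$ do not remove these terms, and they cannot be removed, because they encode Jordan blocks. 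For $T_{p,q,r}$, the point $\bm e=(1,1,1)$ is the vertex shared by all three facets. The $2\times 2$ Jordan block for eigenvalue $1$, linking the exponents $1$ and $2$, forces $(\partial_tt-1)[d\bm x]\neq 0$ in $\mathrm{Gr}^1_V\mathcal H$: indeed $[d\bm x]$ spans $F^2$, which meets $W_2=\ker(\partial_tt-1)$ trivially. This contradicts $(\partial_tt-1)d\bm x\in N_{>1}$ once $N\subseteq V$ is known. What is true is nilpotency, $(\partial_tt-a)^{n+1}N_a\subseteq N_{>a}$. Your face-by-face Koszul remark does not give it: acyclicity yields freeness and strictness, but says nothing about $\partial_tt$. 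Nilpotency can be proved as follows. For $\omega=\bm x^{\bm v}d\bm x$ and any facet $\sigma$ with $\bm v+\bm e\in\mathrm{Cone}(0,\sigma)$, one has $(\partial_tt-a)\omega=-\partial_t(g_\sigma\omega)$ with $g_\sigma=\xi_\sigma(f)-f$. Combine this with $(\partial_tt-a)\partial_t=\partial_t(\partial_tt-a-1)$ and induct on $\dim\tau$, where $\bm v+\bm e\in\mathrm{relint}\,\mathrm{Cone}(0,\tau)$. Each same-order error monomial has $\bm w+\bm v+\bm e$ in the relative interior of the cone over the smallest face containing $\tau\cup\{\bm w\}$, which is strictly larger. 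So each application of $\partial_tt-a$ raises the face dimension until you reach a facet cone, where the error is of strictly higher order.

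\textbf{Step 3: $\mathfrak m$-adic finiteness cannot give ``high Newton level $\Rightarrow V^a$'' for $P(\partial_tt)\omega$.} To use any estimate on $\mathcal H^{(0)}$, you must pass to $\eta=\partial_t^{-\deg P}P(\partial_tt)\omega\in\mathcal H^{(0)}$, whose Newton order is at best about $a+K+\deg P$. The $\mathfrak m$-adic comparison only gives $N_c\Omega^{n+1}\subseteq V^{\kappa c-C}$, with a slope $\kappa$ that is in general $<1$. You therefore obtain $V$-order at least $\kappa(a+K+\deg P)-C-\deg P$ for $P(\partial_tt)\omega$. This tends to $-\infty$, because $\deg P$ grows linearly in $K$. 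The missing ingredient is a slope-one bound $N_c\Omega^{n+1}\subseteq V^{c-C}$, hence $N_c\mathcal H\subseteq V^{c-C}\mathcal H$, with $C$ fixed. It follows from your own strictness statement:
\begin{itemize}
\item For $c$ above the largest Newton exponent $c_0$, we have $N_cM_f=0$, so strictness gives $N_c\Omega^{n+1}=df\wedge N_{c-1}\Omega^n$.
\item For such a form, $df\wedge\eta=\partial_t^{-1}d\eta$ with $d\eta\in N_{c-1}\Omega^{n+1}$.
\item Iterating gives $N_c\Omega^{n+1}\subseteq\partial_t^{-k}\mathcal H^{(0)}\subseteq V^{\alpha_1+k}$ with $k\geq c-c_0$.
\end{itemize}
Because the Newton levels are discrete, only finitely many algebraic steps occur, so the convergence problem you anticipate never arises. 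With this bound and nilpotency, your Step 3 argument closes for $K$ larger than a constant depending only on $f$. Alternatively, these two facts are essentially the hypotheses of the uniqueness theorem for the $V$-filtration, which then gives $N=V$ on $\mathcal H$ directly.
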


Therefore, for many cases, by applying \textbf{Theorem \ref{Saito's non-degenerated theorem}}, it suffices to compute the dimension of each grading. Here we give the method to compute those dimensions. Let $v : \mathcal O_{n+1} \to \mathbb R_{\geq 0}, g \mapsto \varphi(\bm x^{\bm e}\cdot g)$, then $N_a\mathcal O_{n+1} = \{g\in \mathcal O_{n+1}\mid v(g) \geq a\}$. $v$ is called the \textit{shifted valuation of} $f$. For a monomial $\bm x^{\bm \alpha}\in\mathcal O_{n+1}$ and an ideal $I$ of $\mathcal O_{n+1}$, we will also write $\bm x^{\bm \alpha}$ to denote its image in $\mathcal O_{n+1}/I$.

\begin{definition}[Monomial Filtration]
	Suppose $I$ is an ideal of $\mathcal O_{n+1}$ such that $\sqrt{I} = \mathfrak m$ and $F_\bullet\mathcal O_{n+1}$ is a $\mathbb Q$-indexed filtration of $\mathcal O_{n+1}$. It induces a natural filtration on $A := \mathcal O_{n+1}/I$, denoted as $F_\bullet A$. $F_\bullet A$ is called a monomial filtration, if each $F_a A,a\in \mathbb Q$ permits a monomial basis. That is, for all $a\in \mathbb Q$, there is $\bm x^{\bm \rho_1},...,\bm x^{\bm \rho_q}\in \mathcal O_{n+1}/I$ forming a basis of $F_a(\mathcal O_{n+1}/I)$.
\end{definition}
\begin{remark}
	Since $\dim_\mathbb C A< \infty$, by a simple argument of linear algebra, we have the following proposition.
\end{remark}
\begin{proposition}
	Suppose $I$ is an ideal of $\mathcal O_{n+1}$ with $\sqrt{I} = \mathfrak m$ and $F_\bullet\mathcal O_{n+1}$ induces a monomial filtration $A = \mathcal O_{n+1}/I$. Then there is a monomial basis $\{\bm x^{\bm \alpha_i}\}_{i\in \Lambda}$ satisfying the following:
	
	For every $a\in \mathbb Q$, there is a subset $\Lambda_a$ of $\Lambda$ such that $\{\bm x^{\bm \alpha_i}\}_{i\in \Lambda_a}$ is a basis of $\mathrm{Gr}_a^F(A)$. 
	
	Such monomial basis is called an \textit{admissible basis o}f $F_\bullet A$.
\end{proposition}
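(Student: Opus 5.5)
The plan is to build the admissible basis by a descending induction over the finitely many jumps of $F_\bullet A$. Each time, we enlarge an already chosen monomial basis of a smaller filtration step, using the monomial basis of the next larger step that the hypothesis provides.

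\emph{Finiteness of the jumps.} Since $\sqrt I=\mathfrak m$, the ideal $I$ contains some power $\mathfrak m^N$. Hence $A$ is finite dimensional, and it is spanned by the images of the finitely many monomials $\bm x^{\bm\alpha}$ with $\vert\bm\alpha\vert<N$. The filtration $F_\bullet A$ is decreasing, so $a\mapsto\dim_{\mathbb C}F_aA$ is a weakly decreasing function with values in $\{0,\dots,\dim A\}$. It therefore changes value at only finitely many points. These are the jumps, which we list as $a_1<\dots<a_t$. A jump here means a value $a$ where $F_aA\neq F_{>a}A:=\bigcup_{b>a}F_bA$.

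For $a_j<b\le a_{j+1}$ we have $F_bA=F_{a_{j+1}}A$. This gives $F_{>a_j}A=F_{a_{j+1}}A$, with the convention $F_{a_{t+1}}A:=F_{>a_t}A$. One checks that $F_{>a_t}A=0$ in the intended setting. If it is not $0$, we simply treat it as an extra top step that has its own monomial basis. Finally, if $F_aA\ne A$ for $a\ll0$, we add a bottom step $A$ itself, which is spanned by monomials by the previous paragraph.

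\emph{Inductive construction.} Start with a monomial basis $B_t$ of $F_{a_t}A$. Suppose we already have a monomial basis $B_{j+1}$ of $F_{a_{j+1}}A$. Let $M_j$ be a monomial basis of $F_{a_j}A$, which exists because the filtration is monomial. Since $B_{j+1}\subset F_{a_{j+1}}A\subset F_{a_j}A$ is linearly independent and $M_j$ spans $F_{a_j}A$, the Steinitz exchange lemma gives a subset $C_j\subset M_j$ such that $B_j:=B_{j+1}\sqcup C_j$ is a basis of $F_{a_j}A$. The set $B_j$ consists of monomials.

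At the end we take the union of all the $B_j$ as the basis $\{\bm x^{\bm\alpha_i}\}_{i\in\Lambda}$. We set $\Lambda_{a_j}$ to be the index set of $C_j$, and $\Lambda_a=\emptyset$ when $a$ is not a jump.

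\emph{Verification.} For a jump $a_j$, the set $B_{j+1}$ is a basis of $F_{>a_j}A$, and $B_{j+1}\sqcup C_j$ is a basis of $F_{a_j}A$. Linear algebra then shows that the images of $C_j$ form a basis of $\mathrm{Gr}^F_{a_j}A=F_{a_j}A/F_{>a_j}A$. For $a$ not a jump, $\mathrm{Gr}^F_aA=0$, which matches $\Lambda_a=\emptyset$.

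The only delicate point is the bookkeeping of $F_{>a}$. This requires the finiteness of the jumps, so that $F_{>a_j}$ is exactly the next step $F_{a_{j+1}}$ and not an infinite union of distinct subspaces. That finiteness is handled by the dimension argument above.
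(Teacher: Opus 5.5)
Your strategy, which completes a monomial basis of a smaller step by monomials from a monomial basis of the next larger step via the exchange lemma, is the ``simple argument of linear algebra'' that the paper invokes without giving any detail. Your exchange step and final verification are fine. The gap is in the bookkeeping you flagged yourself. You identify the points where $a\mapsto\dim_{\mathbb C}F_aA$ changes value with the jumps $\{a : F_aA\neq F_{>a}A\}$, and from this you conclude $F_bA=F_{a_{j+1}}A$ for $a_j<b\le a_{j+1}$.

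For an arbitrary $\mathbb Q$-indexed decreasing filtration this fails. The dimension can drop at an irrational point, or at a rational $c$ where $F_cA$ is already the smaller space, and such a $c$ is not a jump in your sense. For example, let $F_a\mathbb C\{x\}$ be $\mathbb C\{x\}$ for $a\le 0$, $(x)$ for $0<a<1$, $(x^2)$ for $1\le a\le 2$, and $(x^3)$ for $a>2$, and take $A=\mathbb C\{x\}/(x^3)$. This induces a monomial filtration whose only jumps are $0$ and $2$. Your construction gives $B_2=\{x^2\}$ and $C_1=\{1,x\}$. But $F_{>0}A=(x)\supsetneq F_2A$, so $\mathrm{Gr}^F_0A=A/(x)$ is one-dimensional and $C_1$ is not a basis of it.

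The repair is to run the same induction along the chain $V_0\supsetneq V_1\supsetneq\cdots\supsetneq V_m$ of \emph{all} distinct subspaces occurring among the $F_aA$, and then complete to a monomial basis of $A$. These subspaces are totally ordered, there are at most $\dim A+1$ of them, and each has a monomial basis by hypothesis.
\begin{itemize}
\item For any $a$, $F_{>a}A$ is the largest of the finitely many $F_bA$ with $b>a$, so it equals some $V_k$.
\item If $F_aA=V_i$ with $i<m$, write $V_{i+1}=F_bA$. Then $V_{i+1}\subsetneq F_aA$ forces $b>a$, hence $V_{i+1}\subseteq F_{>a}A\subseteq V_i$, so $k\in\{i,i+1\}$. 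If $i=m$, then $F_{>a}A=V_m$.
\item Thus $\mathrm{Gr}^F_aA$ is either $0$ or $V_i/V_{i+1}$, and the monomials added at step $i$ give the required $\Lambda_a$.
\end{itemize}
In the setting the paper actually uses, the shifted Newton filtration is cut out by $v\ge a$, with $v$ taking values in $\frac1D\mathbb Z$ for a fixed $D$. There every drop in dimension occurs at a rational point where $F_a$ is the larger space, so your argument works as written in that case.
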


As a corollary, we have a way to compute $\mathrm{Gr}_a^N(M_f)$.

\begin{corollary}
	Let $\{\bm x^{\bm \alpha_i}\}_{i\in \Lambda}$ be an adimissible basis of $M_f$, then for $a\in \mathbb Q$, $\dim_{\mathbb C}\mathrm{Gr}_a^N(M_f) = \#\{i\in \Lambda\mid \bm x^{\bm \alpha_i} \in F_aM_f\}-\#\{i\in \Lambda\mid \bm x^{\bm \alpha_i} \in \bigcup_{b>a}F_bM_f\}$.
\end{corollary}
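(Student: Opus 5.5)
Throughout I read the statement with $F_\bullet=N_\bullet$. The corollary is meant to be applied to the shifted Newton filtration, so $F_\bullet M_f$ is the monomial filtration $N_\bullet M_f$ of the preceding Proposition and $\mathrm{Gr}^N_a=\mathrm{Gr}^F_a$. The argument below works for any monomial filtration $F_\bullet$ of a finite-dimensional $A=\mathcal O_{n+1}/I$. The plan is to show two things: the basis monomials lying in $F_aM_f$ form a basis of $F_aM_f$, and the same holds for $F_{>a}M_f:=\bigcup_{b>a}F_bM_f$. The formula then follows from $\dim\mathrm{Gr}^F_a=\dim F_aM_f-\dim F_{>a}M_f$.

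\emph{Step 1 (finitely many jumps).} Since $\dim_{\mathbb C}M_f=\mu<\infty$, the decreasing filtration $F_\bullet M_f$ has finitely many jumps $c_1<\dots<c_m$. Hence $F_{>a}M_f=F_{c}M_f$, where $c$ is the smallest jump strictly greater than $a$, or $F_{>a}M_f=0$ if there is none. In particular $F_{>a}M_f$ is a subspace, and the exact sequence $0\to F_{>a}\to F_a\to \mathrm{Gr}^F_a\to 0$ holds.

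\emph{Step 2 (where the basis elements sit).} By admissibility, for each jump $c_j$ the monomials $\{\bm x^{\bm\alpha_i}\}_{i\in\Lambda_{c_j}}$ lie in $F_{c_j}$, and their classes form a basis of $\mathrm{Gr}^F_{c_j}$.
\begin{itemize}
\item Such an element is not in $F_{>c_j}$, since otherwise its class in $\mathrm{Gr}^F_{c_j}$ would vanish. So it lies in $F_a$ exactly when $a\le c_j$.
\item The sets $\Lambda_{c_j}$ are therefore pairwise disjoint.
\item Their total size is $\sum_j\dim\mathrm{Gr}^F_{c_j}=\mu=|\Lambda|$, so $\Lambda=\bigsqcup_j\Lambda_{c_j}$.
\end{itemize}
Consequently $\{i\in\Lambda\mid \bm x^{\bm\alpha_i}\in F_aM_f\}=\bigsqcup_{c_j\ge a}\Lambda_{c_j}$, and $\{i\mid \bm x^{\bm\alpha_i}\in F_{>a}M_f\}=\bigsqcup_{c_j>a}\Lambda_{c_j}$.

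\emph{Step 3 (these sets are bases).} I would prove by descending induction on $k$ that $\bigsqcup_{j\ge k}\Lambda_{c_j}$ indexes a basis of $F_{c_k}M_f$. For $k>m$ both sides are empty or zero. For the inductive step, use the exact sequence of Step 1 with $F_{>c_k}=F_{c_{k+1}}$: a basis of the subspace $F_{c_{k+1}}$ together with lifts of a basis of the quotient $\mathrm{Gr}^F_{c_k}$ is a basis of $F_{c_k}$. The same holds for any $a$, since $F_aM_f=F_{c_k}M_f$ for the least jump $c_k\ge a$.

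\emph{Conclusion.} Taking dimensions in the exact sequence, $\dim\mathrm{Gr}^N_a M_f=\dim F_aM_f-\dim F_{>a}M_f$. By Steps 2 and 3 these two dimensions are the two counts in the statement.

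The only delicate point is Step 2: the disjointness and exhaustion of the $\Lambda_c$, i.e.\ that each basis monomial has a well-defined ``level''. This is what allows membership in $F_a$ to be read off from the level, and it rests on the observation that a monomial nonzero in $\mathrm{Gr}_c$ cannot lie in $F_{>c}$.
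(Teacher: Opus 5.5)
Your linear algebra is the argument the paper has in mind; the paper gives no proof beyond ``a simple argument of linear algebra''. The first two bullets of Step~2 are fine.

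\textbf{The gap.} It is the third bullet: the equality $\sum_j\dim\mathrm{Gr}^F_{c_j}=\mu$, equivalently $\Lambda=\bigsqcup_j\Lambda_{c_j}$. The same issue affects ``or $0$ if there is none'' in Step~1. Neither follows from $\dim M_f<\infty$. Both require the filtration to be exhaustive, separated, and to drop only at values where $\mathrm{Gr}\neq 0$.

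Without these properties, both your claimed generality and the statement itself fail. Take $A=\mathbb C\{x,y,z\}/\bigl((x-y-z)+\mathfrak m^2\bigr)$ with
\begin{itemize}
\item $F_aA=A$ for $a\le 0$;
\item $F_aA=\mathfrak mA=\mathrm{span}(y,z)$ for $0<a\le 1$;
\item $F_aA=\mathbb C x$ for $a>1$.
\end{itemize}
This is induced by $\mathcal O_3\supset\mathfrak m\supset(x)+\mathfrak m^2$, and each $F_aA$ has a monomial basis. The basis $\{1,y,z\}$ with $\Lambda_0=\{1\}$ and $\Lambda_1=\{y\}$ is admissible. Yet at $a=1$ the two counts are $2$ and $0$, while $\dim\mathrm{Gr}_1=1$.

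\textbf{How to close it for $N_\bullet M_f$.} The missing facts do hold here, but you have to say why.
\begin{itemize}
\item Since $v\ge 0$, you have $N_aM_f=M_f$ for $a\le 0$.
\item Since $\mu<\infty$, $J(f)\supseteq\mathfrak m^p$ for some $p$. Only finitely many monomials have degree $<p$, so $N_a\mathcal O_{n+1}\subseteq\mathfrak m^p\subseteq J(f)$ for $a\gg 0$.
\item The facet equations have rational coefficients, so $v$ takes values in $\frac{1}{D}\mathbb Z$ for some $D$. Because $N_a$ is defined by $v\ge a$, $N_aM_f$ is constant for $a\in(\frac{l-1}{D},\frac{l}{D}]$. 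Hence $\mathrm{Gr}^N_a=0$ off $\frac{1}{D}\mathbb Z$, and the graded dimensions telescope to $\mu$.
\end{itemize}

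With this in place, Step~2 already finishes the proof. For $i\in\Lambda_{c_j}$, you have $\bm x^{\bm\alpha_i}\in\bigcup_{b>a}N_bM_f$ if and only if $c_j>a$. This follows directly from $\bm x^{\bm\alpha_i}\notin\bigcup_{b>c_j}N_bM_f$, without Step~1. So the difference of the two counts is $|\Lambda_a|=\dim\mathrm{Gr}^N_a M_f$, and both sides are $0$ when $a$ is not a jump. Step~3 is correct but unnecessary.
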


To compute an admissible basis, we give another notion, ``maximal basis". The two notions of basis are actually equivalent, which can simply be checked by definition. And hence Newton filtration is a monomial filtration.

\begin{definition}[maximal basis]
	Let $B = \{\bm x^{\bm \alpha_i}\}_{i=1}^\mu$ be a monomial basis of $M_f$, where $v(\bm \alpha_1) \leq v(\bm \alpha_2) \leq ... \leq v(\bm \alpha_\mu)$. For $a\in \mathbb Q$, let $B_a = \{\bm x^{\bm \alpha_i}\}_{v(\bm \alpha_i) \geq a}$.
	
	$B$ is called \textit{maximal} if for any monomial $\bm x^{\bm \beta} \in \mathcal O_{n+1}\setminus J(f)$, $\bm x^{\bm\beta}$ is the linear combination of monomials in $B_{v(\bm \beta)}$ modulo $J(f)$.
\end{definition}

We hence propose a straightforward algorithm to compute maximal (hence admissible) basis of $N_\bullet M_f$, the shifted Newton filtration. Our algorithm turns out to be effective and will be applied frequently in next few pages.

\begin{algorithm}\label{algorithm for a maximal basis}
	Let $f \in \mathcal O_{n+1}$ define an isolated singularity at the origin. $M_f$ denotes the Milnor algebra of $f$ and $v = \varphi(\bm x^{\bm e}\cdot-)$ be the shifted valuation of $f$. Running the following process, we will obtain a maximal basis of $N_\bullet M_f$.
	
	\noindent(1) Let $S = \{\bm x^{\bm \alpha} \in \mathcal O_{n+1}\setminus J(f)\}$ and $T = \varnothing$.
	
	\noindent(2) If $S = \varnothing$, return $T$.
	
	\noindent(3) Let $\bm x^{\bm \alpha} \in S$ such that $v(\bm x^{\bm \alpha})$ is the maximum among all $v(\bm x^{\bm \beta}),\bm x^{\bm \beta}\in S$. Add $\bm x^{\bm \alpha}$ to $T$ and delete all $\bm x^{\bm \gamma} \in \mathrm{span}_\mathbb C T + J(f)$.
	
	\noindent(4) Back to (2).
\end{algorithm}

\section{Spectrum and Hertling Conjecture for Trimodal Singularities}

\subsection{Spectrum of Trimodal Singularities}

Our classification of singularities of modality $3$ follows from \cite{MR0553709} and \cite{MR1721628}. The first reference classifies all quasi-homogeneous trimodal singularities and the second classifies other trimodal singularities. In this subsection, we will compute the spectra of non-quasihomogeneous trimodal singularities (listed below). To avoid too much repetition, we only provide processes for the cases $NA_{r,0}$, $NA_{r,s}$ and $VA_{r,s}$.    

Singularties of modality $3$ in \cite{MR1721628} are classified into two types $4N$ and $4V$. 

% \noindent\textbf{I. 4N Type}

% \begin{tabular}{lccc}
	% 	\hline
	% 	Name & $\mu$ & Representative $f_0$ & Further Terms\\
	% 	\hline
	% 	$NA_{r,0}$ & $16+r$ & $x^{r+5}+x^3y^2+dxy^{4}+by^5$ & $xy^5,y^6$\\
	% 	$NA_{r,s}$ & $16+r+s$ & $ax^{r+5}+x^3y^2+x^2y^3+by^{s+5}$ & $x^3y^3$\\
	% 	$NB_{(-1)}^r$ & $18+r$ & $ax^{r+5}+x^3y^2+y^6$ & $xy^5,xy^6$\\
	% 	$NB_{(0)}^r$ & $19+r$ & $ax^{r+5}+x^3y^2+xy^5$ & $x^2y^4,x^2y^5$\\
	% 	$NB_{(1)}^r$ & $20+r$ & $ax^{r+5}+x^3y^2+y^7$ & $xy^6,xy^7$\\
	% 	$NC_{19}$ & $19$ & $x^4y+y^6$ & $xy^5,x^2y^4,x^2y^5$\\
	% 	$NC_{20}$ & $20$ & $x^4y+xy^5$ & $x^2y^4,x^3y^3,x^3y^4$\\
	% 	$NF_{20}$ & $20$ & $x^5+y^6$ & $x^2y^4,x^3y^3,x^3y^4$\\
	% 	$NF_{21}$ & $21$ & $x^5+xy^5$ & $x^3y^3,x^4y^2,x^4y^3$\\
	% 	\hline
	% \end{tabular}
% ~\\
\begin{table}[!htbp]
	\centering
	\caption{4N Type}
	%\label{频率型、强度型和相对比型指标}也可以用这个
	\renewcommand\arraystretch{1.5}
	\begin{tabular}{lp{2cm}p{6cm}p{3cm}}
		\toprule  %添加表格头部粗线
		Name & $\mu$ & Representative $f_0$ & Further Terms\\
		\midrule  %添加表格中横线
		$NA_{r,0}$ & $16+r$ & $x^{r+5}+x^3y^2+dxy^{4}+by^5$ & $xy^5,y^6$\\
		$NA_{r,s}$ & $16+r+s$ & $ax^{r+5}+x^3y^2+x^2y^3+by^{s+5}$ & $x^3y^3$\\
		$NB_{(-1)}^r$ & $18+r$ & $ax^{r+5}+x^3y^2+y^6$ & $xy^5,xy^6$\\
		$NB_{(0)}^r$ & $19+r$ & $ax^{r+5}+x^3y^2+xy^5$ & $x^2y^4,x^2y^5$\\
		$NB_{(1)}^r$ & $20+r$ & $ax^{r+5}+x^3y^2+y^7$ & $xy^6,xy^7$\\
		$NC_{19}$ & $19$ & $x^4y+y^6$ & $xy^5,x^2y^4,x^2y^5$\\
		$NC_{20}$ & $20$ & $x^4y+xy^5$ & $x^2y^4,x^3y^3,x^3y^4$\\
		$NF_{20}$ & $20$ & $x^5+y^6$ & $x^2y^4,x^3y^3,x^3y^4$\\
		$NF_{21}$ & $21$ & $x^5+xy^5$ & $x^3y^3,x^4y^2,x^4y^3$\\
		\bottomrule  
	\end{tabular}
\end{table}
% \noindent\textbf{II. 4V Type}
% \begin{tabular}{lccc}
	% 	\hline
	% 	Name & $\mu$ & Representative $f_0$ & Further Terms\\
	% 	\hline
	% 	$VA_{r,s}$ & $15+s+r$ & $ax^{r+4}+x^2y^2+(x+y)z^2+by^{s+4}$ & $xyz^2$\\
	% 	$VA_{r,0}^\#$ & $15+r$ & $a\phi_{r+8}+x^3y+2dx^2y^2+yz^2+xy^3$ & $x^2y^3$\\
	% 	$VA_{r,s}^\#$ & $15+r+s$ & $a\phi_{r+8}+x^3y+x^2y^2+yz^2+by^{s+4}$ & $x^3y^2$\\
	% 	$VB_{(-1)}^s$ & $17+s$ & $x^3z+x^2y^2+yz^2+by^{s+4}$ & $x^5,x^6$\\
	% 	$VB_{(0)}^s$ & $18+s$ & $x^5+x^2y^2+yz^2+by^{s+4}$ & $x^4y,x^5y$\\
	% 	$VB_{(-1)}^{\#,r}$ & $17+r$ & $a\phi_{r+8}+x^3y+yz^2+y^5$ & $xy^4,xy^5$\\
	% 	$VB_{(0)}^{\#,r}$ & $18+r$ & $a\phi_{r+8}+x^3y+yz^2+xy^4$ & $y^6,y^7$\\
	% 	$VB_{(1)}^{\#,r}$ & $19+r$ & $a\phi_{r+8}+x^3y+yz^2+y^6$ & $xy^5,xy^6$\\
	% 	$VC_{18}$ & $18$ & $x^4+yz^2+y^5$ & $xy^4,x^2y^3,x^2y^4$\\
	% 	$VC_{19}$ & $19$ & $x^4+yz^2+xy^4$ & $x^2y^3,x^3y^2,x^3y^3$\\
	% 	$VC_{18}^\#$ & $18$ & $x^3z+yz^2+xy^3$ & $x^5,x^4y,x^5y$\\
	% 	$VC_{19}^\#$ & $19$ & $x^5+yz^2+xy^3$ & $x^4y,x^3y^2,x^4y^2$\\
	% 	$VF_{19}$ & $19$ & $x^3z+yz^2+y^4$ & $x^4y,x^3y^2,x^4y^2$\\
	% 	$VF_{20}$ & $20$ & $x^5+yz^2+y^4$ & $x^3y^2,x^2y^3,x^3y^3$\\
	% 	\hline
	% \end{tabular}
% ~\\
~\\

\begin{table}[!htbp]
	\centering
	\caption{4V Type}
	%\label{频率型、强度型和相对比型指标}也可以用这个
	\renewcommand\arraystretch{1.5}
	\begin{tabular}{lp{2cm}p{6cm}p{3cm}}
		\toprule  %添加表格头部粗线
		Name & $\mu$ & Representative $f_0$ & Further Terms\\
		\midrule  %添加表格中横线
		$VA_{r,s}$ & $15+s+r$ & $ax^{r+4}+x^2y^2+(x+y)z^2+by^{s+4}$ & $xyz^2$\\
		$VA_{r,0}^\#$ & $15+r$ & $a\phi_{r+8}+x^3y+2dx^2y^2+yz^2+xy^3$ & $x^2y^3$\\
		$VA_{r,s}^\#$ & $15+r+s$ & $a\phi_{r+8}+x^3y+x^2y^2+yz^2+by^{s+4}$ & $x^3y^2$\\
		$VB_{(-1)}^s$ & $17+s$ & $x^3z+x^2y^2+yz^2+by^{s+4}$ & $x^5,x^6$\\
		$VB_{(0)}^s$ & $18+s$ & $x^5+x^2y^2+yz^2+by^{s+4}$ & $x^4y,x^5y$\\
		$VB_{(-1)}^{\#,r}$ & $17+r$ & $a\phi_{r+8}+x^3y+yz^2+y^5$ & $xy^4,xy^5$\\
		% 		\midrule  %添加表格中横线
		% 	\end{tabular}
	% \end{table}
% \begin{table}[!htbp]
	% 	\centering
	% 	\renewcommand\arraystretch{1.5}
	% 	\begin{tabular}{lp{2cm}p{6cm}p{3cm}}
		% 		\midrule  %
		$VB_{(0)}^{\#,r}$ & $18+r$ & $a\phi_{r+8}+x^3y+yz^2+xy^4$ & $y^6,y^7$\\
		$VB_{(1)}^{\#,r}$ & $19+r$ & $a\phi_{r+8}+x^3y+yz^2+y^6$ & $xy^5,xy^6$\\
		$VC_{18}$ & $18$ & $x^4+yz^2+y^5$ & $xy^4,x^2y^3,x^2y^4$\\
		$VC_{19}$ & $19$ & $x^4+yz^2+xy^4$ & $x^2y^3,x^3y^2,x^3y^3$\\
		$VC_{18}^\#$ & $18$ & $x^3z+yz^2+xy^3$ & $x^5,x^4y,x^5y$\\
		$VC_{19}^\#$ & $19$ & $x^5+yz^2+xy^3$ & $x^4y,x^3y^2,x^4y^2$\\
		$VF_{19}$ & $19$ & $x^3z+yz^2+y^4$ & $x^4y,x^3y^2,x^4y^2$\\
		$VF_{20}$ & $20$ & $x^5+yz^2+y^4$ & $x^3y^2,x^2y^3,x^3y^3$\\
		\bottomrule  
	\end{tabular}
\end{table}
Here $\phi_{m+8} = \begin{cases}
	x^{\frac{m}{2}+4}, \text{if } 2\mid m,\\
	x^{\frac{m-1}{2}+3}z, \text{if } 2\nmid m.
\end{cases}$

Parameters $r,s$ are positive integers and $a,b,d$ are complex numbers (they only influence the choice of monomial basis of Milnor algebra) with $a,b\neq 0$. Further terms mean the singularities is a sum of $f_0$ and a $\mathbb C$-linear combination of further terms.

One may find under such circumstances, all representatives above are Newton non-degenerate, since each cone containg $n+1$ vertices and are simplicial and of dimension $n+1$.

For a specific singularity $(V(f),0),f\in \mathcal O_{n+1}$, one can use SINGULAR to compute its spectrum as below.

\begin{verbatim}
	LIB"gmssing.lib"
	ring r = 0, (x,y), ds;
	poly f = x14+x3y2+xy4+y5;
	spectrum(f);
\end{verbatim}

Next we compute the spectrum of $NA_{r,0}$ and $NA_{r,s}$ in detail, as examples of our algorithm (\textbf{Algorithm \ref{algorithm for a maximal basis}}). For other types of singularities of modality $3$, we merely list results.

To run our algorithm, we need to first determine monomials in $\mathcal O_{n+1}\setminus J(f)$ and second know their relations modulo $J(f)$. We list the Gr\"obner basis of $J(f)$ so that one can easily realize the two things. As for an admissible order (see \cite{MR1075338}), we choose $x>y>1$ and the local order. That is, we apply the lexicographic order for monomials and list monomials terms from small to large. In \cite{MR1721628}, monmial bases of the Milnor algebras of singularities of modality $3$ have been listed. So one can compute maximal bases in a more convenient way.

\begin{lemma}[Gr\"obner basis of $NA_{r,0}$]
	The following is a Gr\"obner basis of $J(f)$, where $f = x^{r+5}+x^3y^2+y^5(r>0)$.
	\begin{align*}
		& 2x^3y+5y^4, 3x^2y^2+x^{r+4}, 15y^5-2(r+5)x^{r+5},x^{r+7}.
	\end{align*}
\end{lemma}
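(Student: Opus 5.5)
The plan is to avoid checking every s-polynomial and instead compare colengths of leading ideals. This works because the Milnor number $\mu(f)=16+r$ is already known from the table (or by semi-quasihomogeneity).

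Fix the local monomial order used in the paper (local ordering with $x>y$), and let $L(I)$ denote the ideal generated by the leading monomials of elements of an ideal $I$. The standard fact for local orderings (\cite{MR2290112}) is that $\dim_{\mathbb C}\mathcal O_2/I=\dim_{\mathbb C}\mathcal O_2/L(I)$ for an $\mathfrak m$-primary ideal $I$. Moreover, a finite set $G\subset I$ is a standard (Gr\"obner) basis of $I$ exactly when $L(I)$ is generated by the leading monomials of $G$.

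\textbf{Step 1: each listed element lies in $J(f)$.} We have
\begin{displaymath}
f_x=(r+5)x^{r+4}+3x^2y^2,\qquad f_y=2x^3y+5y^4 .
\end{displaymath}
The first listed element is $f_y$ itself. For the second, we take $f_x$, normalized and corrected by a multiple of the other generators; I expect the coefficient of $x^{r+4}$ to be adjusted this way, and otherwise I would replace it by $f_x$, which has the same leading monomial. For the third, eliminating $x^3y^2$ between $y f_y$ and $x f_x$ gives a combination equal to $15y^5-2(r+5)x^{r+5}$ modulo $J(f)$. Concretely, we use $5y\cdot f_y\cdot(\dots)$ together with the Euler-type relation coming from the weights of $x^3y^2+y^5$. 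For $x^{r+7}$, we repeatedly multiply the previous relations by $x$ and $y$ and substitute. This shows $x^{r+7}\in J(f)+\mathfrak m\,x^{r+7}$, and Nakayama's lemma then gives $x^{r+7}\in J(f)$. Equivalently, $x^{r+7}$ is a local-order normal-form reduction of an s-polynomial. This routine elimination is the only computational step.

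\textbf{Step 2: leading monomials and counting.} In the local ordering the leading monomials of the four elements are $x^3y$, $x^2y^2$, $y^5$ and $x^{r+7}$. Let $L_0$ be the monomial ideal they generate. The monomials outside $L_0$ are:
\begin{itemize}
\item $x^a$ for $0\le a\le r+6$, which gives $r+7$ monomials;
\item $x^ay$ for $a\le 2$, which gives $3$ monomials;
\item $x^ay^b$ for $a\le1$ and $2\le b\le 4$, which gives $6$ monomials.
\end{itemize}
So $\dim\mathcal O_2/L_0=16+r=\mu(f)$.

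\textbf{Step 3: conclude.} Since the four elements lie in $J(f)$, we have $L_0\subseteq L(J(f))$. Both ideals have colength $\mu$, because $\dim\mathcal O_2/L(J(f))=\dim \mathcal O_2/J(f)=\mu$. Hence $L_0=L(J(f))$, and the four elements form a standard basis.

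The main obstacle is Step 1, namely producing explicit membership relations with the exact coefficients stated, especially for $x^{r+7}$. If the stated normalization of the second element turns out to be slightly off, the argument goes through unchanged with that element replaced by $f_x$, since only the leading monomials matter.
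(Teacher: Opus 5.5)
\textbf{The gap is in Step~1, for the second generator, and it cannot be closed: the lemma as printed is false.} We have $3x^2y^2+x^{r+4}=f_x-(r+4)x^{r+4}$, so this element lies in $J(f)$ if and only if $x^{r+4}\in J(f)$. But $x^{r+4}\notin J(f)$, and your own argument shows it. Run Steps 1--3 with $f_x$ in place of the printed element and you get $L(J(f))=(x^3y,x^2y^2,y^5,x^{r+7})$, which does not contain $x^{r+4}$. This agrees with the paper, where $x^{r+4}$ belongs to the monomial basis $K'$ of the Milnor algebra.

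So no correction by multiples of the other generators can change that coefficient, and the printed set is not even contained in $J(f)$. It is not a standard basis of the ideal it generates either: $3y(2x^3y+5y^4)-2x(3x^2y^2+x^{r+4})-(15y^5-2(r+5)x^{r+5})=2(r+4)x^{r+5}$, whose leading monomial lies outside $(x^3y,x^2y^2,y^5,x^{r+7})$.

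The identity $3yf_y-2xf_x=15y^5-2(r+5)x^{r+5}$ shows that the intended second generator is $f_x=3x^2y^2+(r+5)x^{r+4}$. What your argument can prove is this corrected lemma. Your remark that ``only the leading monomials matter'' is also not right. Step~3 uses $G\subset J(f)$, and the printed element has the correct leading monomial but fails exactly that hypothesis.

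Two smaller points.

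\emph{The monomial order.} You must state it explicitly. The paper speaks of a lexicographic order, but under the local lexicographic order (Singular's \texttt{ls}) the leading monomial of $2x^3y+5y^4$ is $y^4$, and the count in Step~2 no longer equals $\mu$. Your leading monomials are those of a degree-compatible local order such as \texttt{ds}, which is the order the paper's Singular code actually uses.

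\emph{The membership $x^{r+7}\in J(f)$.} You leave this as ``routine''; it goes as follows.
\begin{itemize}
\item Combining $y^3f_x$ with $x^2(15y^5-2(r+5)x^{r+5})$ gives $x^{r+7}\equiv-\tfrac{5}{2}x^{r+4}y^3$.
\item Four substitutions using $x^2y^2\equiv-\tfrac{r+5}{3}x^{r+4}$ and $x^3y\equiv-\tfrac{5}{2}y^4$ turn this into a nonzero multiple of $x^{4r+7}=x^{3r}x^{r+7}$.
\item So your Nakayama step applies, and it does so exactly because $r>0$.
\end{itemize}

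With $f_x$ in place of the printed second generator and \texttt{ds} fixed, your colength comparison is a complete proof. The paper gives no argument for this lemma at all; it is stated as the output of a computation. Your route is therefore a genuine, checkable alternative rather than a reproduction of the paper's proof.
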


\begin{proposition}[$NA_{r,0}$]
	The following is a maximal basis of the Milnor algebra of $f = x^{r+5}+x^3y^2+y^5$($r>0$):
	\begin{align*}
		x^k(1\leq k \leq r+5);
		x^2y;
		y^i(0 \leq i \leq 4);
		xy^{i+1}( 0\leq i \leq 4).
	\end{align*}
	And the spectrum of $f$ is hence given below:
	\begin{displaymath}
		\frac{2}{5},\frac{3}{5},\frac{4}{5},\frac{4}{5},1,1,1,\frac{6}{5},\frac{6}{5},\frac{7}{5},\frac{8}{5}, \frac{2k+r+4}{2(r+5)}(1\leq k\leq r+5).
	\end{displaymath}
	%where multiplicities of $1,\frac{4}{5},\frac{6}{5}$ are $4,2,2$ respectively.
\end{proposition}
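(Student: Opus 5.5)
The plan is to reduce everything to the shifted Newton filtration and then run \textbf{Algorithm \ref{algorithm for a maximal basis}} by hand, using the Gr\"obner basis of the preceding lemma. First, $f=x^{r+5}+x^3y^2+y^5$ is convenient. Its Newton boundary has the two compact facets $\sigma_1=[(0,5),(3,2)]$ and $\sigma_2=[(3,2),(r+5,0)]$. Each cone $\mathrm{Cone}(0,\sigma_i)$ is simplicial and meets $\mathrm{Supp}(f)$ on the boundary in exactly two points, so $f$ is non-degenerate. Hence \textbf{Theorem \ref{Saito's non-degenerated theorem}} applies: the spectrum is the Poincar\'e polynomial of $N_\bullet M_f$, and it suffices to exhibit a maximal basis and read off the shifted valuations.

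Next I compute the valuation $\varphi$. On $\mathrm{Cone}(0,\sigma_1)$, i.e.\ for $v_2/v_1\ge 2/3$, we have $\varphi(v)=(v_1+v_2)/5$. On $\mathrm{Cone}(0,\sigma_2)$ we have $\varphi(v)=\frac{v_1}{r+5}+\frac{(r+2)v_2}{2(r+5)}$. The shifted valuation is $v(x^iy^j)=\varphi(i+1,j+1)$, which gives the following values on the proposed basis.
\begin{itemize}
\item $v(y^i)=\frac{i+2}{5}$ for $0\le i\le 4$, giving $\frac25,\dots,\frac65$.
\item $v(xy^{i+1})=\frac{i+4}{5}$ for $0\le i\le 4$, giving $\frac45,\dots,\frac85$.
\item $v(x^2y)=\varphi(3,2)=1$.
\item $v(x^k)=\varphi(k+1,1)=\frac{2k+r+4}{2(r+5)}$, since $1/(k+1)<2/3$ puts $(k+1,1)$ in the second cone.
\end{itemize}
The proposed set has $(r+5)+1+5+5=r+16=\mu$ elements. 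Its members are exactly the standard monomials with respect to the leading terms $y^4$ (from $2x^3y+5y^4$), $x^2y^2$, $y^5$ and $x^{r+7}$ of the Gr\"obner basis, up to the identification of which member of each relation is taken as a basis element. So it is a monomial basis of $M_f$. Collecting the valuations gives the stated spectrum. As a sanity check, the multiplicities of $\frac45,1,\frac65$ are $2,3,2$ from the $y$- and $xy$-parts, and the result is symmetric about $1$ as \textbf{Theorem \ref{symmetry of spectrum}} demands.

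The main work is maximality: every monomial outside $J(f)$ must reduce modulo $J(f)$ to a combination of basis monomials of valuation at least its own. I will check that each Gr\"obner relation, read as a rewriting rule, never lowers $v$.
\begin{itemize}
\item $x^3y\equiv -\tfrac52 y^4$: here $v(x^3y)=\frac{r+6}{r+5}\le \frac65=v(y^4)$, which holds because $r\ge1$.
\item $x^2y^2\equiv-\tfrac13x^{r+4}$: here $v(x^2y^2)=\frac65\le\frac{3r+12}{2(r+5)}$, i.e.\ $15r+60\ge 12r+60$.
\item $y^5\equiv\frac{2(r+5)}{15}x^{r+5}$: here $\frac75\le\frac{3r+14}{2(r+5)}$, i.e.\ $15r+70\ge14r+70$.
\end{itemize}
Since $v$ is superadditive (each $\varphi_{\sigma_i}$ is linear and $\varphi=\min_i\varphi_{\sigma_i}$), multiplying a relation by a monomial preserves the inequality. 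So iterated reduction of any monomial lands in the span of basis monomials of valuation at least the original one. The monomials killed by $x^{r+7}$, and the high powers of $x$ equal to zero in $M_f$, cause no trouble.

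The step I expect to be most delicate is this bookkeeping. Products land in different cones, and the direction of rewriting must be chosen so that valuations never drop: for instance, $x^3y$ must be expressed through $y^4$ and not conversely. There is also the borderline case $k=(r+6)/2$, where $x^k$ has valuation exactly $1$ and must not be confused with the three other exponents equal to $1$. If a direct check becomes messy, I would instead run \textbf{Algorithm \ref{algorithm for a maximal basis}} literally from the top valuation $\frac85$ downward, confirming at each step that the chosen monomial is independent of the higher ones modulo $J(f)$.
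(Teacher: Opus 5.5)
\textbf{There is a genuine gap in the maximality step, and it sits exactly at the top exponent $\frac85$.} Your setup is correct: non-degeneracy, the two facet functionals, the valuations of the proposed monomials, and the count $r+16$.

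The failing inference is ``$v$ is superadditive, so multiplying a relation $m_1\equiv\lambda m_2$ with $v(m_1)\le v(m_2)$ by a monomial preserves the inequality''. Superadditivity only gives $v(x^cm_2)\ge\varphi(c)+v(m_2)$, while $v(x^cm_1)$ may exceed $\varphi(c)+v(m_1)$ by more. Since $\varphi=\min_\sigma\varphi_\sigma$, a condition that does work is $\varphi_\sigma(m_2-m_1)\ge0$ for both facets.
\begin{itemize}
\item Your first two relations satisfy it. The exponent differences $(-3,3)$ and $(r+2,-2)$ give the values $0,\frac{3r}{2(r+5)}$ and $\frac r5,0$ respectively.
\item The third relation does not: $\varphi_{\sigma_2}(r+5,-5)=-\frac{3r}{2(r+5)}<0$.
\end{itemize}

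Concretely, $x\cdot(15y^5-2(r+5)x^{r+5})\in J(f)$, while $v(xy^5)=\frac85>\frac{3r+16}{2(r+5)}=v(x^{r+6})$. So your rule $y^5\to x^{r+5}$ lowers $v$ on $xy^5$. It also leaves $x^{r+6}$ irreducible, even though $x^{r+6}$ is not in your set.

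Also, the leading term of $2x^3y+5y^4$ is $x^3y$, not $y^4$, as your own rule $x^3y\to y^4$ presupposes. With leading terms $x^3y,x^2y^2,y^5,x^{r+7}$, the normal forms are $1,x,\dots,x^{r+6},y,xy,x^2y,y^2,xy^2,y^3,xy^3,y^4,xy^4$. If your argument were valid, it would prove this basis maximal. That would give the exponent $\frac{3r+16}{2(r+5)}$ instead of $\frac85$, violating $\alpha_1+\alpha_\mu=2$ with $\alpha_1=\frac25$.

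\textbf{Repair.} Orient the third relation case by case and finish with a finite check; this is what your fallback, and the paper's proof, amount to.
\begin{itemize}
\item Your set is the standard basis above with $x^{r+6}$ swapped for $xy^5$. The swap is legitimate since $15xy^5\equiv2(r+5)x^{r+6}\ne0$ in $M_f$.
\item Reduce with the two safe relations, and with the third in whichever direction is needed. The nonzero monomials outside your set are then exactly $x^{r+6},x^3y,x^4y,x^2y^2,x^3y^2,x^4y^2,y^5$.
\item These are nonzero multiples of $xy^5,y^4,xy^4,x^{r+4},x^{r+5},xy^5,x^{r+5}$ respectively. Each such basis monomial has valuation at least that of the monomial it replaces. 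For $x^{r+6}$ and $x^4y^2$ the check is $\frac{3r+16}{2(r+5)}\le\frac85$; for $y^5$ and $x^3y^2$ it is $\frac75\le\frac{3r+14}{2(r+5)}$.
\end{itemize}

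The paper does this by running the algorithm downward from $\frac85$. It picks $xy^5$ and discards $x^{r+6}$, then picks $x^{r+5}$ and discards $y^5$. Next it picks the $x^k$ of valuation at least $\frac75$, and then $y^4$, discarding $x^3y$ and $x^4y$. Its list of nonvanishing monomials omits $x^2y^2,x^3y^2,x^4y^2$, but the same comparison handles them.
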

\begin{proof}
	With the help of the Gr\"obner basis, one can show all monomials in $\mathcal O_2\setminus J(f) = \{\bm x^{\bm \alpha \in \mathcal O_X \mid \bm x^{\bm \alpha}\not\in J(f)}\}$ are contained in the following set:
	\begin{align*}
		W = \{& 1,x,...,x^{r+6}, y,xy,...,x^4y,y^2,xy^2,y^3,xy^3,y^4,xy^4,y^5,xy^5\}.
	\end{align*}
	Their shifted valuations are presented as follow:
	\begin{align*}
		V = \{& \frac{2}{5},\frac{2k+r+4}{2(r+5)}(1\leq k\leq r+6), \frac{3}{5}, \frac{4}{5}, 1, \frac{r+6}{r+5}, \frac{r+7}{r+5}, \frac{4}{5},1,1,\frac{6}{5},\frac{6}{5},\frac{7}{5}, \frac{7}{5},\frac{8}{5}\}. 
	\end{align*}
	
	By \cite{MR1721628}, the following is a basis of $\mathcal O_2/J(f)$:
	\begin{align*}
		K = \{x^k(1\leq k \leq r+4);
		x^2y;
		y^i(0 \leq i \leq 4);
		xy^{i+1}( 0\leq i \leq 4); y^5\}.
	\end{align*}
	
	Next, we run \textbf{Algorithm \ref{algorithm for a maximal basis}} on $W$ and seek for a basis by picking or deleting elements of $W$. Since $y^5$ is linearly dependent with $x^{r+5}$, we slightly can adjust $K$ as
	\begin{align*}
		K' = \{x^k(1\leq k \leq r+5);
		x^2y;
		y^i(0 \leq i \leq 4);
		xy^{i+1}( 0\leq i \leq 4)\}.
	\end{align*}
	$K'$ is also a basis. We note that $\frac{8}{5}$ is the biggest in $V$ and $xy^5\in K'$, thus $xy^5$ is picked. Next, since $x^{r+6}$ is linearly dependent with $xy^5$, it is ignored. So far, we have only picked $xy^5$ and deleted $x^{r+6}$.
	
	Since $\frac{2(r+5)+r+4}{2(r+5)} = \frac{3}{2}-\frac{1}{2(r+5)} > \frac{7}{5}$, $x^{r+5}\in K'$ and $y^5$ is linearly dependent with $x^{r+5}$, we pick $x^{r+5}$ and delete $y^5$.
	
	Next, by our algorithm, we then pick $x^k$ for all $k$ such that $\frac{2k+r+4}{2(r+5)} \geq \frac{7}{5}$, since $\frac{r+7}{r+5} \leq \frac{7}{5}$. Let $k_0$ be the smallest one of such $k$. Till now, we have picked $K_{k_0} = \{x^{k_0},...,x^{r+5},xy^5\} \subset K'$. We then confront $y^4$, which is also in $K'$. Hence we pick $y^4$ and delete $x^3y,x^4y$. Eventually, the rest elements of $W$ are exactly $K'-(K_{k_0}\cup \{y^4\})$. $K'$ is a maximal basis by our wish.
\end{proof}

Next we consider $NA_{r,s}$.
\begin{lemma}[Gr\"obner basis of $NA_{r,s}$]
	The following is a Gr\"obner basis of $J(f)$, where 
	\begin{equation*}
		f = x^{r+4}+x^3y^2+x^2y^3+y^{s+4}(r\geq s>0).
	\end{equation*}
	\begin{align*}
		\begin{split}
			& 2x^3y+3x^2y^2+(s+4)y^{s+3},3x^2y^2+2xy^3+(r+4)x^{r+4},\\
			&10xy^4-9(s+4)y^{s+4}+6(r+4)x^{r+4}+5(r+4)x^{r+3}y, (s+4)y^{s+5}-(r+4)x^{r+5},x^{r+6}.
		\end{split}
	\end{align*}
\end{lemma}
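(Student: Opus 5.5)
The plan is to avoid checking all S-polynomials and use a dimension count instead. Let $f = x^{r+4}+x^3y^2+x^2y^3+y^{s+4}$ with $r\geq s>0$, let $G$ denote the listed five elements, and let $L(\cdot)$ denote the ideal of leading monomials for the fixed local order ($x>y>1$, smaller degree first). We have
\begin{align*}
	f_x &= (r+4)x^{r+3}+3x^2y^2+2xy^3,\\
	f_y &= 2x^3y+3x^2y^2+(s+4)y^{s+3}.
\end{align*}
The argument has three steps: (i) show $G\subset J(f)$; (ii) show that the monomials outside $L(G)$ number exactly $\mu(f)=16+r+s$; (iii) conclude from (i) and (ii) that $L(G)=L(J(f))$, which is the definition of a standard (Gr\"obner) basis in the local setting.

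For step (i), the first element is $f_y$ itself. The second is $x f_x$ up to the normalisation used in the list; if the list is taken literally I would first reconcile the stated form with $f_x$ and $xf_x$. For the third element I would take the combination $3y f_y - 2x f_x$ (cancelling the $x^3y^2$-terms), and then use $y f_x$ and $x f_y$ to eliminate the $x^2y^3$-type terms until only $xy^4$, $y^{s+4}$, $x^{r+4}$ and $x^{r+3}y$ remain. I expect the coefficients $10, -9(s+4), 6(r+4), 5(r+4)$ to fall out of this. The fourth element should arise from multiplying the third by $y$ and the second by suitable monomials, using the quasi-homogeneity of the principal part $x^3y^2+x^2y^3$: the Euler-type relation $x f_x + y f_y = 5f + (\text{higher-order corrections})$ lets one trade $y^{s+5}$ against $x^{r+5}$. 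Finally, $x^{r+6}\in J(f)$ will follow by multiplying the fourth element by $x$ and reducing $xy^{s+5}$ via the third element times $y^{s+1}$. Because $r\geq s$, every term produced lies in $\mathfrak m\cdot(\text{previous elements})$ or is a higher power of $x$ that can be absorbed. Convergence of such absorptions is harmless in $\mathcal O_2$: either a unit factor $1+(\text{element of }\mathfrak m)$ is inverted, or one appeals to finite determinacy (\textbf{Corollary \ref{Corollary of Finite Determinacy Theorem}}), since $\mathfrak m^{\mu+2}\subset J(f)$.

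For step (ii), I read off the leading monomials: $x^3y$, $x^2y^2$, $xy^4$ (or $x^4y$ when $r=1$; this must be checked against the order), $y^{s+5}$ or $x^{r+5}$ for the fourth element, and $x^{r+6}$. I then count the staircase, i.e.\ the monomials $x^iy^j$ outside $L(G)$. These should be $1,\dots,x^{r+5}$, together with $y,\dots,y^{s+4}$, $xy,xy^2,xy^3$ and $x^2y$, giving $16+r+s$ monomials. This matches the Milnor algebra basis listed in \cite{MR1721628}.

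Step (iii) is then formal. Since $L(G)\subseteq L(J(f))$, the monomials outside $L(J(f))$ form a subset of those outside $L(G)$. The former are exactly $\mu(f)$ in number, being a basis of $\mathcal O_2/J(f)$, and by step (ii) so are the latter, so the two leading ideals coincide.

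The main obstacle is step (i) for the third and fourth elements. This requires finding the explicit combinations of $f_x,f_y$ and carefully tracking the higher-order terms $x^{r+3}y$ and $y^{s+4}$, whose leading status depends on $r,s$. A secondary obstacle is getting the leading monomials right in the edge cases $r=1$ or $s=1$. If the hand computation becomes unwieldy, I would verify the Buchberger--Mora criterion directly with SINGULAR (\texttt{std} in ring \texttt{ds}) for small $r,s$ to pin down the correct combinations, and then generalise them symbolically.
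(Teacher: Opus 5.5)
The paper states this lemma without proof. Your three-step certificate (membership, a staircase of size $\mu$, equality of leading ideals) is the right tool. However, the points you defer are exactly where the printed statement breaks, so the plan cannot be completed as written.

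\textbf{Membership (step (i)).} The second element is not $xf_x$ in any normalisation. It equals $f_x+(r+4)x^{r+3}(x-1)$. Since $x-1$ is a unit and $x^{r+3}$ is a staircase monomial (it belongs to the Milnor basis), the printed element is \emph{not} in $J(f)$. Step (i) only works after replacing it by $f_x=(r+4)x^{r+3}+3x^2y^2+2xy^3$.

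Your route to the third element is fine: it is exactly $g_3:=(6x+5y)f_x-9yf_y$.

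For the fourth element the relevant identity is
\[
(3x+2y)g_3-10y^2f_x=18(r+4)x^{r+5}-18(s+4)y^{s+5}+27(r+4)x^{r+4}y-27(s+4)xy^{s+4},
\]
so the missing idea is that $x^{r+4}y$ and $xy^{s+4}$ lie in $J(f)$.
\begin{itemize}
\item The Euler relation does not give this, because it brings in $f\notin J(f)$.
\item The inclusion $\mathfrak m^{\mu+2}\subset J(f)$ does not give it either, because these monomials have degree about $r+5$, far below $\mu$.
\item What does give it is Saito's theorem on the Newton filtration. The shifted Newton valuations of $x^{r+4}y$ and $xy^{s+4}$ are $2-\frac{2}{r+4}$ and $2-\frac{2}{s+4}$. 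These exceed the maximal exponent $\frac85=2-v(1)$ exactly when $r,s\geq 2$, and monomials above the maximal exponent lie in $J(f)$.
\end{itemize}
For $s=1$ (and $r\geq 2$) one has $v(xy^5)=\frac85$, so $xy^5$ is not in $J(f)$. Reducing $xy^5$ with $yg_3$ gives $155y^6-4(r+4)x^{r+5}\in J(f)$ instead. This is incompatible with the printed fourth element: together they would put both $x^{r+5}$ and $y^6$ in $J(f)$, leaving only $\mu-1$ staircase monomials. When $r>s\geq 2$, the fifth element follows from $x$ times the fourth, since $xy^{s+5}\in J(f)$.

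\textbf{Staircase (step (ii)).} Your count silently assumes $r>s$. If $r=s$, both monomials of the fourth element have degree $r+5$, and with the tie-break $x>y$ its leading monomial is $x^{r+5}$, not $y^{s+5}$. Then no leading monomial of $G$ is a pure power of $y$, the complement of $L(G)$ is infinite, and $G$ is not a standard basis. The correct last element in that case is $y^{s+6}$, which lies in $J(f)$ because $x^{r+5}y\in J(f)$.

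\textbf{Count (step (iii)).} Your list $1,\dots,x^{r+5},y,\dots,y^{s+4},xy,xy^2,xy^3,x^2y$ has $r+s+14$ elements, not $16+r+s$. By Kouchnirenko, $r+s+14$ is also $\mu(f)$ for this $f$. The value $16+r+s$ is the Milnor number of the normal form with exponents $r+5$ and $s+5$.

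With these corrections your argument proves the lemma for $r>s\geq 2$, with $f_x$ as the second generator. Outside that range the statement as printed is false.
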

\begin{proposition}[$NA_{r,s}$]
	The following is a maximal basis of the Milnor algebra of $f = x^{r+5}+x^3y^2+x^2y^3+y^{s+5}$ when $r,s > 1$:
	\begin{align*}
		x^k(1\leq k \leq r+5);
		x^2y,xy^2;
		y^l(1 \leq l \leq s+5);
		(xy)^{i}( 0\leq i \leq 3).
	\end{align*}
	
	And the spectrum of $f$ is hence given below:
	\begin{displaymath}
		\frac{2}{5},\frac{4}{5},\frac{6}{5},\frac{8}{5},1,1,\frac{2k+r+4}{2(r+5)}(1\leq k\leq r+5),\frac{2l+r+4}{2(s+5)}(1\leq l\leq s+5).
	\end{displaymath}
\end{proposition}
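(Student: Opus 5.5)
The plan is to apply Theorem \ref{Saito's non-degenerated theorem} and then run Algorithm \ref{algorithm for a maximal basis}, exactly as for $NA_{r,0}$.

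\textbf{Step 1: non-degeneracy and the valuation.} First I check that $f=x^{r+5}+x^3y^2+x^2y^3+y^{s+5}$ is convenient and Newton non-degenerate. The Newton boundary has vertices $(r+5,0),(3,2),(2,3),(0,s+5)$, hence three compact facets. Each facet cone is simplicial and contains exactly two support points, so the criterion recalled in Subsection \ref{Newton Filtration} applies.

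The valuation is then computed facet by facet:
\begin{itemize}
\item on $\mathrm{Cone}(0,[(r+5,0),(3,2)])$ it is $\varphi(i,j)=\frac{i}{r+5}+\frac{(r+2)j}{2(r+5)}$;
\item on the middle cone it is $\varphi(i,j)=\frac{i+j}{5}$;
\item on the last cone it is the symmetric expression with $r\leftrightarrow s$ and $i\leftrightarrow j$.
\end{itemize}
Consequently:
\begin{itemize}
\item the shifted valuation of $x^k$ is $\frac{2k+r+4}{2(r+5)}$;
\item that of $y^l$ is $\frac{2l+s+4}{2(s+5)}$ (I expect the displayed formula for $y^l$ should read this way, by the $x\leftrightarrow y$ symmetry);
\item the monomials $1,x^2y,xy^2,xy,x^2y^2,x^3y^3$ lie in the middle cone and give $\frac25,\frac65,\frac65,\frac45,\frac65,\frac85$.
\end{itemize}
These are exactly the listed values $\frac25,\frac45,\frac65,\frac85,1,1$ once the middle-cone monomials are correctly identified; I would double-check which of $x^2y,xy^2$ versus $(xy)^i$ land in which cone.

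\textbf{Step 2: candidate monomials.} Using the Gr\"obner basis of the preceding lemma (with the exponent shift $r+4\to r+5$, $s+4\to s+5$, recomputed in SINGULAR if necessary), I list the finite set $W$ of monomials not in $J(f)$. These are powers $x^k$ up to about $r+6$, powers $y^l$ up to about $s+6$, and finitely many mixed monomials $x^ay^b$ with $a,b\ge1$, essentially $xy^{\le 4}$, $x^{\le 4}y$, $x^2y^2$ and $x^3y^3$. I also record the linear relations modulo $J(f)$ that the Gr\"obner basis provides. The count of the proposed basis is $(r+5)+2+(s+5)+4=16+r+s=\mu$, and linear independence follows from comparing with the monomial basis in \cite{MR1721628} after the same kind of swap ($y^{s+5}\leftrightarrow x^{r+5}$ type exchanges) performed for $NA_{r,0}$.

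\textbf{Step 3: running the algorithm.} I process the elements of $W$ in decreasing order of valuation.
\begin{enumerate}
\item The top monomial $x^3y^3$, of valuation $\frac85$, is picked. The monomials $x^{r+6}$, $y^{s+6}$ and the higher mixed ones are proportional to it modulo $J(f)$, so they are deleted.
\item Next come $x^{r+5}$ and $y^{s+5}$, of valuation $\frac32-\frac{1}{2(r+5)}$ and $\frac32-\frac{1}{2(s+5)}$, both above $\frac75$. Both are picked, since the relation $(s+5)y^{s+6}\equiv(r+5)x^{r+6}$ only links the deleted degree-one-higher terms.
\item Continuing downward, I pick the powers $x^k$ and $y^l$ whose valuations exceed the competing mixed monomials.
\item At the level $\frac65$, I pick $x^2y^2$ (or $x^2y,xy^2$) and use the relation $10xy^4\equiv\cdots$ together with $2x^3y+3x^2y^2\equiv -(s+5)y^{s+4}$ to delete $xy^4$, $x^4y$, $x^3y$ and $xy^3$ as combinations of already picked elements of greater or equal valuation.
\end{enumerate}
The remaining undeleted monomials are then exactly the proposed basis, and reading off valuations gives the spectrum.

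\textbf{Main obstacle.} The delicate point is Step 3 at the levels $\frac65$ and $1$. There I must verify that each deleted mixed monomial, such as $x^3y$, $xy^3$, $x^4y$ or $xy^4$, is a combination modulo $J(f)$ of picked monomials of valuation \emph{at least} its own, and not merely of some basis. The relations mix $x^2y^2$ with high powers $x^{r+4},y^{s+4}$ of larger valuation, which is favorable, but the sign and degree bookkeeping has to be done carefully. I would first try to verify this directly from the Gr\"obner relations. As a sanity check, I would confirm the symmetry $\alpha_i+\alpha_{\mu-i+1}=2$ of Theorem \ref{symmetry of spectrum}, which the proposed list satisfies ($\frac25\leftrightarrow\frac85$, $\frac45\leftrightarrow\frac65$, and $x^k\leftrightarrow x^{r+5-k+1}$ sum to $2$).
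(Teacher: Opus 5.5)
Your route is the paper's. You apply Saito's theorem and then run Algorithm~\ref{algorithm for a maximal basis} top-down on the monomials outside $J(f)$: pick $(xy)^3$ and discard $x^{r+6},y^{s+6}$ and the degree-six mixed monomials, pick $x^{r+5},y^{s+5}$, then $(xy)^2$ and the remaining powers, and discard the other mixed monomials via Jacobian relations. Independence of the list comes from \cite{MR1721628}, and your correction of the $y^l$ exponents to $\frac{2l+s+4}{2(s+5)}$ is right. However, two pieces of your bookkeeping are wrong, and as written the run does not return the claimed basis and spectrum.

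First, $x^2y$ and $xy^2$ do not have valuation $\frac65$. After the shift they sit at $(3,2)$ and $(2,3)$, which are vertices of the Newton polygon, so $\varphi(3,2)=\varphi(2,3)=1$. These two monomials are exactly where the exponents $1,1$ come from: they are picked at level $1$, and the only mixed monomial picked at level $\frac65$ is $(xy)^2$. With your values the spectrum would have two extra copies of $\frac65$ in place of $1,1$, contradicting the symmetry you check at the end.

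Second, your $W$ leaves out $x^3y^2$ and $x^2y^3$ (valuation $\frac75$), which are not in $J(f)$. The relation $(s+5)y^{s+6}\equiv(r+5)x^{r+6}$ you invoke in step (2) is true, but it only concerns already-deleted terms. What matters at this level are the relations from $xf_x$ and $yf_y$:
\[ (r+5)x^{r+5}\equiv -3x^3y^2-2x^2y^3,\qquad (s+5)y^{s+5}\equiv -2x^3y^2-3x^2y^3 \pmod{J(f)}. \]
This is an invertible $2\times 2$ system. So after you pick $x^{r+5},y^{s+5}$, whose valuations $\frac32-\frac{1}{2(r+5)}$ and $\frac32-\frac{1}{2(s+5)}$ exceed $\frac75$, you must delete $x^3y^2$ and $x^2y^3$, as the paper does. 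Without this step your account of what gets deleted is incomplete, and your final claim that the undeleted monomials are exactly the proposed basis is not justified.

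The same step, combined with $xf_y$ and $yf_x$ (using $xy^{s+4},x^{r+4}y\in J(f)$), already removes $x^4y$ and $xy^4$; they need not wait for level $\frac65$, and for $r<5$ one even has $v(x^4y)>\frac65$. With these two repairs your argument coincides with the paper's.
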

\begin{proof}
	A direct computation shows that all monomials in $\mathcal O_2\setminus J(f)$ are contained in the following set:
	\begin{align*}
		W = \{& x,...,x^{r+6},1,xy,...,(xy)^3, x^2y,x^3y, x^4y,x^5y,xy^2,xy^3,xy^4,xy^5, \\
		&x^3y^2,x^2y^3,x^4y^2,x^2y^4, y,y^2,...,y^{s+6}\}.
	\end{align*}
	Their shifted valuations are
	\begin{align*}
		V = \{& \frac{2k+r+4}{2(r+5)}(1\leq k\leq r+6), \frac{2}{5},\frac{4}{5},\frac{6}{5},\frac{8}{5}, 1,\frac{r+6}{r+5},\frac{r+7}{r+5},\frac{r+8}{r+5},1,\\
		&\frac{s+6}{s+5},\frac{s+7}{s+5},\frac{s+8}{s+5}, \frac{7}{5},\frac{7}{5},\frac{3r+16}{2(r+5)},\frac{3s+16}{2(s+5)}, \frac{2l+r+4}{2(s+5)}(1\leq l\leq s+6)\}.
	\end{align*}
	
	By \cite{MR1721628}, $K = \{x^k(1\leq k \leq r+5);
	x^2y,xy^2;
	y^l(1 \leq l \leq s+5);
	(xy)^{i}( 0\leq i \leq 3)\}$ is a basis of $\mathcal O_2/J(f)$. So it suffices to proof its maximality.
	
	Since $(xy)^3\in K$ and $\frac{8}{5}$ is the biggest one in $V$, we pick $(xy)^3$. Since $x^5y,xy^5,x^4y^2,x^2y^4,x^{r+6}$ and $x^{s+6}$ are linearly dependent with $(xy)^3$, we delete them. Next, running the same procedure as $NA_{r,0}$, we will pick $x^{r+5}$ and $y^{s+5}$ since $\frac{3r+14}{2(r+5)}$ and $\frac{3s+14}{2(r+5)}$ are both greater than $\frac{7}{5}$. We hence delete $x^3y^2,x^2y^3$. Moreover, $x^4y$ and $xy^4$ are linearly dependent with $x^3y^2,x^2y^3$, so we also delete them. Because $\frac{6}{5} > \max\{\frac{r+6}{r+5},\frac{s+6}{s+5}\}$ and $(xy)^2$ is linearly independent with $(xy)^3,x^k,y^l(1\leq k\leq r+5,1\leq l\leq s+5)$, we pick $(xy)^2$. We note that $x^{r+4}$ and $y^{s+4}$ are also picked since $\frac{3(r+4)}{2(r+5)},\frac{3(s+4)}{2(s+5)} > \frac{6}{5}$. Then $x^3y$ and $xy^3$ are deleted, for there dependence with $\{x^{r+4},y^{s+r}\}$. The remaining elements of $W$ and what we have picked are exactly elements of $K$. So we are done. 
\end{proof}
\begin{remark}
	As for the exceptional cases $(r,s) = (1,1),(1,2),(2,2)$, we used SINGULAR and find their spectrum also satisfies the results of cases for $r,s$ larger.
\end{remark}

We list the spectrum of curve singularities of modality $3$ as below.

\begin{table}[!htbp]
	\centering
	\caption{Spectrum of curve Singularities of Modality 3}
	%\label{频率型、强度型和相对比型指标}也可以用这个
	\renewcommand\arraystretch{1.5}
	\begin{tabular}{lp{2cm}p{12cm}}
		\toprule  %添加表格头部粗线
		\textbf{Class}   &$\mu$         & Spectrum Number\\
		\midrule  %添加表格中横线
		$NA_{r,0}$ & $16+r$ & $\frac{2}{5},\frac{3}{5},\frac{4}{5},\frac{4}{5},1,1,1,\frac{6}{5},\frac{6}{5},\frac{7}{5},\frac{8}{5},\frac{2k+r+4}{2(r+5)}(1\leq k\leq r+5)$ \\
		$NA_{r,s}$ & $16+r+s$ & $\frac{2}{5},\frac{4}{5},\frac{6}{5},\frac{8}{5},1,1,\frac{2k+r+4}{2(r+5)}(1\leq k\leq r+5),\frac{2l+r+4}{2(s+5)}(1\leq l\leq s+5)$ \\
		$NB_{(-1)}^r$ & $18+r$ & 
		$\frac{7}{18},\frac{5}{9},\frac{13}{18},\frac{7}{9},\frac{8}{9},\frac{17}{18},1,\frac{19}{18},\frac{10}{9},\frac{11}{9},\frac{23}{18},\frac{13}{9},\frac{29}{18},\frac{2k+r+4}{2(r+5)}(1\leq k\leq r+5)$  \\
		$NB_{(0)}^r$ & $19+r$ &  $\frac{5}{13},\frac{7}{13},\frac{9}{13},\frac{10}{13},\frac{11}{13},\frac{12}{13},1,1,\frac{14}{13},\frac{15}{13},\frac{16}{13},\frac{17}{13},\frac{19}{13},\frac{21}{13},\frac{2k+r+4}{2(r+5)}(1\leq k \leq r+5)$\\
		$NB_{(1)}^r$ & $20+r$ & $\frac{8}{21},\frac{11}{21},\frac{2}{3},\frac{16}{21},\frac{17}{21},\frac{19}{21},\frac{20}{21},1,\frac{22}{21},\frac{23}{21},\frac{25}{21},\frac{26}{21},\frac{4}{3},\frac{31}{21},\frac{34}{21},\frac{2k+r+4}{2(r+5)}(1\leq k \leq r+5)$ \\
		$NC_{19}$ & $19$ & $\frac{3}{8},\frac{13}{24},\frac{7}{12},\frac{17}{24},\frac{3}{4},\frac{19}{24},\frac{7}{8},\frac{11}{12},\frac{23}{24},1,\frac{25}{24},\frac{13}{12},\frac{9}{8},\frac{29}{24},\frac{5}{4},\frac{31}{24},\frac{17}{12},\frac{35}{24},\frac{13}{8}$ \\
		$NC_{20}$ & $20$ & $\frac{7}{19},\frac{10}{19},\frac{11}{19},\frac{13}{19},\frac{14}{19},\frac{15}{19},\frac{16}{19},\frac{17}{19},\frac{18}{19},1,1,\frac{20}{19},\frac{21}{19},\frac{22}{19},\frac{23}{19},\frac{24}{19},\frac{25}{19},\frac{27}{19},\frac{28}{19},\frac{31}{19}$ \\
		$NF_{20}$ & $20$ & $\frac{11}{30},\frac{8}{15},\frac{17}{30},\frac{7}{10},\frac{11}{15},\frac{23}{30},\frac{13}{15},\frac{9}{10},\frac{14}{15},\frac{29}{30},\frac{31}{30},\frac{16}{15},\frac{11}{10},\frac{17}{15},\frac{37}{30},\frac{19}{15},\frac{13}{10},\frac{43}{30},\frac{22}{15},\frac{49}{30}$ \\
		$NF_{21}$ & $21$ & $\frac{9}{25},\frac{13}{25},\frac{14}{25},\frac{17}{25},\frac{18}{25},\frac{19}{25},\frac{21}{25},\frac{22}{25},\frac{23}{25},\frac{24}{25},1,\frac{26}{25},\frac{27}{25},\frac{28}{25},\frac{29}{25},\frac{31}{25},\frac{32}{25},\frac{33}{25},\frac{36}{25},\frac{37}{25},\frac{41}{25}$ \\
		\bottomrule  
	\end{tabular}
\end{table}
We point out that if a rational number appears $q$-times in a row, then it has multiplicity $q$ in the spectrum of the corresponding singularity.

For surface singularities of modality $3$, we only give a sketch of computation for $VA_{r,s}$ and the results of other singularities. The following is a Gr\"obner basis of $J(f),f = x^{r+4}+x^2y^2+(x+y)z+y^{s+4}$.

\begin{lemma}[Gr\"obner basis of $NA_{r,s}$]
	The following is a Gr\"obner basis of $J(f)$, where 
	\begin{equation*}
		f = x^{r+4}+x^2y^2+(x+y)z+y^{s+4}(r,s>0).
	\end{equation*}
	\begin{align*}
		& xz+yz,z^2+2x^2y+(s+4)y^{s+3},2x^2y-2xy^2+(s+4)y^{s+3}-(r+4)x^{r+3},\\
		& 4xy^3-(s+4)y^{s+4}+(r+4)x^{r+4}+2(r+4)x^{r+3}y, 4y^3z+(s+4)y^{s+3}z-(r+4)x^{r+3}z,\\
		& 4(s+4)y^{s+5}+(s+4)^2y^{s+4}-4(r+4)x^{r+5}, x^{r+6}.
	\end{align*}		
\end{lemma}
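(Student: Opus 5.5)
The plan is to avoid checking all pairwise $s$-polynomials with Mora's normal form algorithm. Instead I would use the dimension criterion for standard bases in the local ring. Read $f$ as $x^{r+4}+x^2y^2+(x+y)z^2+y^{s+4}$, as in Table 2; with this reading the first two listed elements are, up to constants, $\partial f/\partial z$ and $\partial f/\partial y$.

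\textbf{Step 1 (membership in $J(f)$).} Let $G$ be the list in the statement. I would show $G\subseteq J(f)$ by explicit combinations of the partials.
\begin{itemize}
\item $xz+yz=\tfrac12 f_z$.
\item $z^2+2x^2y+(s+4)y^{s+3}=f_y$.
\item $2x^2y-2xy^2+(s+4)y^{s+3}-(r+4)x^{r+3}=f_y-f_x$.
\item The quartic relation $4xy^3-\dots$ comes from multiplying the third element by suitable monomials in $x,y$ and using the first element to remove $z^2$ through $xz^2=-yz^2+\dots$, that is, from combinations of $xf_x$, $yf_y$ and $zf_z$. I would find it by an Euler-type combination.
\item $4y^3z+\dots$ is $z$ times an earlier relation, reduced with $xz=-yz$.
\item The $y^{s+5}$ relation and $x^{r+6}$ come from further multiplication by $x$ and $y$ and elimination. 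Here I would track the orders of the terms carefully, since in the local ring a relation $x^{r+6}(1+\text{higher})\in J(f)$ is enough after dividing by a unit.
\end{itemize}

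\textbf{Step 2 (leading monomials).} Under the local lexicographic order with $x>y>z$ used in the paper, I would determine the leading monomial of each element of $G$. The expected leading monomials are $xz$, $z^2$, $x^2y$, $xy^3$, $y^3z$, $y^{s+4}$ (or $y^{s+5}$) and $x^{r+6}$. Let $L(G)$ be the monomial ideal they generate.

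\textbf{Step 3 (counting).} Count the monomials not in $L(G)$. Since $z^2$ and $xz$ are leading monomials, they split into two families: pure $x,y$ monomials avoiding $x^2y$, $xy^3$, $y^{s+4}$ and $x^{r+6}$, and monomials $y^jz$ with $j<3$. I would show the count equals $\mu=15+r+s$. This value is the Milnor number recorded in Table 2; it can also be recomputed independently by Kouchnirenko's formula, since $f$ is convenient and Newton non-degenerate.

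\textbf{Step 4 (conclusion).} Because $G\subseteq J(f)$, we have $L(G)\subseteq L(J(f))$. Moreover $\dim_{\mathbb C}\mathcal O_3/L(J(f))=\dim_{\mathbb C}\mathcal O_3/J(f)=\mu$. Equal finite colengths force $L(G)=L(J(f))$, so $G$ is a standard (Gröbner) basis.

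\textbf{Main obstacle.} I expect the difficulty to be Steps 1 and 2 for the last three elements. Deriving the high-degree relations with the exact coefficients $(s+4)^2$ and $4(r+4)$ requires careful elimination. Identifying the correct leading term in the local order is also delicate: for instance, in $4(s+4)y^{s+5}+(s+4)^2y^{s+4}-\dots$ the local leading term is $y^{s+4}$, not $y^{s+5}$. If the monomial count in Step 3 does not come out to exactly $\mu$, the fallback is to run Mora's tangent-cone normal form on the $s$-polynomials of the pairs involving $xz$ and $z^2$, which are the only ones that produce new relations.
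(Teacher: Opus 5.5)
The paper gives no argument for this lemma (it is presented as a computer-algebra output), so the real question is whether your verification closes. It does not.

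You are right to read the $f$ in this statement as having $(x+y)z^2$, as in Table~2. The colength criterion of your Steps~2--4 is also a valid way to certify a standard basis, provided you use a local \emph{degree} ordering such as SINGULAR's \texttt{ds}, which the paper's code uses. That is the ordering your listed leading monomials presuppose; under a genuinely lexicographic local order the third generator would have leading monomial $y^{s+3}$.

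\textbf{Where it fails.} Step~3 breaks, and the cause is the sixth generator. In the paper's ordering its leading monomial is $y^{s+4}$ when $s\le r$, and $x^{r+5}$ when $s>r$.
\begin{itemize}
\item For $s\le r$, the monomials outside $L(G)$ are $z,yz,y^2z,xy,xy^2$, then $1,x,\dots,x^{r+5}$, then $y,\dots,y^{s+3}$. That is $14+r+s=\mu-1$ monomials.
\item For $s>r$, no listed element has a pure power of $y$ as leading monomial, so $L(G)$ is not even $\mathfrak m$-primary.
\end{itemize}
So the count can never equal $\mu=15+r+s$. By your own Step~4 reasoning, for $s\le r$ the seven listed elements cannot all lie in $J(f)$, and the culprit is the sixth. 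The statement as printed is false for every $r,s$. Your Mora fallback cannot rescue it.

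\textbf{Source of the error.} It sits exactly in the step you left vague (``further multiplication by $x$ and $y$ and elimination''). Put $a=r+4$ and $b=s+4$. Let $h$ be the third generator and $q$ the fourth, with leading monomials $x^2y$ and $xy^3$. Their $s$-polynomial reduces to
\[
2y^2h-xq+yq \;=\; b\,y^{b+1}+b\,xy^{b}-a\,x^{a+1}-a\,x^{a}y\;\in J(f).
\]
Reducing $xy^b=y^{b-3}\cdot xy^3$ with $q$ then gives
\[
4b\,y^{b+1}+b^2y^{2b-3}-4a\,x^{a+1}\quad\text{plus terms divisible by } x^2y.
\]
So the printed $(s+4)^2y^{s+4}$ should be $(s+4)^2y^{2s+5}$. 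This same computation refutes your claim that only pairs involving $xz$ and $z^2$ produce new relations: the pair $(h,q)$ does.

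\textbf{What a corrected version needs.} With the corrected sixth generator, your argument works for $s<r$. Its leading monomial becomes $y^{s+5}$, and together with $x^{r+6}$ the count is exactly $15+r+s$; you still have to prove $x^{r+6}\in J(f)$. For $s\ge r$ the leading monomial is $x^{r+5}$, so $x^{r+6}$ becomes redundant. You must then add an element with leading monomial $y^{s+6}$. The statement therefore has to be corrected and split into these two cases before your Steps~1--4 can close.
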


\begin{proposition}[$VA_{r,s}$]
	The following is a maximal basis of the Milnor algebra of $f = x^{r+4}+x^2y^2+(x+y)z+y^{s+4}$($r,s>0$):
	\begin{align*}
		1,z,xy,xz,z^2,xyz,xyz^2;
		x^k(1\leq k \leq r+4);
		y^l(1 \leq l \leq s+4).
	\end{align*}
	
	\begin{displaymath}
		\frac{7}{8},\frac{5}{4},\frac{11}{8},\frac{3}{2},\frac{13}{8},\frac{7}{4},\frac{17}{8},\frac{2u+2r+7}{2(r+4)}(1\leq u\leq r+4),\frac{2v+2s+7}{2(s+4)}(1\leq v \leq s+4)
	\end{displaymath}
\end{proposition}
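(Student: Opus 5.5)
We follow the scheme already used for $NA_{r,0}$ and $NA_{r,s}$. Throughout, $f = x^{r+4}+x^2y^2+(x+y)z^2+y^{s+4}$, as in the $4V$ table; the ``$(x+y)z$'' in the Gr\"obner lemma should read $(x+y)z^2$, as the partial derivative $z^2+2x^2y+\dots$ there shows.

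\textbf{Reduction and the valuation.} First we reduce to this representative. Each cone of the Newton diagram is simplicial and contains exactly $n+1$ support points, so $f$ is convenient and Newton non-degenerate. The coefficients $a,b\neq 0$ and the further term $xyz^2$ give $\mu$-constant deformations of this $f$, so by \textbf{Theorem \ref{Varchenko's constant deformation}} it suffices to treat it. By \textbf{Theorem \ref{Saito's non-degenerated theorem}}, the spectrum is then the Poincar\'e polynomial of the shifted Newton filtration on $M_f$. Next we compute the valuation $\varphi$. The compact facets are spanned by:
\begin{itemize}
\item the central facet $\{x^2y^2, xz^2, yz^2\}$, with weights $(\frac14,\frac14,\frac38)$;
\item the facet $\{x^{r+4}, x^2y^2, xz^2\}$, with weights $(\frac{1}{r+4},\frac{r+2}{2(r+4)},\frac{r+3}{2(r+4)})$;
\item the facet $\{y^{s+4}, x^2y^2, yz^2\}$, the symmetric counterpart in $y$ and $s$.
\end{itemize}
Shifting by $\bm e$, the central facet gives $v(1)=\frac78$, $v(z)=\frac54$, $v(xy)=\frac{11}{8}$, and so on up to $v(xyz^2)=\frac{17}{8}$. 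The second facet gives $v(x^k)=\frac{2k+2r+7}{2(r+4)}$, and symmetrically $v(y^l)=\frac{2l+2s+7}{2(s+4)}$. These values match the claimed list, so once maximality is established the spectrum follows by reading off $v$ on the basis.

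\textbf{Candidates and basis.} Using the Gr\"obner basis of $J(f)$, the leading monomials cut out a finite set $W$ of monomials not in $J(f)$. Concretely, $W$ consists of:
\begin{itemize}
\item $1, z, xz, z^2, xyz, xyz^2$;
\item the powers $x^k$ up to $x^{r+5}$ and $y^l$ up to $y^{s+5}$;
\item a few mixed monomials such as $x^2y$, $xy^2$, $x^2y^2$, $xy^3$, $yz$, $y^3z$.
\end{itemize}
We tabulate $v$ on $W$. The relations needed are:
\begin{itemize}
\item $yz\equiv -xz$;
\item $z^2\equiv -2x^2y-(s+4)y^{s+3}$;
\item $x^{r+5}$, $y^{s+5}$ and $x^2y^2$ are proportional to $xyz^2$ modulo $J(f)$ (the socle);
\item $xy^3$ and $x^2y$ are expressible through pure powers of $x$ and $y$.
\end{itemize}
By the classification in \cite{MR1721628}, the set
\[
K=\{1,z,xy,xz,z^2,xyz,xyz^2,\ x^k\ (1\le k\le r+4),\ y^l\ (1\le l\le s+4)\}
\]
is a monomial basis of $M_f$, of cardinality $7+r+s+8=\mu$. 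It therefore remains only to prove maximality.

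\textbf{Maximality.} We run \textbf{Algorithm \ref{algorithm for a maximal basis}} on $W$ in decreasing order of $v$:
\begin{enumerate}
\item Pick $xyz^2$, whose value $\frac{17}{8}$ is the maximum. Delete the monomials proportional to it: $x^{r+5}$, $y^{s+5}$, $x^2y^2z$, and similar.
\item Pick $x^{r+4}$ and $y^{s+4}$, whose values $\frac{3r+15}{2(r+4)}$ and $\frac{3s+15}{2(s+4)}$ exceed $\frac74$. Delete $x^2y^2$, $xy^3$ and $x^3y$, which are dependent on these modulo $J(f)$.
\item Continue downward. The monomials $xyz$, $z^2$, $xz$, $xy$, $z$ and $1$ are each picked at their values $\frac74,\frac32,\frac{13}{8},\frac{11}{8},\frac54,\frac78$. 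Along the way, the high powers $x^k,y^l$ are picked when reached, and each of $yz$, $x^2y$, $xy^2$ is deleted once the monomials it depends on have been picked.
\end{enumerate}

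The main obstacle is this last step. Each monomial in $W\setminus K$ must reduce to a combination of basis elements of valuation at least its own, not merely be linearly dependent on them. The delicate cases are $z^2\equiv -2x^2y-(s+4)y^{s+3}$ and $x^2y$, where several facets meet. Here I would verify directly from the displayed Gr\"obner relations that every term produced lies in $N_{v(\cdot)}$. This holds because the central facet minimizes $v$ on the cone it governs, so pure powers $y^{s+3}$, $x^{r+3}$ always carry valuation at least the central one.

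Finally, small $r,s$, where some inequalities between $\frac{2k+2r+7}{2(r+4)}$ and the central values may fail, would be checked individually with \texttt{SINGULAR}'s \texttt{spectrum}, as done in the remark after $NA_{r,s}$.
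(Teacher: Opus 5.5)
Your plan is the paper's (Saito's theorem for the shifted Newton filtration, the basis $K$ from \cite{MR1721628}, then \textbf{Algorithm \ref{algorithm for a maximal basis}}). Your three facets also agree with the paper's equations; its facet labelled ``$DEB$'' is in fact the one through $y^{s+4},x^2y^2,yz^2$. But the step that licenses everything is false: $f$ is \emph{not} convenient, since the only monomials containing $z$ are $xz^2$ and $yz^2$. So \textbf{Theorem \ref{Saito's non-degenerated theorem}} does not apply to $f$ as you invoke it. Moreover, your three compact facets leave $\varphi$ undefined on the cone over the non-compact face $\{X+Y=1\}$, i.e.\ on shifted exponents of $x^ay^bz^c$ with $c\ge 2a+2b+3$ (already $z^3$).

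The missing step is the paper's: replace $f$ by $f+z^m$ with $m\gg0$. Then $J(f+z^m)=J(f)$ (Nakayama, once $z^{m-1}\in\mathfrak m J(f)$; compare \textbf{Corollary \ref{Corollary of Finite Determinacy Theorem}}), so $M_f$ and the spectrum are unchanged. Now $f+z^m$ is convenient and non-degenerate, with a fourth compact facet $EFC$, namely $\frac{(m-2)X}{m}+\frac{(m-2)Y}{m}+\frac{Z}{m}=1$. You must then check that this facet is harmless. The monomial $z^3$ lies on its common edge with the central facet, so $v(z^3)=2$ either way. And $z^c\in J(f)$ for $c\ge 4$: indeed $z^4\equiv 2xy^2z^2-(s+4)y^{s+3}z^2$, and both terms have valuation at least $\frac{19}{8}$, above the top exponent $3-v(1)=\frac{17}{8}$.

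Several of your computed values are also wrong, and one breaks a step as written:
In step 2, $v(x^{r+4})=\frac{4r+15}{2(r+4)}=2-\frac{1}{2(r+4)}$, not $\frac{3r+15}{2(r+4)}$ (that is $v(x^3y)$). With your value the claimed inequality $>\frac74$ fails for $r\ge 2$.

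The values of $z^2$ and $xz$ are swapped: $v(z^2)=\frac{13}{8}$ and $v(xz)=v(yz)=\frac32$.

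Your bullet that $x^2y^2$ is proportional to the socle $xyz^2$ is false and contradicts your own step 2. The correct relation comes from $x\partial_xf+y\partial_yf-\frac z2\partial_zf$ and reads $4x^2y^2\equiv-(r+4)x^{r+4}-(s+4)y^{s+4}$. This is exactly what deleting $x^2y^2$ requires, since $v(x^2y^2)=\frac{15}{8}\le v(x^{r+4}),v(y^{s+4})$.

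Finally, $x^2y$ lies in the cone of the facet through $x^{r+4}$, with $v(x^2y)=\frac{3r+13}{2(r+4)}<\frac{13}{8}$, so your ``central facet'' justification is not the operative one. What is actually needed is $v(z^2)=\frac{13}{8}\ge v(x^2y)$ and $v(y^{s+3})=2-\frac{3}{2(s+4)}\ge v(x^2y)$, and both hold for all $r,s\ge1$. So no separate \texttt{SINGULAR} check of small $r,s$ is needed for these inequalities.
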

\begin{proof}
	By \cite{MR1721628} and a simple computation through the Gr\"obner basis above, we find that
	\begin{displaymath}
		K = \{1,z,xy,xz,z^2,xyz,xyz^2;
		x^k(1\leq k \leq r+4);
		y^l(1 \leq l \leq s+4)\}
	\end{displaymath}
	is a monomial basis of $M$. All monomials not vanishing in $\mathcal O_3/J(f)$ are listed as follow:
	\begin{align*}
		W = \{
		& x,...,x^{r+5}, xy,xy^2,xy^3,xy^4, x^2y,x^2y^2, x^2y^3, x^3y,x^2y^2,x^4y, y,y^2,...,y^{s+5},\\
		& z,z^2,xz,x^2z,xz^2,x^2z^2,xyz, yz,y^2z,yz^2,y^2z^2,xyz^2\}.
	\end{align*}
	%		\begin{displaymath}
		%			W = \{& \frac{2k+r+4}{2(r+5)}(1\leq k\leq r+5),\\
		%			& \frac{2}{5},\frac{4}{5},\frac{6}{5},\frac{8}{5},\\
		%			& 1,\frac{r+6}{r+5},\frac{r+7}{r+5},\frac{r+8}{r+5},1,\frac{s+6}{s+5},\frac{s+7}{s+5},\frac{s+8}{s+5},\\
		%			& \frac{7}{5},\frac{7}{5},\frac{3r+16}{2(r+5)},\frac{3s+16}{2(s+5)},\\
		%			& \frac{2l+r+4}{2(s+5)}(1\leq l\leq s+6)\}.
		%		\end{displaymath}
	Let $m \gg 0$ be a sufficiently large integer. Since spectrum  is stable under $\mu$-constant deformation, and by \textbf{Corollary \ref{Corollary of Finite Determinacy Theorem}}, it suffices to consider the Newton diagram of $f+ z^m$. There are $6$ vertices $A(r+4,0,0),B(0,s+4,0),C(0,0,m),D(2,2,0),E(1,0,2),F(0,1,2)$, generating $4$ facets:
	\begin{align*}
		ADE: & \frac{X}{r+4}+\frac{(r+2)Y}{2(r+4)}+\frac{(r+3)Z}{2(r+4)} = 1\\
		DEB: & \frac{(s+2)X}{2(s+4)}+\frac{Y}{s+4}+\frac{(s+3)}{2(s+4)} = 1\\
		EFC: & \frac{(m-2)X}{m}+\frac{(m-2)Y}{m}+\frac{Z}{m} = 1\\
		DEF: & \frac{X}{4}+\frac{Y}{4}+\frac{3Z}{8} = 1
	\end{align*}
	The valuation $v$ of the Newton polyhegon of $f$ is defined by the above equations. Following the same method in curve singularities and applying our algorithm on the above valuation, we find $K$ is a maximal basis.
\end{proof}

Finally, we list the spectrum of surface singularities of modality $3$ as below.

\begin{table}[!htbp]
	\centering
	\caption{Spectrum of Surface Singularities of Modality 3}
	%\label{频率型、强度型和相对比型指标}也可以用这个
	\renewcommand\arraystretch{1.5}
	\begin{tabular}{lp{2cm}p{12cm}}
		\toprule  %添加表格头部粗线
		\textbf{Class}   &$\mu$         & Spectrum Number\\
		\midrule  %添加表格中横线
		$VA_{r,s}$ & $15+s+r$ & $\frac{7}{8},\frac{5}{4},\frac{11}{8},\frac{3}{2},\frac{13}{8},\frac{7}{4},\frac{17}{8},\frac{2u+2r+7}{2(r+4)}(1\leq u\leq r+4),\frac{2v+2s+7}{2(s+4)}(1\leq v \leq s+4)$ \\
		$VA_{2k,0}^\#$ & $15+2k$ & $\frac{7}{8},\frac{9}{8},\frac{11}{8},\frac{11}{8},\frac{13}{8},\frac{13}{8},\frac{15}{8},\frac{17}{8},\frac{l+2k+8}{2(k+4)}(1\leq l\leq 2k+7)$ \\
		$VA_{2k+1,0}^\#$ & $16+2k$ & $\frac{7}{8},\frac{9}{8},\frac{11}{8},\frac{11}{8},\frac{13}{8},\frac{13}{8},\frac{15}{8},\frac{17}{8},\frac{l+2k+9}{2k+9}(1\leq l \leq 2k+8)$ \\
		$VA_{2k,s}^\#$ & $15+2k+s$ & $\frac{7}{8},\frac{11}{8},\frac{13}{8},\frac{17}{8},\frac{u+2k+8}{2(k+4)}(1\leq u\leq 2k+7),\frac{2v+2s+7}{2(s+4)}(1\leq v\leq s+4)$ \\
		$VA_{2k+1,s}^\#$ & $16+2k+s$ & $\frac{7}{8},\frac{11}{8},\frac{13}{8},\frac{17}{8},\frac{u+2k+9}{2k+9}(1\leq u\leq 2k+8),\frac{2v+2s+7}{2(s+4)}(1\leq v\leq s+4)$ \\
		$VB_{(-1)}^s$ & $17+s$ & $\frac{6}{7},\frac{15}{14},\frac{17}{14},\frac{9}{7},\frac{19}{14},\frac{10}{7},\frac{3}{2},\frac{11}{7},\frac{23}{14},\frac{12}{7},\frac{25}{14},\frac{27}{14},\frac{15}{7},\frac{2l+2s+7}{2(s+4)}(1\leq l\leq s+4)$ \\
		$VB_{(0)}^s$ & $18+s$ & $\frac{17}{20},\frac{21}{20},\frac{6}{5},\frac{5}{4},\frac{27}{20},\frac{7}{5},\frac{29}{20},\frac{31}{20},\frac{8}{5},\frac{33}{20},\frac{7}{4},\frac{9}{5},\frac{39}{20},\frac{43}{20},\frac{2l+2s+7}{2(s+4)}(1\leq l \leq s+4)$ \\
		$VB_{(-1)}^{\#,2k}$ & $17+2k$ & $\frac{13}{15},\frac{16}{15},\frac{19}{15},\frac{4}{3},\frac{22}{15},\frac{23}{15},\frac{5}{3},\frac{26}{15},\frac{29}{15},\frac{32}{15},\frac{l+2k+8}{2(k+4)}(1\leq l\leq 2k+7)$\\
		$VB_{(-1)}^{\#,2k+1}$ & $18+2k$ & $\frac{13}{15},\frac{16}{15},\frac{19}{15},\frac{4}{3},\frac{22}{15},\frac{23}{15},\frac{5}{3},\frac{26}{15},\frac{29}{15},\frac{32}{15},\frac{l+2k+9}{2k+9}(1\leq k\leq 2k+8)$\\
		$VB_{(0)}^{\#,2k}$ & $18+2k$ & $\frac{19}{22},\frac{23}{22},\frac{27}{22},\frac{29}{22},\frac{31}{22},\frac{3}{2},\frac{35}{22},\frac{37}{22},\frac{39}{22},\frac{43}{22},\frac{47}{22},\frac{l+2k+8}{2(k+4)}(1\leq l\leq 2k+7)$\\
		$VB_{(0)}^{\#,2k+1}$ & $19+2k$ & $\frac{19}{22},\frac{23}{22},\frac{27}{22},\frac{29}{22},\frac{31}{22},\frac{3}{2},\frac{35}{22},\frac{37}{22},\frac{39}{22},\frac{43}{22},\frac{47}{22},\frac{l+2k+9}{2k+9}(1\leq l\leq 2k+8)$\\
		\midrule  %添加表格中横线
	\end{tabular}
\end{table}
~\\
\begin{table}[!htbp]
	\centering
	\renewcommand\arraystretch{1.5}
	\begin{tabular}{lp{2cm}p{12cm}}
		\midrule  %
		$VB_{(1)}^{\#,2k}$ & $19+2k$ & $\frac{31}{36},\frac{37}{36},\frac{43}{36},\frac{47}{36},\frac{49}{36},\frac{53}{36},\frac{55}{36},\frac{59}{36},\frac{61}{36},\frac{65}{36},\frac{71}{36},\frac{77}{36},\frac{l+2k+8}{2(k+4)}(1\leq l\leq 2k+7)$\\
		$VB_{(1)}^{\#,2k+1}$ & $20+2k$ & $\frac{31}{36},\frac{37}{36},\frac{43}{36},\frac{47}{36},\frac{49}{36},\frac{53}{36},\frac{55}{36},\frac{59}{36},\frac{61}{36},\frac{65}{36},\frac{71}{36},\frac{77}{36},\frac{l+2k+9}{2k+9}(1\leq l\leq 2k+8)$\\
		$VC_{18}$ & $18$ & $\frac{17}{20},\frac{21}{20},\frac{11}{10},\frac{13}{10},\frac{5}{4},\frac{5}{4},\frac{27}{20},\frac{29}{20},\frac{3}{2},\frac{3}{2},\frac{31}{20},\frac{33}{20},\frac{17}{10},\frac{7}{4},\frac{7}{4},\frac{19}{10},\frac{39}{20},\frac{43}{20}$\\
		$VC_{19}$ & $19$ & $\frac{27}{32},\frac{33}{32},\frac{35}{32},\frac{39}{32},\frac{5}{4},\frac{41}{32},\frac{43}{32},\frac{45}{32},\frac{47}{32},\frac{3}{2},\frac{49}{32},\frac{51}{32},\frac{53}{32},\frac{55}{32},\frac{7}{4},\frac{57}{32},\frac{61}{32},\frac{63}{32},\frac{69}{32}$\\
		$VC_{18}^\#$ & $18$ & $\frac{16}{19},\frac{20}{19},\frac{21}{19},\frac{23}{19},\frac{24}{19},\frac{25}{19},\frac{26}{19},\frac{27}{19},\frac{28}{19},\frac{29}{19},\frac{30}{19},\frac{31}{19},\frac{32}{19},\frac{33}{19},\frac{34}{19},\frac{36}{19},\frac{37}{19},\frac{41}{19}$\\
		$VC_{19}^\#$ & $19$ & $\frac{5}{6},\frac{31}{30},\frac{11}{10},\frac{6}{5},\frac{37}{30},\frac{13}{10},\frac{41}{30},\frac{7}{5},\frac{43}{30},\frac{3}{2},\frac{47}{30},\frac{8}{5},\frac{49}{30},\frac{17}{10},\frac{53}{30},\frac{9}{5},\frac{19}{10},\frac{59}{30},\frac{13}{6}$\\
		$VF_{19}$ & $19$ & $\frac{5}{6},\frac{25}{24},\frac{13}{12},\frac{29}{24},\frac{5}{4},\frac{31}{24},\frac{4}{3},\frac{17}{12},\frac{35}{24},\frac{3}{2},\frac{37}{24},\frac{19}{12},\frac{5}{3},\frac{41}{24},\frac{7}{4},\frac{43}{24},\frac{23}{12},\frac{47}{24},\frac{13}{6}$\\
		$VF_{20}$ & $20$ & $\frac{33}{40},\frac{41}{40},\frac{43}{40},\frac{6}{5},\frac{49}{40},\frac{51}{40},\frac{53}{40},\frac{7}{5},\frac{57}{40},\frac{59}{40},\frac{61}{40},\frac{63}{40},\frac{8}{5},\frac{67}{40},\frac{69}{40},\frac{71}{40},\frac{9}{5},\frac{77}{40},\frac{79}{40},\frac{87}{40}$\\
		\bottomrule %添加表格底部粗线
	\end{tabular}
\end{table}

%As a simple corollary, we find spectrum is a complete invariant for singularities of modality $\leq 3$ by a direct observation. For spectrum of singularities of modality $0,1,2$, we refer to \cite{MR0631501}.

%\begin{theorem}
%	Spectrum is a complete invariant for singularities of modality $\leq 3$.
%\end{theorem}

%So far one can easily see \textbf{Conjecture \ref{range conjecture}} of K. Saito holds for singularities of modality $3$. We write it down as below.
%
%\begin{theorem}\label{K. Saito's range conjecture}
%	For all singularities of modality $3$, the ranges of their spectra are all greater than $1$. That is, \textbf{Conjecture \ref{range conjecture}} holds for them.
%\end{theorem}

\subsection{Hertling Conjecture for Singularities of Modality $3$}

\begin{theorem}\label{main theorem A}
	Hertling Conjecture holds for singularities of modality $3$.
\end{theorem}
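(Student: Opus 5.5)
The proof will be a direct verification, class by class, using the spectra in the tables of this section. Quasihomogeneous trimodal singularities already give equality by Theorem \ref{Hertlin Conjecture for quasihomogeneous singularities}, and semi-quasihomogeneous ones by the corollary that follows it. So it remains to treat the non-quasihomogeneous classes of type $4N$ and $4V$. By Theorem \ref{Varchenko's constant deformation} the spectrum does not depend on the moduli $a,b,d$ or on the further terms, so the listed spectra are all that is needed.

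Each spectrum has the same shape: a finite \emph{core} $C$ of fixed numbers, together with one or two \emph{arithmetic blocks} $\{\frac{2k+r+4}{2(r+5)}\}_{1\le k\le r+5}$ and similar. Each block is symmetric about $\frac{n+1}{2}$. I will compute, as rational functions of $r$ (and $s$), the three quantities
\begin{displaymath}
	\mu,\qquad S=\sum_i\Big(\alpha_i-\frac{n+1}{2}\Big)^2,\qquad R=\alpha_\mu-\alpha_1 .
\end{displaymath}
A block of $N$ equally spaced numbers with step $h$, centred at $\frac{n+1}{2}$, contributes $\frac{h^2(N^2-1)N}{12}$ to $S$. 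Here $hN\approx 1$, so each block contributes roughly $\frac{N}{12}$, which is close to the equality case. The core contributes a constant $S_C$, computed once per class.

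In every listed case the extreme exponents come from the core. Examples are $\frac25,\frac85$ for $NA$ and $\frac78,\frac{17}8$ for $VA$, while the block entries lie strictly inside $(\frac{n}{2},\frac{n+2}{2})$. Hence $R=R_C$ is a constant greater than $1$. The inequality to prove becomes
\begin{displaymath}
	12\big(S_C+S_{\mathrm{blocks}}(r,s)\big)\le R_C\,(|C|+N_r+N_s).
\end{displaymath}
Since $S_{\mathrm{blocks}}\le \frac{N_r+N_s}{12}$ and $R_C>1$, the block terms help the inequality, and the slack grows linearly in $r,s$. So it suffices to check the inequality with the block terms replaced by the exact expression $\frac{N^2-1}{12N}\cdot\frac{1}{\cdot}$ scaled appropriately. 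This reduces each class to a polynomial inequality in $r$ (or $r,s$) of the form $A+B\,r+\dots\ge 0$, with $B>0$. I then verify it for the smallest admissible $r,s$ and use monotonicity for the rest. The parity splits ($VA^\#_{2k},VA^\#_{2k+1}$, etc.) are handled in the same way, with $N=2k+7$ or $2k+8$. The classes $NC,NF,VC,VF$ are fixed finite lists, and for them I will just compute both sides numerically.

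I expect the main obstacle to be bookkeeping rather than a conceptual difficulty. The core constant $S_C$ must be computed correctly for about twenty classes, and for each block one must confirm both its exact centre and its spacing; the table entries such as $\frac{2l+r+4}{2(s+5)}$ look like typos for $\frac{2l+s+4}{2(s+5)}$ and must first be corrected using symmetry (Theorem \ref{symmetry of spectrum}). The tightest cases should be the smallest parameters, e.g. $r=s=1$. To rule out sign errors there, I would cross-check the spectra with SINGULAR before running the inequality.
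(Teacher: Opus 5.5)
Your plan is correct and is essentially the paper's proof. The paper also computes $\mu\cdot\mathrm{Var}$ class by class from the tabulated spectra, as a core constant plus closed-form sums over blocks symmetric about $\frac{n+1}{2}$. It uses that the range is a constant $>1$ coming from the core, checks the resulting inequality at the smallest parameters, and handles the semi-quasihomogeneous $VC,VF$ series via Varchenko and Hertling.

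The one real difference is the curve classes. The paper skips $NA,NB$ by citing the theorem for \emph{irreducible} curves. That theorem does not literally apply, since these normal forms are reducible: their Newton polygons have two or more compact edges, e.g.\ $NA_{r,0}$ has edge polynomial $y^2(x^3+dxy^2+by^3)$. So your direct verification is the safer route, and it does close. For example, for $NA_{r,0}$ the slack $\mu R-12S$ equals $\frac15(r+5)+\frac{1}{r+5}-\frac65>0$ for $r\ge 1$.
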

\begin{proof}
	By \textbf{Theorem \ref{Hertling Conjecture for curve singularities}}, we only need to consider surface cases. Furthermore, by \textbf{Theorem \ref{Varchenko's constant deformation}} and \textbf{Theorem \ref{Hertlin Conjecture for quasihomogeneous singularities}}, exceptional cases, say $VC,VF$ series satisfies Hertling Conjecture. Hence, it suffices to check the conjecture for $VA,VB$ series. We still use $f_0$ to refer to the representative germ for each type.
	
	\noindent\textbf{(1) $VA_{r,s}$.}
	\begin{align*}
		& \mu\cdot \mathrm{Var}[\mathrm{Sp}(f_0)] \\
		= & (\frac{7}{8}-\frac{3}{2})^2+(\frac{5}{4}-\frac{3}{2})^2+(\frac{11}{8}-\frac{3}{2})^2+(\frac{3}{2}-\frac{3}{2})^2+(\frac{13}{8}-\frac{3}{2})^2+(\frac{7}{4}-\frac{3}{2})^2+(\frac{17}{8}-\frac{3}{2})^2\\
		& + \sum_{u = 1}^{r+4} (\frac{2u+2r+7}{2(r+4)}-\frac{3}{2})^2+\sum_{v = 1}^{s+4}(\frac{2v+2s+7}{2(s+4)}-\frac{3}{2})^2 \\
		= & \frac{15}{16}+\frac{(r+4)^2-1}{12(r+4)}+\frac{(s+4)^2-1}{12(s+4)}.
	\end{align*}
	
	The range of $\mathrm{Sp}(f_0)$ is $\frac{5}{4}$. Let $a = r+4,b+4$, then it suffices to prove
	\begin{displaymath}
		\frac{15}{16}+\frac{a^2-1}{12a}+\frac{b^2-1}{12b} \leq \frac{5}{48}(7+a+b).
	\end{displaymath}
	Equivalently,
	\begin{displaymath}
		\frac{1}{48}(a+b)+\frac{1}{12a}+\frac{1}{12b} \geq \frac{5}{24}.
	\end{displaymath}
	We have $LHS \geq \frac{1}{48}(5+5)+\frac{1}{60}+\frac{1}{60} \geq \frac{5}{24}$. Done.

	\noindent\textbf{(2) $VA_{2k,0}^\#$.}
	\begin{align*}
		& \mu\cdot \mathrm{Var}[\mathrm{Sp}(f_0)] \\
		= & (\frac{7}{8}-\frac{3}{2})^2+(\frac{9}{8}-\frac{3}{2})^2+(\frac{11}{8}-\frac{3}{2})^2+(\frac{11}{8}-\frac{3}{2})^2+(\frac{3}{2}-\frac{3}{2})^2+(\frac{13}{8}-\frac{3}{2})^2+(\frac{13}{8}-\frac{3}{2})^2\\
		& +(\frac{7}{4}-\frac{3}{2})^2+(\frac{17}{8}-\frac{3}{2})^2 + \sum_{l=1}^{2k+7}(\frac{l+2k+8}{2(k+4)}-\frac{3}{2})^2\\
		= & \frac{9}{8}+\frac{(2k+7)(k+3)}{12(k+4)}.
	\end{align*}
	
	The range of $\mathrm{Sp}(f_0)$ is $\frac{5}{4}$. It suffices to prove
	\begin{displaymath}
		\frac{9}{8}+\frac{(2k+7)(k+3)}{12(k+4)} \leq \frac{5}{48}(15+2k).
	\end{displaymath}
	Equivalently,
	\begin{displaymath}
		2k^2+9k\geq 0,
	\end{displaymath}
	which is obvious.

	\noindent\textbf{(3) $VA_{2k+1,0}^\#$.}
	\begin{align*}
		& \mu\cdot \mathrm{Var}[\mathrm{Sp}(f_0)] \\
		= & (\frac{7}{8}-\frac{3}{2})^2+(\frac{9}{8}-\frac{3}{2})^2+(\frac{11}{8}-\frac{3}{2})^2+(\frac{11}{8}-\frac{3}{2})^2+(\frac{3}{2}-\frac{3}{2})^2+(\frac{13}{8}-\frac{3}{2})^2+(\frac{13}{8}-\frac{3}{2})^2\\
		& +(\frac{7}{4}-\frac{3}{2})^2+(\frac{17}{8}-\frac{3}{2})^2 + \sum_{l=1}^{2k+8}(\frac{l+2k+9}{2k+9}-\frac{3}{2})^2\\
		= & \frac{9}{8}+\frac{(2k+7)(k+4)}{6(2k+9)}.
	\end{align*}
	
	The range of $\mathrm{Sp}(f_0)$ is $\frac{5}{4}$. It suffices to prove
	\begin{displaymath}
		\frac{9}{8}+\frac{(2k+7)(k+4)}{6(2k+9)} \leq \frac{5}{48}(16+2k).
	\end{displaymath}
	Equivalently,
	\begin{displaymath}
		2k^2+11k+5\geq 0,
	\end{displaymath}
	which is obvious.

	\noindent\textbf{(4) $VA_{2k,s}^\#$.}
	\begin{align*}
		& \mu\cdot \mathrm{Var}[\mathrm{Sp}(f_0)] \\
		= & (\frac{7}{8}-\frac{3}{2})^2+(\frac{11}{8}-\frac{3}{2})^2+(\frac{13}{8}-\frac{3}{2})^2+(\frac{17}{8}-\frac{3}{2})^2\\
		& + \sum_{u=1}^{2k+7}(\frac{u+2k+8}{2(k+4)}-\frac{3}{2})^2+\sum_{v = 1}^{s+4}(\frac{2v+2s+7}{2(s+4)}-\frac{3}{2})^2 \\
		= & \frac{13}{16}+\frac{(2k+7)(k+3)}{12(k+4)}+\frac{(s+4)^2-1}{12(s+4)}.
	\end{align*}
	
	The range of $\mathrm{Sp}(f_0)$ is $\frac{5}{4}$. Let $a = k+4, b = s+4$, then it suffices to prove
	\begin{displaymath}
		\frac{13}{16}+\frac{(2a-1)(a-1)}{12a}+\frac{b^2-1}{12b} \leq \frac{5}{48}(3+2a+b).
	\end{displaymath}
	Equivalently,
	\begin{displaymath}
		\frac{1}{48}b+\frac{1}{12b}+\frac{a^2+6a-2}{24a} \geq \frac{1}{2}.
	\end{displaymath}
	We have $LHS \geq \frac{5}{48}+\frac{1}{60}+\frac{53}{120} \geq \frac{1}{2}$. Done.

	\noindent\textbf{(5) $VA_{2k+1,s}^\#$.}
	\begin{align*}
		& \mu\cdot \mathrm{Var}[\mathrm{Sp}(f_0)] \\
		= & (\frac{7}{8}-\frac{3}{2})^2+(\frac{11}{8}-\frac{3}{2})^2+(\frac{13}{8}-\frac{3}{2})^2+(\frac{17}{8}-\frac{3}{2})^2\\
		& + \sum_{u=1}^{2k+8}(\frac{u+2k+9}{2k+9}-\frac{3}{2})^2+\sum_{v = 1}^{s+4}(\frac{2v+2s+7}{2(s+4)}-\frac{3}{2})^2 \\
		= & \frac{13}{16}+\frac{(2k+7)(k+4)}{6(2k+9)}+\frac{(s+4)^2-1}{12(s+4)}.
	\end{align*}
	
	The range of $\mathrm{Sp}(f_0)$ is $\frac{5}{4}$. Let $a = k+4, b = s+4$, then it suffices to prove
	\begin{displaymath}
		\frac{13}{16}+\frac{(2a-1)a}{6(2a+1)}+\frac{b^2-1}{12b} \leq \frac{5}{48}(4+2a+b).
	\end{displaymath}
	Equivalently,
	\begin{displaymath}
		\frac{1}{48}b+\frac{1}{12b}+\frac{a(2a+9)}{24(2a+1)} \geq \frac{19}{48}.
	\end{displaymath}
	We have $LHS \geq \frac{5}{48}+\frac{1}{60}+\frac{68}{216} \geq \frac{19}{48}$. Done.

	\noindent\textbf{(6) $VB_{(-1)}^s$.}
	\begin{align*}
		& \mu\cdot \mathrm{Var}[\mathrm{Sp}(f_0)] \\
		= & (\frac{6}{7}-\frac{3}{2})^2+(\frac{13}{14}-\frac{3}{2})^2+(\frac{17}{14}-\frac{3}{2})^2+(\frac{9}{7}-\frac{3}{2})^2+(\frac{19}{14}-\frac{3}{2})^2+(\frac{10}{7}-\frac{3}{2})^2+(\frac{3}{2}-\frac{3}{2})^2\\
		&+(\frac{11}{7}-\frac{3}{2})^2 +(\frac{23}{14}-\frac{3}{2})^2 +(\frac{12}{7}-\frac{3}{2})^2+(\frac{25}{14}-\frac{3}{2})^2+(\frac{29}{14}-\frac{3}{2})^2\\
		&+(\frac{15}{7}-\frac{3}{2})^2+ \sum_{l = 1}^{s+4}(\frac{2l+2s+7}{2(s+4)}-\frac{3}{2})^2 \\
		= & \frac{3}{2}+\frac{(s+4)^2-1}{12(s+4)}.
	\end{align*}
	
	The range of $\mathrm{Sp}(f_0)$ is $\frac{9}{7}$. Let $b = s+4$, then it suffices to prove
	\begin{displaymath}
		\frac{3}{2}+\frac{b^2-1}{12b} \leq \frac{9}{84}(13+b).
	\end{displaymath}
	Equivalently,
	\begin{displaymath}
		2b+\frac{7}{b}- 9\geq 0.
	\end{displaymath}
	Since $b\geq 5$, we have $LHS \geq 10+\frac{7}{5} -9 \geq 0$. Done.

	\noindent\textbf{(7) $VB_{(0)}^s$.}
	\begin{align*}
		& \mu\cdot \mathrm{Var}[\mathrm{Sp}(f_0)] \\
		= & (\frac{17}{20}-\frac{3}{2})^2+(\frac{21}{20}-\frac{3}{2})^2+(\frac{6}{5}-\frac{3}{2})^2+(\frac{5}{4}-\frac{3}{2})^2+(\frac{27}{20}-\frac{3}{2})^2+(\frac{7}{5}-\frac{3}{2})^2\\
		&+(\frac{29}{20}-\frac{3}{2})^2+(\frac{31}{20}-\frac{3}{2})^2 +(\frac{8}{5}-\frac{3}{2})^2+(\frac{33}{20}-\frac{3}{2})^2+(\frac{7}{4}-\frac{3}{2})^2+(\frac{9}{5}-\frac{3}{2})^2\\
		&+(\frac{39}{20}-\frac{3}{2})^2+(\frac{43}{20}-\frac{3}{2})^2+ \sum_{l = 1}^{s+4}(\frac{2l+2s+7}{2(s+4)}-\frac{3}{2})^2 \\
		= & \frac{13}{8}+\frac{(s+4)^2-1}{12(s+4)}.
	\end{align*}

	The range of $\mathrm{Sp}(f_0)$ is $\frac{13}{10}$. Let $b = s+4$, then it suffices to prove
	\begin{displaymath}
		\frac{13}{8}+\frac{b^2-1}{12b} \leq \frac{13}{120}(14+b).
	\end{displaymath}
	Equivalently,
	\begin{displaymath}
		3b+\frac{15}{b}- 13\geq 0.
	\end{displaymath}
	Since $b\geq 5$, we have $LHS \geq 15+\frac{15}{5} -13 \geq 0$. Done.

	\noindent\textbf{(8) $VB_{(-1)}^{\#,2k}$.}
	\begin{align*}
		& \mu\cdot \mathrm{Var}[\mathrm{Sp}(f_0)] \\
		= & (\frac{13}{15}-\frac{3}{2})^2+(\frac{16}{15}-\frac{3}{2})^2+(\frac{19}{15}-\frac{3}{2})^2+(\frac{4}{3}-\frac{3}{2})^2+(\frac{22}{15}-\frac{3}{2})^2+(\frac{23}{15}-\frac{3}{2})^2+(\frac{5}{3}-\frac{3}{2})^2\\
		& +(\frac{26}{15}-\frac{3}{2})^2+(\frac{26}{15}-\frac{3}{2})^2+(\frac{29}{15}-\frac{3}{2})^2+(\frac{32}{15}-\frac{3}{2})^2+ \sum_{l=1}^{2k+7}(\frac{l+2k+8}{2(k+4)}-\frac{3}{2})^2 \\
		= & \frac{121}{90}+\frac{(2k+7)(k+3)}{12(k+4)}.
	\end{align*}

	The range of $\mathrm{Sp}(f_0)$ is $\frac{19}{15}$. Let $a = k+4$, then it suffices to prove
	\begin{displaymath}
		\frac{121}{90}+\frac{(2a-1)(a-1)}{12a} \leq \frac{19}{180}(9+2a).
	\end{displaymath}
	Equivalently,
	\begin{displaymath}
		8a-\frac{15}{a}-26\geq 0.
	\end{displaymath}
	Since $a\geq 5$, we have $LHS \geq 40-\frac{15}{5} -26 \geq 0$. Done.

	\noindent\textbf{(9) $VB_{(-1)}^{\#,2k+1}$.}
	\begin{align*}
		& \mu\cdot \mathrm{Var}[\mathrm{Sp}(f_0)] \\
		= & (\frac{13}{15}-\frac{3}{2})^2+(\frac{16}{15}-\frac{3}{2})^2+(\frac{19}{15}-\frac{3}{2})^2+(\frac{4}{3}-\frac{3}{2})^2+(\frac{22}{15}-\frac{3}{2})^2+(\frac{23}{15}-\frac{3}{2})^2+(\frac{5}{3}-\frac{3}{2})^2\\
		& +(\frac{26}{15}-\frac{3}{2})^2+(\frac{26}{15}-\frac{3}{2})^2+(\frac{29}{15}-\frac{3}{2})^2+(\frac{32}{15}-\frac{3}{2})^2+ \sum_{l=1}^{2k+8}(\frac{l+2k+9}{2k+9}-\frac{3}{2})^2 \\
		= & \frac{121}{90}+\frac{(2k+7)(k+4)}{6(2k+9)}.
	\end{align*}

	The range of $\mathrm{Sp}(f_0)$ is $\frac{19}{15}$. Let $a = k+4$, then it suffices to prove
	\begin{displaymath}
		\frac{121}{90}+\frac{(2a-1)a}{6(2a+1)} \leq \frac{19}{180}(10+2a).
	\end{displaymath}
	Equivalently,
	\begin{displaymath}
		\frac{a(4a+17)}{45(2a+1)} \geq \frac{13}{45}.
	\end{displaymath}
	Since $a\geq 4$, we have $LHS \geq \frac{4\cdot 33}{45\cdot 9} \geq \frac{13}{45}$. Done.

	\noindent\textbf{(10) $VB_{(0)}^{\#,2k}$.}
	\begin{align*}
		& \mu\cdot \mathrm{Var}[\mathrm{Sp}(f_0)] \\
		= & (\frac{19}{22}-\frac{3}{2})^2+(\frac{23}{22}-\frac{3}{2})^2+(\frac{27}{22}-\frac{3}{2})^2+(\frac{29}{22}-\frac{3}{2})^2+(\frac{31}{22}-\frac{3}{2})^2+(\frac{3}{2}-\frac{3}{2})^2+(\frac{35}{22}-\frac{3}{2})^2\\
		& +(\frac{37}{22}-\frac{3}{2})^2+(\frac{39}{22}-\frac{3}{2})^2+(\frac{43}{22}-\frac{3}{2})^2+(\frac{47}{22}-\frac{3}{2})^2+ \sum_{l=1}^{2k+7}(\frac{l+2k+8}{2(k+4)}-\frac{3}{2})^2 \\
		= & \frac{16}{11}+\frac{(2k+7)(k+3)}{12(k+4)}.
	\end{align*}

	The range of $\mathrm{Sp}(f_0)$ is $\frac{14}{11}$. Let $a = k+4$, then it suffices to prove
	\begin{displaymath}
		\frac{16}{11}+\frac{(2a-1)(a-1)}{12a} \leq \frac{7}{66}(10+2a).
	\end{displaymath}
	Equivalently,
	\begin{displaymath}
		\frac{1}{22}a+\frac{1}{4}-\frac{1}{12a} \geq \frac{13}{33}.
	\end{displaymath}
	Since $a\geq 5$, we have $LHS \geq \frac{5}{22}+\frac{1}{4}-\frac{1}{60} \geq \frac{13}{33}$. Done.

	\noindent\textbf{(11) $VB_{(-1)}^{\#,2k+1}$.}
	\begin{align*}
		& \mu\cdot \mathrm{Var}[\mathrm{Sp}(f_0)] \\
		= & (\frac{19}{22}-\frac{3}{2})^2+(\frac{23}{22}-\frac{3}{2})^2+(\frac{27}{22}-\frac{3}{2})^2+(\frac{29}{22}-\frac{3}{2})^2+(\frac{31}{22}-\frac{3}{2})^2+(\frac{3}{2}-\frac{3}{2})^2+(\frac{35}{22}-\frac{3}{2})^2\\
		& +(\frac{37}{22}-\frac{3}{2})^2+(\frac{39}{22}-\frac{3}{2})^2+(\frac{43}{22}-\frac{3}{2})^2+(\frac{47}{22}-\frac{3}{2})^2+ \sum_{l=1}^{2k+8}(\frac{l+2k+9}{2k+9}-\frac{3}{2})^2 \\
		= & \frac{16}{11}+\frac{(2k+7)(k+4)}{6(2k+9)}.
	\end{align*}

	The range of $\mathrm{Sp}(f_0)$ is $\frac{14}{11}$. Let $a = k+4$, then it suffices to prove
	\begin{displaymath}
		\frac{16}{11}+\frac{(2a-1)a}{6(2a+1)} \leq \frac{7}{66}(11+2a).
	\end{displaymath}
	Equivalently,
	\begin{displaymath}
		\frac{a(6a+25)}{66(2a+1)} \geq \frac{19}{66}.
	\end{displaymath}
	Since $a\geq 4$, we have $LHS \geq \frac{4\cdot 49}{66\cdot 9} \geq \frac{19}{66}$. Done.

	\noindent\textbf{(12) $VB_{(1)}^{\#,2k}$.}
	\begin{align*}
		& \mu\cdot \mathrm{Var}[\mathrm{Sp}(f_0)] \\
		= & (\frac{31}{36}-\frac{3}{2})^2+(\frac{37}{36}-\frac{3}{2})^2+(\frac{43}{36}-\frac{3}{2})^2+(\frac{47}{36}-\frac{3}{2})^2+(\frac{49}{36}-\frac{3}{2})^2+(\frac{53}{36}-\frac{3}{2})^2+(\frac{55}{36}-\frac{3}{2})^2\\
		& +(\frac{59}{36}-\frac{3}{2})^2+(\frac{61}{36}-\frac{3}{2})^2+(\frac{65}{36}-\frac{3}{2})^2+(\frac{71}{36}-\frac{3}{2})^2+(\frac{77}{36}-\frac{3}{2})^2+ \sum_{l=1}^{2k+7}(\frac{l+2k+8}{2(k+4)}-\frac{3}{2})^2 \\
		= & \frac{169}{108}+\frac{(2k+7)(k+3)}{12(k+4)}.
	\end{align*}

	The range of $\mathrm{Sp}(f_0)$ is $\frac{23}{18}$. Let $a = k+4$, then it suffices to prove
	\begin{displaymath}
		\frac{169}{108}+\frac{(2a-1)(a-1)}{12a} \leq \frac{23}{216}(11+2a).
	\end{displaymath}
	Equivalently,
	\begin{displaymath}
		\frac{5}{108}a+\frac{1}{4}-\frac{1}{12a} \geq \frac{85}{216}.
	\end{displaymath}
	Since $a\geq 5$, we have $LHS \geq \frac{25}{108}+\frac{1}{4}-\frac{1}{60} \geq \frac{85}{216}$. Done.

	\noindent\textbf{(13) $VB_{(1)}^{\#,2k+1}$.}
	\begin{align*}
		& \mu\cdot \mathrm{Var}[\mathrm{Sp}(f_0)] \\
		= & (\frac{31}{36}-\frac{3}{2})^2+(\frac{37}{36}-\frac{3}{2})^2+(\frac{43}{36}-\frac{3}{2})^2+(\frac{47}{36}-\frac{3}{2})^2+(\frac{49}{36}-\frac{3}{2})^2+(\frac{53}{36}-\frac{3}{2})^2+(\frac{55}{36}-\frac{3}{2})^2\\
		& +(\frac{59}{36}-\frac{3}{2})^2+(\frac{61}{36}-\frac{3}{2})^2+(\frac{65}{36}-\frac{3}{2})^2+(\frac{71}{36}-\frac{3}{2})^2+(\frac{77}{36}-\frac{3}{2})^2+\sum_{l=1}^{2k+8}(\frac{l+2k+9}{2k+9}-\frac{3}{2})^2 \\
		= &  \frac{169}{108}+\frac{(2k+7)(k+4)}{6(2k+9)}.
	\end{align*}
	
	\begin{align*}
		& \tau \cdot \mathrm{Var}[\mathrm{Sp}^\tau(f_0)] \\
		= & (\frac{31}{36}-\frac{3}{2})^2+(\frac{37}{36}-\frac{3}{2})^2+(\frac{43}{36}-\frac{3}{2})^2+(\frac{47}{36}-\frac{3}{2})^2+(\frac{49}{36}-\frac{3}{2})^2+(\frac{53}{36}-\frac{3}{2})^2+(\frac{55}{36}-\frac{3}{2})^2\\
		& +(\frac{59}{36}-\frac{3}{2})^2+(\frac{61}{36}-\frac{3}{2})^2+(\frac{65}{36}-\frac{3}{2})^2+(\frac{71}{36}-\frac{3}{2})^2+\sum_{l=1}^{2k+8}(\frac{l+2k+9}{2k+9}-\frac{3}{2})^2 \\
		= &  \frac{169}{108}+\frac{(2k+7)(k+4)}{6(2k+9)}.
	\end{align*}
	The range of $\mathrm{Sp}(f_0)$ is $\frac{23}{18}$. Let $a = k+4$, then it suffices to prove
	\begin{displaymath}
		\frac{169}{108}+\frac{(2a-1)a}{6(2a+1)} \leq \frac{23}{216}(12+2a).
	\end{displaymath}
	Equivalently,
	\begin{displaymath}
		\frac{a(10a+41)}{108(2a+1)} \geq \frac{31}{108}.
	\end{displaymath}
	Since $a\geq 4$, we have $LHS \geq \frac{4\cdot 81}{108\cdot 9} \geq \frac{31}{108}$. Done.
\end{proof}

\section{Propoerties and Generalized Hertling Conjecture for Tjurina Spectrum}

As before, let $f\in \mathcal O_{n+1}$ define an isolated singularity at the origin. Let $\mathrm{sp}(f) = \{\alpha_1\leq ... \leq \alpha_\mu\}$ denote the collection of its spectrum numbers  and $\mathrm{sp}^{\tau}(f) = \{\beta_1,...,\beta_\tau\}$ denote its Tjurina spectrum (both counting multiplicity). By \textbf{Corollary \ref{Tjurina spectrum is a subsepctrum}}, $\mathrm{sp}^\tau(f)$ is a subset of $\mathrm{sp}(f)$. $I = \{i_1 < ... < i_{\mu-\tau}\} \subset \{1,...,n\}$ are those $k$ such that $\mathrm{sp}(f)\setminus \mathrm{sp}^\tau(f) = \{\alpha_k\mid k\in I\} =: \mathrm{R}(f)$.

In the first subsection, we give a computation method for $\mathrm{sp}^\tau(f)$. In the second subsection, we give an estimation for $\mathrm{R}(f)$ and show that $\alpha_\mu \not \in \mathrm{R}(f)$.

%The following is the generalzied Hertling conjecture for Tjurina spectrum.
%\begin{conjecture}[GHCTS]\label{GHCTS}
%	Let $f\in \mathcal O_{n+1}$ be a germ which defines an isolated singularity at the origin, with Tjurina number $\tau$. Suppose $\alpha_1\leq ...\leq \alpha_\tau$ are exponents of Tjurina spectrum of $f$, then the following inequality holds:
%	\begin{displaymath}
	%		\frac{1}{\tau}\sum_{i = 1}^{\mu} (\alpha_i-\bar \alpha)^2 \leq \frac{\alpha_\tau-\alpha_1}{12},
	%	\end{displaymath}
%	where $\bar \alpha$ is the average of $\alpha_1,...,\alpha_\tau$ i.e. $\bar \alpha = \frac{1}{\tau}\sum_{j=1}^\tau \alpha_j$.
%\end{conjecture}

\subsection{Computation for Tjurina Spectrum}

\label{Computation for Tjurina spectrum}
By \textbf{Corollary \ref{Tjurina spectrum is a subsepctrum}}, we can compute Tjurina spectrum through $V$-filtration. Due to this result, we programed the following code in SINGULAR.

\begin{verbatim}
	LIB"gmssing.lib";
	LIB"sing.lib";
	ring r = 0, (x,y), ds;
	poly f = x22+x3y2+y7;
	list v = vfilt(f);
	ideal I = v[4];
	list sp = spectrum(f);
	list vf = v[3];
	int m = size(sp[2]);
	int mu = milnor(f);
	intvec t;
	ideal J = f,jacob(f);
	int a;
	int j;
	module temp;
	for(a = m ; a >= 1 ; a = a-1){
		temp = vf[a];
		temp = std(temp);
		for(j = 1 ; j <= mu ; j = j+1){
			if(reduce(gen(j),temp) == 0){
				J = J,I[j];
			}
		}
		J = std(J);
		t[a] = vdim(J);
	}
	int tau = tjurina(f);
	intvec w;
	w[m] = tau-t[m];
	for(a = m-1 ; a >= 1 ; a = a-1){
		w[a] = t[a+1]-t[a];
	}
	w;
	spectrum(f);
	mu-tau;
\end{verbatim}

Moreover, if $f$ is Newton non-degenerate (see \textbf{Subsection \ref{Newton Filtration}}), $V^{\bullet}\Omega_f$ also coincides with the shifted Newton filtration $N_\bullet M_f$. Therefore, one can also apply \textbf{Algorithm \ref{algorithm for a maximal basis}}, merely by replacing $J(f)$ with $(f,J(f))$.

As examples, we present the compution results of $\mathrm{R}(f)$ for normal forms when modality $f$ is no greater than $3$. One can recover $\mathrm{sp}^\tau(f)$ from $\mathrm{R}(f)$ and $\mathrm{sp}(f)$ (for modality $\leq 2$, we refer to \cite{MR3146513} and \cite{MR0631501}). In the following tables, we mark a $(!)$ behind a spectrum number to mean that it is the maximal one among $\mathrm{sp}(f)$. 

\begin{remark}\label{a extra remark}
	We point out that among all singularities of modality no greater than $3$, there are four exceptional cases which are Newton degenerate. Their representatives are $(x^2+y^3)^2+xy^{4+q}$($W^\sharp_{1,2q-1}$), $(x^2+y^3)^2+x^2y^{3+q}$($W^\sharp_{1,2q}$), $x^3+xz^2+xy^3+y^{1+q}z^2$($U_{1,2q-1}$) and $x^3+xz^2+xy^3+y^{3+q}z$($U_{1,2q}$). So we have to compute their Tjurina spectrum exceptionally. However, there seems no effective methods to compute $\mathrm{R}(f)$ for them. But through \textbf{Lemma \ref{f induces fitration lemma}} in the next subsection, we can easily show each of their largest expoent is in $\mathrm{R}(f)$ respectively. Also, we conjecture that $\mathrm{R}(f)$ is exactly the largest and the second larget exponents.
\end{remark}

\begin{conjecture}\label{Conjecture for four exceptional}
	For four exceptional cases $W^\sharp_{1,2q-1},W^\sharp_{1,2q},U_{1,2q-1}$ and $U_{1,2q}$, $\mathrm{R}(f)$ is exactly the largest and the second larget exponents.
\end{conjecture}
\begin{table}[!htbp]
	\centering
	\caption{Spectrum of Singularities of Modality 1}
	%\label{频率型、强度型和相对比型指标}也可以用这个
	\renewcommand\arraystretch{1.5}
	\begin{tabular}{lp{2cm}p{2cm}p{6cm}}
		\toprule  %添加表格头部粗线
		\textbf{Class}   &$\tau$         & $ \mu-\tau $       & \textbf{R}(f)  \\
		\midrule  %添加表格中横线
		$ P_8 $            & 8            & 0               &      $ \varnothing$                      \\
		$ X_9 $            &  9           & 0               &      $ \varnothing$                      \\
		$ J_{10}$          & 10           & 0               & $ \varnothing$                        \\  
		$ Q_{10}$          & 9            & 1               & $\frac{49}{24} (!) $                        \\
		$ Q_{11}$          & 10           & 1               & $\frac{37}{18} (!)$                      \\
		$ Q_{12}$          & 11           & 1               & $\frac{31}{15} (!) $                            \\
		$ S_{11}$          & 10           & 1               & $\frac{33}{16} (!) $                     \\
		$ S_{12}$          & 11           & 1               & $\frac{27}{13} (!)  $                     \\
		$ U_{12}$          & 11           & 1               & $\frac{25}{12} (!)  $                     \\
		$ Z_{11}$          & 10           & 1               & $\frac{23}{15} (!) $                     \\
		$ Z_{12}$          & 11           & 1               & $\frac{17}{11} (!) $                    \\
		$ Z_{13}$          & 12           & 1               & $\frac{14}{9}   (!) $                   \\
		$ W_{12}$          & 8            & 0               &  $ \varnothing      $             \\
		$ W_{13}$          & 12           & 1               & $\frac{25}{16} (!) $                      \\
		$ E_{12}$          & 11           & 1               & $\frac{32}{21} (!) $                     \\
		% 						\midrule  %添加表格中横线
		% 			\end{tabular}
	% 	\end{table}
% % ~\\
% \begin{table}[!htbp]
	% 		\centering
	% 		\renewcommand\arraystretch{1.5}
	% 		\begin{tabular}{lp{2cm}p{2cm}p{6cm}}
		% \midrule  %
		$ E_{13}$          & 12           & 1               & $\frac{23}{15} (!) $                     \\
		$ E_{14}$          & 13           & 1               & $\frac{37}{24} (!)  $                     \\
		$ T_{p,q,r}$       & $p+q+r-2$    & 1               & $ 2 (!)$                           \\
		\bottomrule %添加表格底部粗线
	\end{tabular}
\end{table}

% \vskip 20pt %加空行调整图文位置
% ~\\%加空行

\begin{table}[!htbp]
	\centering
	\caption{Spectrum of Curve Singularities of Modality 2}
	\renewcommand\arraystretch{1.5}
	\begin{tabular}{lp{2cm}p{2cm}p{6cm}}
		\toprule  %添加表格头部粗线
		\textbf{Class}                   &$\tau$              & $\mu- \tau $       & \textbf{R}(f)   \\
		\midrule  %添加表格中横线
		$ J_{3,0} $                      &  15                & 1              & $ \frac{37}{18} (!)    $                           \\
		$ J_{3,p} $                      &  14+ $p$           & 2              & $ \frac{37}{18} (!), 2 - \frac{1}{2(p+9)} $      \\               
		$ Z_{1,0} $                      &  14                & 1              & $ \frac{29}{14} (!) $  \\
		$ Z_{1,p} $                      &  13 + $p$          & 2              & $ \frac{29}{14} (!), 2 - \frac{1}{2(p+7)} $       \\
		$ W_{1,0} $                      &  14                & 1              & $ \frac{25}{12} (!) $                       \\
		$ W_{1,p} $                      & 13 + $p$           & 2              & $ \frac{25}{12} (!), 2 - \frac{1}{2(p+6)} $              \\
		$ W_{1,2q-1} ^\sharp $           &                    &                &                            \\
		$ W_{1,2q} ^\sharp $             &                    &                &                            \\
		$ Q_{2,0} $                      & 13                 & 1              & $ \frac{25}{12} (!) $                          \\
		$ Q_{2,p} $                      & 12 + $p$           & 2              & $ \frac{25}{12} (!), 2 - \frac{1}{2(p+6)} $                          \\
		$ S_{1,0} $                      & 13                 & 1              & $ \frac{21}{10} (!) $                          \\
		$ S_{1,p} $                      & 12 + $p$           & 2              & $ \frac{21}{10} (!), 2 - \frac{1}{2(p+5)} $                           \\
		$ S_{1,p} ^\sharp $         	 & 12 + $p$           & 2              & $ \frac{21}{10} (!), 2 - \frac{1}{p+10} $                          \\
		% $ S_{1,2q-1} ^\sharp $         & 12 + $2q-1$           & 2               &  $ \frac{12}{10} (!), \frac{1}{10}+\frac{2q+7}{2q+9}$                          \\
		% $ S_{1,2q} ^\sharp $           & 12 + $2q$             & 2               &  $ \frac{12}{10} (!), \frac{1}{10} + \frac{q+4}{q+5}$                          \\
		$ U_{1,0} $                      & 13                 & 1              & $ \frac{19}{9} (!)$                          \\
		$ U_{1,p} $                      & 12 + $ p $         & 2              & $ \frac{19}{9} (!), 2 - \frac{1}{2q+9}$                          \\
		% $ U_{1,2q-1} $                 & 12 + $ 2q-1 $                  & 2               &  $ \frac{11}{9} (!), \frac{1}{9}+\frac{q+3}{q+4}$                          \\
		% $ U_{1,2q} $                   & 12 + $ 2q  $                  & 2               &  $ \frac{11}{9} (!), \frac{1}{9} +\frac{2q+7}{2q+9}$                          \\
		$ E_{18} $          & 16               & 2                 & $\frac{34}{30} (!), \frac{28}{30}$                          \\
		$ E_{19} $          & 17               & 2                 & $ \frac{8}{7} (!), \frac{20}{21}$                           \\
		$ E_{20} $          & 18               & 2                 & $ \frac{38}{33} (!), \frac{32}{33}$                           \\      
		$ Z_{17} $          & 15               & 2                 & $ \frac{14}{12} (!), \frac{11}{12}$                           \\
		$ Z_{18} $          & 16               & 2                 & $ \frac{20}{17} (!), \frac{16}{17}$                           \\
		$ Z_{19} $          & 17               & 2                 & $ \frac{32}{27} (!), \frac{26}{27}$                           \\
		$ W_{17} $          & 15               & 2                 & $ \frac{6}{5} (!), \frac{19}{20}$                            \\
		$ W_{18} $          & 16               & 2                 & $ \frac{34}{28} (!), \frac{27}{28}$                           \\
		$ Q_{16} $          & 14               & 2                 & $ \frac{25}{21} (!), \frac{19}{21}$                           \\
		$ Q_{17} $          & 15               & 2                 & $ \frac{12}{10} (!), \frac{28}{30}$                           \\
		$ Q_{18} $          & 16               & 2                 & $ \frac{58}{48} (!), \frac{46}{48}$                           \\
		% 						\midrule  %添加表格中横线
		% 			\end{tabular}
	% 	\end{table}
% % ~\\
% \begin{table}[!htbp]
	% 		\centering
	% 		\renewcommand\arraystretch{1.5}
	% 		\begin{tabular}{lp{2cm}p{2cm}p{6cm}}
		% \midrule  %
		$ S_{16} $          & 14               & 2                 & $ \frac{21}{17} (!), \frac{16}{17}$                           \\
		$ S_{17} $          & 15               & 2                 & $ \frac{10}{8} (!), \frac{23}{24}$                          \\
		$ U_{16} $          & 14               & 2                 & $ \frac{19}{15} (!), \frac{14}{15}$                           \\
		\bottomrule %添加表格底部粗线
	\end{tabular}
\end{table}

% \noindent\large{\textbf{Trimodal Singularities}}

% \noindent\textbf{I. Spectrum of Curve Singularities of Modality 3}

% \begin{tabular}{lccc}
	% 	\hline
	% 	Class & $\tau$ & $\mu-\tau$& $\mathrm{R}(f)$\\
	% 	\hline
	% 	$NA_{r,0}$ & $14+r$ & $2$ & $\frac{8}{5}(!),\frac{3}{2}-\frac{1}{2(r+5)}$ \\
	% 	$NA_{r,s}$ & $14+r+s$ &  $2$ &$\frac{8}{5}(!),\frac{3}{2}-\max\{\frac{1}{2(r+5)},\frac{1}{2(s+5)}\}$ \\
	% 	$NB_{(-1)}^r$ & $16+r$ &  $2$ &
	% 	$\frac{29}{18}(!),\frac{3}{2}-\frac{1}{2(r+5)}$  \\
	% 	$NB_{(0)}^r$ & $17+r$ &   $2$ & $\frac{21}{13}(!),\frac{3}{2}-\frac{1}{2(r+5)}$\\
	% 	$NB_{(1)}^r$ & $18+r$ &   $2$ & $\frac{34}{21}(!),\frac{3}{2}-\frac{1}{2(r+5)}$ \\
	% 	$NC_{19}$ & $19$ &  $0$ &$\varnothing$ \\
	% 	$NC_{20}$ & $20$ & $0$ &$\varnothing$ \\
	% 	$NF_{20}$ & $20$ & $0$ &$\varnothing$ \\
	% 	$NF_{21}$ & $21$ & $0$ &$\varnothing$ \\
	% 	\hline
	% \end{tabular}

\begin{table}[!htbp]
	\centering
	\caption{Spectrum of Curve Singularities of Modality 3}
	\renewcommand\arraystretch{1.5}
	\begin{tabular}{lp{2cm}p{2cm}p{6cm}}
		\toprule  %添加表格头部粗线
		\textbf{Class}      &$ \tau$         & $ \mu-\tau $     &\textbf{R}(f)    \\
		\midrule  %添加表格中横线
		$NA_{r,0}$ & $14+r$ & $2$ & $\frac{8}{5}(!),\frac{3}{2}-\frac{1}{2(r+5)}$ \\
		$NA_{r,s}$ & $14+r+s$ &  $2$ &$\frac{8}{5}(!),\frac{3}{2}-\max\{\frac{1}{2(r+5)},\frac{1}{2(s+5)}\}$ \\
		$NB_{(-1)}^r$ & $16+r$ &  $2$ &
		$\frac{29}{18}(!),\frac{3}{2}-\frac{1}{2(r+5)}$  \\
		$NB_{(0)}^r$ & $17+r$ &   $2$ & $\frac{21}{13}(!),\frac{3}{2}-\frac{1}{2(r+5)}$\\
		$NB_{(1)}^r$ & $18+r$ &   $2$ & $\frac{34}{21}(!),\frac{3}{2}-\frac{1}{2(r+5)}$ \\
		$NC_{19}$ & $19$ &  $0$ &$\varnothing$ \\
		$NC_{20}$ & $20$ & $0$ &$\varnothing$ \\
		$NF_{20}$ & $20$ & $0$ &$\varnothing$ \\
		$NF_{21}$ & $21$ & $0$ &$\varnothing$ \\
		\bottomrule %添加表格底部粗线
	\end{tabular}
\end{table}

\begin{table}[!htbp]
	\centering
	\caption{Spectrum of Surface Singularities of Modality 3}
	\renewcommand\arraystretch{1.5}
	\begin{tabular}{lp{2cm}p{2cm}p{6cm}}
		\toprule  %添加表格头部粗线
		\textbf{Class}      &$ \tau$         & $ \mu-\tau $     &\textbf{R}(f)    \\
		\midrule  %添加表格中横线
		$VA_{r,s}$ & $13+s+r$ & $2$ & $\frac{17}{8}(!),2-\max\{\frac{1}{2(r+4)},\frac{1}{2(s+4)}\}$ \\
		$VA_{2k,0}^\#$ & $13+2k$ & $2$ & $\frac{17}{8}(!),2-\frac{1}{2(k+4)}$ \\
		$VA_{2k+1,0}^\#$ & $14+2k$ & $2$ & $\frac{17}{8}(!),2-\frac{1}{2k+9}$ \\
		$VA_{2k,s}^\#$ & $13+2k+s$ & $2$ & $\frac{17}{8}(!),2-\max\{\frac{1}{2(r+4)},\frac{1}{2(s+4)}\}$ \\
		$VA_{2k+1,s}^\#$ & $ 14+2k+s$ & $2$ & $\frac{17}{8}(!),2-\max\{\frac{1}{2k+9},\frac{1}{2(s+4)}\}$ \\
		$VB_{(-1)}^s$ & $15+s$ & $2$ & $\frac{15}{7}(!),2-\frac{1}{2(s+4)}$ \\
		$VB_{(0)}^s$ & $16+s$ & $2$ & $\frac{43}{20}(!),2-\frac{1}{2(s+4)}$ \\
		$VB_{(-1)}^{\#,2k}$ & $15+2k$ & $2$ & $\frac{32}{15}(!),2-\frac{1}{2(k+4)}$\\
		$VB_{(-1)}^{\#,2k+1}$ & $16+2k$ & $2$ & $\frac{32}{15}(!), 2-\frac{1}{2k+9}$\\
		$VB_{(0)}^{\#,2k}$ & $16+2k$ & $2$ & $\frac{47}{22}(!),2-\frac{1}{2(k+4)}$\\
		$VB_{(0)}^{\#,2k+1}$ & $17+2k$ & $2$ & $\frac{47}{22}(!), 2-\frac{1}{2k+9}$\\
		$VB_{(1)}^{\#,2k}$ & $17+2k$ & $2$ & $\frac{77}{36}(!),2-\frac{1}{2(k+4)}$\\
		$VB_{(1)}^{\#,2k+1}$ & $18+2k$ & $2$ & $\frac{77}{36}(!), 2-\frac{1}{2k+9}$\\
		% 		\midrule  %添加表格中横线
		% 	\end{tabular}
	% \end{table}
% % ~\\
% \begin{table}[!htbp]
	% \centering
	% \renewcommand\arraystretch{1.5}
	% \begin{tabular}{lp{2cm}p{2cm}p{6cm}}
		% 		\midrule  %
		$VC_{18}$ & $18$ & $0$ & $\varnothing$\\
		$VC_{19}$ & $19$ & $0$ & $\varnothing$\\
		$VC_{18}^\#$ & $18$ & $0$ & $\varnothing$\\
		$VC_{19}^\#$ & $19$ & $0$ & $\varnothing$\\
		$VF_{19}$ & $19$ & $0$ & $\varnothing$\\
		$VF_{20}$ & $20$ & $0$ & $\varnothing$\\
		\bottomrule %添加表格底部粗线
	\end{tabular}
\end{table}

% \noindent\textbf{II. Spectrum of Surface Singularities of Modality 3}

% \begin{tabular}{lccc}
	% 	\hline
	% 	Class & $\tau$ & $\mu-\tau$ & $\mathrm{R}(f)$\\
	% 	\hline
	% 	$VA_{r,s}$ & $13+s+r$ & $2$ & $\frac{17}{8}(!),2-\max\{\frac{1}{2(r+4)},\frac{1}{2(s+4)}\}$ \\
	% 	$VA_{2k,0}^\#$ & $13+r$ & $2$ & $\frac{17}{8}(!),2-\frac{1}{2(k+4)}$ \\
	% 	$VA_{2k+1,0}^\#$ & $13+r$ & $2$ & $\frac{17}{8}(!),2-\frac{1}{2k+9}$ \\
	% 	$VA_{2k,s}^\#$ & $13+r+s$ & $2$ & $\frac{17}{8}(!),2-\max\{\frac{1}{2(r+4)},\frac{1}{2(s+4)}\}$ \\
	% 	$VA_{2k+1,s}^\#$ & $ 13+r+s$ & $2$ & $\frac{17}{8}(!),2-\max\{\frac{1}{2k+9},\frac{1}{2(s+4)}\}$ \\
	% 	$VB_{(-1)}^s$ & $15+s$ & $2$ & $\frac{15}{7}(!),2-\frac{1}{2(s+4)}$ \\
	% 	$VB_{(0)}^s$ & $16+s$ & $2$ & $\frac{43}{20}(!),2-\frac{1}{2(s+4)}$ \\
	% 	$VB_{(-1)}^{\#,2k}$ & $15+r$ & $2$ & $\frac{32}{15}(!),2-\frac{1}{2(k+4)}$\\
	% 	$VB_{(-1)}^{\#,2k+1}$ & $15+r$ & $2$ & $\frac{32}{15}(!), 2-\frac{1}{2k+9}$\\
	% 	$VB_{(0)}^{\#,2k}$ & $16+r$ & $2$ & $\frac{47}{22}(!),2-\frac{1}{2(k+4)}$\\
	% 	$VB_{(0)}^{\#,2k+1}$ & $16+r$ & $2$ & $\frac{47}{22}(!), 2-\frac{1}{2k+9}$\\
	% 	$VB_{(1)}^{\#,2k}$ & $17+r$ & $2$ & $\frac{77}{36}(!),2-\frac{1}{2(k+4)}$\\
	% 	$VB_{(1)}^{\#,2k+1}$ & $17+r$ & $2$ & $\frac{77}{36}(!), 2-\frac{1}{2k+9}$\\
	% 	$VC_{18}$ & $18$ & $0$ & $\varnothing$\\
	% 	$VC_{19}$ & $19$ & $0$ & $\varnothing$\\
	% 	$VC_{18}^\#$ & $18$ & $0$ & $\varnothing$\\
	% 	$VC_{19}^\#$ & $19$ & $0$ & $\varnothing$\\
	% 	$VF_{19}$ & $19$ & $0$ & $\varnothing$\\
	% 	$VF_{20}$ & $20$ & $0$ & $\varnothing$\\
	% 	\hline
	% \end{tabular}

It is worth mentioning that Tjurina spectrum is not stable under $\tau$-constant deformation (see \cite{tjsp0b}). But we conjecture that it remains constant in the connected stratum containing the normal form of $f$. Besides, we do not consider singularities in \cite{MR0553709}. The reason lies in our formulation of \textit{\textbf{Conjecture \ref{GHCTS}}} in the introduction, since semi-quasihomogeneous singularities can be $\mu$-constantly deformed to quasihomogeneous ones. In these cases $\tau$ increases.

\subsection{Maximal Exponent and Generalized Hertling Conjecture for Tjurina Spectrum}\label{Maximal Exponent and Generalized Hertling Conjecture for Tjurina Spectrum}

Notations are as before. $f\in \mathcal O_{n+1}$ defines an isolated singularity at the origin with $\mu$ and $\tau$ its Milnor and Tjurina numbers respectively.
\begin{theorem}\label{f induces fitration lemma}
	Let $d = \mu-\tau$
	, $\alpha_1\leq \alpha_2 \leq ... \leq \alpha_\mu$ be all spectrum numbers, $\alpha_{j_1} < ... < \alpha_{j_t}$ be all jumpings i.e. $\alpha_{j_i} < \alpha_{j_{i}+1}$ and $h_l = \mathrm{Gr}_V^{\alpha_{j_l}}\Omega_f^{n+1} = j_l -j_{l-1}$ be the multiplicity of $\alpha_{j_l}$.
	Besides, let $\alpha_{i_1} \leq \alpha_{i_2} \leq ... \leq \alpha_{i_d}$ be all exponents not in the Tjurina spectrum.
	
	For each $1\leq k\leq d$, let $r_k$ be the maximal integer such that $\#\{j\mid \alpha_j\leq \alpha_{r_k}\} \leq k$, then $\alpha_{i_k} \geq \alpha_{j_{r_k}}+1$. In particular, we have $\alpha_{i_1} \geq \alpha_1+1$ and $\alpha_{i_2} \geq \alpha_2+1$ if $d \geq 2$.
\end{theorem}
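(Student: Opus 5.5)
We work entirely inside the filtered finite-dimensional space $(\Omega_f^{n+1},V^\bullet)$. By Corollary \ref{Tjurina spectrum is a subsepctrum}, the Tjurina spectrum is the Poincar\'e polynomial of the quotient filtration on $\Omega_f^{n+1}/f\Omega_f^{n+1}$. Hence the exponents not in the Tjurina spectrum are exactly the jumping numbers of the induced filtration $V^\beta\cap f\Omega_f^{n+1}$ on the $d$-dimensional subspace $W:=f\Omega_f^{n+1}$. Concretely, for every $\beta$,
\begin{displaymath}
\#\{k\mid \alpha_{i_k}\geq \beta\} = \dim_{\mathbb C}\bigl(V^\beta\Omega_f^{n+1}\cap W\bigr),
\end{displaymath}
by exactness of $\mathrm{Gr}_V$ for the strict short exact sequence $0\to W\to \Omega_f^{n+1}\to \Omega_f^{n+1}/W\to 0$. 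So the statement becomes a lower bound for the jumping numbers of $V$ restricted to $W$.

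The key input is inclusion (2.1): $f\cdot V^\alpha\Omega_f^{n+1}\subseteq V^{\alpha+1}\Omega_f^{n+1}$. It follows from $tH^\beta\subseteq H^{\beta+1}$ and from compatibility of the action of $t$ with the surjection $\mathcal H^{(0)}\twoheadrightarrow\Omega_f^{n+1}$. Consider multiplication by $f$ as a surjective linear map $m_f:\Omega_f^{n+1}\to W$. Every element of $W$ is $f\omega$ with $\omega\in V^{\alpha_1}\Omega_f^{n+1}=\Omega_f^{n+1}$, so $W\subseteq V^{\alpha_1+1}$. This already gives $\alpha_{i_1}\geq\alpha_1+1$.

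For the general statement we filter the source. Fix the jump $\alpha_{j_{r}}$. The subspace $V^{>\alpha_{j_r}}\Omega_f^{n+1}=V^{\alpha_{j_r+1}}$ has codimension $j_r=\#\{j\mid \alpha_j\leq\alpha_{j_r}\}$. Its image under $m_f$ lies in $V^{\alpha_{j_r+1}+1}\cap W$. Meanwhile, $m_f(\Omega_f^{n+1})=W$ lies in $V^{\alpha_{j_r}+1}$ as soon as we know that $m_f$ kills, modulo $V^{\alpha_{j_r+1}+1}$, nothing more than needed. The cleaner route is a dimension count. The map $m_f$ restricted to $V^{\alpha_{j_r+1}}$ has image of dimension at least $d-j_r$, because the codimension of $V^{\alpha_{j_r+1}}$ is $j_r$ and $m_f$ is onto $W$. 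Hence $\dim(V^{\alpha_{j_r+1}+1}\cap W)\geq d-j_r$. In terms of the ordered $\alpha_{i_k}$, at most $j_r$ of them lie below $\alpha_{j_r+1}+1$. Moreover all of them lie in $V^{\alpha_1+1}$, and more generally in $V^{\alpha_{j_{l}}+1}$ for each $l$.

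Translating: if $k$ satisfies $j_{r}\leq k$, i.e. $r$ is maximal with $\#\{j\mid\alpha_j\leq\alpha_{j_r}\}\leq k$, then the count shows $\alpha_{i_{k}}\geq\alpha_{j_{r}}+1$. Indeed, among the $d$ exponents of $W$, at least $d-j_{r-1}\geq d-k+1$ of them are $\geq\alpha_{j_{r-1}+1}+1=\alpha_{j_r}+1$, and these are precisely $\alpha_{i_k},\dots,\alpha_{i_d}$. The indexing has to be matched carefully between "$r_k$ as an index of jumps" and "$\alpha_{r_k}$" in the statement; I would fix the convention that $\alpha_{j_{r_k}}$ is the largest jump value whose cumulative multiplicity before it is $<k$. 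The special cases then read: $k=1$ gives the jump $\alpha_{j_1}=\alpha_1$, and $k=2$ gives $\alpha_2$, whether or not $\alpha_1=\alpha_2$.

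The main obstacle is this bookkeeping: making the off-by-one in the definition of $r_k$ consistent with the dimension inequality $\dim m_f(V^{\alpha_{j_{r}}})\geq d-j_{r-1}$. I would verify it first on the $k=1,2$ cases and then write the general count.
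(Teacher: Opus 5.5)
Your proposal is correct. It uses the same two ingredients as the paper: the inclusion $fV^\beta\Omega_f^{n+1}\subseteq V^{\beta+1}\Omega_f^{n+1}$ from (2.1), and a rank count for multiplication by $f$ on a filtration step of small codimension. The organization is different, though.

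The paper picks a basis adapted to $V^\bullet$ and uses only that $f$ is nonzero on a tail span. From this it extracts a single exponent of $\mathrm{R}(f)$ above a threshold and then appeals to ``a simple induction''. For the final clause it invokes the Dimca--Saito multiplicity-one result (Theorem \ref{multiplicity 1 of the minimal exponent}).

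You instead identify $\mathrm{R}(f)$ with the spectrum of the filtered subspace $W=f\Omega_f^{n+1}$, so that $\#\{k\mid\alpha_{i_k}\geq\beta\}=\dim(V^\beta\cap W)$. The single inequality $\dim f(V^\gamma)\geq d-\mathrm{codim}\,V^\gamma$ then counts all such exponents above $\gamma+1$ at once, with no induction.

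Take $\gamma=\alpha_k$; its codimension $\#\{j\mid\alpha_j<\alpha_k\}$ is at most $k-1$. This gives the uniform bound $\alpha_{i_k}\geq\alpha_k+1$ for every $1\leq k\leq d$, which is exactly what your convention for $r_k$ produces. Under the reading $j_{r_k}\leq k$ it implies the stated bound, since then $\alpha_{j_{r_k}}\leq\alpha_k$. It also gives both ``in particular'' inequalities directly, without the multiplicity-one theorem. So the off-by-one you flag as the main obstacle is already settled by your ``Indeed'' sentence, which only uses $j_{r-1}\leq k-1$.

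Two statements in your write-up are false and should be deleted, although nothing in the final count depends on them. First, $W=m_f(\Omega_f^{n+1})$ is not contained in $V^{\alpha_{j_r}+1}$ in general. Second, $W$ is not contained in $V^{\alpha_{j_l}+1}$ for every $l$: for $l=t$ this space is $V^{\alpha_\mu+1}=0$, while $\dim W=d\geq 1$.
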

\begin{proof}
	By (2.1), we have $f V^\beta\Omega_f^{n+1} \subset V^{\beta+1}\Omega_f^{n+1}$ for all $\beta\in \mathbb Q$.  We abbreviate $r_k$ as $r$. Take $f : v\mapsto f\cdot v$ as an endomorphipsm of $\Omega_f^{n+1}$. Pick $e_1,...,e_\mu$ to be a basis of $\Omega_f^{n+1}$ with $e_{h_1+...+h_{r-1}+s} \in V^{\alpha_r}$ for all $r$ and $1\leq s \leq h_r$. Then for each $k$, since $\mathrm{rank}(f\cdot) > k-1$, $f$ does not act trivially on $\mathrm{span}\{e_{k+1},...,e_{\mu}\}$ and hence $0\neq fV^{\alpha_{j_r}}\Omega_f^{n+1} \subset V^{\alpha_{j_r}+1}\Omega_f^{n+1}$. Hence there must be some $\beta \geq \alpha_{j_r+1}$ such that $\mathrm{Gr}_V^\beta(\Omega_f^{n+1}/f\Omega_f^{n+1}) < \mathrm{Gr}_V^\beta(\Omega_f^{n+1})$. By a simple induction, we get the result.
	%we have in particular $f \Omega_f^{n+1} = f V^{\alpha_1} \Omega_f^{n+1} \subset V^{\alpha_1+1}\Omega_f^{n+1}$. Hence, for all $\beta < \alpha_1+1$, $\mathrm{Gr}_V^\beta\Omega_f^{n+1} = \mathrm{Gr}_V^\beta (\Omega_f^{n+1}/f\Omega_f^{n+1})$, or $\beta_j = \alpha_j$ for all $\alpha_j < \alpha_1+1$.
	
	%If we further assume $\mu-\tau \geq 2$, $f : v\mapsto fv$ can be viewed as a linear endomorphism of $\Omega_f^{n+1}$ with rank $\mu-\tau \geq 2$. By \cite{dimca2014koszul}, 4.11, the multiplicity of $\alpha_1$ is $1$, or $\dim \mathrm{Gr}_V^{\alpha_1}\Omega_f^{n+1} = 1$. Let $e_1\in V^{\alpha_1}$ whose image in $\mathrm{Gr}_V^{\alpha_1}$ is a basis. Take $e_2,...,e_n\in V^{\alpha_2}$
	%Let $e_1,...,e_n\in \Omega_f^{n+1}$ be a basis under which $f$ is represented in Jordan canonical form. Let $\lambda_k = \sup\{l \mid e_k\in V^{\alpha_l}\Omega_f^{n+1}\}$ and we find $\{k\mid \lambda_k = 1\} \leq 1$. Otherwise, $\dim \mathrm{Gr}_V^{\alpha_1}\Omega_f^{n+1} \geq 2$, a contradiction. Since the rank of this endomorphism is no smaller than $2$, there must be a $k$ such that $\lambda_k \geq \alpha_2$ and $V^{\alpha_{2}+1} \ni f\cdot e_k \neq 0$. Hence, there exists $\beta\geq \alpha_2+1$ such that $\mathrm{Gr}_V^\beta(\Omega_f^{n+1}/f\Omega_f^{n+1}) < \mathrm{Gr}_V^\beta \Omega_f^{n+1}$ i.e. $\beta$ is in $\mathrm{R}(f)$. In particular, $\alpha_{i_{\mu-\tau}} \geq \alpha_2+1$.  

	As for the last assertion, we need the following theorem.
	%As for the last assertion, by Saito's formula (see \cite{MR1621831}, 8.6), $p_g = \#\{\alpha_j \mid \alpha_j \leq 1\}$. Since $\alpha_1 >0$, we are done.
\end{proof}
\begin{theorem}[\cite{dimca2014koszul}, 4.11] \label{multiplicity 1 of the minimal exponent}
	The multiplicity of $\alpha_1$ is $1$. So is $\alpha_\mu$.
\end{theorem}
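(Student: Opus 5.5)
The statement to prove is \textbf{Theorem \ref{multiplicity 1 of the minimal exponent}}: the multiplicity of $\alpha_1$ in $\mathrm{sp}(f)$ is $1$, and so is the multiplicity of $\alpha_\mu$.

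By \textbf{Theorem \ref{symmetry of spectrum}}, $\alpha_i+\alpha_{\mu-i+1}=n+1$, so the multiplicities of $\alpha_1$ and $\alpha_\mu$ agree. It therefore suffices to prove $\dim_{\mathbb C}\mathrm{Gr}_V^{\alpha_1}\Omega_f^{n+1}=1$.

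I would work in the Brieskorn lattice $\mathcal H^{(0)}$ and the Gau\ss--Manin system $\mathcal H$ with its $V$-filtration from Subsection \ref{Brieskorn Lattice}. The plan has three steps.

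\textbf{Step 1: identify the class that computes $\alpha_1$.} Let $\omega=[d\bm x]$, the class of $1\in\mathcal O_{n+1}$. Since $\mathcal H^{(0)}=\mathcal O_{n+1}d\bm x/df\wedge d\Omega^{n-1}$, every element has the form $g\,d\bm x$. The action of $t$ is multiplication by $f$, and $\mathcal H^{(0)}$ is generated as a $\mathcal D$-type module over $\mathbb C\{t\}\{\{\partial_t^{-1}\}\}$ by $\omega$ together with the classes of monomials. The key claim is that $\mathcal H^{(0)}$ is cyclic over the ring $\mathbb C\{\{\partial_t^{-1}\}\}[t]$ modulo $V^{>\alpha_1}$. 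More precisely, I would show that $\alpha_1$ equals the minimal exponent $\tilde\alpha_f$, i.e.\ the smallest root of $b_f(-s)/(s+1)$ (Malgrange, Saito). I would also show that $V^{>\alpha_1}\mathcal H^{(0)}$ contains $\mathfrak m\,d\bm x$ modulo $\partial_t^{-1}\mathcal H^{(0)}$, so that only $\omega$ itself can contribute to $\mathrm{Gr}_V^{\alpha_1}$. This is the microlocal statement that $\tilde V^{\alpha}\mathcal O_{n+1}=\mathcal O_{n+1}$ exactly for $\alpha\le\tilde\alpha_f$, and that $\mathfrak m\subset\tilde V^{>\tilde\alpha_f}$. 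Via \textbf{Theorem \ref{microlocal V-filtration = V-filtration on Brieskorn lattice}}, it transfers to $\Omega_f^{n+1}$.

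\textbf{Step 2: prove $\mathfrak m\subset\tilde V^{>\tilde\alpha_f}$.} For $x_i g$, I would use the identity $x_i\,\partial_t^{-1}$-relations. Specifically, $x_i\partial_{x_i}(f)$ and the Euler-type relations in $\mathcal H$ give $x_i\,d\bm x = \partial_t^{-1}\,(\ldots)$ up to terms of strictly higher $V$-order. Alternatively, I would appeal to the strict inequality for jumping numbers of multiplier ideals with respect to $\mathfrak m$-weights, namely $\mathrm{lct}$-type invariants $\mathrm{lct}(f;x_i)>\mathrm{lct}(f)$. For isolated singularities, this follows from the Newton/valuation characterization: if $x_i g$ had $V$-order exactly $\alpha_1$, then pulling back to a log resolution, the divisor computing $\alpha_1$ would have center $0$, along which $\mathrm{ord}(x_i)\ge1$. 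That raises the order by a positive amount.

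\textbf{Step 3: conclude.} From Steps 1 and 2, $\mathrm{Gr}_V^{\alpha_1}\Omega_f^{n+1}$ is spanned by the image of $1$. Hence its dimension is $1$; it is nonzero since $\alpha_1$ is a spectrum number.

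The main obstacle is Step 2, the strict increase of $V$-order under multiplication by $\mathfrak m$. This needs the comparison between the microlocal $V$-filtration and valuations on a resolution. In the Newton non-degenerate case it is immediate from \textbf{Theorem \ref{Saito's non-degenerated theorem}}, since the shifted valuation satisfies $v(x_i g)>v(g)$ and $v(1)=\alpha_1$ is attained only by the constant monomial. For the general case I would try first to reduce to the non-degenerate case by a generic $\mu$-constant perturbation, using \textbf{Theorem \ref{Varchenko's constant deformation}}. If that fails, I would cite the Koszul-complex argument of \cite{dimca2014koszul}.
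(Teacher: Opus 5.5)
Your reduction is sound. By \textbf{Theorem \ref{symmetry of spectrum}} it suffices to show $\dim\mathrm{Gr}_V^{\alpha_1}\Omega_f^{n+1}\le 1$. Using $\alpha_1=\tilde\alpha_f$ and \textbf{Theorem \ref{microlocal V-filtration = V-filtration on Brieskorn lattice}}, this follows from $\mathfrak m\subset\tilde V^{>\alpha_1}\mathcal O_{n+1}$. In fact the theorem is equivalent to $\mathfrak m\subset\tilde V^{>\alpha_1}\mathcal O_{n+1}+J(f)$, since that ideal has colength exactly $\dim\mathrm{Gr}_V^{\alpha_1}\Omega_f^{n+1}$. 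Your Newton non-degenerate argument is also correct: every compact facet has a strictly positive normal vector, so $v(x_ig)>v(g)$.

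The gap is Step 2 for an arbitrary isolated singularity, which is the whole content of the statement, and none of your options closes it.
Option (a) is not an argument: without quasihomogeneity there is no Euler relation expressing $x_i\,d\bm x$ through $\partial_t^{-1}$.
Option (b) only works when $\alpha_1<1$, because multiplier ideals compute $\tilde V^{\alpha}\mathcal O_{n+1}$ only for $\alpha\le 1$. For $\alpha_1\ge 1$ (e.g.\ every rational singularity) they say nothing about $\tilde V^{>\alpha_1}$.
Option (c) fails because not every isolated singularity is $\mu$-constant equivalent to a Newton non-degenerate germ. For example, $W^\sharp_{1,2q-1}$, i.e.\ $(x^2+y^3)^2+xy^{4+q}$, is irreducible with Puiseux characteristic $(4;6,5+2q)$. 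Since $\mu$-constant families of plane curves are equisingular, and a non-degenerate irreducible plane germ has a single Puiseux pair, nothing in its $\mu$-constant class is non-degenerate. That particular example happens to be covered by (b), but you give no argument for germs with $\alpha_1\ge 1$ outside the non-degenerate world.
Option (d) cites the very statement being proved. The paper itself does no more than cite \cite{dimca2014koszul}, so falling back on it means your Steps 1--3 are not an independent proof.

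One way to prove Step 2 uniformly is Hodge-theoretic rather than valuative. Let $\tilde{\mathcal B}_f=\mathcal O_{n+1}[\partial_t,\partial_t^{-1}]\delta(t-f)$, with its $V$-filtration along $t=0$ and Hodge filtration $F_p=\sum_{k\le p}\mathcal O_{n+1}\partial_t^k\delta$, so that $\tilde V^\alpha\mathcal O_{n+1}=\{g\mid g\delta\in V^\alpha\tilde{\mathcal B}_f\}$.
\begin{enumerate}
\item Because the singularity is isolated, each $(\mathrm{Gr}_V^\alpha\tilde{\mathcal B}_f,F)$ underlies, up to a shift of $F$, a mixed Hodge module supported at $0$ (vanishing cycles, transported by powers of $\partial_t$).
\item Take $\alpha=\alpha_1=\tilde\alpha_f$. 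Since $\mathcal O_{n+1}\delta\subset V^{\alpha}$ and $\partial_t$ commutes with $\mathcal O_{n+1}$, we get $F_{-1}=\sum_{k\ge1}\partial_t^{-k}\mathcal O_{n+1}\delta\subset V^{\alpha+1}$.
\item Hence $F_{-1}\mathrm{Gr}_V^{\alpha}=0$, and $F_0\mathrm{Gr}_V^{\alpha}$, the image of $\mathcal O_{n+1}\delta$, is the lowest Hodge piece.
\item By Kashiwara's equivalence such a module is $H\otimes\mathbb C[\partial_{x_0},\dots,\partial_{x_n}]\delta_0$ for a mixed Hodge structure $H$. Its lowest Hodge piece has the form $F_qH\otimes\delta_0$, which every $x_i$ annihilates.
\end{enumerate}
Therefore $x_ig\delta\in V^{>\alpha_1}\tilde{\mathcal B}_f$ for all $g$, i.e.\ $\mathfrak m\subset\tilde V^{>\alpha_1}\mathcal O_{n+1}$. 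Your Step 3 then applies to every isolated singularity, with no non-degeneracy or deformation argument needed.
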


As a corollary straightforward, we have the following.
\begin{corollary}
	If $\tau < \mu$, then the minimal exponent $\alpha_{i_1}$ not in the Tjurina spectrum satisfies
	\begin{displaymath}
		\alpha_{i_1} \geq \alpha_1+1
	\end{displaymath}
\end{corollary}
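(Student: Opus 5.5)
We have to prove the Corollary: if $\tau<\mu$, then the smallest exponent $\alpha_{i_1}$ not in the Tjurina spectrum satisfies $\alpha_{i_1}\geq \alpha_1+1$.

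The plan is to derive this directly from the filtration property (2.1), i.e. $f\,V^\beta\Omega_f^{n+1}\subset V^{\beta+1}\Omega_f^{n+1}$, rather than relying on the index bookkeeping in \textbf{Theorem \ref{f induces fitration lemma}}. The case $k=1$ of that theorem already asserts the claim; we want a self-contained argument.

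\textbf{Step 1.} Since $\alpha_1$ is the smallest spectrum number, $V^{\alpha_1}\Omega_f^{n+1}=\Omega_f^{n+1}$. By (2.1) this gives
\[
f\Omega_f^{n+1}=fV^{\alpha_1}\Omega_f^{n+1}\subset V^{\alpha_1+1}\Omega_f^{n+1}.
\]

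\textbf{Step 2.} By \textbf{Corollary \ref{Tjurina spectrum is a subsepctrum}}, the Tjurina spectrum is the Poincar\'e polynomial of the quotient filtration on $Q:=\Omega_f^{n+1}/f\Omega_f^{n+1}$. For each $\beta$ there is an exact sequence
\[
0\to \mathrm{Gr}_V^\beta(f\Omega_f^{n+1})\to \mathrm{Gr}_V^\beta\Omega_f^{n+1}\to \mathrm{Gr}_V^\beta Q\to 0,
\]
where $f\Omega_f^{n+1}$ carries the induced filtration $V^\beta\cap f\Omega_f^{n+1}$. This sequence is exact because taking graded pieces is exact for strict morphisms, and both the inclusion and the quotient map are strict by construction.

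\textbf{Step 3.} The multiplicity of $\beta$ in $\mathrm{R}(f)$ is therefore $\dim \mathrm{Gr}_V^\beta(f\Omega_f^{n+1})$. For $\beta<\alpha_1+1$, Step 1 gives $V^\beta\cap f\Omega_f^{n+1}=f\Omega_f^{n+1}$ for all such $\beta$, so these graded pieces vanish. Hence every element of $\mathrm{R}(f)$ is at least $\alpha_1+1$.

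\textbf{Step 4.} Since $\dim f\Omega_f^{n+1}=\mu-\tau\geq1$, the set $\mathrm{R}(f)$ is nonempty, so $\alpha_{i_1}$ is defined. It satisfies $\alpha_{i_1}\geq\alpha_1+1$.

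The only delicate point is the strictness and exactness in Step 2, which is the standard fact that induced and quotient filtrations on a short exact sequence of finite-dimensional filtered vector spaces give exact graded sequences. \textbf{Theorem \ref{multiplicity 1 of the minimal exponent}} is not needed here. It would only be relevant for the second assertion $\alpha_{i_2}\geq\alpha_2+1$, where the multiplicity of $\alpha_1$ being $1$ gives $\dim(\Omega_f^{n+1}/V^{\alpha_2})=1$. Then $f$ is nonzero on $V^{\alpha_2}$ when $\mu-\tau\geq2$, and the same graded argument applies to $fV^{\alpha_2}\subset V^{\alpha_2+1}$.
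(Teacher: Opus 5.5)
Your proof is correct and is essentially the paper's argument. The corollary is the case $k=1$ of \textbf{Theorem \ref{f induces fitration lemma}}, whose proof rests on (2.1) in the form $f\Omega_f^{n+1}=fV^{\alpha_1}\Omega_f^{n+1}\subset V^{\alpha_1+1}\Omega_f^{n+1}$ (your Step 1); your graded exact sequence makes explicit the bookkeeping the paper leaves implicit, and you are right that \textbf{Theorem \ref{multiplicity 1 of the minimal exponent}} matters only for the $\alpha_{i_2}$ assertion, not for this bound.
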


From the computation in the subsection before, we find that $\alpha_\mu \not\in \mathrm{R}(f)$ is all those cases. In the first version of the manuscript, we conjectured it holds generally. Later on, in our communication with Jung, Kim, Saito, and Yoon, they wrote specially a paper, \cite{tjsp0b}, and gave a proof the Conjecture \ref{DConjecture} in the paper.

\begin{theorem}[\cite{tjsp0b}]\label{difference conjecture for sptau and sp}
	Suppose $f\in \mathcal O_{n+1}$ defines an isolated singularity at the origin and the difference between its Milnor number and Tjurina number is positive i.e. $\mu-\tau \geq 1$, then the maximal spectrum number of $f$ is not contained in the Tjurina spectrum of $f$. Morover, the minimal and maximal exponents $\alpha_{i_1},\alpha_{i_{\mu-\tau}}$ not in the Tjurina spectrum both have multiplicity $1$.
\end{theorem}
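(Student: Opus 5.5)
The plan is to derive everything from the description of the Tjurina spectrum in \textbf{Corollary \ref{Tjurina spectrum is a subsepctrum}}, combined with the Grothendieck residue pairing on $\Omega_f^{n+1}\simeq M_f$. Since $V^\bullet$ is induced on subspaces and quotients, the short exact sequence $0\to f\Omega_f^{n+1}\to \Omega_f^{n+1}\to \Omega_f^{n+1}/f\Omega_f^{n+1}\to 0$ is strict. Hence $\mathrm{R}(f)$ is exactly the spectrum of $f\Omega_f^{n+1}\cong f M_f$ with the induced filtration $V^\beta\cap fM_f$. The statement then reduces to two claims about the filtered ideal $fM_f$, which is nonzero when $\mu>\tau$: it contains a nonzero element of $V^{\alpha_\mu}$, and its lowest and highest graded pieces are one-dimensional.

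For the maximal exponent, I use that $M_f$ is Artinian Gorenstein. Its socle $\mathrm{Soc}(M_f)=\{g : \mathfrak m g=0\}$ is therefore one-dimensional and contained in every nonzero ideal, in particular in $fM_f$. So it suffices to show $V^{\alpha_\mu}M_f=\mathrm{Soc}(M_f)$.
\begin{itemize}
\item By \textbf{Theorem \ref{multiplicity 1 of the minimal exponent}}, $\dim V^{\alpha_\mu}=1$ and $\dim \mathrm{Gr}_V^{\alpha_1}=1$. The latter piece is spanned by the class of $1$ (i.e.\ of $d\bm x$), which has the minimal $V$-order.
\item I will show $\mathfrak m M_f\subset V^{>\alpha_1}$. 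Each monomial $x_i g$ with $g\in\mathfrak m^0$ has $V$-order strictly bigger than $\alpha_1$; this is the $\partial_t t$-weight shift of multiplying by a coordinate, and in the non-degenerate case it is immediate from the Newton valuation. Combined with $\dim\mathrm{Gr}^{\alpha_1}_V=1$, the inclusion follows.
\item The residue pairing $\mathbb S$ is perfect, satisfies $\mathbb S(ag,h)=\mathbb S(g,ah)$, and satisfies $\mathbb S(V^{>p},V^{n+1-p})=0$. Hence for $g\in V^{\alpha_\mu}$ and any $h$ we get $\mathbb S(x_ig,h)=\mathbb S(g,x_ih)=0$, because $x_ih\in V^{>\alpha_1}=V^{>n+1-\alpha_\mu}$. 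Thus $x_ig=0$, so $V^{\alpha_\mu}\subset\mathrm{Soc}(M_f)$, with equality by dimensions.
\end{itemize}
Consequently $0\neq V^{\alpha_\mu}\subset fM_f$, so $\alpha_\mu\in\mathrm{R}(f)$. It appears there with multiplicity one, since it has multiplicity one in $\mathrm{sp}(f)$.

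For the minimal element $\alpha_{i_1}$ of $\mathrm{R}(f)$, I again use duality. Multiplication by $f$ is self-adjoint for $\mathbb S$, so $(fM_f)^\perp=\mathrm{Ann}(f)$. Strictness and the symmetry $\alpha\leftrightarrow n+1-\alpha$ then identify the spectrum of $fM_f$ with the reflection of the spectrum of $M_f/\mathrm{Ann}(f)$. The lowest piece of $fM_f$ therefore corresponds to the highest piece of the quotient filtration on $M_f/\mathrm{Ann}(f)\cong fM_f$. The latter is a cyclic $M_f$-module generated by the image of $1$, which has $V$-order $\alpha_1$ of multiplicity one.

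This is the step I expect to be the main obstacle. The natural argument says the lowest piece of $fM_f$ is spanned by the class of $f\cdot 1$, because $f\mathfrak m M_f=\mathfrak m(fM_f)$ should sit strictly higher. But the $V$-order of $f\cdot g$ is not simply additive, so a naive inclusion gives no strict inequality. What I would try first is to work at the level of the Brieskorn lattice. There $t=f\cdot$ maps $V^\alpha\mathcal H^{(0)}$ into $V^{\alpha+1}$, and I would use that $\mathcal H^{(0)}$ is a free $\mathbb C\{\{\partial_t^{-1}\}\}$-module to compare the leading $V$-orders of $t\omega_0$ and $t x_i\omega_0$, where $\omega_0=[d\bm x]$. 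If this comparison is too weak, I would instead run the socle argument above dually: an element of $fM_f$ of minimal order pairs nontrivially only against $\mathrm{Soc}(fM_f)$-type elements, and since $fM_f$ is an ideal of a Gorenstein ring its socle is again $V^{\alpha_\mu}$, which is one-dimensional. That would bound the lowest graded piece of $fM_f$ by one.
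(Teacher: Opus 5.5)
The paper gives no proof of this statement and only cites \cite{tjsp0b}, so your proposal has to stand on its own.

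\textbf{Maximal exponent.} This part is sound in substance, but one justification must be replaced. Your claim that multiplying by $x_i$ raises the $V$-order through a ``$\partial_t t$-weight shift'' is not valid in general. Multiplication by $x_i$ is not even well defined on $\mathcal H^{(0)}$, only on $\Omega_f^{n+1}$, and it raises the order automatically only when $V$ is a Newton filtration. The correct input is that $V^\bullet M_f$ is a filtration by ideals, because it is the image of the microlocal filtration $\tilde V^\bullet\mathcal O_{n+1}$ (Theorem \ref{microlocal V-filtration = V-filtration on Brieskorn lattice}). Then $V^{>\alpha_1}$ is a proper ideal of codimension one, hence equals $\mathfrak m M_f$. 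With the ideal property the residue pairing is not needed for this part at all: $V^{\alpha_\mu}$ is a one-dimensional ideal, so the nilpotent elements of $\mathfrak m$ act on it by zero, and it is the socle. If you do use the pairing, the compatibility $\mathbb S(V^{>p},V^{n+1-p})=0$ comes from K.~Saito's higher residue pairings and must be cited.

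\textbf{Minimal exponent: the gap.} You do not prove that $\alpha_{i_1}$ has multiplicity one. Your duality reduction is correct and can be sharpened. Put $\gamma=\max\{\beta : fV^\beta\neq 0\}$. The top piece of $M_f/\mathrm{Ann}(f)$ (with the quotient filtration) is $V^\gamma/(V^\gamma\cap\mathrm{Ann}(f))\cong fV^\gamma$. Hence $\alpha_{i_1}=n+1-\gamma$, with multiplicity $\dim fV^\gamma$. Equivalently, you must show $\mathfrak m f\subset V^{>\beta}$ in $M_f$, where $\beta$ is the $V$-order of $f$.

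Neither of your attempts reaches this.
\begin{itemize}
\item Your remark that $M_f/\mathrm{Ann}(f)$ is cyclic, generated by $1$, controls only its bottom piece, i.e.\ the top of $fM_f$. So it re-proves the first part.
\item Fallback (b) fails as stated. Since $\mathrm{Soc}(M_f)\subset\mathrm{Ann}(f)=(fM_f)^{\perp}$, the socle pairs to zero with all of $fM_f$ and cannot detect its lowest piece.
\item Idea (a) gives no mechanism for $x_i$ to raise the $V$-order, for the reason given above.
\end{itemize}

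The module $fV^\gamma$ is a nonzero ideal, so it contains the socle, but nothing in the Gorenstein/duality formalism forces equality. What you need is a strictness property such as $\mathfrak m V^\gamma\subset V^{>\gamma}$. Given that, $\mathfrak m\, fV^\gamma\subset fV^{>\gamma}=0$, so $fV^\gamma\subset\mathrm{Soc}(M_f)$ is one-dimensional. This strictness holds when $V$ is a Newton filtration. For $\gamma<1$ it follows from the multiplier-ideal description via a log resolution, which covers curves, since then $\gamma\le n-\alpha_1<1$. In general it is an additional statement about the microlocal $V$-filtration in the Hodge-ideal range, and it has to be proved or taken from \cite{tjsp0b}.
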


Nevertheless, \textbf{Theorem \ref{f induces fitration lemma}} still has its advantages, especially for large $\mu-\tau$.

Next, we propose a conjecture for Tjurina spectrum parallel to Hertling conjecture. One may simply consider the same inequality, replacing spectrum by Tjurina spectrum and $\frac{n+1}{2}$ by their average. Unfortunately, it turns false in semi-quasihomogeneous cases (see \cite{tjsp0b}). However, the potential problem inside seems to be ``the form is not good enough''. For example, semi-quasihomogeneous singularities can be $\mu$-constantly deformed to quasihomogeneous ones. The latter are good singularities in our context. Therefore, we have to pick out the suitable singularities (but not lossing generality).

\begin{definition}\label{tau-max}
	We call isolated singularities germs $f_1,f_2\in \mathcal O_{n+1}$ $\mu$-accessible, denoted as $f_1 \sim_\mu f_2$, if $f_1$ can be tranformed to $f_2$ after fintie times of $\mu$-constant deformation. More precisely, there is a sequence $f_1 = g_0 \sim g_1 \sim ... \sim g_m = f_2$, where $u \sim v$ means there is a $\mu$-constant deformation $F \in \mathcal O_{n+2}$ such that $F(\bm x,0) = u$ and $F(\bm x,1) = v$. 
	
	Let $\tau^{\mathrm{max}}:= \tau^{\mathrm{max}}(f) = \sup_{g \sim_\mu f} \tau(g)$. We call $g\sim_\mu f$ a $\tau$-max form if $\tau(g) = \tau^{\mathrm{max}}(f)$.
\end{definition}

So far, there is no ambiguity in the satement of \textit{\textbf{Conjecture \ref{GHCTS}}}.

There is an interesting corollary if \textit{\textbf{Conjecture \ref{GHCTS}}} holds.
\begin{corollary}
	Assume \textit{\textbf{Conjecture \ref{GHCTS}}} holds. If the inequality (GHCTS) fails for $(V(f),0)$ and $\mu-\tau = 1$, then $f$ can be tranformed to a quasihomogeneous singularity after finite times of $\mu$-constant deformation.
\end{corollary}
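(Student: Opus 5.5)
The plan is to argue by contraposition on the $\tau$-max property, using only the definition of $\tau^{\mathrm{max}}$ and the elementary inequality $\tau\leq\mu$. \textbf{Conjecture \ref{GHCTS}} asserts the inequality (GHCTS) only for germs which are themselves $\tau$-max forms. So, assuming the conjecture, if (GHCTS) fails for $f$, then $f$ cannot be a $\tau$-max form. Hence $\tau(f)<\tau^{\mathrm{max}}(f)$.

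Next I would check that $\tau^{\mathrm{max}}(f)$ is actually attained. Along a $\mu$-constant deformation the Milnor number stays equal to $\mu=\mu(f)$. Since $J(g)\subseteq T(g)$, every $g\sim_\mu f$ satisfies $\tau(g)\leq\mu(g)=\mu$. Thus $\{\tau(g)\mid g\sim_\mu f\}$ is a bounded set of nonnegative integers, and the supremum in \textbf{Definition \ref{tau-max}} is a maximum realized by some $g\sim_\mu f$. Combining this with $\tau(f)=\mu-1$ gives
\begin{displaymath}
	\mu-1=\tau(f)<\tau^{\mathrm{max}}(f)=\tau(g)\leq \mu ,
\end{displaymath}
which forces $\tau(g)=\mu=\mu(g)$.

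Finally, $J(g)\subseteq T(g)=(g,J(g))$ and both quotients $\mathcal O_{n+1}/J(g)$ and $\mathcal O_{n+1}/T(g)$ have the same finite dimension $\mu$. Therefore $T(g)=J(g)$, i.e.\ $g\in J(g)$, and $g$ is quasihomogeneous in the sense of Subsection \ref{basic analytic germs}. By construction $g$ is reached from $f$ through finitely many $\mu$-constant deformations, which is exactly the claim.

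The only delicate point I expect is the bookkeeping of the $\mu$-constancy: the Milnor number must be the same for every germ in the chain $f=g_0\sim\cdots\sim g_m=g$, so that the bound $\tau(g)\leq\mu(f)$ holds. This is immediate because each step is a $\mu$-constant deformation. Everything else is a dimension count, so no real obstacle is anticipated.
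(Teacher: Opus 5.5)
Your proposal is correct and follows the paper's argument. The paper uses the one-line contrapositive: if $f$ were not $\mu$-accessible to a quasihomogeneous germ, then $\tau(f)=\mu-1$ would already equal $\tau^{\mathrm{max}}(f)$, so $f$ would itself be a $\tau$-max form and Conjecture \ref{GHCTS} would force the inequality. You run the same reasoning in the direct direction and spell out the facts the paper leaves implicit, namely $\tau(g)\leq\mu(g)=\mu$ along the chain and $\tau(g)=\mu(g)$ iff $g\in J(g)$. One small simplification: attainment of the supremum is not needed, because $\tau^{\mathrm{max}}(f)>\mu-1$ already gives some $g\sim_\mu f$ with $\tau(g)=\mu$.
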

\begin{proof}
	If the assertion fails, then $f$ itself is a $\tau$-max form, contradicting to \textit{\textbf{Conjecture \ref{GHCTS}}}.
\end{proof}

For a general isolated singularity $(V(f),0)$, we say \textit{\textbf{Conjecture \ref{GHCTS}}} holds for $f$ if it holds for all $\tau$-max forms of $f$. In the rest pages of the paper, we prove the conjecture for singularities of modality not greater than $3$.

The following elementary inequality is helpful.
\begin{lemma}\label{a simple reduction}
	\label{5.10}
	Suppose $0 < x_1 \leq x_2 \leq ... \leq x_m$ is a sequence of real numbers and $1\leq k \leq m-2$ be an integer. We further assume $\sum_{i=1}^m x_i = bm$, $m\geq 3$, and $b\geq \frac{1}{2}(1-\frac{4}{m})^{-1}(x_m+x_{m-1})$. Let $V_1$ and $V_2$ be the following number.
	\begin{align*}
		V_1 & = \frac{1}{m-2}\sum_{j\neq m-1,m} x_j^2-\frac{1}{(m-2)^2}(\sum_{j\neq m-1,m} x_j)^2-\frac{x_{m-2}-x_1}{12}\\
		V_2 & = \frac{1}{m-2}\sum_{j\neq k,m} x_j^2-\frac{1}{(m-2)^2}(\sum_{j\neq k,m} x_j)^2-\frac{x_{m-1}-x_1}{12}
	\end{align*}
	Then we have $V_1 \geq V_2$.
\end{lemma}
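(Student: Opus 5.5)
The plan is to compute $V_1-V_2$ directly and show it is a sum of two nonnegative pieces. Write $S=\sum_{i=1}^m x_i=bm$, and set
\begin{displaymath}
A=S-x_{m-1}-x_m,\qquad B=S-x_k-x_m .
\end{displaymath}
The sums of squares in $V_1$ and $V_2$ differ only in which of $x_k$ and $x_{m-1}$ is omitted. Subtracting gives
\begin{displaymath}
V_1-V_2=\frac{x_k^2-x_{m-1}^2}{m-2}-\frac{A^2-B^2}{(m-2)^2}+\frac{x_{m-1}-x_{m-2}}{12}.
\end{displaymath}
The last term is nonnegative because the sequence is ordered. So it suffices to show that the first two terms together are nonnegative.

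Next I factor out $x_k-x_{m-1}$. We have $A-B=x_k-x_{m-1}$ and $A+B=2S-2x_m-x_k-x_{m-1}$. Hence the first two terms equal
\begin{displaymath}
\frac{x_k-x_{m-1}}{m-2}\Bigl[(x_k+x_{m-1})-\frac{2S-2x_m-x_k-x_{m-1}}{m-2}\Bigr].
\end{displaymath}
Since $k\leq m-2$, we have $x_k-x_{m-1}\leq 0$. It therefore remains to show that the bracket is $\leq 0$. This is equivalent to
\begin{displaymath}
(m-1)(x_k+x_{m-1})+2x_m\leq 2S=2bm .
\end{displaymath}

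This inequality is the only place where the hypothesis on $b$ enters. Using $x_k\leq x_{m-1}\leq x_m$, the left side is at most
\begin{displaymath}
2(m-1)x_{m-1}+2x_m\leq m(x_{m-1}+x_m),
\end{displaymath}
where the second step is equivalent to $(m-2)x_{m-1}\leq (m-2)x_m$. The hypothesis gives $2bm\geq m(1-\frac4m)^{-1}(x_{m-1}+x_m)$, and for $m>4$ the factor $(1-\frac4m)^{-1}$ exceeds $1$. So $2bm\geq m(x_{m-1}+x_m)$, which closes the argument; the hypothesis is in fact stronger than needed.

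The main point needing care is the range of $m$. For $m=4$ the factor $(1-\frac4m)^{-1}$ is undefined, and for $m=3$ it is negative. I would treat the hypothesis as implicitly requiring $m\geq 5$, or else check $m=3,4$ separately. In those cases $b\geq \frac{1}{2}(x_{m-1}+x_m)$ cannot hold with positive entries unless all entries are equal, and then $V_1=V_2$ trivially.
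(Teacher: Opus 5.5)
Your core computation is the paper's proof. The paper also subtracts, discards $\frac{x_{m-1}-x_{m-2}}{12}\geq 0$, factors out $x_{m-1}-x_k$, and bounds using $x_k\leq x_{m-1}\leq x_m$. Its last step likewise needs only $b\geq\frac{1}{2}(x_{m-1}+x_m)$, which the hypothesis supplies when $m\geq 5$. For $m\geq 5$ your argument is complete.

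The flaw is your treatment of $m=3$. There $(1-\frac{4}{3})^{-1}=-3$, so the hypothesis reads $b\geq -\frac{3}{2}(x_2+x_3)$, which holds automatically because $b>0$. It does not become $b\geq\frac{1}{2}(x_{m-1}+x_m)$, so this case is not vacuous, and your claim that only constant sequences survive is wrong. The case still has to be proved, but it follows immediately from your own reduction. For $m=3$ we must have $k=1$, and your inequality $(m-1)(x_k+x_{m-1})+2x_m\leq 2S$ becomes $2(x_1+x_2)+2x_3\leq 2(x_1+x_2+x_3)$, which is an identity. So the bracket vanishes and $V_1-V_2=\frac{x_2-x_1}{12}\geq 0$ without any use of $b$. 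Directly, $V_1=0$ and $V_2=-\frac{x_2-x_1}{12}$.

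The paper's chain does not cover this case either. At $m=3$ its final expression is $(x_2-x_1)(2x_1-x_2-x_3)$, which is negative as soon as $x_1<x_2$, so a direct check is needed in any case.

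Finally, the observation you make for $m=3,4$ holds for every $m$. Since $b$ is the mean, $b\leq\frac{1}{2}(x_{m-1}+x_m)$ always. For $m\geq 5$ we have $(1-\frac{4}{m})^{-1}>1$, so the stated hypothesis can never be met by positive numbers. Thus $m=3$ is the only case in which the lemma has content, and it is exactly the case your write-up dismisses.
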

\begin{proof}
	The difference of $V_1$ and $V_2$ is
	\begin{align*}
		&V_1-V_2 \\
		& = \frac{1}{m-2}(x_k^2-x_{m-1}^2)-\frac{1}{(m-2)^2}(2bm-x_k-x_{m-1}-2x_m)(x_k-x_{m-1})-\frac{x_{m-2}-x_{m-1}}{12}\\
		& \geq (x_{m-1}-x_k)(\frac{2bm-x_k-x_{m-1}-2x_m}{(m-2)^2}-\frac{x_k+x_{m-1}}{m-2})\\
		& \geq (x_{m-1}-x_k)(\frac{2bm-2(x_{m-1}+x_m)}{(m-2)^2}-\frac{x_{m-1}+x_{m}}{m-2}) \geq 0
	\end{align*}
\end{proof}

Next, we prove \textit{\textbf{Generalized Hertling Conjecture for Tjurina Spectrum}} holds for all singularities of modality $\leq 3$. \textbf{Lemma \ref{a simple reduction}} performs useful in the proof. Since $ \tau^{\mathrm{max}} (f) \geq \tau_{f_0}, $ where $ f_0 $ is the normal form of $ f$. We know $ \tau^{\mathrm{max}} (f) \in \{ \mu_f -2, \mu_f - 1, \mu_f \}$ for $ f $ a singularity of modality $ \leq 3, $
where $ \tau_f $ and $ \mu_f $ are Tjurina number and Milnor number of $ f. $
If $ \tau^{\mathrm{max}} (f) = \mu_f, $ i.e. $f \sim_{\mu} g$ with $ g$ quasihomogeneous, Hertling Conjecture holds by\textbf{ Theorem \ref{3.15}}.
If $ \tau^{\mathrm{max}}(f) = \mu_f -1 $, then we only need to show Hertling inequality of the first $ \mu_f - 1 $ spectrum exponents of $ f$ holds, that is,
\begin{equation*}		
	\frac{1}{\mu_f - 1}\sum_{i = 1}^{\mu_f -1} (\alpha_i-\bar{\alpha})^2 \leq \frac{\alpha_{\mu_f -1}-\alpha_1}{12},
\end{equation*}
where $\alpha_1\leq ...\leq \alpha_{\mu_f}$ are spectrum numbers of $f$ and $ \bar{\alpha} = \frac{1}{ \mu_f -1 } \cdot \sum_{i =1 } ^{\mu_f - 1} \alpha_i, $
since the maximal spectrum exponent must disappear.
If $ \tau^{\mathrm{max}}(f) = \mu_f - 2, $  then we only need to show Hertling inequality of the first $ \mu_f - 2 $ spectrum exponents of $ f$ holds
by \textbf{Lemma \ref{5.10}}, that is,
\begin{equation*}		
	\frac{1}{\mu_f - 2}\sum_{i = 1}^{\mu_f -2} (\alpha_i-\bar{\alpha})^2 \leq \frac{\alpha_{\mu_f -2}-\alpha_1}{12},
\end{equation*}
where $\alpha_1\leq ...\leq \alpha_{\mu_f}$ are spectrum numbers of $f$ and $ \bar{\alpha} = \frac{1}{ \mu_f -2 } \cdot \sum_{i =1 } ^{\mu_f - 2} \alpha_i. $

%Since we have $\mu-\tau \leq 2$ for singularities of modality $\leq 3$, $\mu-\tau^{\mathrm{max}} \in \{0,1,2\}$. $0$ means quasihomogeneous and is no need to check. For $1$, by \textbf{Theorem \ref{difference conjecture for sptau and sp}}, the only missing exponent is the largest one. For $2$, by \textbf{Lemma \ref{a simple reduction}}, it suffices to check the (GHCTS) for the first $\tau$-exponents (one can easily see the condition of \textbf{Lemma \ref{a simple reduction}} holds in these cases).

\subsection{Generalized Hertling Conjecture for Tjurina Spectrum for  Unimodal Singularities}

\begin{theorem}\label{HCTS1}
	\textbf{\textit{\textbf{Generalized Hertling Conjecture for Tjurina Spectrum}}} holds for singularities of modality 1 with related to Tjurina Spectrum.
\end{theorem}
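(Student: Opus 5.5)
We prove the statement by a case-by-case check over Arnold's list of unimodal singularities. The list consists of the simple elliptic singularities $P_8, X_9, J_{10}$, the cusp family $T_{p,q,r}$, and the fourteen exceptional families $E_{12},E_{13},E_{14},Z_{11},Z_{12},Z_{13},W_{12},W_{13},Q_{10},Q_{11},Q_{12},S_{11},S_{12},U_{12}$.

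First we determine $\tau^{\mathrm{max}}(f)$ for each class, following the reduction at the end of the previous subsection. Every class except $T_{p,q,r}$ is semi-quasihomogeneous. Its normal form therefore deforms $\mu$-constantly, via the weighted $\mathbb C^*$-scaling described before the Corollary on semi-quasihomogeneous singularities, to its quasihomogeneous principal part. Hence $\tau^{\mathrm{max}}=\mu$ for these classes. For such an $f$, every $\tau$-max form $g$ satisfies $\tau(g)=\mu(g)$, so $g$ is quasihomogeneous and its Tjurina spectrum equals its spectrum. By Theorem \ref{symmetry of spectrum} the average is $\bar\beta=\frac{n+1}{2}$, and (GHCTS) reduces to the Hertling inequality. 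That inequality holds, with equality, by Theorem \ref{Hertlin Conjecture for quasihomogeneous singularities}. This disposes of all classes except the cusps.

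For $T_{p,q,r}$ with $\frac1p+\frac1q+\frac1r<1$, the singularity is not quasihomogeneous. We claim it has no quasihomogeneous member in its $\mu$-constant stratum. The justification is that any quasihomogeneous germ satisfies Hertling's equality, while for the cusp we can compute from the known spectrum and check that the equality fails. Concretely, the spectrum of $T_{p,q,r}$ (surface case, $n=2$) is
\begin{displaymath}
1,\ 2,\ \tfrac32+\tfrac{i}{p}-\tfrac12\ (1\le i\le p-1),\ \text{and similarly for } q,r,
\end{displaymath}
with $\mu=p+q+r-1$. Since $\mu-\tau\le 1$ for these singularities and the normal form has $\tau=\mu-1$, we get $\tau^{\mathrm{max}}=\mu-1$. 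By Theorem \ref{difference conjecture for sptau and sp}, the missing exponent is the maximal one, $\alpha_\mu=2$, as the table confirms. This holds for any $\tau$-max form, since it depends only on $\mu-\tau=1$ and the spectrum, which is $\mu$-constant by Theorem \ref{Varchenko's constant deformation}. The curve and higher-dimensional versions differ only by a shift by $\frac12$, which affects neither the variance nor the range.

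The remaining task is the explicit inequality for the $\mu-1$ exponents $1,\ \frac{i}{p}+1\ (1\le i\le p-1),\ \frac{j}{q}+1,\ \frac{k}{r}+1$. The range is $1$ (from $1$ up to $2-\frac{1}{\max}$, which is less than $1$), so we need to show that the variance is at most $\frac{1}{12}\bigl(1-\frac{1}{\max(p,q,r)}\bigr)$. Write $\tau=p+q+r-2$. Using $\sum_{i=1}^{p-1}\frac ip=\frac{p-1}{2}$ and $\sum (\frac ip)^2=\frac{(p-1)(2p-1)}{6p}$, we compute $\tau\bar\beta$ and $\sum\beta^2$ in closed form. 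After clearing denominators, the inequality becomes a rational inequality in $p,q,r$.

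The main obstacle is verifying this inequality uniformly in $p\le q\le r$. The plan is to shift everything by $-1$, so the exponents become $0$ together with $\frac ip,\frac jq,\frac kr$. Each block $\{\frac ip\}$ has mean $\frac12$ and variance $\frac{p^2-1}{12p^2}$ about its own mean. The total variance is then at most
\begin{displaymath}
\frac{1}{\tau}\Bigl(\sum\frac{(p-1)(p^2-1)}{12p^2}+\frac14\Bigr),
\end{displaymath}
using deviation from $\frac12$ as an upper bound. This gives at most $\frac{1}{12}\cdot\frac{\tau+3-\sum\frac1p(\cdots)}{\tau}$, and comparing with $\frac1{12}(1-\frac1r)$ reduces the claim to an elementary inequality. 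If the crude bound turns out to be too weak for small triples, such as $(2,3,7)$, $(2,4,5)$, $(3,3,4)$, we handle these finitely many small cases by direct computation. For large $r$ the asymptotics give slack of order $\frac1r$.
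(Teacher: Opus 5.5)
Your outline is sound and differs from the paper in a useful way. For the exceptional families the paper never decides whether $\tau^{\mathrm{max}}$ is $\mu-1$ or $\mu$. It covers $\tau^{\mathrm{max}}=\mu$ by the general reduction, then numerically checks, family by family, the spectrum with its top exponent removed; $W_{12}$ is skipped. You observe instead that all these normal forms are semi-quasihomogeneous, so $\tau^{\mathrm{max}}=\mu$ and every $\tau$-max form is quasihomogeneous. Then (GHCTS) is Hertling's equality, those computations are unnecessary, and $W_{12}$ is covered as well.

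For $T_{p,q,r}$ you also correctly pin down $\tau^{\mathrm{max}}=\mu-1$. The full spectrum has variance $\frac{1}{12}\cdot\frac{\mu-2+2\sigma}{\mu}$ with $\sigma=\frac1p+\frac1q+\frac1r<1$, so Hertling's equality fails. In that case the truncated spectrum must be checked either way. Where the paper writes the variance as an exact rational function and argues via a quadratic in $p$, you bound it by squared deviations from $\frac12$, which is the cleaner idea.

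The numerical core of that bound is wrong, however, and as written the step fails. The $p-1$ numbers $\frac ip$, $1\le i\le p-1$, have variance $\frac{p-2}{12p}$, not $\frac{p^2-1}{12p^2}$; the latter is the value for $p$ equally spaced points. Hence $\sum_{i=1}^{p-1}(\frac ip-\frac12)^2=\frac{(p-1)(p-2)}{12p}$.

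With your value the numerator of the bound is $\tau+2-\sum\frac1p(1-\frac1p)>\tau$, so your estimate exceeds $\frac1{12}$. It is therefore never $\le\frac1{12}(1-\frac1m)$, where $m=\max(p,q,r)$. The argument proves nothing for any triple, and a finite check of small triples cannot repair it.

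With the correct value, the sum of squared deviations from $\frac12$ is $\frac14+\frac1{12}\sum_{x\in\{p,q,r\}}(x-3+\frac2x)=\frac1{12}(\tau-4+2\sigma)$. So (GHCTS) reduces to $\frac{\tau}{m}+2\sigma\le 4$, that is, for the sorted triple $a\le b\le m$,
\[
\frac{a+b}{m}+\frac{2}{a}+\frac{2}{b}\le 3 .
\]
This holds for every hyperbolic triple:
\begin{itemize}
\item If $a\ge 4$, use $\frac{a+b}{m}\le 2$ and $\frac2a+\frac2b\le 1$.
\item If $a=3$, use $\frac{3+b}{m}\le 1+\frac3b$ when $b\ge 4$, and $m\ge 4$ when $b=3$.
\item If $a=2$, then $m\ge 7$ when $b=3$, and $\frac{2+b}{m}\le 1+\frac2b$ when $b\ge 4$.
\end{itemize}
Equality occurs only at $(4,4,4)$, so no separate small cases are needed.

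A minor point: the range of the Tjurina spectrum is $1-\frac1m$, not $1$, though you use the correct value afterwards.
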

\begin{proof}
	\noindent\textbf{(1) $Q_{10}$.}
	
	Since $ \tau_{f_0} = \mu_f - 1 = 9, $ we only need to verify the case of  $\tau^{\mathrm{max}} = \mu_f -1 = 9 . $ 
	\begin{align*}
		& \bar{\alpha} =\frac{1}{\mu_f - 1}\cdot ( \frac{23}{24}+\frac{29}{24}+\frac{31}{24} + \frac{4}{3} + \frac{35}{24}
		+ \frac{37}{24} + \frac{5}{3} + \frac{41}{24} + \frac{43}{24} )=\frac{311}{216}.
	\end{align*}
	\begin{align*}
		& (\mu_f -  1) \cdot \mathrm{Var}[\mathrm{Sp^{\tau^{\mathrm{max}} }}(f)] \\
		=& (\frac{23}{24}-\bar{\alpha})^2+(\frac{29}{24}-\bar{\alpha})^2+(\frac{31}{24}-\bar{\alpha})^2+ (\frac{4}{3} - \bar{\alpha})^2 +  ( \frac{35}{24}-\bar{\alpha})^2 +(\frac{37}{24} - \bar{\alpha})^2 \\
		&+  ( \frac{5}{3} -\bar{\alpha})^2 + ( \frac{41}{24} - \bar{\alpha})^2 +( \frac{43}{24} - \bar{\alpha})^2 \\
		=& \frac{1495}{2592}  .
	\end{align*}
	And $ \alpha_{\mu_f -1}-\alpha_1=  \frac{43}{24} - \frac{23}{24} = \frac{20}{24}.  $
	Then it suffices to prove $\frac{1495}{2592} - \frac{9}{12} \cdot \frac{20}{24} \leq 0,  $ i.e., $ - \frac{125}{2592} \leq 0, $
	which holds obviously.
	%	Considering the average value $ \bar{\alpha}=\frac{3}{2}, $
	%	\begin{align*}
		%		& (\mu_f -  1) \cdot \mathrm{Var}[\mathrm{Sp^{\tau^{\mathrm{max}} }}(f)] \\
		%		=& (\frac{23}{24}-\bar{\alpha})^2+(\frac{29}{24}-\bar{\alpha})^2+(\frac{31}{24}-\bar{\alpha})^2+ (\frac{4}{3} - \bar{\alpha})^2 +  ( \frac{35}{24}-\bar{\alpha})^2 +(\frac{37}{24} - \bar{\alpha})^2 \\
		%		&+  ( \frac{5}{3} -\bar{\alpha})^2 + ( \frac{41}{24} - \bar{\alpha})^2 +( \frac{43}{24} - \bar{\alpha})^2 \\
		%		=& \frac{39}{64}  .
		%	\end{align*}
	%	Then it suffices to prove $\frac{39}{64}- \frac{9}{12} \cdot \frac{20}{24} \leq 0,  $ i.e., $ - \frac{1}{64} \leq 0, $
	%	which holds obviously.
	
	\noindent\textbf{(2) $Q_{11}$.}
	
	Since $ \tau_{f_0} = \mu_f - 1 = 10, $ we only need to verify the case of  $\tau^{\mathrm{max}} = \mu_f -1 = 10 . $ 
	\begin{align*}
		& \bar{\alpha} =\frac{1}{\mu_f -1 }\cdot ( \frac{17}{18}+\frac{7}{6}+\frac{23}{18} + \frac{4}{3} + \frac{25}{18}
		+\frac{3}{2} + \frac{29}{18} + \frac{5}{3} + \frac{31}{18} + \frac{11}{6} ) =\frac{13}{9}.
	\end{align*}
	\begin{align*}
		& (\mu_f -  1) \cdot \mathrm{Var}[\mathrm{Sp^{\tau^{\mathrm{max}} }}(f)] \\
		=& (\frac{17}{18}-\bar{\alpha})^2+(\frac{7}{6}-\bar{\alpha})^2+(\frac{23}{18}-\bar{\alpha})^2+ (\frac{4}{3} - \bar{\alpha})^2 +  ( \frac{25}{18}-\bar{\alpha})^2 +(\frac{3}{2} - \bar{\alpha})^2  +  ( \frac{29}{18} -\bar{\alpha})^2\\
		& + ( \frac{5}{3} - \bar{\alpha})^2 +( \frac{31}{18} - \bar{\alpha})^2 + ( \frac{11}{6} - \bar{\alpha})^2 \\
		=& \frac{55}{81}  .
	\end{align*}
	And $ \alpha_{\mu_f -1}-\alpha_1= \frac{11}{6} - \frac{17}{18} = \frac{16}{18}.  $
	Then it suffices to prove $\frac{55}{81} - \frac{10}{12} \cdot \frac{16}{18} \leq 0,  $ i.e., $ - \frac{5}{81} \leq 0, $
	which holds obviously.
	%	
	%	Considering the average value $ \bar{\alpha}=\frac{3}{2}, $
	%	\begin{align*}
		%		& (\mu_f -  1) \cdot \mathrm{Var}[\mathrm{Sp^{\tau^{\mathrm{max}} }}(f)] \\
		%		=& (\frac{17}{18}-\bar{\alpha})^2+(\frac{7}{6}-\bar{\alpha})^2+(\frac{23}{18}-\bar{\alpha})^2+ (\frac{4}{3} - \bar{\alpha})^2 +  ( \frac{25}{18}-\bar{\alpha})^2 +(\frac{3}{2} - \bar{\alpha})^2  \\
		%		&+  ( \frac{29}{18} -\bar{\alpha})^2 + ( \frac{5}{3} - \bar{\alpha})^2 +( \frac{31}{18} - \bar{\alpha})^2 + ( \frac{11}{6} - \bar{\alpha})^2 \\
		%		=& \frac{115}{162}  .
		%	\end{align*}
	%	Then it suffices to prove $\frac{115}{162} - \frac{10}{12} \cdot \frac{16}{18} \leq 0,  $ i.e., $ - \frac{5}{162} \leq 0, $
	%	which holds obviously.
	%	
	
	\noindent\textbf{(3) $Q_{12}$.}
	
	Since $  \tau_{f_0} = \mu_f - 1 = 11, $ we only need to verify the case of  $\tau^{\mathrm{max}} = \mu_f -1 = 11. $ 
	\begin{align*}
		& \bar{\alpha} =\frac{1}{\mu_f - 1}\cdot ( \frac{14}{15}+\frac{17}{15}+\frac{19}{15} + 2\cdot \frac{4}{3} + \frac{22}{15}
		+ \frac{23}{15} + 2\cdot \frac{5}{3} + \frac{26}{15} + \frac{28}{15} )=\frac{239}{165}.
	\end{align*}
	\begin{align*}
		& (\mu_f -  1) \cdot \mathrm{Var}[\mathrm{Sp^{\tau^{\mathrm{max}} }}(f)] \\
		=& (\frac{14}{15}-\bar{\alpha})^2+(\frac{17}{15}-\bar{\alpha})^2+(\frac{19}{15}-\bar{\alpha})^2+ 2\cdot (\frac{4}{3} - \bar{\alpha})^2 +  ( \frac{22}{15}-\bar{\alpha})^2\\
		& +  ( \frac{23}{15} -\bar{\alpha})^2 + 2\cdot ( \frac{5}{3} - \bar{\alpha})^2 +( \frac{26}{15} - \bar{\alpha})^2 + ( \frac{28}{15} - \bar{\alpha})^2 \\
		=& \frac{646}{825}  .
	\end{align*}
	And $ \alpha_{\mu_f -1}-\alpha_1= \frac{28}{15} - \frac{14}{15} = \frac{14}{15}.  $
	Then it suffices to prove $\frac{646}{825} - \frac{11}{12} \cdot \frac{14}{15} \leq 0,  $ i.e., $ - \frac{359}{4950} \leq 0, $
	which holds obviously.
	%	
	%	
	%	Considering the average value $ \bar{\alpha}=\frac{3}{2}, $
	%	\begin{align*}
		%		& (\mu_f -  1) \cdot \mathrm{Var}[\mathrm{Sp^{\tau^{\mathrm{max}} }}(f)] \\
		%		=& (\frac{14}{15}-\bar{\alpha})^2+(\frac{17}{15}-\bar{\alpha})^2+(\frac{19}{15}-\bar{\alpha})^2+ 2\cdot (\frac{4}{3} - \bar{\alpha})^2 +  ( \frac{22}{15}-\bar{\alpha})^2\\
		%		& +  ( \frac{23}{15} -\bar{\alpha})^2 + 2\cdot ( \frac{5}{3} - \bar{\alpha})^2 +( \frac{26}{15} - \bar{\alpha})^2 + ( \frac{28}{15} - \bar{\alpha})^2 \\
		%		=& \frac{731}{900}  .
		%	\end{align*}
	%	Then it suffices to prove $ \frac{731}{900}- \frac{11}{12} \cdot \frac{14}{15} \leq 0,  $ i.e., $ - \frac{13}{300} \leq 0, $
	%	which holds obviously.
	%	
	%	
	%	
	
	\noindent\textbf{(4) $S_{11}$.}
	
	Since $  \tau_{f_0} = \mu_f - 1 = 10, $ we only need to verify the case of  $\tau^{\mathrm{max}} = \mu_f -1 = 10. $ 
	\begin{align*}
		& \bar{\alpha} =\frac{1}{\mu_f - 1}\cdot ( \frac{15}{16}+\frac{19}{16}+\frac{5}{4} +\frac{21}{16} + \frac{23}{16}
		+ \frac{3}{2}+ \frac{25}{16} +  \frac{27}{16} + \frac{7}{4} + \frac{29}{16} )=\frac{231}{160}.
	\end{align*}
	\begin{align*}
		& (\mu_f -  1) \cdot \mathrm{Var}[\mathrm{Sp^{\tau^{\mathrm{max}} }}(f)] \\
		=& (\frac{14}{15}-\bar{\alpha})^2+(\frac{17}{15}-\bar{\alpha})^2+(\frac{19}{15}-\bar{\alpha})^2+ 2\cdot (\frac{4}{3} - \bar{\alpha})^2 +  ( \frac{22}{15}-\bar{\alpha})^2 +  ( \frac{23}{15} -\bar{\alpha})^2 \\
		& + 2\cdot ( \frac{5}{3} - \bar{\alpha})^2 +( \frac{26}{15} - \bar{\alpha})^2 + ( \frac{28}{15} - \bar{\alpha})^2 \\
		=& \frac{1749}{2560}  .
	\end{align*}
	And $ \alpha_{\mu_f -1}-\alpha_1= \frac{29}{16} - \frac{15}{16} = \frac{14}{16}.  $
	Then it suffices to prove $\frac{1749}{2560}  - \frac{10}{12} \cdot \frac{14}{16} \leq 0,  $ i.e., $ - \frac{353}{7680} \leq 0, $
	which holds obviously.
	%	
	%	Considering the average value $ \bar{\alpha}=\frac{3}{2}, $
	%	\begin{align*}
		%		& (\mu_f -  1) \cdot \mathrm{Var}[\mathrm{Sp^{\tau^{\mathrm{max}} }}(f)] \\
		%		=& (\frac{14}{15}-\bar{\alpha})^2+(\frac{17}{15}-\bar{\alpha})^2+(\frac{19}{15}-\bar{\alpha})^2+ 2\cdot (\frac{4}{3} - \bar{\alpha})^2 +  ( \frac{22}{15}-\bar{\alpha})^2 +  ( \frac{23}{15} -\bar{\alpha})^2 \\
		%		& + 2\cdot ( \frac{5}{3} - \bar{\alpha})^2 +( \frac{26}{15} - \bar{\alpha})^2 + ( \frac{28}{15} - \bar{\alpha})^2 \\
		%		=& \frac{183}{256}  .
		%	\end{align*}
	%	Then it suffices to prove $\frac{183}{256}  - \frac{10}{12} \cdot \frac{14}{16} \leq 0,  $ i.e., $ - \frac{11}{768} \leq 0, $
	%	which holds obviously.
	%	
	%	
	
	\noindent\textbf{(5) $S_{12}$.}
	
	Since $  \tau_{f_0}  = \mu_f - 1 = 11, $ we only need to verify the case of  $\tau^{\mathrm{max}} = \mu_f -1 = 11. $ 
	\begin{align*}
		& \bar{\alpha} =\frac{1}{\mu_f -1}\cdot ( \frac{12}{13}+\frac{15}{13}+\frac{16}{13} +\frac{17}{13} + \frac{18}{13} + \frac{19}{13}
		+ \frac{20}{13} +  \frac{21}{13} + \frac{22}{13} + \frac{23}{13} + \frac{24}{13}) =\frac{231}{160}.
	\end{align*}
	\begin{align*}
		& (\mu_f -  1) \cdot \mathrm{Var}[\mathrm{Sp^{\tau^{\mathrm{max}} }}(f)] \\
		=& (\frac{14}{15}-\bar{\alpha})^2+(\frac{17}{15}-\bar{\alpha})^2+(\frac{19}{15}-\bar{\alpha})^2+ 2\cdot (\frac{4}{3} - \bar{\alpha})^2 +  ( \frac{22}{15}-\bar{\alpha})^2 +  ( \frac{23}{15} -\bar{\alpha})^2\\
		&  + 2\cdot ( \frac{5}{3} - \bar{\alpha})^2 +( \frac{26}{15} - \bar{\alpha})^2 + ( \frac{28}{15} - \bar{\alpha})^2 \\
		=& \frac{1470}{1859}  .
	\end{align*}
	And $ \alpha_{\mu_f -1}-\alpha_1= \frac{24}{13} - \frac{12}{13} = \frac{12}{13}.  $
	Then it suffices to prove $\frac{1470}{1859}   - \frac{11}{12} \cdot \frac{12}{13} \leq 0,  $ i.e., $ - \frac{103}{1859} \leq 0, $
	which holds obviously.
	%	
	%	Considering the average value $ \bar{\alpha}=\frac{3}{2}, $
	%	\begin{align*}
		%		& (\mu_f -  1) \cdot \mathrm{Var}[\mathrm{Sp^{\tau^{\mathrm{max}} }}(f)] \\
		%		=& (\frac{14}{15}-\bar{\alpha})^2+(\frac{17}{15}-\bar{\alpha})^2+(\frac{19}{15}-\bar{\alpha})^2+ 2\cdot (\frac{4}{3} - \bar{\alpha})^2 +  ( \frac{22}{15}-\bar{\alpha})^2 +  ( \frac{23}{15} -\bar{\alpha})^2\\
		%		&  + 2\cdot ( \frac{5}{3} - \bar{\alpha})^2 +( \frac{26}{15} - \bar{\alpha})^2 + ( \frac{28}{15} - \bar{\alpha})^2 \\
		%		=& \frac{555}{676}  .
		%	\end{align*}
	%	Then it suffices to prove $ \frac{555}{676}  - \frac{11}{12} \cdot \frac{12}{13} \leq 0,  $ i.e., $ - \frac{17}{676} \leq 0, $
	%	which holds obviously.
	%	
	
	\noindent\textbf{(6) $U_{12}$.}
	
	Since $  \tau_{f_0} = \mu_f - 1 = 11, $ we only need to verify the case of  $\tau^{\mathrm{max}} = \mu_f -1 = 11 . $ 
	\begin{align*}
		& \bar{\alpha} =\frac{1}{\mu_f -1 }\cdot ( \frac{11}{12}+\frac{7}{6}+2\cdot \frac{5}{4} +\frac{17}{12} + 2\cdot \frac{3}{2}
		+ \frac{19}{12} +  2\cdot \frac{7}{4} + \frac{11}{6} ) =\frac{191}{132}.
	\end{align*}
	\begin{align*}
		& (\mu_f -  1) \cdot \mathrm{Var}[\mathrm{Sp^{\tau^{\mathrm{max}} }}(f)] \\
		=& (\frac{14}{15}-\bar{\alpha})^2+(\frac{17}{15}-\bar{\alpha})^2+(\frac{19}{15}-\bar{\alpha})^2+ 2\cdot (\frac{4}{3} - \bar{\alpha})^2 +  ( \frac{22}{15}-\bar{\alpha})^2 +  ( \frac{23}{15} -\bar{\alpha})^2\\
		&  + 2\cdot ( \frac{5}{3} - \bar{\alpha})^2 +( \frac{26}{15} - \bar{\alpha})^2 + ( \frac{28}{15} - \bar{\alpha})^2 \\
		=& \frac{35}{44}  .
	\end{align*}
	And $ \alpha_{\mu_f -1}-\alpha_1= \frac{11}{6} - \frac{11}{12} = \frac{11}{12}.  $
	Then it suffices to prove $\frac{35}{44}  - \frac{11}{12} \cdot \frac{11}{12} \leq 0,  $ i.e., $ - \frac{71}{1584} \leq 0, $
	which holds obviously.
	%	
	%	
	%	Considering the average value $ \bar{\alpha}=\frac{3}{2}, $
	%	\begin{align*}
		%		& (\mu_f -  1) \cdot \mathrm{Var}[\mathrm{Sp^{\tau^{\mathrm{max}} }}(f)] \\
		%		=& (\frac{14}{15}-\bar{\alpha})^2+(\frac{17}{15}-\bar{\alpha})^2+(\frac{19}{15}-\bar{\alpha})^2+ 2\cdot (\frac{4}{3} - \bar{\alpha})^2 +  ( \frac{22}{15}-\bar{\alpha})^2 +  ( \frac{23}{15} -\bar{\alpha})^2 \\
		%		& + 2\cdot ( \frac{5}{3} - \bar{\alpha})^2 +( \frac{26}{15} - \bar{\alpha})^2 + ( \frac{28}{15} - \bar{\alpha})^2 \\
		%		=& \frac{119}{144}  .
		%	\end{align*}
	%	Then it suffices to prove $\frac{119}{144}  - \frac{11}{12} \cdot \frac{11}{12} \leq 0,  $ i.e., $ - \frac{1}{72} \leq 0, $
	%	which holds obviously.
	%	
	
	\noindent\textbf{(7) $Z_{11}$.}
	
	Since $  \tau_{f_0}  = \mu_f - 1 = 10, $ we only need to verify the case of  $\tau^{\mathrm{max}} = \mu_f -1 = 10 . $ 
	\begin{align*}
		& \bar{\alpha} =\frac{1}{\mu_f -1 }\cdot ( \frac{7}{15}+\frac{2}{3}+\frac{11}{15} +\frac{13}{15} + \frac{14}{15}
		+1 + \frac{16}{15} +  \frac{17}{15} + \frac{19}{15} + \frac{4}{3} ) =\frac{71}{75}.
	\end{align*}
	\begin{align*}
		& (\mu_f -  1) \cdot \mathrm{Var}[\mathrm{Sp^{\tau^{\mathrm{max}} }}(f)] \\
		=& (\frac{7}{15}-\bar{\alpha})^2+(\frac{2}{3}-\bar{\alpha})^2+(\frac{11}{15}-\bar{\alpha})^2+ (\frac{13}{15} - \bar{\alpha})^2 +  ( \frac{14}{15}-\bar{\alpha})^2 + (1 - \bar{\alpha})^2 \\
		& +  ( \frac{16}{15} -\bar{\alpha})^2 + ( \frac{17}{15} - \bar{\alpha})^2 +( \frac{19}{15} - \bar{\alpha})^2 + ( \frac{4}{3} - \bar{\alpha})^2 \\
		=& \frac{748}{1125}  .
	\end{align*}
	And $ \alpha_{\mu_f -1}-\alpha_1= \frac{4}{3} - \frac{7}{15} = \frac{13}{15}.  $
	Then it suffices to prove $ \frac{748}{1125}  - \frac{10}{12} \cdot \frac{13}{15} \leq 0,  $ i.e., $ - \frac{43}{750} \leq 0, $
	which holds obviously.
	%	
	%	Considering the average value $ \bar{\alpha}=1, $
	%	\begin{align*}
		%		& (\mu_f -  1) \cdot \mathrm{Var}[\mathrm{Sp^{\tau^{\mathrm{max}} }}(f)] \\
		%		=& (\frac{7}{15}-\bar{\alpha})^2+(\frac{2}{3}-\bar{\alpha})^2+(\frac{11}{15}-\bar{\alpha})^2+ (\frac{13}{15} - \bar{\alpha})^2 +  ( \frac{14}{15}-\bar{\alpha})^2 + (1 - \bar{\alpha})^2 \\
		%		& +  ( \frac{16}{15} -\bar{\alpha})^2 + ( \frac{17}{15} - \bar{\alpha})^2 +( \frac{19}{15} - \bar{\alpha})^2 + ( \frac{4}{3} - \bar{\alpha})^2 \\
		%		=& \frac{52}{75}  .
		%	\end{align*}
	%	Then it suffices to prove $ \frac{52}{75}  - \frac{10}{12} \cdot \frac{13}{15} \leq 0,  $ i.e., $ - \frac{13}{450} \leq 0, $
	%	which holds obviously.
	%	
	%	
	
	\noindent\textbf{(8) $Z_{12}$.}
	
	Since $  \tau_{f_0} = \mu_f - 1 = 11, $ we only need to verify the case of  $\tau^{\mathrm{max}} = \mu_f -1 = 11 . $ 
	\begin{align*}
		& \bar{\alpha} =\frac{1}{\mu_f -1}\cdot ( \frac{5}{11}+\frac{7}{11}+\frac{8}{11} +\frac{9}{11} + \frac{10}{11} + 2 \cdot 1
		+\frac{12}{11} + \frac{13}{11} + \frac{14}{11} + \frac{15}{11}  )=\frac{115}{121}.
	\end{align*}
	\begin{align*}
		& (\mu_f -  1) \cdot \mathrm{Var}[\mathrm{Sp^{\tau^{\mathrm{max}} }}(f)] \\
		=& (\frac{5}{11}-\bar{\alpha})^2+(\frac{7}{11}-\bar{\alpha})^2+(\frac{8}{11}-\bar{\alpha})^2+ (\frac{9}{11} - \bar{\alpha})^2 +  ( \frac{10}{11}-\bar{\alpha})^2 +2 \cdot (1 - \bar{\alpha})^2\\
		&  +  ( \frac{12}{11} -\bar{\alpha})^2 + ( \frac{13}{11} - \bar{\alpha})^2 +( \frac{14}{11} - \bar{\alpha})^2 + ( \frac{15}{11} - \bar{\alpha})^2 \\
		=& \frac{1020}{1331}  .
	\end{align*}
	And $ \alpha_{\mu_f -1}-\alpha_1= \frac{15}{11} - \frac{5}{11} = \frac{10}{11}.  $
	Then it suffices to prove $ \frac{1020}{1331} - \frac{11}{12} \cdot \frac{10}{11} \leq 0,  $ i.e., $ - \frac{535}{7986} \leq 0, $
	which holds obviously.
	%	
	%	Considering the average value $ \bar{\alpha}=1, $
	%	\begin{align*}
		%		& (\mu_f -  1) \cdot \mathrm{Var}[\mathrm{Sp^{\tau^{\mathrm{max}} }}(f)] \\
		%		=& (\frac{5}{11}-\bar{\alpha})^2+(\frac{7}{11}-\bar{\alpha})^2+(\frac{8}{11}-\bar{\alpha})^2+ (\frac{9}{11} - \bar{\alpha})^2 +  ( \frac{10}{11}-\bar{\alpha})^2  +2 \cdot (1 - \bar{\alpha})^2 \\
		%		& +  ( \frac{12}{11} -\bar{\alpha})^2 + ( \frac{13}{11} - \bar{\alpha})^2 +( \frac{14}{11} - \bar{\alpha})^2 + ( \frac{15}{11} - \bar{\alpha})^2 \\
		%		=& \frac{96}{121}  .
		%	\end{align*}
	%	Then it suffices to prove $ \frac{96}{121} - \frac{11}{12} \cdot \frac{10}{11} \leq 0,  $ i.e., $ - \frac{29}{726} \leq 0, $
	%	which holds obviously.
	%	
	%	
	
	\noindent\textbf{(9) $Z_{13}$.}
	
	Since $  \tau_{f_0} = \mu_f - 1 = 12, $ we only need to verify the case of  $\tau^{\mathrm{max}} = \mu_f -1 = 12. $ 
	\begin{align*}
		& \bar{\alpha} =\frac{1}{\tau}\cdot ( \frac{4}{9}+\frac{11}{18}+\frac{13}{18}+\frac{7}{9} +\frac{8}{9} + \frac{17}{18} +  1
		+\frac{19}{18} + \frac{10}{9} + \frac{11}{9} + \frac{23}{18} + \frac{25}{18}   )=\frac{103}{108}.
	\end{align*}
	\begin{align*}
		& (\mu_f -  1) \cdot \mathrm{Var}[\mathrm{Sp^{\tau^{\mathrm{max}} }}(f)] \\
		=& (\frac{4}{9}-\bar{\alpha})^2+(\frac{11}{18}-\bar{\alpha})^2+(\frac{7}{9}-\bar{\alpha})^2+ (\frac{8}{9} - \bar{\alpha})^2 +  ( \frac{17}{18}-\bar{\alpha})^2 + (1 - \bar{\alpha})^2\\
		&  +  ( \frac{19}{18} -\bar{\alpha})^2 + ( \frac{10}{9} - \bar{\alpha})^2 +( \frac{11}{9} - \bar{\alpha})^2 + ( \frac{23}{18} - \bar{\alpha})^2 +( \frac{25}{18} - \bar{\alpha})^2\\
		=& \frac{845}{972}  .
	\end{align*}
	And $ \alpha_{\mu_f -1}-\alpha_1= \frac{25}{18} - \frac{4}{9} = \frac{17}{18}.  $
	Then it suffices to prove $ \frac{845}{972} - \frac{12}{12} \cdot \frac{17}{18} \leq 0,  $ i.e., $ - \frac{73}{972} \leq 0, $
	which holds obviously.
	%	
	%	
	%	Considering the average value $ \bar{\alpha}=1, $
	%	\begin{align*}
		%		& (\mu_f -  1) \cdot \mathrm{Var}[\mathrm{Sp^{\tau^{\mathrm{max}} }}(f)] \\
		%		=& (\frac{4}{9}-\bar{\alpha})^2+(\frac{11}{18}-\bar{\alpha})^2+(\frac{7}{9}-\bar{\alpha})^2+ (\frac{8}{9} - \bar{\alpha})^2 +  ( \frac{17}{18}-\bar{\alpha})^2 + (1 - \bar{\alpha})^2 \\
		%		& +  ( \frac{19}{18} -\bar{\alpha})^2 + ( \frac{10}{9} - \bar{\alpha})^2 +( \frac{11}{9} - \bar{\alpha})^2 + ( \frac{23}{18} - \bar{\alpha})^2 +( \frac{25}{18} - \bar{\alpha})^2\\
		%		=& \frac{145}{162}  .
		%	\end{align*}
	%	Then it suffices to prove $ \frac{145}{162} - \frac{12}{12} \cdot \frac{17}{18} \leq 0,  $ i.e., $ - \frac{4}{81} \leq 0, $
	%	which holds obviously.
	%	
	%	
	
	\noindent\textbf{(10) $W_{13}$.}
	
	Since $  \tau_{f_0} = \mu_f - 1 = 12, $ we only need to verify the case of  $\tau^{\mathrm{max}} = \mu_f -1 = 12 . $ 
	\begin{align*}
		& \bar{\alpha} =\frac{1}{\mu_f -1}\cdot ( \frac{7}{16}+\frac{5}{8}+\frac{11}{16} +\frac{13}{16} + \frac{7}{8} +  \frac{15}{16}
		+1 + \frac{17}{16} + \frac{9}{8} + \frac{19}{16} + \frac{21}{16} + \frac{11}{8}   )=\frac{61}{64}.
	\end{align*}
	\begin{align*}
		& (\mu_f -  1) \cdot \mathrm{Var}[\mathrm{Sp^{\tau^{\mathrm{max}} }}(f)] \\
		=& (\frac{7}{16}-\bar{\alpha})^2+(\frac{5}{8}-\bar{\alpha})^2+(\frac{11}{16}-\bar{\alpha})^2+ (\frac{13}{16} - \bar{\alpha})^2 +  ( \frac{7}{8}-\bar{\alpha})^2 + (\frac{15}{16} - \bar{\alpha})^2\\
		& + (1 - \bar{\alpha})^2 +  ( \frac{17}{16} -\bar{\alpha})^2 + ( \frac{9}{8} - \bar{\alpha})^2 +( \frac{19}{16} - \bar{\alpha})^2 
		+ ( \frac{21}{16} - \bar{\alpha})^2 +( \frac{11}{8} - \bar{\alpha})^2\\
		=& \frac{897}{1024}  .
	\end{align*}
	And $ \alpha_{\mu_f -1}-\alpha_1= \frac{11}{8} - \frac{7}{16} = \frac{15}{16}.  $
	Then it suffices to prove $  \frac{897}{1024}- \frac{12}{12} \cdot \frac{15}{16} \leq 0,  $ i.e., $ - \frac{63}{1024} \leq 0, $
	which holds obviously.
	%	
	%	
	%	Considering the average value $ \bar{\alpha}=1, $
	%	\begin{align*}
		%		& (\mu_f -  1) \cdot \mathrm{Var}[\mathrm{Sp^{\tau^{\mathrm{max}} }}(f)] \\
		%		=& (\frac{7}{16}-\bar{\alpha})^2+(\frac{5}{8}-\bar{\alpha})^2+(\frac{11}{16}-\bar{\alpha})^2+ (\frac{13}{16} - \bar{\alpha})^2 +  ( \frac{7}{8}-\bar{\alpha})^2 + (\frac{15}{16} - \bar{\alpha})^2 \\
		%		&+ (1 - \bar{\alpha})^2 +  ( \frac{17}{16} -\bar{\alpha})^2 + ( \frac{9}{8} - \bar{\alpha})^2 +( \frac{19}{16} - \bar{\alpha})^2 
		%		+ ( \frac{21}{16} - \bar{\alpha})^2 +( \frac{11}{8} - \bar{\alpha})^2\\
		%		=& \frac{231}{256}  .
		%	\end{align*}
	%	Then it suffices to prove $  \frac{231}{256} - \frac{12}{12} \cdot \frac{15}{16} \leq 0,  $ i.e., $ - \frac{9}{256} \leq 0, $
	%	which holds obviously.
	%	
	%	
	
	\noindent\textbf{(11) $E_{12}$.}
	
	Since $ \tau_{f_0} = \mu_f - 1 = 11, $ we only need to verify the case of  $\tau^{\mathrm{max}} = \mu_f -1 = 11 . $ 
	\begin{align*}
		& \bar{\alpha} =\frac{1}{\mu_f -1}\cdot ( \frac{10}{21}+\frac{13}{21}+\frac{16}{21} +\frac{17}{21} + \frac{19}{21} +  \frac{20}{21}
		+ \frac{22}{21} + \frac{23}{21} + \frac{25}{21} + \frac{26}{21} + \frac{29}{21}   )=\frac{20}{21}.
	\end{align*}
	\begin{align*}
		& (\mu_f -  1) \cdot \mathrm{Var}[\mathrm{Sp^{\tau^{\mathrm{max}} }}(f)] \\
		=& (\frac{10}{21}-\bar{\alpha})^2+(\frac{13}{21}-\bar{\alpha})^2+(\frac{16}{21}-\bar{\alpha})^2+ (\frac{17}{21} - \bar{\alpha})^2 +  ( \frac{19}{21}-\bar{\alpha})^2 + (\frac{20}{21} - \bar{\alpha})^2 \\
		&+ (\frac{22}{21} - \bar{\alpha})^2 +  ( \frac{23}{21} -\bar{\alpha})^2 + ( \frac{25}{21} - \bar{\alpha})^2 +( \frac{26}{21} - \bar{\alpha})^2 
		+ ( \frac{29}{21} - \bar{\alpha})^2 \\
		=& \frac{110}{147}  .
	\end{align*}
	And $ \alpha_{\mu_f -1}-\alpha_1= \frac{29}{21} - \frac{10}{21} = \frac{19}{21}.  $
	Then it suffices to prove $  \frac{110}{147}- \frac{11}{12} \cdot \frac{19}{21} \leq 0,  $ i.e., $ - \frac{143}{1764} \leq 0, $
	which holds obviously.
	%	
	%	
	%	Considering the average value $ \bar{\alpha}=1, $
	%	\begin{align*}
		%		& (\mu_f -  1) \cdot \mathrm{Var}[\mathrm{Sp^{\tau^{\mathrm{max}} }}(f)] \\
		%		=& (\frac{10}{21}-\bar{\alpha})^2+(\frac{13}{21}-\bar{\alpha})^2+(\frac{16}{21}-\bar{\alpha})^2+ (\frac{17}{21} - \bar{\alpha})^2 +  ( \frac{19}{21}-\bar{\alpha})^2 +(\frac{20}{21} - \bar{\alpha})^2\\
		%		&  + (\frac{22}{21} - \bar{\alpha})^2 +  ( \frac{23}{21} -\bar{\alpha})^2 + ( \frac{25}{21} - \bar{\alpha})^2 +( \frac{26}{21} - \bar{\alpha})^2 
		%		+ ( \frac{29}{21} - \bar{\alpha})^2 \\
		%		=& \frac{341}{441}  .
		%	\end{align*}
	%	Then it suffices to prove $  \frac{341}{441}- \frac{11}{12} \cdot \frac{19}{21} \leq 0,  $ i.e., $ - \frac{11}{196} \leq 0, $
	%	which holds obviously.
	%	
	%	
	
	\noindent\textbf{(12) $E_{13}$.}
	
	Since $  \tau_{f_0} = \mu_f - 1 = 12, $ we only need to verify the case of  $\tau^{\mathrm{max}} = \mu_f -1 = 12 . $ 
	\begin{align*}
		& \bar{\alpha} =\frac{1}{\mu_f -1}\cdot ( \frac{7}{15}+\frac{3}{5}+\frac{11}{15} +\frac{4}{5} + \frac{13}{15} +  \frac{14}{15}
		+ 1 + \frac{16}{15} + \frac{17}{15} + \frac{6}{5} + \frac{19}{15} + \frac{7}{5}   )=\frac{43}{45}.
	\end{align*}
	\begin{align*}
		& (\mu_f -  1) \cdot \mathrm{Var}[\mathrm{Sp^{\tau^{\mathrm{max}} }}(f)] \\
		=& (\frac{7}{15}-\bar{\alpha})^2+(\frac{3}{5}-\bar{\alpha})^2+(\frac{11}{15}-\bar{\alpha})^2+ (\frac{4}{5} - \bar{\alpha})^2 +  ( \frac{13}{15}-\bar{\alpha})^2 + (\frac{14}{15} - \bar{\alpha})^2 \\
		&+ (1- \bar{\alpha})^2 +  ( \frac{16}{15} -\bar{\alpha})^2 + ( \frac{17}{15} - \bar{\alpha})^2 +( \frac{6}{5} - \bar{\alpha})^2 
		+ ( \frac{19}{15} - \bar{\alpha})^2 +( \frac{7}{5} - \bar{\alpha})^2 \\
		=& \frac{572}{675}  .
	\end{align*}
	And $ \alpha_{\mu_f -1}-\alpha_1= \frac{7}{5} - \frac{7}{15} = \frac{14}{15}.  $
	Then it suffices to prove $ \frac{572}{675}- \frac{12}{12} \cdot \frac{14}{15} \leq 0,  $ i.e., $ - \frac{58}{675} \leq 0, $
	which holds obviously.
	%	
	%	
	%	Considering the average value $ \bar{\alpha}=1, $
	%	\begin{align*}
		%		& (\mu_f -  1) \cdot \mathrm{Var}[\mathrm{Sp^{\tau^{\mathrm{max}} }}(f)] \\
		%		=& (\frac{7}{15}-\bar{\alpha})^2+(\frac{3}{5}-\bar{\alpha})^2+(\frac{11}{15}-\bar{\alpha})^2+ (\frac{4}{5} - \bar{\alpha})^2 +  ( \frac{13}{15}-\bar{\alpha})^2 + (\frac{14}{15} - \bar{\alpha})^2 \\
		%		&+ (1- \bar{\alpha})^2 +  ( \frac{16}{15} -\bar{\alpha})^2 + ( \frac{17}{15} - \bar{\alpha})^2 +( \frac{6}{5} - \bar{\alpha})^2 
		%		+ ( \frac{19}{15} - \bar{\alpha})^2 +( \frac{7}{5} - \bar{\alpha})^2 \\
		%		=& \frac{196}{225}  .
		%	\end{align*}
	%	Then it suffices to prove $ \frac{196}{225}- \frac{12}{12} \cdot \frac{14}{15} \leq 0,  $ i.e., $ - \frac{14}{225} \leq 0, $
	%	which holds obviously.
	%	
	%	
	
	\noindent\textbf{(13) $E_{14}$.}
	
	Since $ \tau_{f_0} = \mu_f - 1 = 13, $ we only need to verify the case of  $\tau^{\mathrm{max}} = \mu_f -1 = 13. $ 
	\begin{align*}
		& \bar{\alpha} =\frac{1}{\tau}\cdot ( \frac{11}{24}+\frac{7}{12}+\frac{15}{24} +\frac{17}{24} + \frac{5}{6} +  \frac{11}{12}
		+ \frac{23}{24} + \frac{25}{24} + \frac{13}{12} + \frac{7}{6} + \frac{29}{24} + \frac{31}{24}  +\frac{17}{12} )\\
		&=\frac{23}{24}.
	\end{align*}
	\begin{align*}
		& (\mu_f -  1) \cdot \mathrm{Var}[\mathrm{Sp^{\tau^{\mathrm{max}} }}(f)] \\
		=& (\frac{11}{24}-\bar{\alpha})^2+(\frac{7}{12}-\bar{\alpha})^2+(\frac{15}{24}-\bar{\alpha})^2+ (\frac{17}{24} - \bar{\alpha})^2 +  ( \frac{5}{6}-\bar{\alpha})^2+ (\frac{11}{12} - \bar{\alpha})^2 + (\frac{23}{24}- \bar{\alpha})^2\\
		& +  ( \frac{25}{24} -\bar{\alpha})^2 + ( \frac{13}{12} - \bar{\alpha})^2 +( \frac{7}{6} - \bar{\alpha})^2 
		+ ( \frac{29}{24} - \bar{\alpha})^2 +( \frac{31}{24} - \bar{\alpha})^2 + (\frac{17}{12} - \bar{\alpha})^2 \\
		=& \frac{275}{288}  .
	\end{align*}
	And $ \alpha_{\mu_f -1}-\alpha_1= \frac{17}{12} - \frac{11}{24} = \frac{23}{24}.  $
	Then it suffices to prove $ \frac{275}{288}  - \frac{13}{12} \cdot \frac{23}{24} \leq 0,  $ i.e., $ - \frac{1}{12} \leq 0, $
	which holds obviously.
	
	%	
	%	Considering the average value $ \bar{\alpha}=1, $
	%	\begin{align*}
		%		& (\mu_f -  1) \cdot \mathrm{Var}[\mathrm{Sp^{\tau^{\mathrm{max}} }}(f)] \\
		%		=& (\frac{11}{24}-\bar{\alpha})^2+(\frac{7}{12}-\bar{\alpha})^2+(\frac{15}{24}-\bar{\alpha})^2+ (\frac{17}{24} - \bar{\alpha})^2 +  ( \frac{5}{6}-\bar{\alpha})^2+ (\frac{11}{12} - \bar{\alpha})^2 + (\frac{23}{24}- \bar{\alpha})^2 \\
		%		& +  ( \frac{25}{24} -\bar{\alpha})^2 + ( \frac{13}{12} - \bar{\alpha})^2 +( \frac{7}{6} - \bar{\alpha})^2 
		%		+ ( \frac{29}{24} - \bar{\alpha})^2 +( \frac{31}{24} - \bar{\alpha})^2 + (\frac{17}{12} - \bar{\alpha})^2 \\
		%		=& \frac{559}{576}  .
		%	\end{align*}
	%	Then it suffices to prove $ \frac{559}{576}  - \frac{13}{12} \cdot \frac{23}{24} \leq 0,  $ i.e., $ - \frac{13}{192} \leq 0, $
	%	which holds obviously.
	%	
	%	
	
	\noindent\textbf{(14) $T_{p,q,r}$.}
	
	Since $ \tau_{f_0} = \mu_f - 1 = p+q +r -2, $ we only need to verify the case of  $\tau^{\mathrm{max}} = \mu_f -1 = p+q+r-2 . $ 
	\begin{align*}
		& \bar{\alpha} =\frac{1}{\mu_f -1}\cdot ( 0 + \sum_{k_1 = 1}^{p-1} \frac{k_1}{p} + \sum_{k_2 = 1}^{q-1} \frac{k_2}{q} + \sum_{k_3 = 1}^{r-1} \frac{k_3}{r} )=\frac{p + q + r - 3}{2(p + q + r - 2)}.
	\end{align*}
	\begin{align*}
		& (\mu_f -  1) \cdot \mathrm{Var}[\mathrm{Sp^{\tau^{\mathrm{max}} }}(f)] \\
		=& (0 - \bar{\alpha})^2 + \sum_{k_1 = 1}^{p-1} ( \frac{k_1}{p} - \bar{\alpha})^2 + \sum_{k_2 = 1}^{q-1} ( \frac{k_2}{q} - \bar{\alpha})^2 + \sum_{k_3 = 1}^{r-1} ( \frac{k_3}{r} - \bar{\alpha})^2 \\
		=& \frac{1}{12(p + q + r - 2)pqr} \cdot (p^3rq + (2q^2r + (2r^2 - 8r + 2)q + 2r)p^2 + (q^3r + (2r^2 - 8r + 2)q^2\\
		& + (r^3 - 8r^2 + 15r - 4)q + 2r^2 - 4r)p + 2qr(q + r - 2) )  .
	\end{align*}
	Without loss of generality, we may assume $ 2 \leq r \leq q \leq p. $  
	And $ \alpha_{\mu_f -1}-\alpha_1= \frac{p-1}{p} - 0 = \frac{p-1}{p}.  $
	Then it suffices to prove
	\begin{equation*}
		\begin{split}
			&\frac{1}{12(p + q + r - 2)pqr} \cdot (p^3rq + (2q^2r + (2r^2 - 8r + 2)q + 2r)p^2 + (q^3r + (2r^2 - 8r + 2)q^2\\
			& + (r^3 - 8r^2 + 15r - 4)q + 2r^2 - 4r)p + 2qr(q + r - 2) ) - \frac{p + q + r -2}{12} \cdot \frac{p-1}{p} \leq 0, 
		\end{split}
	\end{equation*}
	i.e., 
	\begin{equation*}
		\begin{split}
			&\frac{1}{12(p + q + r - 2)pqr} \cdot ( q^3r - 2(r - 1)(-r + p)q^2 + (r^3 + (-2p - 2)r^2 \\
			&+ (-3p^2 + 7p)r + 2p^2 - 4p)q + 2pr(p + r - 2) ) \leq 0 .
		\end{split}
	\end{equation*}
	Viewing the numerator
	\begin{equation*}
		\begin{split}
			f(p) =&((-3r + 2)q + 2r) p^2 + ( -2(r - 1)q^2 + (-2r^2 + 7r - 4)q + 2r(r - 2) )p\\
			& + q^3r + 2(r - 1)rq^2 + (r^3 - 2r^2)q
		\end{split}
	\end{equation*}
	of the left hand side of the above inequality as a quadratic function of the variable $ p, $
	it is not diffcult to verify $ f(p) \leq 0, \forall p \geq q $ under the restriction $ \frac{1}{p} + \frac{1}{q} + \frac{1}{r} < 1,  $ i.e., $ p > \frac{qr}{ qr - q - r}.$ 
	Thus the above inequality holds for all $ p, q, r$ which satisfy $ \frac{1}{p} + \frac{1}{q} + \frac{1}{r} < 1 $.
\end{proof}
\subsection{Generalized Hertling Conjecture for Tjurina Spectrum for  Bimodal Singularities}

\begin{theorem}\label{HCTS2}
	\textbf{\textit{\textbf{Generalized Hertling Conjecture for Tjurina Spectrum}}} holds for singularities of modality 2 with related to Tjurina Spectrum, except for $W^\sharp_{1,2q-1},W^\sharp_{1,2q},U_{1,2q-1}$ and $U_{1,2q}$. If \textbf{Conjecture \ref{Conjecture for four exceptional}} holds, then this theorem holds for all bimodal singularities.
\end{theorem}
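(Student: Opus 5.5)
The proof will follow the same pattern as Theorem \ref{HCTS1}: a case-by-case verification over the classification of bimodal singularities. The classification splits into the eight infinite series $J_{3,p},Z_{1,p},W_{1,p},W^\sharp_{1,p},Q_{2,p},S_{1,p},S^\sharp_{1,p},U_{1,p}$ (together with their $p=0$ members) and the fourteen exceptional families $E_{18},\dots,U_{16}$. For each class I first determine $\tau^{\mathrm{max}}$. By the tables of Subsection \ref{Computation for Tjurina spectrum}, the normal form has $\mu-\tau\in\{1,2\}$, so $\tau^{\mathrm{max}}\in\{\mu-2,\mu-1,\mu\}$. I then apply the reduction stated just before Theorem \ref{HCTS1}:
\begin{itemize}
\item if $\tau^{\mathrm{max}}=\mu$, Theorem \ref{3.15} settles the case;
\item if $\tau^{\mathrm{max}}=\mu-1$, Theorem \ref{difference conjecture for sptau and sp} shows that only $\alpha_\mu$ is removed;
\item if $\tau^{\mathrm{max}}=\mu-2$, Lemma \ref{a simple reduction} reduces the claim to the Hertling-type inequality for $\alpha_1,\dots,\alpha_{\mu-2}$.
\end{itemize}
The exceptional bimodal families are semi-quasihomogeneous. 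Their spectrum therefore equals that of the quasihomogeneous principal part, which has $\tau=\mu$, so $\tau^{\mathrm{max}}=\mu$. Since the $\tau$-max forms are then quasihomogeneous, Theorem \ref{3.15} gives equality and these cases are immediate.

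For the series, the spectrum splits into a finite block of constants plus a parametric arithmetic block of the form $\frac{k}{N}+c$, $1\le k\le N-1$, with $N$ linear in $p$ (taken from \cite{MR3146513}, \cite{MR0631501}). For the $p=0$ members ($\mu-\tau=1$), I delete $\alpha_\mu$ and compute the variance about the new mean and the range $\alpha_{\mu-1}-\alpha_1$ directly, exactly as for $Q_{10}$. For $p\ge1$ ($\mu-\tau=2$), I take $\tau^{\mathrm{max}}=\mu-2$; this requires arguing that $\mu$-constant deformations within the stratum do not raise $\tau$, using the normal form as a $\tau$-max representative.

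Next I check the hypothesis of Lemma \ref{a simple reduction}, namely $b\ge\frac12(1-\frac4m)^{-1}(x_m+x_{m-1})$. Two points need care here. First, the lemma only compares deleting $\{\alpha_{\mu-1},\alpha_\mu\}$ with deleting some $\{\alpha_k,\alpha_\mu\}$, and by Theorem \ref{difference conjecture for sptau and sp} the deleted pair always contains $\alpha_\mu$. Second, the condition as written, with $b\approx\frac{n+1}{2}$, may fail, since the sum of the top two exponents is about $2\cdot\frac{n+1}{2}+$ something positive. In that case I plan to bypass the lemma and use the actual deleted pair $\alpha_\mu$, $2-\frac{1}{2(p+c)}$ listed in the table directly. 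Concretely, I compute $\sum(\alpha_i-\bar\alpha)^2$ over the remaining $\mu-2$ exponents in closed form. The arithmetic block contributes $\frac{N^2-1}{12N}$ about its own centre, plus a shift term coming from the mean not being $\frac{n+1}{2}$.

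The target inequality then becomes a rational inequality in $p$. After clearing denominators it is a quadratic or cubic whose sign for $p\ge1$ I verify by checking the leading coefficient together with small values of $p$, as in parts (1)--(13) of Theorem \ref{main theorem A}.

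The main obstacle is the $\mu-\tau=2$ series. There the mean is no longer $\frac{n+1}{2}$, the deleted second exponent lies inside the parametric block, and the variance has an awkward closed form. Small $p$, where the range and the variance are closest, must be checked by hand, possibly with SINGULAR.

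For $W^\sharp_{1,p}$ and $U_{1,p}$ the Tjurina spectrum is unknown, since these are Newton degenerate. For them I assume Conjecture \ref{Conjecture for four exceptional}: $\mathrm{R}(f)$ consists of the two largest exponents, which puts us exactly in the reduced setting above and gives the conditional statement.
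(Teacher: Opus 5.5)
\textbf{Gap: the value of $\tau^{\mathrm{max}}$ for the series with $p\ge 1$.} For the $\mu-\tau=2$ series you fix $\tau^{\mathrm{max}}=\mu-2$, and you explicitly leave unproved that no germ $\mu$-accessible from the normal form has a larger Tjurina number. Nothing in the paper supports this, since the tables record $\tau$ only for the normal form. Your own treatment of $E_{18},\dots,U_{16}$ shows the danger: these are tabulated with $\tau=\mu-2$, yet they have $\tau^{\mathrm{max}}=\mu$, so $\tau$ really can jump inside a $\mu$-constant stratum.

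The paper never determines $\tau^{\mathrm{max}}$. It uses $\mu-2=\tau_{f_0}\le\tau^{\mathrm{max}}\le\mu$, notes that $\tau^{\mathrm{max}}=\mu$ is Hertling's equality case, and then verifies the inequality in both remaining scenarios:
\begin{itemize}
\item for $\mathrm{sp}(f)\setminus\{\alpha_\mu\}$, which is the scenario $\tau^{\mathrm{max}}=\mu-1$ by Theorem \ref{difference conjecture for sptau and sp};
\item for the scenario $\tau^{\mathrm{max}}=\mu-2$.
\end{itemize}
You already have everything needed for the $\mu-1$ computation; it is the same kind of rational inequality in $p$. As written, however, it is missing, and the proof is incomplete without it.

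\textbf{Second issue: your bypass of Lemma \ref{a simple reduction} only covers the normal form.} Your doubt about the lemma is well founded, and in fact understated. Since $\frac12(x_m+x_{m-1})\ge b$ and $(1-\frac4m)^{-1}>1$, its hypothesis can never hold for $m>4$. But reading the second deleted exponent off the table only certifies the normal form. Conjecture \ref{GHCTS} is asserted for every $\tau$-max form, and the paper itself notes that the Tjurina spectrum is not stable under $\tau$-constant deformation. So you must either show that all $\tau$-max forms share the normal form's $\mathrm{R}(f)$, or control every admissible pair $\{\alpha_\mu,y\}$.

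The second option is cheap.
\begin{itemize}
\item By Theorem \ref{f induces fitration lemma}, $y\ge\alpha_1+1$.
\item Let $S$ and $Q$ be the sum and the sum of squares of $\mathrm{sp}(f)\setminus\{\alpha_\mu\}$. The remaining $\mu-2$ exponents then have sum of squared deviations $Q-y^2-(S-y)^2/(\mu-2)$.
\item This expression is concave in $y$ and maximal at $y=S/(\mu-1)$. Moreover $S/(\mu-1)<\frac{n+1}{2}\le\alpha_1+1$, the last inequality holding for all bimodal spectra. So it is decreasing on the admissible range of $y$.
\end{itemize}
Hence only two checks remain: $y=\alpha_{\mu-1}$, with range $\alpha_{\mu-2}-\alpha_1$; and $y$ equal to the smallest exponent $\ge\alpha_1+1$, with range $\alpha_{\mu-1}-\alpha_1$.

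\textbf{What works.} Your semi-quasihomogeneity argument for the fourteen exceptional families is correct, and shorter than the paper's explicit computations in the $\mu-2$ and $\mu-1$ scenarios. It also disposes of the $p=0$ members in the same way; for instance, $J_{3,0}$ has quasihomogeneous principal part $x^3+bx^2y^3+y^9$.
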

\begin{proof}
	\noindent\textbf{(1) $J_{3,0}$.}
	
	Since $ \tau_{f_0} = \mu_f - 1 = 15, $ we only need to verify the case of  $\tau^{\mathrm{max}} = \mu_f -1 = 15 . $ 
	\begin{align*}
		& \bar{\alpha} =\frac{1}{\mu_f - 1}\cdot ( \frac{4}{9}+\frac{5}{9}+\frac{6}{9} + 2 \cdot \frac{7}{9} + 2 \cdot \frac{8}{9}+2 \cdot \frac{10}{9} + 2 \cdot \frac{11}{9} + \frac{12}{9} + \frac{13}{9} )
		=\frac{26}{27}.
	\end{align*}
	\begin{align*}
		& (\mu_f -  1) \cdot \mathrm{Var}[\mathrm{Sp^{\tau^{\mathrm{max}} }}(f)] \\
		=& (\frac{4}{9}-\bar{\alpha})^2+(\frac{5}{9}-\bar{\alpha})^2+(\frac{6}{9}-\bar{\alpha})^2+2 \cdot (\frac{7}{9} - \bar{\alpha})^2 + 2 \cdot ( \frac{8}{9} - \bar{\alpha})^2 +2 \cdot(\frac{10}{9} - \bar{\alpha})^2\\
		& + 2 \cdot ( \frac{11}{9} -\bar{\alpha})^2 + ( \frac{12}{9} - \bar{\alpha})^2 + ( \frac{13}{9} - \bar{\alpha})^2\\
		=& \frac{280}{243}  .
	\end{align*}
	And $ \alpha_{\mu_f -1}-\alpha_1= \frac{13}{9} - \frac{4}{9} =1 .  $
	Then it suffices to prove $\frac{280}{243} - \frac{15}{12} \cdot 1 \leq 0,  $ i.e., $ - \frac{95}{972} \leq 0, $
	which holds obviously.
	%	
	%	
	%	Considering the average value $ \bar{\alpha}=1, $
	%	\begin{align*}
		%		& (\mu_f -  1) \cdot \mathrm{Var}[\mathrm{Sp^{\tau^{\mathrm{max}} }}(f)] \\
		%		=& (\frac{4}{9}-\bar{\alpha})^2+(\frac{5}{9}-\bar{\alpha})^2+(\frac{6}{9}-\bar{\alpha})^2+2 \cdot (\frac{7}{9} - \bar{\alpha})^2 + 2 \cdot ( \frac{8}{9} - \bar{\alpha})^2 +2 \cdot(\frac{10}{9} - \bar{\alpha})^2 \\
		%		&+ 2 \cdot ( \frac{11}{9} -\bar{\alpha})^2 + ( \frac{12}{9} - \bar{\alpha})^2 + ( \frac{13}{9} - \bar{\alpha})^2\\
		%		=& \frac{95}{81}  .
		%	\end{align*}
	%	Then it suffices to prove $\frac{95}{81} - \frac{15}{12} \cdot 1 \leq 0,  $ i.e., $ - \frac{25}{324} \leq 0, $
	%	which holds obviously.
	%	
	
	\noindent\textbf{(2) $J_{3,p}$.}

	Since $ \tau_{f_0}  = \mu_f - 2 = 14+p, $ we need to verify the cases of $\tau^{\mathrm{max}} = 14 + p $ or $ 15 + p. $ For simplicity, we only verify the cases of $ p$ even.
	For $ p $ even, if  $\tau^{\mathrm{max}} = 14 + p, $ 
	\begin{align*}
		&\bar{\alpha} =\frac{1}{\mu_f -2}\cdot ( \frac{17}{18}+\frac{23}{18}+\frac{25}{18} + \sum_{ k=1 } ^{ \frac{p+10}{2}} (1+\frac{2k-1}{ 2(p+9)})+ \frac{29}{18} + \frac{31}{18} + \sum_{ k=2 } ^{ \frac{p+10}{2}} (2-\frac{2k-1}{ 2(p+9)}))\\
		&=\frac{27p^2 + 602p + 3240}{18(p + 9)(14 + p)}. 
	\end{align*}
	\begin{align*}
		& (\mu_f -  2) \cdot \mathrm{Var}[\mathrm{Sp^{\tau^{\mathrm{max}} }}(f)] \\
		=& (\frac{17}{18}-\bar{\alpha})^2+(\frac{23}{18}-\bar{\alpha})^2+(\frac{25}{8}-\bar{\alpha})^2+\sum_{ k=1 } ^{ \frac{p+10}{2}} (1+\frac{2k-1}{ 2(p+9)} - \bar{\alpha})^2 + ( \frac{29}{18} - \bar{\alpha})^2\\
		&  + (\frac{31}{18}- \bar{\alpha})^2 + \sum_{ k=2 } ^{ \frac{p+10}{2}} (2 - \frac{2k-1}{ 2(p+9)} - \bar{\alpha})^2\\
		=& \frac{27p^4 + 1166p^3 + 18429p^2 + 128052p + 332100}{324(p + 9)^2(14 + p)}.
	\end{align*}
	And $ \alpha_{\mu_f -2}-\alpha_1= 2 - \frac{3}{2(p+9)} - \frac{17}{18} = \frac{19}{18} - \frac{3}{2(p+9)}. $
	Then it suffices to prove
	\begin{align*}
		&	\frac{27p^4 + 1166p^3 + 18429p^2 + 128052p + 332100}{324(p + 9)^2(14 + p)} - \frac{14+p}{12} \cdot (\frac{19}{18} - \frac{3}{2(p+9)}) \leq 0.
	\end{align*}
	Equivalently,
	\begin{align*}
		&	\frac{-3p^4 - 209p^3 - 4662p^2 - 37980p - 97848}{648(p + 9)^2(14 + p)} \leq 0,
	\end{align*}
	which holds obviously.

	If  $\tau^{\mathrm{max}} = 15 + p, $ 
	\begin{align*}
		&\bar{\alpha} =\frac{1}{\mu_f -1}\cdot ( \frac{17}{18}+\frac{23}{18}+\frac{25}{18} + \sum_{ k=1 } ^{ \frac{p+10}{2}} (1+\frac{2k-1}{ 2(p+9)})+ \frac{29}{18} + \frac{31}{18} + \sum_{ k=1 } ^{ \frac{p+10}{2}} (2-\frac{2k-1}{ 2(p+9)}))\\
		&=\frac{395 + 27p}{270 + 18p}. 
	\end{align*}
	\begin{align*}
		& (\mu_f -  1) \cdot \mathrm{Var}[\mathrm{Sp^{\tau^{\mathrm{max}} }}(f)] \\
		=& (\frac{17}{18}-\bar{\alpha})^2+(\frac{23}{18}-\bar{\alpha})^2+(\frac{25}{8}-\bar{\alpha})^2+\sum_{ k=1 } ^{ \frac{p+10}{2}} (1+\frac{2k-1}{ 2(p+9)} - \bar{\alpha})^2 + ( \frac{29}{18} - \bar{\alpha})^2\\
		&  + (\frac{31}{18}- \bar{\alpha})^2 + \sum_{ k=1 } ^{ \frac{p+10}{2}} (2 - \frac{2k-1}{ 2(p+9)} - \bar{\alpha})^2\\
		=& \frac{27p^3 + 1031p^2 + 12710p + 50400}{324(p + 9)(15 + p)}.
	\end{align*}
	And $ \alpha_{\mu_f -1}-\alpha_1= 2 - \frac{1}{2(p+9)} - \frac{17}{18}. $
	Then it suffices to prove
	\begin{align*}
		&	\frac{27p^3 + 1031p^2 + 12710p + 50400}{324(p + 9)(15 + p)} - \frac{15+p}{12} \cdot (2 - \frac{1}{2(p+9)} - \frac{17}{18}) \leq 0.
	\end{align*}
	Equivalently,
	\begin{align*}
		&	\frac{-3p^3 - 134p^2 - 1985p - 8550}{648(p + 9)(15 + p)} \leq 0,
	\end{align*}
	which holds obviously.
	%	Considering the average value $ \bar{\alpha}=\frac{3}{2}, $
	%	\begin{align*}
		%		& (\mu_f -  1) \cdot \mathrm{Var}[\mathrm{Sp^{\tau^{\mathrm{max}} }}(f)] \\
		%		=& (\frac{17}{18}-\bar{\alpha})^2+(\frac{23}{18}-\bar{\alpha})^2+(\frac{25}{18}-\bar{\alpha})^2+\sum_{ k=1 } ^{ \frac{p+9}{2}} (1+\frac{2k-1}{ 2(p+9)} - \bar{\alpha})^2 + ( \frac{3}{2} - \bar{\alpha})^2\\
		%		&  + ( \frac{29}{18} - \bar{\alpha})^2+ (\frac{31}{18}- \bar{\alpha})^2 + \sum_{ k=2 } ^{ \frac{p+9}{2}} (2 - \frac{2k-1}{ 2(p+9)} - \bar{\alpha})^2\\
		%		=&  \frac{27p^3 + 788p^2 + 7758p + 25596}{324(p + 9)^2}.
		%	\end{align*}
	%	Then it suffices to prove
	%	\begin{align*}
		%		&\frac{27p^3 + 788p^2 + 7758p + 25596}{324(p + 9)^2} - \frac{14+p}{12} \cdot (\frac{19}{18} - \frac{3}{2(p+9)}) \leq 0.
		%	\end{align*}
	%	Equivalently,
	%	\begin{align*}
		%		&\frac{-3p^3 - 167p^2 - 1602p - 3240}{648(p + 9)^2} \leq 0,
		%	\end{align*}
	%	which holds obviously.
	%	
	%	
	
	\noindent\textbf{(3) $Z_{1,0}$.}
	
	Since $  \tau_{f_0}  = \mu_f - 1 = 14, $ we only need to verify the case of  $\tau^{\mathrm{max}} = \mu_f -1 = 14. $ 
	\begin{align*}
		& \bar{\alpha} =\frac{1}{\mu_f -1}\cdot ( \frac{3}{7}+\frac{4}{7}+2 \cdot \frac{5}{7} + 2 \cdot \frac{6}{7} + 3 \cdot 1+2 \cdot \frac{8}{7} + 2 \cdot \frac{9}{7} + \frac{10}{7} ) =\frac{47}{49}.
	\end{align*}
	\begin{align*}
		& (\mu_f -  1) \cdot \mathrm{Var}[\mathrm{Sp^{\tau^{\mathrm{max}} }}(f)] \\
		=& (\frac{3}{7}-\bar{\alpha})^2+(\frac{4}{7}-\bar{\alpha})^2+2\cdot (\frac{5}{7}-\bar{\alpha})^2+2 \cdot (\frac{6}{7} - \bar{\alpha})^2 + 3 \cdot ( 1 -\bar{\alpha})^2 +2 \cdot(\frac{8}{7} - \bar{\alpha})^2\\
		& + 2 \cdot ( \frac{9}{7} -\bar{\alpha})^2 + ( \frac{10}{9} - \bar{\alpha})^2 \\
		=& \frac{370}{343}  .
	\end{align*}
	And $ \alpha_{\mu_f -1}-\alpha_1= \frac{10}{7} - \frac{3}{7} =1 .  $
	Then it suffices to prove $\frac{370}{343} - \frac{14}{12} \cdot 1 \leq 0,  $ i.e., $ - \frac{181}{2058} \leq 0, $
	which holds obviously.
	%	
	%	
	%	Considering the average value $ \bar{\alpha}=1, $
	%	\begin{align*}
		%		& (\mu_f -  1) \cdot \mathrm{Var}[\mathrm{Sp^{\tau^{\mathrm{max}} }}(f)] \\
		%		=& (\frac{4}{9}-\bar{\alpha})^2+(\frac{5}{9}-\bar{\alpha})^2+(\frac{6}{9}-\bar{\alpha})^2+2 \cdot (\frac{7}{9} - \bar{\alpha})^2 + 2 \cdot ( \frac{8}{9} - \bar{\alpha})^2 +2 \cdot(\frac{10}{9} - \bar{\alpha})^2\\
		%		& + 2 \cdot ( \frac{11}{9} -\bar{\alpha})^2 + ( \frac{12}{9} - \bar{\alpha})^2 + ( \frac{13}{9} - \bar{\alpha})^2\\
		%		=& \frac{54}{47}  .
		%	\end{align*}
	%	Then it suffices to prove $\frac{54}{49} - \frac{14}{12} \cdot 1 \leq 0,  $ i.e., $ - \frac{19}{294} \leq 0, $
	%	which holds obviously.
	
	\noindent\textbf{(4) $Z_{1,p}$.}
	Since $  \tau_{f_0} = \mu_f - 2 = 13+p, $ we need to verify the cases of $\tau^{\mathrm{max}} = 13 + p $ or $ 14 + p. $ For simplicity, we only verify the cases of $ p$ even.
	For $ p $ even, if  $\tau^{\mathrm{max}} = 13 + p, $ 
	\begin{align*}
		&\bar{\alpha} =\frac{1}{\mu_f - 2}\cdot ( \frac{13}{14}+\frac{17}{14}+\frac{19}{14} + \sum_{ k=1 } ^{ \frac{p+8}{2}} (1+\frac{2k-1}{ 2(p+7)})+2 \cdot \frac{3}{2} + \frac{23}{14} + \frac{25}{14} +\sum_{ k=2 } ^{ \frac{p+8}{2}} (2-\frac{2k-1}{ 2(p+7)}))\\
		&=\frac{21p^2 + 405p + 1813}{14(p + 7)(13 + p)}.
	\end{align*}
	\begin{align*}
		& (\mu_f -  2) \cdot \mathrm{Var}[\mathrm{Sp^{\tau^{\mathrm{max}} }}(f)] \\
		&=(\frac{13}{14}-\bar{\alpha})^2+(\frac{17}{14}-\bar{\alpha})^2+(\frac{19}{14}-\bar{\alpha})^2+\sum_{ k=1 } ^{ \frac{p+7}{2}} (1+\frac{2k-1}{ 2(p+7)} - \bar{\alpha})^2 + 2\cdot( \frac{3}{2} - \bar{\alpha})^2\\
		&  + ( \frac{23}{14} - \bar{\alpha})^2+ (\frac{25}{14}- \bar{\alpha})^2 + \sum_{ k=2 } ^{ \frac{p+7}{2}} (2 - \frac{2k-1}{ 2(p+7)} - \bar{\alpha})^2\\
		= & \frac{49p^4 + 1831p^3 + 24605p^2 + 144494p + 315168}{588(p + 7)^2(13 + p)}.
	\end{align*}
	And $ \alpha_{\mu_f -2}-\alpha_1= 2 - \frac{3}{2(p+7)} - \frac{13}{14} = \frac{15}{14} - \frac{3}{2(p+7)}. $
	Then it suffices to prove
	\begin{align*}
		&\frac{49p^4 + 1831p^3 + 24605p^2 + 144494p + 315168}{588(p + 7)^2(13 + p)} \leq \frac{13+p}{12} \cdot (\frac{15}{14} - \frac{3}{2(p+7)}).
	\end{align*}
	Equivalently,
	\begin{align*}
		&\frac{-7p^4 - 391p^3 - 7049p^2 - 41615p - 65268}{1176(p + 7)^2(13 + p)} \leq 0,
	\end{align*}
	which holds obviously.

	If  $\tau^{\mathrm{max}} = 14 + p, $ 
	\begin{align*}
		&\bar{\alpha} =\frac{1}{\mu_f - 1}\cdot ( \frac{13}{14}+\frac{17}{14}+\frac{19}{14} + \sum_{ k=1 } ^{ \frac{p+8}{2}} (1+\frac{2k-1}{ 2(p+7)})+2 \cdot \frac{3}{2} + \frac{23}{14} + \frac{25}{14} +\sum_{ k=1 } ^{ \frac{p+8}{2}} (2-\frac{2k-1}{ 2(p+7)}))\\
		&=\frac{286 + 21p}{196 + 14p}.
	\end{align*}
	\begin{align*}
		& (\mu_f -  1) \cdot \mathrm{Var}[\mathrm{Sp^{\tau^{\mathrm{max}} }}(f)] \\
		&=(\frac{13}{14}-\bar{\alpha})^2+(\frac{17}{14}-\bar{\alpha})^2+(\frac{19}{14}-\bar{\alpha})^2+\sum_{ k=1 } ^{ \frac{p+7}{2}} (1+\frac{2k-1}{ 2(p+7)} - \bar{\alpha})^2 + 2\cdot( \frac{3}{2} - \bar{\alpha})^2\\
		&  + ( \frac{23}{14} - \bar{\alpha})^2+ (\frac{25}{14}- \bar{\alpha})^2 + \sum_{ k=1 } ^{ \frac{p+7}{2}} (2 - \frac{2k-1}{ 2(p+7)} - \bar{\alpha})^2\\
		= & \frac{49p^3 + 1684p^2 + 18316p + 62160}{588(p + 7)(14 + p)}.
	\end{align*}
	And $ \alpha_{\mu_f -1}-\alpha_1= 2 - \frac{1}{2(p+7)} - \frac{13}{14}. $
	Then it suffices to prove
	\begin{align*}
		&\frac{49p^3 + 1684p^2 + 18316p + 62160}{588(p + 7)(14 + p)} -\frac{14+p}{12} \cdot (2 - \frac{1}{2(p+7)} - \frac{13}{14}).
	\end{align*}
	Equivalently,
	\begin{align*}
		&\frac{-7p^3 - 258p^2 - 3156p - 10136}{1176(p + 7)(14 + p)} \leq 0,
	\end{align*}
	which holds obviously.
	%	Considering the average value $ \bar{\alpha}=\frac{3}{2}, $
	%	\begin{align*}
		%		& (\mu_f -  1) \cdot \mathrm{Var}[\mathrm{Sp^{\tau^{\mathrm{max}} }}(f)] \\
		%		&= (\frac{13}{14}-\bar{\alpha})^2+(\frac{17}{14}-\bar{\alpha})^2+(\frac{19}{14}-\bar{\alpha})^2+\sum_{ k=1 } ^{ \frac{p+8}{2}} (1+\frac{2k-1}{ 2(p+7)} - \bar{\alpha})^2  + 2\cdot( \frac{3}{2} - \bar{\alpha})^2\\
		%		& + ( \frac{23}{14} - \bar{\alpha})^2+ (\frac{25}{14}- \bar{\alpha})^2 + \sum_{ k=2 } ^{ \frac{p+8}{2}} (2 - \frac{2k-1}{ 2(p+7)} - \bar{\alpha})^2\\
		%		&= \frac{49p^3 + 1194p^2 + 9758p + 26460}{588(p + 7)^2}.
		%	\end{align*}
	%	Then it suffices to prove
	%	\begin{align*}
		%		&\frac{49p^3 + 1194p^2 + 9758p + 26460}{588(p + 7)^2} \leq \frac{13+p}{12} \cdot (\frac{15}{14} - \frac{3}{2(p+7)}).
		%	\end{align*}
	%	Equivalently,
	%	\begin{align*}
		%		&\frac{-7p^3 - 300p^2 - 1799p - 588}{1176(p + 7)^2} \leq 0,
		%	\end{align*}
	%	which holds obviously.
	%	
	
	\noindent\textbf{(5) $W_{1,0}$.}
	
	Since $  \tau_{f_0}  = \mu_f - 1 = 14, $ we only need to verify the case of  $\tau^{\mathrm{max}} = \mu_f -1 = 14 . $ 
	\begin{align*}
		& \bar{\alpha} =\frac{1}{\mu_f -1}\cdot ( \frac{5}{12}+\frac{7}{12}+\frac{2}{3} +\frac{3}{4} + \frac{5}{6} + 2 \cdot \frac{11}{12} + 
		1 + 2 \cdot \frac{13}{12} + \frac{7}{6} + \frac{5}{4} + \frac{4}{3} + \frac{17}{12}  ) =\frac{23}{24}.
	\end{align*}
	\begin{align*}
		& (\mu_f -  1) \cdot \mathrm{Var}[\mathrm{Sp^{\tau^{\mathrm{max}} }}(f)] \\
		=& (\frac{5}{12}-\bar{\alpha})^2+(\frac{7}{12}-\bar{\alpha})^2+ (\frac{2}{3}-\bar{\alpha})^2+ (\frac{3}{4} - \bar{\alpha})^2 + ( \frac{5}{6} -\bar{\alpha})^2 + 2\cdot(\frac{11}{12} - \bar{\alpha})^2 + ( 1 - \bar{\alpha})^2\\
		&+2\cdot (\frac{13}{12} - \bar{\alpha})^2  + ( \frac{7}{6} - \bar{\alpha})^2 + ( \frac{5}{4} - \bar{\alpha})^2 + ( \frac{4}{3} - \bar{\alpha})^2 + ( \frac{17}{12} - \bar{\alpha})^2 \\
		=& \frac{629}{576}  .
	\end{align*}
	
	And $ \alpha_{\mu_f -1}-\alpha_1= \frac{17}{12} - \frac{5}{12} =1 .  $
	Then it suffices to prove $ \frac{629}{576} - \frac{14}{12} \cdot 1 \leq 0,  $ i.e., $ - \frac{43}{576} \leq 0, $
	which holds obviously.
	%	
	%	
	%	Considering the average value $ \bar{\alpha}=1, $
	%	\begin{align*}
		%		& (\mu_f -  1) \cdot \mathrm{Var}[\mathrm{Sp^{\tau^{\mathrm{max}} }}(f)] \\
		%		=& (\frac{5}{12}-\bar{\alpha})^2+(\frac{7}{12}-\bar{\alpha})^2+ (\frac{2}{3}-\bar{\alpha})^2+ (\frac{3}{4} - \bar{\alpha})^2 + ( \frac{5}{6} -\bar{\alpha})^2 + 2\cdot(\frac{11}{12} - \bar{\alpha})^2 + ( 1 - \bar{\alpha})^2\\
		%		&+2\cdot (\frac{13}{12} - \bar{\alpha})^2  + ( \frac{7}{6} - \bar{\alpha})^2 + ( \frac{5}{4} - \bar{\alpha})^2 + ( \frac{4}{3} - \bar{\alpha})^2 + ( \frac{17}{12} - \bar{\alpha})^2 \\
		%		=& \frac{161}{144}  .
		%	\end{align*}
	%	Then it suffices to prove $\frac{161}{144}  - \frac{14}{12} \cdot 1 \leq 0,  $ i.e., $ - \frac{7}{144} \leq 0, $
	%	which holds obviously.
	
	\noindent\textbf{(6) $W_{1,p}$.}

	Since $  \tau_{f_0}  = \mu_f - 2 = 13+p, $ we need to verify the cases of $\tau^{\mathrm{max}} = 13 + p $ or $ 14 + p. $ For simplicity, we only verify the cases of $ p$ even.
	For $ p $ even, if  $\tau^{\mathrm{max}} = 13 + p, $ 
	\begin{align*}
		&\bar{\alpha} =\frac{1}{\mu_f -2 }\cdot ( \frac{11}{12}+\frac{14}{12}+\frac{16}{12} + \frac{17}{12} + \sum_{ k=1 } ^{ \frac{p+6}{2}} (1+\frac{2k-1}{ 2(p+6)})+ \frac{3}{2} + \frac{19}{12} + \frac{20}{12} + \frac{22}{12}+ \\
		&\sum_{ k=2 } ^{ \frac{p+6}{2}} (2-\frac{2k-1}{ 2(p+6)}))\\
		=&\frac{18p^2 + 329p + 1332}{12(p + 6)(13 + p)}.
	\end{align*}
	\begin{align*}
		& (\mu_f -  2) \cdot \mathrm{Var}[\mathrm{Sp^{\tau^{\mathrm{max}} }}(f)] \\
		= & (\frac{11}{12}-\bar{\alpha})^2+(\frac{14}{12}-\bar{\alpha})^2+(\frac{16}{12}-\bar{\alpha})^2+ (\frac{17}{12} -\bar{\alpha})^2 +\sum_{ k=1 } ^{ \frac{p+6}{2}} (1+\frac{2k-1}{ 2(p+6)} - \bar{\alpha})^2\\
		& +(\frac{3}{2} - \bar{\alpha})^2 + ( \frac{19}{12} - \bar{\alpha})^2+ (\frac{20}{12}- \bar{\alpha})^2 + (\frac{22}{12} - \bar{\alpha})^2 + \sum_{ k=2 } ^{ \frac{p+6}{2}} (2 - \frac{2k-1}{ 2(p+6)} - \bar{\alpha})^2\\
		= & \frac{12p^4 + 427p^3 + 5370p^2 + 29232p + 58464}{144(p + 6)^2(13 + p)}.
	\end{align*}
	And   
	\begin{equation*}
		\alpha_{\mu_f -2}-\alpha_1 =
		\begin{cases}
			\frac{22}{12}-\frac{11}{12}=\frac{11}{12}, \qquad \forall 0 \leq p \leq 2 \\
			2- \frac{3}{2(p+6)}-\frac{11}{12} = \frac{13}{12} - \frac{3}{2(p+6)}, \qquad \forall p \geq 3
		\end{cases}.
	\end{equation*}
	Then it suffices to prove
	\begin{equation*}
		\begin{cases}
			\frac{12p^4 + 427p^3 + 5370p^2 + 29232p + 58464}{144(p + 6)^2(13 + p)}-\frac{13+p}{12} \cdot \frac{11}{12} \leq 0, \qquad \forall 0 \leq p \leq 2\\
			\frac{12p^4 + 427p^3 + 5370p^2 + 29232p + 58464}{144(p + 6)^2(13 + p)}-\frac{13+p}{12}\cdot ( \frac{13}{12} - \frac{3}{2(p+6)})\leq 0, \qquad \forall p \geq 3
		\end{cases}.
	\end{equation*}
	Equivalently,
	\begin{equation*}
		\begin{cases}
			\frac{p^4 + 9p^3 - 317p^2 - 3372p - 8460}{144(p + 6)^2(13 + p)} \leq 0, \qquad \forall 0 \leq p \leq 2 \\
			\frac{-p^4 - 49p^3 - 775p^2 - 3450p - 2376}{144(p + 6)^2(13 + p)} \leq 0, \qquad \forall p \geq 3
		\end{cases},
	\end{equation*}
	which hold obviously.

	If  $\tau^{\mathrm{max}} = 14 + p, $ 
	\begin{align*}
		&\bar{\alpha} =\frac{1}{\mu_f -1 }\cdot ( \frac{11}{12}+\frac{14}{12}+\frac{16}{12} + \frac{17}{12} + \sum_{ k=1 } ^{ \frac{p+6}{2}} (1+\frac{2k-1}{ 2(p+6)})+ \frac{3}{2} + \frac{19}{12} + \frac{20}{12} + \frac{22}{12}+ \\
		&\sum_{ k=1 } ^{ \frac{p+6}{2}} (2-\frac{2k-1}{ 2(p+6)}))\\
		=&\frac{245 + 18p}{168 + 12p}.
	\end{align*}
	\begin{align*}
		& (\mu_f -  1) \cdot \mathrm{Var}[\mathrm{Sp^{\tau^{\mathrm{max}} }}(f)] \\
		= & (\frac{11}{12}-\bar{\alpha})^2+(\frac{14}{12}-\bar{\alpha})^2+(\frac{16}{12}-\bar{\alpha})^2+ (\frac{17}{12} -\bar{\alpha})^2 +\sum_{ k=1 } ^{ \frac{p+6}{2}} (1+\frac{2k-1}{ 2(p+6)} - \bar{\alpha})^2\\
		& +(\frac{3}{2} - \bar{\alpha})^2 + ( \frac{19}{12} - \bar{\alpha})^2+ (\frac{20}{12}- \bar{\alpha})^2 + (\frac{22}{12} - \bar{\alpha})^2 + \sum_{ k=1 } ^{ \frac{p+6}{2}} (2 - \frac{2k-1}{ 2(p+6)} - \bar{\alpha})^2\\
		= & \frac{12p^3 + 403p^2 + 4207p + 13230}{144(14 + p)(p + 6)}.
	\end{align*}
	And $ \alpha_{\mu_f -1}-\alpha_1 =2- \frac{1}{2(p+6)}-\frac{11}{12}. $
	Then it suffices to prove
	\begin{equation*}
		\frac{12p^3 + 403p^2 + 4207p + 13230}{144(14 + p)(p + 6)}-\frac{14+p}{12} \cdot (2- \frac{1}{2(p+6)}-\frac{11}{12}) \leq 0.
	\end{equation*}
	Equivalently,
	\begin{equation*}
		\frac{-p^3 - 33p^2 - 357p - 882}{144(14 + p)(p + 6)} \leq 0,
	\end{equation*}
	which holds obviously.
	%	Considering the average value $ \bar{\alpha}=\frac{3}{2}, $
	%	\begin{align*}
		%		& (\mu_f -  1) \cdot \mathrm{Var}[\mathrm{Sp^{\tau^{\mathrm{max}} }}(f)] \\
		%		= & (\frac{11}{12}-\bar{\alpha})^2+(\frac{14}{12}-\bar{\alpha})^2+(\frac{16}{12}-\bar{\alpha})^2+ (\frac{17}{12} -\bar{\alpha})^2 +\sum_{ k=1 } ^{ \frac{p+6}{2}} (1+\frac{2k-1}{ 2(p+6)} - \bar{\alpha})^2\\
		%		& + (\frac{3}{2} - \bar{\alpha})^2 + ( \frac{19}{12} - \bar{\alpha})^2+ (\frac{20}{12}- \bar{\alpha})^2 + (\frac{22}{12} - \bar{\alpha})^2 + \sum_{ k=2 } ^{ \frac{p+6}{2}} (2 - \frac{2k-1}{ 2(p+6)} - \bar{\alpha})^2\\
		%		= & \frac{12p^3 + 271p^2 + 2016p + 4896}{144(p + 6)^2}.
		%	\end{align*}
	%	
	%	Then it suffices to prove
	%	\begin{equation*}
		%		\begin{cases}
			%			\frac{12p^3 + 271p^2 + 2016p + 4896}{144(p + 6)^2}-\frac{13+p}{12} \cdot \frac{11}{12} \leq 0, \qquad \forall 0 \leq p \leq 3 \\
			%			\frac{12p^3 + 271p^2 + 2016p + 4896}{144(p + 6)^2}-\frac{13+p}{12}\cdot ( \frac{13}{12} - \frac{3}{2(p+6)})\leq 0, \qquad \forall p \geq 4 
			%		\end{cases}.
		%	\end{equation*}
	%	Equivalently,
	%	\begin{equation*}
		%		\begin{cases}
			%			\frac{p^3 - 4p^2 - 96p - 252}{144(p + 6)^2} \leq 0, \qquad \forall 0 \leq p \leq 3 \\
			%			\frac{-p^3 - 36p^2 - 138p + 216}{144(p + 6)^2} \leq 0, \qquad \forall p \geq 4
			%		\end{cases},
		%	\end{equation*}
	%	which hold obviously.
	%	
	
	\noindent\textbf{(7) $Q_{2,0}$.}
	
	Since $  \tau_{f_0} = \mu_f - 1 = 13, $ we only need to verify the case of  $\tau^{\mathrm{max}} = \mu_f -1 = 13 . $ 
	\begin{align*}
		& \bar{\alpha} =\frac{1}{\tau}\cdot ( \frac{11}{12}+\frac{13}{12}+2 \cdot \frac{5}{4} + \frac{4}{3} + 2 \cdot \frac{17}{12} + 
		2 \cdot \frac{19}{12} + \frac{5}{3} + 2 \cdot \frac{7}{4} + \frac{23}{12})=\frac{227}{156}.
	\end{align*}
	\begin{align*}
		& (\mu_f -  1) \cdot \mathrm{Var}[\mathrm{Sp^{\tau^{\mathrm{max}} }}(f)] \\
		=& (\frac{11}{12}-\bar{\alpha})^2+(\frac{13}{12}-\bar{\alpha})^2+ 2\cdot (\frac{5}{4}-\bar{\alpha})^2+ (\frac{4}{3} - \bar{\alpha})^2 + 2 \cdot ( \frac{17}{12} -\bar{\alpha})^2 +2\cdot (\frac{19}{12} - \bar{\alpha})^2  \\
		& +( \frac{5}{3} - \bar{\alpha})^2 + 2\cdot( \frac{7}{4} - \bar{\alpha})^2 + ( \frac{23}{12} - \bar{\alpha})^2 \\
		=& \frac{931}{936}  .
	\end{align*}
	And $ \alpha_{\mu_f -1}-\alpha_1= \frac{23}{12} - \frac{11}{12} =1 .  $
	Then it suffices to prove $ \frac{931}{936} - \frac{13}{12} \cdot 1 \leq 0,  $ i.e., $ - \frac{83}{936} \leq 0, $
	which holds obviously.
	%	
	%	
	%	Considering the average value $ \bar{\alpha}=1, $
	%	\begin{align*}
		%		& (\mu_f -  1) \cdot \mathrm{Var}[\mathrm{Sp^{\tau^{\mathrm{max}} }}(f)] \\
		%		=& (\frac{11}{12}-\bar{\alpha})^2+(\frac{13}{12}-\bar{\alpha})^2+ 2\cdot (\frac{5}{4}-\bar{\alpha})^2+ (\frac{4}{3} - \bar{\alpha})^2 + 2 \cdot ( \frac{17}{12} -\bar{\alpha})^2\\
		%		&+2\cdot (\frac{19}{12} - \bar{\alpha})^2  + ( \frac{5}{3} - \bar{\alpha})^2 + 2\cdot( \frac{7}{4} - \bar{\alpha})^2 + ( \frac{23}{12} - \bar{\alpha})^2 \\
		%		=& \frac{49}{48}  .
		%	\end{align*}
	%	Then it suffices to prove $\frac{49}{48}  - \frac{13}{12} \cdot 1 \leq 0,  $ i.e., $ - \frac{1}{16} \leq 0, $
	%	which holds obviously.
	%	
	
	\noindent\textbf{(8) $Q_{2,p}$.}

	Since $  \tau_{f_0} = \mu_f - 2 = 12+p, $ we need to verify the cases of $\tau^{\mathrm{max}} = 12 + p $ or $ 13 + p. $ For simplicity, we only verify the cases of $ p$ even.
	For $ p $ even, if  $\tau^{\mathrm{max}} = 12 + p, $ 
	\begin{align*}
		& \bar{\alpha} =\frac{1}{\mu_f - 2}\cdot ( \frac{11}{12}+\frac{15}{12}+\frac{16}{12} + \frac{17}{12} + \sum_{ k=1 } ^{ \frac{p+6}{2}} (1+\frac{2k-1}{ 2(p+6)}) +  \frac{19}{12} + \frac{20}{12} + \frac{21}{12} \\
		&+ \sum_{ k=2 } ^{ \frac{p+6}{2}} (2-\frac{2k-1}{ 2(p+6)}))\\
		&=\frac{18p^2 + 311p + 1224}{12(p + 6)(12 + p)}.
	\end{align*}
	\begin{align*}
		& (\mu_f -  2) \cdot \mathrm{Var}[\mathrm{Sp^{\tau^{\mathrm{max}} }}(f)] \\
		= & (\frac{11}{12}-\bar{\alpha})^2+(\frac{15}{12}-\bar{\alpha})^2+(\frac{16}{12} - \bar{\alpha})^2 + (\frac{17}{12}-\bar{\alpha})^2+\sum_{ k=1 } ^{ \frac{p+6}{2}} (1+\frac{2k-1}{ 2(p+6)} - \bar{\alpha})^2\\
		& + ( \frac{19}{12} - \bar{\alpha})^2+ (\frac{20}{12}- \bar{\alpha})^2 + (\frac{21}{12} - \bar{\alpha})^2 + \sum_{ k=2 } ^{ \frac{p+6}{2}} (2 - \frac{2k-1}{ 2(p+6)} - \bar{\alpha})^2\\
		= & \frac{12p^4 + 401p^3 + 4763p^2 + 24696p + 47520}{144(p + 6)^2(12 + p)}.
	\end{align*}
	
	And $ \alpha_{\mu_f -2}-\alpha_1= 2 - \frac{3}{2(p+6)} - \frac{11}{12} = \frac{13}{12} - \frac{3}{2(p+6)}. $
	Then it suffices to prove
	\begin{align*}
		&	\frac{12p^4 + 401p^3 + 4763p^2 + 24696p + 47520}{144(p + 6)^2(12 + p)} -\frac{12+p}{12} \cdot (\frac{13}{12} - \frac{3}{2(p+6)}) \leq 0.
	\end{align*}
	Equivalently,
	\begin{align*}
		&	\frac{-p^4 - 49p^3 - 781p^2 - 3816p - 4320}{144(p + 6)^2(12 + p)} \leq 0,
	\end{align*}
	which holds obviously.

	If $\tau^{\mathrm{max}} = 13 + p, $ 
	\begin{align*}
		& \bar{\alpha} =\frac{1}{\mu_f - 1}\cdot ( \frac{11}{12}+\frac{15}{12}+\frac{16}{12} + \frac{17}{12} + \sum_{ k=1 } ^{ \frac{p+6}{2}} (1+\frac{2k-1}{ 2(p+6)}) +  \frac{19}{12} + \frac{20}{12} + \frac{21}{12} \\
		&+ \sum_{ k=1 } ^{ \frac{p+6}{2}} (2-\frac{2k-1}{ 2(p+6)}))\\
		&=\frac{227 + 18p}{156 + 12p}.
	\end{align*}
	\begin{align*}
		& (\mu_f -  1) \cdot \mathrm{Var}[\mathrm{Sp^{\tau^{\mathrm{max}} }}(f)] \\
		= & (\frac{11}{12}-\bar{\alpha})^2+(\frac{15}{12}-\bar{\alpha})^2+(\frac{16}{12} - \bar{\alpha})^2 + (\frac{17}{12}-\bar{\alpha})^2+\sum_{ k=1 } ^{ \frac{p+6}{2}} (1+\frac{2k-1}{ 2(p+6)} - \bar{\alpha})^2\\
		& + ( \frac{19}{12} - \bar{\alpha})^2+ (\frac{20}{12}- \bar{\alpha})^2 + (\frac{21}{12} - \bar{\alpha})^2 + \sum_{ k=1 } ^{ \frac{p+6}{2}} (2 - \frac{2k-1}{ 2(p+6)} - \bar{\alpha})^2\\
		= & \frac{12p^3 + 377p^2 + 3706p + 11172}{144(p + 6)(13 + p)}.
	\end{align*}
	
	And $ \alpha_{\mu_f -1}-\alpha_1= 2 - \frac{1}{2(p+6)} - \frac{11}{12}. $
	Then it suffices to prove
	\begin{equation*}
		\frac{12p^3 + 377p^2 + 3706p + 11172}{144(p + 6)(13 + p)} - \frac{13+p}{12} \cdot ( 2 - \frac{1}{2(p+6)} - \frac{11}{12} ) \leq 0.
	\end{equation*}
	Equivalently,
	\begin{equation*}
		\frac{-p^3 - 33p^2 - 363p - 996}{144(p + 6)(13 + p)}  \leq 0,
	\end{equation*}
	which holds obviously.
	
	%	Considering the average value $ \bar{\alpha}=\frac{3}{2}, $
	%	\begin{align*}
		%		& (\mu_f -  1) \cdot \mathrm{Var}[\mathrm{Sp^{\tau^{\mathrm{max}} }}(f)] \\
		%		= & (\frac{11}{12}-\bar{\alpha})^2+(\frac{15}{12}-\bar{\alpha})^2+(\frac{16}{12} - \bar{\alpha})^2 + (\frac{17}{12}-\bar{\alpha})^2+\sum_{ k=1 } ^{ \frac{p+6}{2}} (1+\frac{2k-1}{ 2(p+6)} - \bar{\alpha})^2\\
		%		&  + ( \frac{19}{12} - \bar{\alpha})^2+ (\frac{20}{12}- \bar{\alpha})^2 + (\frac{21}{12} - \bar{\alpha})^2 + \sum_{ k=2 } ^{ \frac{p+6}{2}} (2 - \frac{2k-1}{ 2(p+6)} - \bar{\alpha})^2\\
		%		= & \frac{12p^3 + 257p^2 + 1848p + 4392}{144(p + 6)^2}.
		%	\end{align*}
	%	Then it suffices to prove
	%	\begin{align*}
		%		&	\frac{12p^3 + 257p^2 + 1848p + 4392}{144(p + 6)^2} - \frac{12+p}{12} \cdot (\frac{13}{12} - \frac{3}{2(p+6)}) \leq 0.
		%	\end{align*}
	%	Equivalently,
	%	\begin{align*}
		%		&\frac{-p^3 - 37p^2 - 168p + 72}{144(p + 6)^2} \leq 0,
		%	\end{align*}
	%	which holds obviously.
	%	
	%	
	
	\noindent\textbf{(9) $S_{1,0}$.}
	
	Since $  \tau_{f_0} = \mu_f - 1 = 13, $ we only need to verify the case of  $\tau^{\mathrm{max}} = \mu_f -1 = 13 . $ 
	\begin{align*}
		& \bar{\alpha} =\frac{1}{\tau}\cdot ( \frac{9}{10}+\frac{11}{10}+\frac{6}{5} + 2 \cdot \frac{13}{10} + \frac{7}{5} + 2 \cdot \frac{3}{2}
		+\frac{8}{5} + 2 \cdot \frac{17}{10}  + \frac{9}{5} + \frac{19}{10} )=\frac{189}{130}.
	\end{align*}
	\begin{align*}
		& (\mu_f -  1) \cdot \mathrm{Var}[\mathrm{Sp^{\tau^{\mathrm{max}} }}(f)] \\
		=& (\frac{9}{10}-\bar{\alpha})^2+(\frac{11}{10}-\bar{\alpha})^2+ (\frac{6}{5}-\bar{\alpha})^2+2\cdot (\frac{13}{10} - \bar{\alpha})^2 +( \frac{7}{5} -\bar{\alpha})^2 +2 \cdot ( \frac{3}{2} - \bar{\alpha})^2\\
		&+\cdot (\frac{8}{5} - \bar{\alpha})^2  +2 \cdot ( \frac{17}{10} - \bar{\alpha})^2 + ( \frac{9}{5} - \bar{\alpha})^2 + ( \frac{19}{10} - \bar{\alpha})^2 \\
		=& \frac{329}{325}  .
	\end{align*}
	And $ \alpha_{\mu_f -1}-\alpha_1= \frac{19}{10} - \frac{9}{10} =1 .  $
	Then it suffices to prove $ \frac{329}{325} - \frac{13}{12} \cdot 1 \leq 0,  $ i.e., $ - \frac{277}{3900} \leq 0, $
	which holds obviously.
	%	
	%	
	%	Considering the average value $ \bar{\alpha}=\frac{3}{2}, $
	%	\begin{align*}
		%		& (\mu_f -  1) \cdot \mathrm{Var}[\mathrm{Sp^{\tau^{\mathrm{max}} }}(f)] \\
		%		=& (\frac{9}{10}-\bar{\alpha})^2+(\frac{11}{10}-\bar{\alpha})^2+ (\frac{6}{5}-\bar{\alpha})^2+2\cdot (\frac{13}{10} - \bar{\alpha})^2 +( \frac{7}{5} -\bar{\alpha})^2 +2 \cdot ( \frac{3}{2} - \bar{\alpha})^2\\
		%		&+\cdot (\frac{8}{5} - \bar{\alpha})^2  +2 \cdot ( \frac{17}{10} - \bar{\alpha})^2 + ( \frac{9}{5} - \bar{\alpha})^2 + ( \frac{19}{10} - \bar{\alpha})^2 \\
		%		=& \frac{26}{25}  .
		%	\end{align*}
	%	Then it suffices to prove $ \frac{26}{25} - \frac{13}{12} \cdot 1 \leq 0,  $ i.e., $ - \frac{13}{300} \leq 0, $
	%	which holds obviously.
	%	
	
	\noindent\textbf{(10) $S_{1,p}$.}

	Since $ \tau_{f_0} = \mu_f - 2 = 12+p, $ we need to verify the cases of $\tau^{\mathrm{max}} = 12 + p $ or $ 13 + p. $ For simplicity, we only verify the cases of $ p$ even.
	For $ p $ even, if  $\tau^{\mathrm{max}} = 12 + p, $ 
	\begin{align*}
		& \bar{\alpha} =\frac{1}{\mu_f -2 }\cdot ( \frac{9}{10}+\frac{12}{10}+\frac{13}{10} + \frac{14}{10} + \sum_{ k=1 } ^{ \frac{p+6}{2}} (1+\frac{2k-1}{ 2(p+5)}) + \frac{16}{10} + \frac{17}{10} + \frac{18}{10} \\
		&+ \sum_{ k=2 } ^{ \frac{p+6}{2}} (2-\frac{2k-1}{ 2(p+5)}))\\
		&=\frac{15p^2 + 244p + 850}{10(p + 5)(12 + p)}.
	\end{align*}
	\begin{align*}
		& (\mu_f -  2) \cdot \mathrm{Var}[\mathrm{Sp^{\tau^{\mathrm{max}} }}(f)] \\
		= & (\frac{9}{10}-\bar{\alpha})^2+(\frac{12}{10}-\bar{\alpha})^2+(\frac{13}{10} - \bar{\alpha})^2 + (\frac{14}{10}-\bar{\alpha})^2+\sum_{ k=1 } ^{ \frac{p+6}{2}} (1+\frac{2k-1}{ 2(p+5)} - \bar{\alpha})^2\\
		& + ( \frac{16}{10} - \bar{\alpha})^2+ (\frac{17}{10}- \bar{\alpha})^2 + (\frac{18}{10} - \bar{\alpha})^2 + \sum_{ k=2 } ^{ \frac{p+6}{2}} (2 - \frac{2k-1}{ 2(p+5)} - \bar{\alpha})^2\\
		= & \frac{25p^4 + 792p^3 + 8711p^2 + 41340p + 71700}{300(p + 5)^2(12 + p)}.
	\end{align*}
	And $ \alpha_{\mu_f -2}-\alpha_1= 2 - \frac{3}{2(p+5)} - \frac{9}{10} = \frac{11}{10} - \frac{3}{2(p+5)}. $
	Then it suffices to prove
	\begin{align*}
		&\frac{25p^4 + 792p^3 + 8711p^2 + 41340p + 71700}{300(p + 5)^2(12 + p)}- \frac{12+p}{12} \cdot (\frac{11}{10} - \frac{3}{2(p+5)})\leq 0.
	\end{align*}
	Equivalently,
	\begin{align*}
		&\frac{-5p^4 - 211p^3 - 2898p^2 - 9720p - 600}{600(p + 5)^2(12 + p)} \leq 0,
	\end{align*}
	which holds obviously.

	If  $\tau^{\mathrm{max}} = 13 + p, $ 
	\begin{align*}
		& \bar{\alpha} =\frac{1}{\mu_f -1 }\cdot ( \frac{9}{10}+\frac{12}{10}+\frac{13}{10} + \frac{14}{10} + \sum_{ k=1 } ^{ \frac{p+6}{2}} (1+\frac{2k-1}{ 2(p+5)}) + \frac{16}{10} + \frac{17}{10} + \frac{18}{10} \\
		&+ \sum_{ k=1 } ^{ \frac{p+6}{2}} (2-\frac{2k-1}{ 2(p+5)}))\\
		&=\frac{189 + 15p}{130 + 10p}.
	\end{align*}
	\begin{align*}
		& (\mu_f -  1) \cdot \mathrm{Var}[\mathrm{Sp^{\tau^{\mathrm{max}} }}(f)] \\
		= & (\frac{9}{10}-\bar{\alpha})^2+(\frac{12}{10}-\bar{\alpha})^2+(\frac{13}{10} - \bar{\alpha})^2 + (\frac{14}{10}-\bar{\alpha})^2+\sum_{ k=1 } ^{ \frac{p+6}{2}} (1+\frac{2k-1}{ 2(p+5)} - \bar{\alpha})^2\\
		& + ( \frac{16}{10} - \bar{\alpha})^2+ (\frac{17}{10}- \bar{\alpha})^2 + (\frac{18}{10} - \bar{\alpha})^2 + \sum_{ k=1 } ^{ \frac{p+6}{2}} (2 - \frac{2k-1}{ 2(p+5)} - \bar{\alpha})^2\\
		= & \frac{25p^3 + 767p^2 + 7198p + 19740}{300(p + 5)(13 + p)}.
	\end{align*}
	And $ \alpha_{\mu_f -1}-\alpha_1= 2 - \frac{1}{2(p+5)} - \frac{9}{10} . $
	Then it suffices to prove
	\begin{align*}
		&\frac{25p^3 + 767p^2 + 7198p + 19740}{300(p + 5)(13 + p)}- \frac{13+p}{12} \cdot (2 - \frac{1}{2(p+5)} - \frac{9}{10} )\leq 0.
	\end{align*}
	Equivalently,
	\begin{align*}
		&\frac{-5p^3 - 146p^2 - 1399p - 2770}{600(p + 5)(13 + p)} \leq 0,
	\end{align*}
	which holds obviously.

	\noindent\textbf{(11) $S_{1,p} ^\#$.}

	Since $  \tau_{f_0} = \mu_f - 2 = 12 + p, $ we need to verify the cases of $\tau^{\mathrm{max}} = 12 + p$ or $ 13 + p. $ 
	If  $\tau^{\mathrm{max}} = 12 + p, $ 
	\begin{align*}
		& \bar{\alpha} =\frac{1}{\mu_f -2}\cdot ( \frac{9}{10}+\frac{13}{10} + \sum_{ k=1 } ^{\frac{p}{2}+5} (1+\frac{k}{ p+10}) + \frac{17}{10} + \sum_{ k=2 } ^{\frac{p}{2}+5} (2-\frac{k}{ p+10}))\\
		& =\frac{15p^2 + 319p + 1700}{10(p + 10)(12 + p)}.
	\end{align*}
	\begin{align*}
		& (\mu_f -  2) \cdot \mathrm{Var}[\mathrm{Sp^{\tau^{\mathrm{max}} }}(f)] \\
		= & (\frac{9}{10}-\bar{\alpha})^2+(\frac{13}{10}-\bar{\alpha})^2+\sum_{ k=1 } ^{\frac{p}{2}+5} (1+\frac{k}{ p + 10} - \bar{\alpha})^2  + (\frac{17}{10} - \bar{\alpha})^2 + \sum_{ k=2 } ^{ \frac{p}{2} +5} (2 - \frac{k}{ p + 10} - \bar{\alpha})^2\\
		= & \frac{25p^4 + 1032p^3 + 15911p^2 + 109680p + 286800}{300(12 + p)(p + 10)^2}.
	\end{align*}
	And $ \alpha_{\mu_f -2}-\alpha_1= 2 - \frac{2}{p + 10} - \frac{9}{10} = \frac{11}{10} - \frac{2}{p + 10}. $
	Then it suffices to prove
	\begin{align*}
		& \frac{25p^4 + 1032p^3 + 15911p^2 + 109680p + 286800}{300(12 + p)(p + 10)^2} -\frac{12 + p}{12} \cdot (\frac{11}{10} - \frac{2}{p + 10}) \leq 0.
	\end{align*}
	Equivalently,
	\begin{align*}
		&\frac{-5p^4 - 256p^3 - 4598p^2 - 32640p - 74400}{600(12 + p)(p + 10)^2} \leq 0,
	\end{align*}
	which holds obviously.
	
	If  $\tau^{\mathrm{max}} = 13 + p, $ 
	\begin{align*}
		& \bar{\alpha} =\frac{1}{\mu_f -1}\cdot ( \frac{9}{10}+\frac{13}{10} + \sum_{ k=1 } ^{\frac{p}{2}+5} (1+\frac{k}{ p+10}) + \frac{17}{10} + \sum_{ k=1 } ^{\frac{p}{2}+5} (2-\frac{k}{ p+10}))\\
		& =\frac{189 + 15p}{130 + 10p}.
	\end{align*}
	\begin{align*}
		& (\mu_f - 1) \cdot \mathrm{Var}[\mathrm{Sp^{\tau^{\mathrm{max}} }}(f)] \\
		= & (\frac{9}{10}-\bar{\alpha})^2+(\frac{13}{10}-\bar{\alpha})^2+\sum_{ k=1 } ^{\frac{p}{2}+5} (1+\frac{k}{ p + 10} - \bar{\alpha})^2  + (\frac{17}{10} - \bar{\alpha})^2 + \sum_{ k=1 } ^{ \frac{p}{2} +5} (2 - \frac{k}{ p + 10} - \bar{\alpha})^2\\
		= & \frac{25p^3 + 882p^2 + 10253p + 39480}{300(p + 10)(13 + p)}.
	\end{align*}
	And $ \alpha_{\mu_f -1}-\alpha_1= 2 - \frac{1}{p + 10} - \frac{9}{10}. $
	Then it suffices to prove
	\begin{align*}
		& \frac{25p^3 + 882p^2 + 10253p + 39480}{300(p + 10)(13 + p)} -\frac{13+ p}{12} \cdot (2 - \frac{1}{p + 10} - \frac{9}{10}) \leq 0.
	\end{align*}
	Equivalently,
	\begin{align*}
		&\frac{-5p^3 - 166p^2 - 1789p - 5540}{600(p + 10)(13 + p)} \leq 0,
	\end{align*}
	which holds obviously.

	%	Considering the average value $ \bar{\alpha}=\frac{3}{2}, $
	%	\begin{align*}
		%		& (\mu_f -  1) \cdot \mathrm{Var}[\mathrm{Sp^{\tau^{\mathrm{max}} }}(f)] \\
		%		= & (\frac{9}{10}-\bar{\alpha})^2+(\frac{13}{10}-\bar{\alpha})^2+\sum_{ k=1 } ^{q+5} (1+\frac{k}{ 2q + 10} - \bar{\alpha})^2  + (\frac{17}{10} - \bar{\alpha})^2+ \sum_{ k=2 } ^{ q +5} (2 - \frac{k}{ 2q + 10} - \bar{\alpha})^2\\
		%		= & \frac{50q^3 + 732q^2 + 3745q + 6600}{300(q + 5)^2}.
		%	\end{align*}
	%	Then it suffices to prove
	%	\begin{align*}
		%		&\frac{50q^3 + 732q^2 + 3745q + 6600}{300(q + 5)^2} -\frac{12 + 2q}{12} \cdot (\frac{11}{10} - \frac{2}{2q + 10}) \leq 0.
		%	\end{align*}
	%	Equivalently,
	%	\begin{align*}
		%		&\frac{-5q^3 - 98q^2 - 380q - 150}{300(q + 5)^2} \leq 0,
		%	\end{align*}
	%	which holds obviously.
	%	
	
	\noindent\textbf{(12) $U_{1,0}$.}

	Since $  \tau_{f_0}  = \mu_f - 1 = 13, $ we only need to verify the case of  $\tau^{\mathrm{max}} = \mu_f -1 = 13 . $ 
	\begin{align*}
		& \bar{\alpha} =\frac{1}{\mu_f -1}\cdot ( \frac{8}{9}+\frac{10}{9}+2 \cdot \frac{11}{9} + \frac{12}{9} + 2 \cdot \frac{13}{9} + 
		2 \cdot \frac{14}{9} + \frac{15}{9} + 2 \cdot \frac{16}{9} + \frac{17}{9}) =\frac{170}{117}.
	\end{align*}
	\begin{align*}
		& (\mu_f -  1) \cdot \mathrm{Var}[\mathrm{Sp^{\tau^{\mathrm{max}} }}(f)] \\
		=& (\frac{8}{9}-\bar{\alpha})^2+(\frac{10}{9}-\bar{\alpha})^2+ 2\cdot (\frac{11}{9}-\bar{\alpha})^2+ (\frac{12}{9} - \bar{\alpha})^2 + 2 \cdot ( \frac{13}{9} -\bar{\alpha})^2\\
		&+2\cdot (\frac{14}{9} - \bar{\alpha})^2  + ( \frac{15}{9} - \bar{\alpha})^2 + 2\cdot( \frac{16}{9} - \bar{\alpha})^2 + ( \frac{17}{9} - \bar{\alpha})^2 \\
		=& \frac{1078}{1053}  .
	\end{align*}
	And $ \alpha_{\mu_f -1}-\alpha_1= \frac{17}{9} - \frac{8}{9} =1 .  $
	Then it suffices to prove $  \frac{1078}{1053} - \frac{13}{12} \cdot 1 \leq 0,  $ i.e., $ - \frac{251}{4212} \leq 0, $
	which holds obviously.
	%	
	%	
	%	Considering the average value $ \bar{\alpha}=\frac{3}{2}, $
	%	\begin{align*}
		%		& (\mu_f -  1) \cdot \mathrm{Var}[\mathrm{Sp^{\tau^{\mathrm{max}} }}(f)] \\
		%		=& (\frac{8}{9}-\bar{\alpha})^2+(\frac{10}{9}-\bar{\alpha})^2+ 2\cdot (\frac{11}{9}-\bar{\alpha})^2+ (\frac{12}{9} - \bar{\alpha})^2 + 2 \cdot ( \frac{13}{9} -\bar{\alpha})^2\\
		%		&+2\cdot (\frac{14}{9} - \bar{\alpha})^2  + ( \frac{15}{9} - \bar{\alpha})^2 + 2\cdot( \frac{16}{9} - \bar{\alpha})^2 + ( \frac{17}{9} - \bar{\alpha})^2 \\
		%		=& \frac{341}{324}  .
		%	\end{align*}
	%	Then it suffices to prove $  \frac{341}{324} - \frac{13}{12} \cdot 1 \leq 0,  $ i.e., $ - \frac{5}{162} \leq 0, $
	%	which holds obviously.
	%	
	
	\noindent\textbf{(13) $U_{1,p}$.}

	Since $  \tau_{f_0}  = \mu_f - 2 = 12+p, $ we need to verify the case of  $\tau^{\mathrm{max}} = \mu_f -1 = 12+ p  $ or $ 13 + p. $
	For $ p $ even, if  $\tau^{\mathrm{max}} = 12 + p, $ 
	\begin{align*}
		& \bar{\alpha} =\frac{1}{\mu_f -2}\cdot ( \frac{8}{9}+\frac{11}{9}+ \frac{13}{9}  + 
		\frac{14}{9} + \frac{16}{9} + \sum_{k =1}^{p/2  + 4 } (1+\frac{k}{p+9})  + \sum_{k=2 } ^{\frac{p}{2} + 4} ( 2- \frac{k}{9})^2 ))\\
		& =\frac{(27p^2 + 547p + 2754}{18(p + 9)(12 + p))}.
	\end{align*}
	\begin{align*}
		& (\mu_f -  2) \cdot \mathrm{Var}[\mathrm{Sp^{\tau^{\mathrm{max}} }}(f)] \\
		=& (\frac{8}{9}-\bar{\alpha})^2+(\frac{11}{9}-\bar{\alpha})^2+ (\frac{13}{9}-\bar{\alpha})^2+ (\frac{14}{9} - \bar{\alpha})^2 + ( \frac{16}{9}-\bar{\alpha})^2 +  \sum_{k =1}^{\frac{p}{2}  + 4 } (1+\frac{k}{p+9} - x)^2 )\\
		& + \sum_{k=2 } ^{\frac{p}{2} + 4} ( 2- \frac{k}{p+9} -x)^2  \\
		=& \frac{27p^4 + 1064p^3 + 15617p^2 + 102816p + 257580}{324(p + 9)^2(12 + p)}  .
	\end{align*}
	And $ \alpha_{\mu_f -2}-\alpha_1= 2 -\frac{2}{p+9} - \frac{8}{9}.  $
	Then it suffices to prove $   \frac{27p^4 + 1064p^3 + 15617p^2 + 102816p + 257580}{324(p + 9)^2(12 + p)} - \frac{12 + p}{12} \cdot (2 -\frac{2}{p+9} - \frac{8}{9}) \leq 0.  $
	Equivalently, $ \frac{-3p^4 - 142p^3 - 2311p^2 - 13824p - 22356}{324(p + 9)^2(12 + p)} \leq 0, $
	which holds obviously.
	
	If  $\tau^{\mathrm{max}} = 13 + p, $ 
	\begin{align*}
		& \bar{\alpha} =\frac{1}{\mu_f -1}\cdot ( \frac{8}{9}+\frac{11}{9}+ \frac{13}{9}  + 
		\frac{14}{9} + \frac{16}{9} + \sum_{k =1}^{p/2  + 4 } (1+\frac{k}{p+9})  + \sum_{k=1 } ^{\frac{p}{2} + 4} ( 2- \frac{k}{9})^2 )) 
		=\frac{340 + 27p}{234 + 18p}.
	\end{align*}
	\begin{align*}
		& (\mu_f -  1) \cdot \mathrm{Var}[\mathrm{Sp^{\tau^{\mathrm{max}} }}(f)] \\
		=& (\frac{8}{9}-\bar{\alpha})^2+(\frac{11}{9}-\bar{\alpha})^2+ (\frac{13}{9}-\bar{\alpha})^2+ (\frac{14}{9} - \bar{\alpha})^2 + ( \frac{16}{9}-\bar{\alpha})^2 +  \sum_{k =1}^{p/2  + 4 } (1+\frac{k}{p+9} - x)^2 )  \\
		&+ \sum_{k=1 } ^{\frac{p}{2} + 4} ( 2- \frac{k}{p+9} -x)^2 \\
		=& \frac{27p^3 + 929p^2 + 10462p + 38808}{324(p + 9)(13 + p)}  .
	\end{align*}
	And $ \alpha_{\mu_f -1}-\alpha_1= 2 -\frac{1}{p+9} - \frac{8}{9}.  $
	Then it suffices to prove $  \frac{27p^3 + 929p^2 + 10462p + 38808}{324(p + 9)(13 + p)} - \frac{13 + p}{12} \cdot ( 2 -\frac{1}{p+9} - \frac{8}{9}) \leq 0.  $
	Equivalently, $ \frac{-3p^3 - 94p^2 - 926p - 2259}{324(p + 9)(13 + p)} \leq 0, $
	which holds obviously.
	
	\noindent\textbf{(14) $E_{18}$.}
	
	Since $  \tau_{f_0}  = \mu_f - 2 = 16, $ we need to verify the cases of $\tau^{\mathrm{max}} = 16 $ or $ 17. $ 
	If  $\tau^{\mathrm{max}} = 16, $ 
	\begin{align*}
		& \bar{\alpha} =\frac{1}{\mu_f - 2 }\cdot ( \frac{13}{30}+\frac{8}{15}+ \frac{19}{30} + \frac{11}{15} 
		+\frac{23}{30} + \frac{5}{6} + \frac{13}{15} + \frac{14}{15} + \frac{29}{30}+\frac{31}{30} + \frac{16}{15} + \frac{17}{15}\\
		& + \frac{7}{6} + \frac{37}{30} + \frac{19}{15} + \frac{41}{30})\\
		&=\frac{449}{480}.
	\end{align*}
	\begin{align*}
		& (\mu_f -  2) \cdot \mathrm{Var}[\mathrm{Sp^{\tau^{\mathrm{max}} }}(f)] \\
		=& (\frac{13}{30}-\bar{\alpha})^2+(\frac{8}{15}-\bar{\alpha})^2+ (\frac{19}{30}-\bar{\alpha})^2+ (\frac{23}{30} - \bar{\alpha})^2 + ( \frac{5}{6} -\bar{\alpha})^2 +(\frac{13}{15}  - \bar{\alpha})^2 + (\frac{14}{15} - \bar{\alpha})^2\\
		& + ( \frac{29}{30}- \bar{\alpha})^2+ (\frac{31}{30} - \bar{\alpha})^2  + ( \frac{16}{15} - \bar{\alpha})^2 + ( \frac{17}{15} - \bar{\alpha})^2 + ( \frac{7}{6} - \bar{\alpha})^2 +( \frac{37}{30} - \bar{\alpha})^2 \\
		&+(\frac{19}{15} -\bar{\alpha})^2 + ( \frac{41}{30} - \bar{\alpha})^2 \\
		=& \frac{1751}{1600}  .
	\end{align*}
	And $ \alpha_{\mu_f -2}-\alpha_1= \frac{41}{30} - \frac{13}{30} =\frac{28}{30} .  $
	Then it suffices to prove $  \frac{1751}{1600} - \frac{16}{12} \cdot \frac{28}{30} \leq 0,  $ i.e., $ - \frac{2161}{14400} \leq 0, $
	which holds obviously.

	If  $\tau^{\mathrm{max}} = 17, $ 
	\begin{align*}
		& \bar{\alpha} =\frac{1}{\tau}\cdot ( \frac{13}{30}+\frac{8}{15}+ \frac{19}{30} + \frac{11}{15} 
		+\frac{23}{30} + \frac{5}{6} + \frac{13}{15} + \frac{14}{15} + \frac{29}{30}+\frac{31}{30} + \frac{16}{15} + \frac{17}{15}\\
		& + \frac{7}{6} + \frac{37}{30} + \frac{19}{15} + \frac{41}{30} + \frac{22}{15})\\
		&=\frac{29}{30}.
	\end{align*}
	\begin{align*}
		& (\mu_f -  1) \cdot \mathrm{Var}[\mathrm{Sp^{\tau^{\mathrm{max}} }}(f)] \\
		=& (\frac{13}{30}-\bar{\alpha})^2+(\frac{8}{15}-\bar{\alpha})^2+ (\frac{19}{30}-\bar{\alpha})^2+ (\frac{23}{30} - \bar{\alpha})^2 + ( \frac{5}{6} -\bar{\alpha})^2 +(\frac{13}{15}  - \bar{\alpha})^2 + (\frac{14}{15} - \bar{\alpha})^2\\
		& + ( \frac{29}{30}- \bar{\alpha})^2+ (\frac{31}{30} - \bar{\alpha})^2  + ( \frac{16}{15} - \bar{\alpha})^2 + ( \frac{17}{15} - \bar{\alpha})^2 + ( \frac{7}{6} - \bar{\alpha})^2 +( \frac{37}{30} - \bar{\alpha})^2 \\
		&+(\frac{19}{15} -\bar{\alpha})^2 + ( \frac{41}{30} - \bar{\alpha})^2 + ( \frac{22}{15} - \bar{\alpha})^2 \\
		=& \frac{34}{25}  .
	\end{align*}
	And $ \alpha_{\mu_f -1}-\alpha_1= \frac{22}{15} - \frac{13}{30} =\frac{31}{30} .  $
	Then it suffices to prove $  \frac{34}{25}  - \frac{17}{12} \cdot \frac{31}{30} \leq 0,  $ 
	i.e., $ - \frac{187}{1800} \leq 0, $
	which holds obviously.
	
	%	Considering the average value $ \bar{\alpha}=1, $
	%	\begin{align*}
		%		& (\mu_f -  1) \cdot \mathrm{Var}[\mathrm{Sp^{\tau^{\mathrm{max}} }}(f)] \\
		%		=& (\frac{13}{30}-\bar{\alpha})^2+(\frac{8}{15}-\bar{\alpha})^2+ (\frac{19}{30}-\bar{\alpha})^2+ (\frac{23}{30} - \bar{\alpha})^2 + ( \frac{5}{6} -\bar{\alpha})^2 +(\frac{13}{15}  - \bar{\alpha})^2 + (\frac{14}{15} - \bar{\alpha})^2\\
		%		&  + ( \frac{29}{30}- \bar{\alpha})^2+ (\frac{31}{30} - \bar{\alpha})^2  + ( \frac{16}{15} - \bar{\alpha})^2 + ( \frac{17}{15} - \bar{\alpha})^2 + ( \frac{7}{6} - \bar{\alpha})^2 +( \frac{37}{30} - \bar{\alpha})^2 \\
		%		&+(\frac{19}{15} -\bar{\alpha})^2 + ( \frac{41}{30} - \bar{\alpha})^2 \\
		%		=& \frac{209}{180}  .
		%	\end{align*}
	%	Then it suffices to prove $ \frac{209}{180}- \frac{16}{12} \cdot \frac{28}{30} \leq 0,  $ i.e., $ - \frac{1}{12} \leq 0, $
	%	which holds obviously.
	
	\noindent\textbf{(15) $E_{19}$.}
	
	Since $  \tau_{f_0}  = \mu_f - 2 = 17, $ we need to verify the cases of $\tau^{\mathrm{max}} = 17 $ or $ 18. $ 
	If  $\tau^{\mathrm{max}} = 17, $ 
	\begin{align*}
		& \bar{\alpha} =\frac{1}{\mu_f - 2}\cdot ( \frac{3}{7}+\frac{11}{21}+ \frac{13}{21} + \frac{5}{7} 
		+\frac{16}{21} + \frac{17}{21} + \frac{6}{7} + \frac{19}{21} + \frac{20}{21} + 1+\frac{22}{21} + \frac{23}{21} + \frac{8}{7} + \frac{25}{21}\\
		& + \frac{26}{21} + \frac{9}{7} + \frac{29}{21})\\
		&=\frac{335}{357}.
	\end{align*}
	\begin{align*}
		& (\mu_f -  2) \cdot \mathrm{Var}[\mathrm{Sp^{\tau^{\mathrm{max}} }}(f)] \\
		=& (\frac{3}{7}-\bar{\alpha})^2+(\frac{11}{21}-\bar{\alpha})^2+ (\frac{13}{21}-\bar{\alpha})^2+ (\frac{5}{7} - \bar{\alpha})^2 + ( \frac{16}{21} -\bar{\alpha})^2+(\frac{17}{21}  - \bar{\alpha})^2 + (\frac{6}{7} - \bar{\alpha})^2 \\
		&+ ( \frac{19}{21}- \bar{\alpha})^2 + (\frac{20}{21} -\bar{\alpha})^2 +( 1 - \bar{\alpha})^2+ (\frac{22}{21} - \bar{\alpha})^2  + ( \frac{23}{21} - \bar{\alpha})^2 + ( \frac{8}{7} - \bar{\alpha})^2 + ( \frac{25}{21} - \bar{\alpha})^2 \\
		&+( \frac{26}{21} - \bar{\alpha})^2 +(\frac{9}{7} -\bar{\alpha})^2 + ( \frac{29}{21} - \bar{\alpha})^2 \\
		=& \frac{2978}{2499}  .
	\end{align*}
	And $ \alpha_{\mu_f -2}-\alpha_1= \frac{29}{21} - \frac{3}{7} =\frac{20}{21} .  $
	Then it suffices to prove $  \frac{2978}{2499}- \frac{17}{12} \cdot \frac{20}{21} \leq 0,  $ i.e., $ - \frac{1181}{7497} \leq 0, $
	which holds obviously.

	If  $\tau^{\mathrm{max}} = 18, $ 
	\begin{align*}
		& \bar{\alpha} =\frac{1}{\mu_f - 1}\cdot ( \frac{3}{7}+\frac{11}{21}+ \frac{13}{21} + \frac{5}{7} 
		+\frac{16}{21} + \frac{17}{21} + \frac{6}{7} + \frac{19}{21} + \frac{20}{21} + 1+\frac{22}{21} + \frac{23}{21} + \frac{8}{7} + \frac{25}{21}\\
		& + \frac{26}{21} + \frac{9}{7} + \frac{29}{21} + \frac{31}{21})\\
		&=\frac{61}{63}.
	\end{align*}
	\begin{align*}
		& (\mu_f -  1) \cdot \mathrm{Var}[\mathrm{Sp^{\tau^{\mathrm{max}} }}(f)] \\
		=& (\frac{3}{7}-\bar{\alpha})^2+(\frac{11}{21}-\bar{\alpha})^2+ (\frac{13}{21}-\bar{\alpha})^2+ (\frac{5}{7} - \bar{\alpha})^2 + ( \frac{16}{21} -\bar{\alpha})^2+(\frac{17}{21}  - \bar{\alpha})^2 + (\frac{6}{7} - \bar{\alpha})^2 \\
		&+ ( \frac{19}{21}- \bar{\alpha})^2 + (\frac{20}{21} -\bar{\alpha})^2 +( 1 - \bar{\alpha})^2+ (\frac{22}{21} - \bar{\alpha})^2  + ( \frac{23}{21} - \bar{\alpha})^2 + ( \frac{8}{7} - \bar{\alpha})^2 + ( \frac{25}{21} - \bar{\alpha})^2 \\
		&+( \frac{26}{21} - \bar{\alpha})^2 +(\frac{9}{7} -\bar{\alpha})^2 + ( \frac{29}{21} - \bar{\alpha})^2  +(\frac{31}{21} - \bar{\alpha})^2\\
		=& \frac{646}{441}  .
	\end{align*}
	And $ \alpha_{\mu_f -1}-\alpha_1= \frac{31}{21} - \frac{3}{7} =\frac{22}{21} .  $
	Then it suffices to prove $ \frac{646}{441} - \frac{18}{12} \cdot \frac{22}{21} \leq 0,  $ 
	i.e., $ - \frac{47}{441} \leq 0, $
	which holds obviously.
	%	Considering the average value $ \bar{\alpha}=1, $
	%	\begin{align*}
		%		& (\mu_f -  1) \cdot \mathrm{Var}[\mathrm{Sp^{\tau^{\mathrm{max}} }}(f)] \\
		%		=& (\frac{3}{7}-\bar{\alpha})^2+(\frac{11}{21}-\bar{\alpha})^2+ (\frac{13}{21}-\bar{\alpha})^2+ (\frac{5}{7} - \bar{\alpha})^2 + ( \frac{16}{21} -\bar{\alpha})^2+(\frac{17}{21}  - \bar{\alpha})^2 + (\frac{6}{7} - \bar{\alpha})^2 \\
		%		&+ ( \frac{19}{21}- \bar{\alpha})^2 + (\frac{20}{21} -\bar{\alpha})^2 +( 1 - \bar{\alpha})^2+ (\frac{22}{21} - \bar{\alpha})^2  + ( \frac{23}{21} - \bar{\alpha})^2 + ( \frac{8}{7} - \bar{\alpha})^2 + ( \frac{25}{21} - \bar{\alpha})^2 \\
		%		&+( \frac{26}{21} - \bar{\alpha})^2 +(\frac{9}{7} -\bar{\alpha})^2 + ( \frac{29}{21} - \bar{\alpha})^2 \\
		%		=& \frac{554}{441}  .
		%	\end{align*}
	%	Then it suffices to prove $\frac{554}{441} - \frac{17}{12} \cdot \frac{20}{21} \leq 0,  $ i.e., $ - \frac{41}{441} \leq 0, $
	%	which holds obviously.
	%	
	%	
	
	\noindent\textbf{(16) $E_{20}$.}

	Since $  \tau_{f_0} = \mu_f - 2 = 18, $ we need to verify the cases of $\tau^{\mathrm{max}} = 18 $ or $ 19. $ 
	If  $\tau^{\mathrm{max}} = 18, $ 
	\begin{align*}
		& \bar{\alpha} =\frac{1}{\mu_f - 2}\cdot ( \frac{14}{33}+\frac{17}{33}+ \frac{20}{33} + \frac{23}{33} 
		+\frac{25}{33} + \frac{26}{33} + \frac{28}{33} + \frac{29}{33} + \frac{31}{33} + \frac{32}{33}+\frac{34}{33} + \frac{35}{33} + \frac{37}{33} + \frac{38}{33} +\\
		& \frac{40}{33} + \frac{41}{33} + \frac{43}{33} + \frac{46}{33})\\
		&=\frac{559}{594}.
	\end{align*}
	\begin{align*}
		& (\mu_f -  2) \cdot \mathrm{Var}[\mathrm{Sp^{\tau^{\mathrm{max}} }}(f)] \\
		=& (\frac{14}{33}-\bar{\alpha})^2+(\frac{17}{33}-\bar{\alpha})^2+ (\frac{20}{33}-\bar{\alpha})^2+ (\frac{23}{33} - \bar{\alpha})^2 + ( \frac{25}{33} -\bar{\alpha})^2+(\frac{26}{33}  - \bar{\alpha})^2 + (\frac{28}{33} - \bar{\alpha})^2 \\
		&+ ( \frac{29}{33}- \bar{\alpha})^2 + (\frac{31}{33} -\bar{\alpha})^2 +( \frac{32}{33} - \bar{\alpha})^2+ (\frac{34}{33} - \bar{\alpha})^2  + ( \frac{35}{33} - \bar{\alpha})^2 + ( \frac{37}{33} - \bar{\alpha})^2 + ( \frac{38}{33} - \bar{\alpha})^2 \\
		&+( \frac{40}{33} - \bar{\alpha})^2 +(\frac{41}{33} -\bar{\alpha})^2 + ( \frac{43}{33} - \bar{\alpha})^2 +( \frac{46}{33}   - \bar{\alpha})^2\\
		=& \frac{209}{162}  .
	\end{align*}
	And $ \alpha_{\mu_f -2}-\alpha_1= \frac{46}{33} - \frac{14}{33} =\frac{32}{33} .  $
	Then it suffices to prove $  \frac{209}{162} - \frac{18}{12} \cdot \frac{32}{33} \leq 0,  $ i.e., $ - \frac{293}{1782} \leq 0, $
	which holds obviously.

	If  $\tau^{\mathrm{max}} = 19, $ 
	\begin{align*}
		& \bar{\alpha} =\frac{1}{\mu_f - 1}\cdot ( \frac{14}{33}+\frac{17}{33}+ \frac{20}{33} + \frac{23}{33} 
		+\frac{25}{33} + \frac{26}{33} + \frac{28}{33} + \frac{29}{33} + \frac{31}{33} + \frac{32}{33}+\frac{34}{33} + \frac{35}{33} + \frac{37}{33} + \frac{38}{33} +\\
		& \frac{40}{33} + \frac{41}{33} + \frac{43}{33} + \frac{46}{33} + \frac{49}{33})\\
		&=\frac{32}{33}.
	\end{align*}
	\begin{align*}
		& (\mu_f -  1) \cdot \mathrm{Var}[\mathrm{Sp^{\tau^{\mathrm{max}} }}(f)] \\
		=& (\frac{14}{33}-\bar{\alpha})^2+(\frac{17}{33}-\bar{\alpha})^2+ (\frac{20}{33}-\bar{\alpha})^2+ (\frac{23}{33} - \bar{\alpha})^2 + ( \frac{25}{33} -\bar{\alpha})^2+(\frac{26}{33}  - \bar{\alpha})^2 + (\frac{28}{33} - \bar{\alpha})^2 \\
		&+ ( \frac{29}{33}- \bar{\alpha})^2 + (\frac{31}{33} -\bar{\alpha})^2 +( \frac{32}{33} - \bar{\alpha})^2+ (\frac{34}{33} - \bar{\alpha})^2  + ( \frac{35}{33} - \bar{\alpha})^2 + ( \frac{37}{33} - \bar{\alpha})^2 + ( \frac{38}{33} - \bar{\alpha})^2 \\
		&+( \frac{40}{33} - \bar{\alpha})^2 +(\frac{41}{33} -\bar{\alpha})^2 + ( \frac{43}{33} - \bar{\alpha})^2 +( \frac{46}{33}   - \bar{\alpha})^2 +(\frac{49}{33} -\bar{\alpha})^2\\
		=& \frac{190}{121}  .
	\end{align*}
	And $ \alpha_{\mu_f -2}-\alpha_1= \frac{49}{33} - \frac{14}{33} =\frac{35}{33} .  $
	Then it suffices to prove $ \frac{190}{121}- \frac{19}{12} \cdot \frac{35}{33} \leq 0,  $ 
	i.e., $ - \frac{475}{4356} \leq 0, $
	which holds obviously.
	%	
	%	Considering the average value $ \bar{\alpha}=1, $
	%	\begin{align*}
		%		& (\mu_f -  1) \cdot \mathrm{Var}[\mathrm{Sp^{\tau^{\mathrm{max}} }}(f)] \\
		%		=& (\frac{14}{33}-\bar{\alpha})^2+(\frac{17}{33}-\bar{\alpha})^2+ (\frac{20}{33}-\bar{\alpha})^2+ (\frac{23}{33} - \bar{\alpha})^2 + ( \frac{25}{33} -\bar{\alpha})^2+(\frac{26}{33}  - \bar{\alpha})^2 + (\frac{28}{33} - \bar{\alpha})^2\\
		%		& + ( \frac{29}{33}- \bar{\alpha})^2 + (\frac{31}{33} -\bar{\alpha})^2 +( \frac{32}{33} - \bar{\alpha})^2+ (\frac{34}{33} - \bar{\alpha})^2  + ( \frac{35}{33} - \bar{\alpha})^2 + ( \frac{37}{33} - \bar{\alpha})^2 + ( \frac{38}{33} - \bar{\alpha})^2 \\
		%		&+( \frac{40}{33} - \bar{\alpha})^2 +(\frac{41}{33} -\bar{\alpha})^2 + ( \frac{43}{33} - \bar{\alpha})^2 +( \frac{46}{33}   - \bar{\alpha})^2\\
		%		=& \frac{491}{363}  .
		%	\end{align*}
	%	Then it suffices to prove $  \frac{491}{363} - \frac{18}{12} \cdot \frac{32}{33} \leq 0,  $ i.e., $ - \frac{37}{363} \leq 0, $
	%	which holds obviously.
	%	
	%	
	
	\noindent\textbf{(17) $Z_{17}$.}

	Since $  \tau_{f_0} = \mu_f - 2 = 15, $ we need to verify the cases of $\tau^{\mathrm{max}} = 15 $ or $ 16. $ 
	If  $\tau^{\mathrm{max}} = 15, $ 
	\begin{align*}
		& \bar{\alpha} =\frac{1}{\mu_f -2 }\cdot ( \frac{5}{12}+\frac{13}{24}+ \frac{2}{3} + \frac{15}{24} 
		+\frac{17}{24} + \frac{5}{6} + \frac{11}{12} +0+\frac{13}{12} + \frac{7}{6} + \frac{29}{24} + \frac{31}{24} + \frac{4}{3} )\\
		=&\frac{67}{72}.
	\end{align*}
	\begin{align*}
		& (\mu_f -  2) \cdot \mathrm{Var}[\mathrm{Sp^{\tau^{\mathrm{max}} }}(f)] \\
		=& (\frac{5}{12}-\bar{\alpha})^2+(\frac{13}{24}-\bar{\alpha})^2+ (\frac{2}{3}-\bar{\alpha})^2+ (\frac{15}{24} - \bar{\alpha})^2 + ( \frac{17}{24} -\bar{\alpha})^2+(\frac{5}{6}  - \bar{\alpha})^2 + (\frac{11}{12} - \bar{\alpha})^2 \\
		&+ ( 0 - \bar{\alpha})^2 + (\frac{13}{12} - \bar{\alpha})^2  + ( \frac{7}{6} - \bar{\alpha})^2 + ( \frac{29}{24} - \bar{\alpha})^2 + ( \frac{31}{24} - \bar{\alpha})^2 +( \frac{4}{3} - \bar{\alpha})^2 \\
		=& \frac{659}{648}  .
	\end{align*}
	And $ \alpha_{\mu_f -2}-\alpha_1= \frac{4}{3} - \frac{5}{12} =\frac{11}{12} .  $
	Then it suffices to prove $  \frac{659}{648} - \frac{15}{12} \cdot \frac{11}{12} \leq 0,  $ i.e., $ - \frac{167}{1296} \leq 0, $
	which holds obviously.

	If  $\tau^{\mathrm{max}} = 16, $ 
	\begin{align*}
		& \bar{\alpha} =\frac{1}{\mu_f -1 }\cdot ( \frac{5}{12}+\frac{13}{24}+ \frac{2}{3} + \frac{15}{24} 
		+\frac{17}{24} + \frac{5}{6} + \frac{11}{12} +0+\frac{13}{12} + \frac{7}{6} + \frac{29}{24} + \frac{31}{24} + \frac{4}{3} + \frac{35}{24})\\
		=&\frac{185}{192}.
	\end{align*}
	\begin{align*}
		& (\mu_f -  1) \cdot \mathrm{Var}[\mathrm{Sp^{\tau^{\mathrm{max}} }}(f)] \\
		=& (\frac{5}{12}-\bar{\alpha})^2+(\frac{13}{24}-\bar{\alpha})^2+ (\frac{2}{3}-\bar{\alpha})^2+ (\frac{15}{24} - \bar{\alpha})^2 + ( \frac{17}{24} -\bar{\alpha})^2+(\frac{5}{6}  - \bar{\alpha})^2 + (\frac{11}{12} - \bar{\alpha})^2 \\
		&+ ( 0 - \bar{\alpha})^2 + (\frac{13}{12} - \bar{\alpha})^2  + ( \frac{7}{6} - \bar{\alpha})^2 + ( \frac{29}{24} - \bar{\alpha})^2 + ( \frac{31}{24} - \bar{\alpha})^2 +( \frac{4}{3} - \bar{\alpha})^2 +(\frac{35}{24} -\bar{\alpha})^2\\
		=& \frac{2975}{2304}  .
	\end{align*}
	And $ \alpha_{\mu_f -1}-\alpha_1= \frac{35}{24} - \frac{5}{12} =\frac{25}{24} .  $
	Then it suffices to prove $   \frac{2975}{2304}- \frac{16}{12} \cdot \frac{25}{24} \leq 0,  $
	i.e., $ - \frac{25}{256} \leq 0, $
	which holds obviously.

	\noindent\textbf{(18) $Z_{18}$.}

	Since $  \tau_{f_0} = \mu_f - 2 = 16, $ we need to verify the cases of $\tau^{\mathrm{max}} = 16 $ or $ 17. $ 
	If  $\tau^{\mathrm{max}} = 16, $ 
	\begin{align*}
		& \bar{\alpha} =\frac{1}{\mu_f -2}\cdot ( \frac{7}{17}+\frac{9}{17}+ \frac{11}{17} + \frac{12}{17} 
		+\frac{13}{17} + \frac{15}{17} + \frac{16}{17} +2 \cdot 0+\frac{18}{17} + \frac{19}{17} + \frac{20}{17} + \frac{21}{17} + \frac{22}{17} + \frac{23}{17})\\
		=&\frac{127}{136}.
	\end{align*}
	\begin{align*}
		& (\mu_f -  2) \cdot \mathrm{Var}[\mathrm{Sp^{\tau^{\mathrm{max}} }}(f)] \\
		=& (\frac{7}{17}-\bar{\alpha})^2+(\frac{9}{17}-\bar{\alpha})^2+ (\frac{11}{17}-\bar{\alpha})^2+ (\frac{12}{17} - \bar{\alpha})^2 + ( \frac{13}{17} -\bar{\alpha})^2+(\frac{15}{17}  - \bar{\alpha})^2 + (\frac{16}{17} - \bar{\alpha})^2 \\
		&+ 2\cdot( 0 - \bar{\alpha})^2 + (\frac{18}{17} - \bar{\alpha})^2  + ( \frac{19}{17} - \bar{\alpha})^2 + ( \frac{20}{17} - \bar{\alpha})^2 + ( \frac{21}{17} - \bar{\alpha})^2\\
		& +( \frac{22}{17} - \bar{\alpha})^2 + (\frac{23}{17} - \bar{\alpha})^2 \\
		=& \frac{1303}{1156}  .
	\end{align*}
	And $ \alpha_{\mu_f -2}-\alpha_1= \frac{23}{17} - \frac{7}{17} =\frac{16}{17} .  $
	Then it suffices to prove $   \frac{1303}{1156}  - \frac{16}{12} \cdot \frac{16}{17} \leq 0,  $ i.e., $ - \frac{443}{3468} \leq 0, $
	which holds obviously.

	If  $\tau^{\mathrm{max}} = 17, $ 
	\begin{align*}
		& \bar{\alpha} =\frac{1}{\mu_f -1}\cdot ( \frac{7}{17}+\frac{9}{17}+ \frac{11}{17} + \frac{12}{17} 
		+\frac{13}{17} + \frac{15}{17} + \frac{16}{17} +2 \cdot 0+\frac{18}{17} + \frac{19}{17} + \frac{20}{17} + \frac{21}{17} + \frac{22}{17} \\
		&+ \frac{23}{17} +\frac{25}{17})\\
		=&\frac{279}{289}.
	\end{align*}
	\begin{align*}
		& (\mu_f -  1) \cdot \mathrm{Var}[\mathrm{Sp^{\tau^{\mathrm{max}} }}(f)] \\
		=& (\frac{7}{17}-\bar{\alpha})^2+(\frac{9}{17}-\bar{\alpha})^2+ (\frac{11}{17}-\bar{\alpha})^2+ (\frac{12}{17} - \bar{\alpha})^2 + ( \frac{13}{17} -\bar{\alpha})^2+(\frac{15}{17}  - \bar{\alpha})^2 + (\frac{16}{17} - \bar{\alpha})^2 \\
		&+ 2\cdot( 0 - \bar{\alpha})^2 + (\frac{18}{17} - \bar{\alpha})^2  + ( \frac{19}{17} - \bar{\alpha})^2 + ( \frac{20}{17} - \bar{\alpha})^2 + ( \frac{21}{17} - \bar{\alpha})^2\\
		& +( \frac{22}{17} - \bar{\alpha})^2 + (\frac{23}{17} - \bar{\alpha})^2 +(\frac{25}{17} - \bar{\alpha})^2  \\
		=& \frac{6870}{4913}  .
	\end{align*}
	And $ \alpha_{\mu_f -1}-\alpha_1= \frac{25}{17} - \frac{7}{17} =\frac{18}{17} .  $
	Then it suffices to prove $   \frac{6870}{4913}   - \frac{17}{12} \cdot \frac{18}{17} \leq 0,  $ 
	i.e., $ - \frac{999}{9826} \leq 0, $
	which holds obviously.
	%	Considering the average value $ \bar{\alpha}=1, $
	%	\begin{align*}
		%		& (\mu_f -  1) \cdot \mathrm{Var}[\mathrm{Sp^{\tau^{\mathrm{max}} }}(f)] \\
		%		=& (\frac{7}{17}-\bar{\alpha})^2+(\frac{9}{17}-\bar{\alpha})^2+ (\frac{11}{17}-\bar{\alpha})^2+ (\frac{12}{17} - \bar{\alpha})^2 + ( \frac{13}{17} -\bar{\alpha})^2+(\frac{15}{17}  - \bar{\alpha})^2 + (\frac{16}{17} - \bar{\alpha})^2 \\
		%		&+ 2\cdot( 0 - \bar{\alpha})^2 + (\frac{18}{17} - \bar{\alpha})^2  + ( \frac{19}{17} - \bar{\alpha})^2 + ( \frac{20}{17} - \bar{\alpha})^2 + ( \frac{21}{17} - \bar{\alpha})^2\\
		%		& +( \frac{22}{17} - \bar{\alpha})^2 + (\frac{23}{17} - \bar{\alpha})^2 \\
		%		=& \frac{346}{289}  .
		%	\end{align*}
	%	Then it suffices to prove $ \frac{346}{289}  - \frac{16}{12} \cdot \frac{16}{17} \leq 0,  $ i.e., $ - \frac{50}{867} \leq 0, $
	%	which holds obviously.
	%	
	%	
	%	
	
	\noindent\textbf{(19) $Z_{19}$.}
	
	Since $  \tau_{f_0}  = \mu_f - 2 = 17, $ we need to verify the cases of $\tau^{\mathrm{max}} = 17 $ or $ 18. $ 
	If  $\tau^{\mathrm{max}} = 17, $ 
	\begin{align*}
		& \bar{\alpha} =\frac{1}{\mu_f - 2}\cdot ( \frac{11}{27}+\frac{14}{27}+ \frac{17}{27} + \frac{19}{27} 
		+\frac{20}{27} + \frac{22}{27} + \frac{23}{27} + \frac{25}{27}+ \frac{26}{27} + 0+\frac{28}{27} + \frac{29}{27} + \frac{31}{27} + \frac{32}{27} \\
		&+ \frac{34}{27} + \frac{35}{27} + \frac{37}{27})\\
		=&\frac{430}{459}.
	\end{align*}
	\begin{align*}
		& (\mu_f -  2) \cdot \mathrm{Var}[\mathrm{Sp^{\tau^{\mathrm{max}} }}(f)] \\
		=& (\frac{11}{27}-\bar{\alpha})^2+(\frac{14}{27}-\bar{\alpha})^2+ (\frac{17}{27}-\bar{\alpha})^2+ (\frac{19}{27} - \bar{\alpha})^2 + ( \frac{20}{27} -\bar{\alpha})^2+(\frac{22}{27}  - \bar{\alpha})^2 + (\frac{23}{27} - \bar{\alpha})^2 \\
		&+ (\frac{25}{27}- \bar{\alpha})^2 +( \frac{26}{27} - \bar{\alpha})^2+( 0 - \bar{\alpha})^2 + (\frac{28}{27} - \bar{\alpha})^2  + ( \frac{29}{27} - \bar{\alpha})^2 + ( \frac{31}{27} - \bar{\alpha})^2 + ( \frac{32}{27} - \bar{\alpha})^2\\
		& +( \frac{34}{27} - \bar{\alpha})^2 + (\frac{35}{27} - \bar{\alpha})^2 +( \frac{37}{27}  - \bar{\alpha})^2 \\
		=& \frac{15190}{12393}  .
	\end{align*}
	And $ \alpha_{\mu_f -2}-\alpha_1= \frac{37}{27} - \frac{11}{27} =\frac{26}{27} .  $
	Then it suffices to prove $  \frac{15190}{12393}  - \frac{17}{12} \cdot \frac{26}{27} \leq 0,  $ i.e., $ - \frac{3433}{24786} \leq 0, $
	which holds obviously.

	If  $\tau^{\mathrm{max}} = 18, $ 
	\begin{align*}
		& \bar{\alpha} =\frac{1}{\mu_f - 1}\cdot ( \frac{11}{27}+\frac{14}{27}+ \frac{17}{27} + \frac{19}{27} 
		+\frac{20}{27} + \frac{22}{27} + \frac{23}{27} + \frac{25}{27}+ \frac{26}{27} + 0+\frac{28}{27} + \frac{29}{27} + \frac{31}{27} + \frac{32}{27} \\
		&+ \frac{34}{27} + \frac{35}{27} + \frac{37}{27} +\frac{40}{27})\\
		=&\frac{235}{243}.
	\end{align*}
	\begin{align*}
		& (\mu_f -  1) \cdot \mathrm{Var}[\mathrm{Sp^{\tau^{\mathrm{max}} }}(f)] \\
		=& (\frac{11}{27}-\bar{\alpha})^2+(\frac{14}{27}-\bar{\alpha})^2+ (\frac{17}{27}-\bar{\alpha})^2+ (\frac{19}{27} - \bar{\alpha})^2 + ( \frac{20}{27} -\bar{\alpha})^2+(\frac{22}{27}  - \bar{\alpha})^2 + (\frac{23}{27} - \bar{\alpha})^2 \\
		&+ (\frac{25}{27}- \bar{\alpha})^2 +( \frac{26}{27} - \bar{\alpha})^2+( 0 - \bar{\alpha})^2 + (\frac{28}{27} - \bar{\alpha})^2  + ( \frac{29}{27} - \bar{\alpha})^2 + ( \frac{31}{27} - \bar{\alpha})^2 + ( \frac{32}{27} - \bar{\alpha})^2\\
		& +( \frac{34}{27} - \bar{\alpha})^2 + (\frac{35}{27} - \bar{\alpha})^2 +( \frac{37}{27}  - \bar{\alpha})^2 +( \frac{40}{27} -\bar{\alpha})^2 \\
		=& \frac{9880}{6561}  .
	\end{align*}
	And $ \alpha_{\mu_f -1}-\alpha_1= \frac{40}{27} - \frac{11}{27} =\frac{29}{27} .  $
	Then it suffices to prove $  \frac{9880}{6561}  - \frac{18}{12} \cdot \frac{29}{27} \leq 0,  $ i.e., $ - \frac{1381}{13122} \leq 0, $
	which holds obviously.
	%	Considering the average value $ \bar{\alpha}=1, $
	%	\begin{align*}
		%		& (\mu_f -  1) \cdot \mathrm{Var}[\mathrm{Sp^{\tau^{\mathrm{max}} }}(f)] \\
		%		=& (\frac{11}{27}-\bar{\alpha})^2+(\frac{14}{27}-\bar{\alpha})^2+ (\frac{17}{27}-\bar{\alpha})^2+ (\frac{19}{27} - \bar{\alpha})^2 + ( \frac{20}{27} -\bar{\alpha})^2+(\frac{22}{27}  - \bar{\alpha})^2 + (\frac{23}{27} - \bar{\alpha})^2\\
		%		& + (\frac{25}{27}- \bar{\alpha})^2 +( \frac{26}{27} - \bar{\alpha})^2+( 0 - \bar{\alpha})^2 + (\frac{28}{27} - \bar{\alpha})^2  + ( \frac{29}{27} - \bar{\alpha})^2 + ( \frac{31}{27} - \bar{\alpha})^2 + ( \frac{32}{27} - \bar{\alpha})^2\\
		%		& +( \frac{34}{27} - \bar{\alpha})^2 + (\frac{35}{27} - \bar{\alpha})^2 +( \frac{37}{27}  - \bar{\alpha})^2 \\
		%		=& \frac{943}{729}  .
		%	\end{align*}
	%	Then it suffices to prove $ \frac{943}{729}  - \frac{17}{12} \cdot \frac{26}{27} \leq 0,  $ i.e., $ - \frac{103}{1458} \leq 0, $
	%	which holds obviously.
	%	
	%	
	%	
	%	
	
	\noindent\textbf{(20) $W_{17}$.}
	
	Since $  \tau_{f_0}  = \mu_f - 2 = 15, $ we need to verify the cases of $\tau^{\mathrm{max}} = 15 $ or $ 16. $ 
	If  $\tau^{\mathrm{max}} = 15, $ 
	\begin{align*}
		& \bar{\alpha} =\frac{1}{\mu_f -2}\cdot ( \frac{2}{5}+\frac{11}{20}+ \frac{13}{20} + \frac{7}{10} 
		+\frac{4}{5} + \frac{17}{20} + \frac{9}{10} + \frac{19}{20}+ 0+\frac{21}{20} + \frac{11}{10} + \frac{23}{20} + \frac{6}{5}\\
		& + \frac{13}{10} + \frac{27}{20}) \\
		=&\frac{24}{25}.
	\end{align*}
	\begin{align*}
		& (\mu_f -  2) \cdot \mathrm{Var}[\mathrm{Sp^{\tau^{\mathrm{max}} }}(f)] \\
		=& (\frac{2}{5}-\bar{\alpha})^2+(\frac{11}{20}-\bar{\alpha})^2+ (\frac{13}{20}-\bar{\alpha})^2+ (\frac{7}{10} - \bar{\alpha})^2 + ( \frac{4}{5} -\bar{\alpha})^2+(\frac{17}{20}  - \bar{\alpha})^2 + (\frac{9}{10} - \bar{\alpha})^2 \\
		&+ (\frac{19}{20}- \bar{\alpha})^2 +( 0 - \bar{\alpha})^2 + (\frac{21}{20} - \bar{\alpha})^2  + ( \frac{11}{10} - \bar{\alpha})^2 + ( \frac{23}{20} - \bar{\alpha})^2 + ( \frac{6}{5} - \bar{\alpha})^2\\
		& +( \frac{13}{10} - \bar{\alpha})^2 +( \frac{27}{20}  - \bar{\alpha})^2 \\
		=& \frac{431}{400}  .
	\end{align*}
	And $ \alpha_{\mu_f -2}-\alpha_1= \frac{27}{20} - \frac{2}{5} =\frac{19}{20} .  $
	Then it suffices to prove $ \frac{431}{400}   - \frac{15}{12} \cdot \frac{19}{20} \leq 0,  $ i.e., $ - \frac{11}{100} \leq 0, $
	which holds obviously.

	If  $\tau^{\mathrm{max}} = 16, $ 
	\begin{align*}
		& \bar{\alpha} =\frac{1}{\mu_f -1}\cdot ( \frac{2}{5}+\frac{11}{20}+ \frac{13}{20} + \frac{7}{10} 
		+\frac{4}{5} + \frac{17}{20} + \frac{9}{10} + \frac{19}{20}+ 0+\frac{21}{20} + \frac{11}{10} + \frac{23}{20} + \frac{6}{5}\\
		& + \frac{13}{10} + \frac{27}{20} + \frac{29}{20}) \\
		=&\frac{77}{80}.
	\end{align*}
	\begin{align*}
		& (\mu_f -  1) \cdot \mathrm{Var}[\mathrm{Sp^{\tau^{\mathrm{max}} }}(f)] \\
		=& (\frac{2}{5}-\bar{\alpha})^2+(\frac{11}{20}-\bar{\alpha})^2+ (\frac{13}{20}-\bar{\alpha})^2+ (\frac{7}{10} - \bar{\alpha})^2 + ( \frac{4}{5} -\bar{\alpha})^2+(\frac{17}{20}  - \bar{\alpha})^2 + (\frac{9}{10} - \bar{\alpha})^2 \\
		&+ (\frac{19}{20}- \bar{\alpha})^2 +( 0 - \bar{\alpha})^2 + (\frac{21}{20} - \bar{\alpha})^2  + ( \frac{11}{10} - \bar{\alpha})^2 + ( \frac{23}{20} - \bar{\alpha})^2 + ( \frac{6}{5} - \bar{\alpha})^2\\
		& +( \frac{13}{10} - \bar{\alpha})^2 +( \frac{27}{20}  - \bar{\alpha})^2 +(\frac{29}{20} - \bar{\alpha})^2 \\
		=& \frac{527}{400}  .
	\end{align*}
	And $ \alpha_{\mu_f -1}-\alpha_1= \frac{29}{20} - \frac{2}{5} =\frac{21}{20} .  $
	Then it suffices to prove $ \frac{527}{400}- \frac{16}{12} \cdot \frac{21}{20} \leq 0,  $
	i.e., $ - \frac{33}{400} \leq 0, $
	which holds obviously.
	
	%	Considering the average value $ \bar{\alpha}=1, $
	%	\begin{align*}
		%		& (\mu_f -  1) \cdot \mathrm{Var}[\mathrm{Sp^{\tau^{\mathrm{max}} }}(f)] \\
		%		=& (\frac{2}{5}-\bar{\alpha})^2+(\frac{11}{20}-\bar{\alpha})^2+ (\frac{13}{20}-\bar{\alpha})^2+ (\frac{7}{10} - \bar{\alpha})^2 + ( \frac{4}{5} -\bar{\alpha})^2+(\frac{17}{20}  - \bar{\alpha})^2 + (\frac{9}{10} - \bar{\alpha})^2\\
		%		& + (\frac{19}{20}- \bar{\alpha})^2 +( 0 - \bar{\alpha})^2 + (\frac{21}{20} - \bar{\alpha})^2  + ( \frac{11}{10} - \bar{\alpha})^2 + ( \frac{23}{20} - \bar{\alpha})^2 + ( \frac{6}{5} - \bar{\alpha})^2\\
		%		& +( \frac{13}{10} - \bar{\alpha})^2 +( \frac{27}{20}  - \bar{\alpha})^2 \\
		%		=& \frac{91}{80}  .
		%	\end{align*}
	%	Then it suffices to prove $ \frac{91}{80}  - \frac{15}{12} \cdot \frac{19}{20} \leq 0,  $ i.e., $ - \frac{1}{20} \leq 0, $
	%	which holds obviously.
	%	
	%	
	%	
	
	\noindent\textbf{(21) $W_{18}$.}
	
	Since $  \tau_{f_0}  = \mu_f - 2 = 16, $ we need to verify the cases of $\tau^{\mathrm{max}} = 16 $ or $ 17. $ 
	If  $\tau^{\mathrm{max}} = 16, $ 
	\begin{align*}
		& \bar{\alpha} =\frac{1}{\mu_f -2}\cdot ( \frac{11}{28}+\frac{15}{28}+ \frac{9}{14} + \frac{11}{14} 
		+\frac{23}{28} + \frac{25}{28} + \frac{13}{14} + \frac{27}{28}+\frac{29}{28} + \frac{15}{14} + \frac{31}{28} + \frac{33}{28} + \frac{17}{14} \\
		&+ \frac{37}{28} +\frac{19}{14})\\
		=&\frac{431}{448}.
	\end{align*}
	\begin{align*}
		& (\mu_f -  2) \cdot \mathrm{Var}[\mathrm{Sp^{\tau^{\mathrm{max}} }}(f)] \\
		=& (\frac{11}{28}-\bar{\alpha})^2+(\frac{15}{28}-\bar{\alpha})^2+ (\frac{9}{14}-\bar{\alpha})^2+ (\frac{11}{14} - \bar{\alpha})^2 + ( \frac{23}{28} -\bar{\alpha})^2+(\frac{25}{28}  - \bar{\alpha})^2 + (\frac{13}{14} - \bar{\alpha})^2 \\
		&+ (\frac{27}{28}- \bar{\alpha})^2  + (\frac{29}{28} - \bar{\alpha})^2  + ( \frac{15}{14} - \bar{\alpha})^2 + ( \frac{31}{28} - \bar{\alpha})^2 + ( \frac{33}{28} - \bar{\alpha})^2\\
		& +( \frac{17}{14} - \bar{\alpha})^2 +( \frac{37}{28}  - \bar{\alpha})^2 +( \frac{19}{14} - \bar{\alpha})^2 \\
		=& \frac{14789}{12544}  .
	\end{align*}
	And $ \alpha_{\mu_f -2}-\alpha_1= \frac{19}{14} - \frac{11}{28} =\frac{27}{28} .  $
	Then it suffices to prove $ \frac{14789}{12544}  - \frac{16}{12} \cdot \frac{27}{28} \leq 0,  $ i.e., $ - \frac{11}{100} \leq 0, $
	which holds obviously.

	If  $\tau^{\mathrm{max}} = 17, $ 
	\begin{align*}
		& \bar{\alpha} =\frac{1}{\mu_f -1}\cdot ( \frac{11}{28}+\frac{15}{28}+ \frac{9}{14} + \frac{11}{14} 
		+\frac{23}{28} + \frac{25}{28} + \frac{13}{14} + \frac{27}{28}+\frac{29}{28} + \frac{15}{14} + \frac{31}{28} + \frac{33}{28} + \frac{17}{14} \\
		&+ \frac{37}{28} +\frac{19}{14} + \frac{41}{28})\\
		=&\frac{27}{28}.
	\end{align*}
	\begin{align*}
		& (\mu_f -  1) \cdot \mathrm{Var}[\mathrm{Sp^{\tau^{\mathrm{max}} }}(f)] \\
		=& (\frac{11}{28}-\bar{\alpha})^2+(\frac{15}{28}-\bar{\alpha})^2+ (\frac{9}{14}-\bar{\alpha})^2+ (\frac{11}{14} - \bar{\alpha})^2 + ( \frac{23}{28} -\bar{\alpha})^2+(\frac{25}{28}  - \bar{\alpha})^2 + (\frac{13}{14} - \bar{\alpha})^2 \\
		&+ (\frac{27}{28}- \bar{\alpha})^2  + (\frac{29}{28} - \bar{\alpha})^2  + ( \frac{15}{14} - \bar{\alpha})^2 + ( \frac{31}{28} - \bar{\alpha})^2 + ( \frac{33}{28} - \bar{\alpha})^2\\
		& +( \frac{17}{14} - \bar{\alpha})^2 +( \frac{37}{28}  - \bar{\alpha})^2 +( \frac{19}{14} - \bar{\alpha})^2 +(\frac{41}{28} -\bar{\alpha})^2\\
		=& \frac{561}{392}  .
	\end{align*}
	And $ \alpha_{\mu_f -1}-\alpha_1= \frac{41}{28} - \frac{11}{28} =\frac{30}{28} .  $
	Then it suffices to prove $ \frac{561}{392}  - \frac{17}{12} \cdot \frac{30}{28} \leq 0,  $ 
	i.e., $ - \frac{17}{196} \leq 0, $
	which holds obviously.
	%	Considering the average value $ \bar{\alpha}=1, $
	%	\begin{align*}
		%		& (\mu_f -  1) \cdot \mathrm{Var}[\mathrm{Sp^{\tau^{\mathrm{max}} }}(f)] \\
		%		=& (\frac{11}{28}-\bar{\alpha})^2+(\frac{15}{28}-\bar{\alpha})^2+ (\frac{9}{14}-\bar{\alpha})^2+ (\frac{11}{14} - \bar{\alpha})^2 + ( \frac{23}{28} -\bar{\alpha})^2+(\frac{25}{28}  - \bar{\alpha})^2 + (\frac{13}{14} - \bar{\alpha})^2\\
		%		& + (\frac{27}{28}- \bar{\alpha})^2  + (\frac{29}{28} - \bar{\alpha})^2  + ( \frac{15}{14} - \bar{\alpha})^2 + ( \frac{31}{28} - \bar{\alpha})^2 + ( \frac{33}{28} - \bar{\alpha})^2\\
		%		& +( \frac{17}{14} - \bar{\alpha})^2 +( \frac{37}{28}  - \bar{\alpha})^2 +( \frac{19}{14} - \bar{\alpha})^2 \\
		%		=& \frac{485}{392}.
		%	\end{align*}
	%	Then it suffices to prove $ \frac{485}{392}- \frac{16}{12} \cdot \frac{27}{28}  \leq 0,  $ i.e., $ - \frac{19}{392} \leq 0, $
	%	which holds obviously.
	%	
	
	\noindent\textbf{(22) $Q_{16}$.}
	
	Since $ \tau_{f_0} = \mu_f - 2 = 14, $ we need to verify the cases of $\tau^{\mathrm{max}} = 14 $ or $ 15. $ 
	If  $\tau^{\mathrm{max}} = 14, $ 
	\begin{align*}
		& \bar{\alpha} =\frac{1}{\mu_f -2 }\cdot ( \frac{19}{21}+\frac{20}{21}+ \frac{25}{21} + \frac{26}{21} 
		+2\cdot \frac{4}{3} + \frac{29}{21} + \frac{31}{21} +\frac{32}{21} + \frac{34}{21} + 2\cdot \frac{5}{3} + \frac{37}{21} + \frac{38}{21} )\\
		=&\frac{139}{98}.
	\end{align*}
	\begin{align*}
		& (\mu_f -  2) \cdot \mathrm{Var}[\mathrm{Sp^{\tau^{\mathrm{max}} }}(f)] \\
		=& (\frac{19}{21}-\bar{\alpha})^2+(\frac{20}{21}-\bar{\alpha})^2+ (\frac{25}{21}-\bar{\alpha})^2+ (\frac{26}{21} - \bar{\alpha})^2 + 2\cdot ( \frac{4}{3} -\bar{\alpha})^2+(\frac{29}{21}  - \bar{\alpha})^2 + (\frac{31}{21} - \bar{\alpha})^2\\
		& + (\frac{32}{21}- \bar{\alpha})^2  + (\frac{34}{21} - \bar{\alpha})^2  + 2\cdot ( \frac{5}{3} - \bar{\alpha})^2 + ( \frac{37}{21} - \bar{\alpha})^2 + ( \frac{38}{21} - \bar{\alpha})^2\\
		=& \frac{6361}{6174}  .
	\end{align*}
	And $ \alpha_{\mu_f -2}-\alpha_1= \frac{38}{21} - \frac{19}{21} =\frac{19}{21} .  $
	Then it suffices to prove $\frac{6361}{6174} - \frac{14}{12} \cdot \frac{19}{21} \leq 0,  $ i.e., $ - \frac{26}{1029} \leq 0, $
	which holds obviously.

	If  $\tau^{\mathrm{max}} = 15, $ 
	\begin{align*}
		& \bar{\alpha} =\frac{1}{\mu_f -1 }\cdot ( \frac{19}{21}+\frac{20}{21}+ \frac{25}{21} + \frac{26}{21} 
		+2\cdot \frac{4}{3} + \frac{29}{21} + \frac{31}{21} +\frac{32}{21} + \frac{34}{21} + 2\cdot \frac{5}{3} + \frac{37}{21} + \frac{38}{21} +\frac{43}{21} )\\
		=&\frac{92}{63}.
	\end{align*}
	\begin{align*}
		& (\mu_f -  1) \cdot \mathrm{Var}[\mathrm{Sp^{\tau^{\mathrm{max}} }}(f)] \\
		=& (\frac{19}{21}-\bar{\alpha})^2+(\frac{20}{21}-\bar{\alpha})^2+ (\frac{25}{21}-\bar{\alpha})^2+ (\frac{26}{21} - \bar{\alpha})^2 + 2\cdot ( \frac{4}{3} -\bar{\alpha})^2+(\frac{29}{21}  - \bar{\alpha})^2 + (\frac{31}{21} - \bar{\alpha})^2\\
		& + (\frac{32}{21}- \bar{\alpha})^2  + (\frac{34}{21} - \bar{\alpha})^2  + 2\cdot ( \frac{5}{3} - \bar{\alpha})^2 + ( \frac{37}{21} - \bar{\alpha})^2 + ( \frac{38}{21} - \bar{\alpha})^2 +(\frac{43}{21} -\bar{\alpha})^2\\
		=& \frac{1852}{1323}  .
	\end{align*}
	And $ \alpha_{\mu_f -1}-\alpha_1= \frac{43}{21} - \frac{19}{21} =\frac{24}{21} .  $
	Then it suffices to prove $ \frac{1852}{1323}  - \frac{15}{12} \cdot \frac{24}{21} \leq 0,  $ 
	i.e., $ - \frac{38}{1323} \leq 0, $
	which holds obviously.
	%	Considering the average value $ \bar{\alpha}=\frac{3}{2}, $
	%	\begin{align*}
		%		=& (\frac{19}{21}-\bar{\alpha})^2+(\frac{20}{21}-\bar{\alpha})^2+ (\frac{25}{21}-\bar{\alpha})^2+ (\frac{26}{21} - \bar{\alpha})^2 + 2\cdot ( \frac{4}{3} -\bar{\alpha})^2+(\frac{29}{21}  - \bar{\alpha})^2 + (\frac{31}{21} - \bar{\alpha})^2 \\
		%		&+ (\frac{32}{21}- \bar{\alpha})^2  + (\frac{34}{21} - \bar{\alpha})^2  + 2\cdot ( \frac{5}{3} - \bar{\alpha})^2 + ( \frac{37}{21} - \bar{\alpha})^2 + ( \frac{38}{21} - \bar{\alpha})^2\\
		%		=& \frac{991}{882}.
		%	\end{align*}
	%	Then it suffices to prove $ \frac{991}{882}- \frac{14}{12} \cdot \frac{19}{21}  \leq 0,  $ i.e., $ \frac{10}{147} \leq 0, $
	%	which holds obviously.
	%	%猜想对Q_16不成立
	%	%
	%	%
	%	%
	%	
	
	\noindent\textbf{(23) $Q_{17}$.}
	Since $  \tau_{f_0}  = \mu_f - 2 = 15, $ we need to verify the cases of $\tau^{\mathrm{max}} = 15 $ or $ 16. $ 
	If  $\tau^{\mathrm{max}} = 15, $ 
	\begin{align*}
		& \bar{\alpha} =\frac{1}{\mu_f -2 }\cdot ( \frac{9}{10}+\frac{31}{30}+ \frac{7}{6} + \frac{37}{30} 
		+\frac{13}{10} + \frac{4}{3} + \frac{41}{30} + \frac{43}{30}  + \frac{3}{2} +\frac{47}{30} + \frac{49}{30} + \frac{5}{3} + \frac{17}{10} \\
		&+ \frac{53}{30} + \frac{11}{6})\\
		=&\frac{643}{450}.
	\end{align*}
	\begin{align*}
		& (\mu_f -  2) \cdot \mathrm{Var}[\mathrm{Sp^{\tau^{\mathrm{max}} }}(f)] \\
		=& (\frac{9}{10}-\bar{\alpha})^2+(\frac{31}{30}-\bar{\alpha})^2+ (\frac{7}{6}-\bar{\alpha})^2+ (\frac{37}{30} - \bar{\alpha})^2+(\frac{13}{10}  - \bar{\alpha})^2 +  ( \frac{4}{3} -\bar{\alpha})^2 \\
		&+ (\frac{41}{30} - \bar{\alpha})^2 + (\frac{43}{30}- \bar{\alpha})^2 + (\frac{3}{2} -\bar{\alpha})^2  + (\frac{47}{30} - \bar{\alpha})^2  +(\frac{49}{30}- \bar{\alpha})^2 + ( \frac{5}{3} - \bar{\alpha})^2 \\
		&+ ( \frac{17}{10} - \bar{\alpha})^2 + ( \frac{53}{30} - \bar{\alpha})^2 + ( \frac{11}{6} - \bar{\alpha})^2\\
		=& \frac{7063}{6750}  .
	\end{align*}
	And $ \alpha_{\mu_f -2}-\alpha_1= \frac{11}{6} - \frac{9}{10} =\frac{28}{30} .  $
	Then it suffices to prove $  \frac{7063}{6750}  - \frac{15}{12} \cdot \frac{28}{30} \leq 0,  $ i.e., $ - \frac{406}{3375} \leq 0, $
	which holds obviously.

	If  $\tau^{\mathrm{max}} = 16, $ 
	\begin{align*}
		& \bar{\alpha} =\frac{1}{\mu_f -1 }\cdot ( \frac{9}{10}+\frac{31}{30}+ \frac{7}{6} + \frac{37}{30} 
		+\frac{13}{10} + \frac{4}{3} + \frac{41}{30} + \frac{43}{30}  + \frac{3}{2} +\frac{47}{30} + \frac{49}{30} + \frac{5}{3} + \frac{17}{10} \\
		&+ \frac{53}{30} + \frac{11}{6} + \frac{59}{30})\\
		=&\frac{117}{80}.
	\end{align*}
	\begin{align*}
		& (\mu_f -  1) \cdot \mathrm{Var}[\mathrm{Sp^{\tau^{\mathrm{max}} }}(f)] \\
		=& (\frac{9}{10}-\bar{\alpha})^2+(\frac{31}{30}-\bar{\alpha})^2+ (\frac{7}{6}-\bar{\alpha})^2+ (\frac{37}{30} - \bar{\alpha})^2+(\frac{13}{10}  - \bar{\alpha})^2 +  ( \frac{4}{3} -\bar{\alpha})^2 \\
		&+ (\frac{41}{30} - \bar{\alpha})^2 + (\frac{43}{30}- \bar{\alpha})^2 + (\frac{3}{2} -\bar{\alpha})^2  + (\frac{47}{30} - \bar{\alpha})^2  +(\frac{49}{30}- \bar{\alpha})^2 + ( \frac{5}{3} - \bar{\alpha})^2 \\
		&+ ( \frac{17}{10} - \bar{\alpha})^2 + ( \frac{53}{30} - \bar{\alpha})^2 + ( \frac{11}{6} - \bar{\alpha})^2 +( \frac{59}{30} -\bar{\alpha})^2\\
		=& \frac{527}{400}  .
	\end{align*}
	And $ \alpha_{\mu_f -1}-\alpha_1= \frac{59}{30} - \frac{9}{10} =\frac{32}{30} .  $
	Then it suffices to prove $  \frac{527}{400} - \frac{16}{12} \cdot \frac{32}{30} \leq 0,  $ 
	i.e., $ - \frac{377}{3600} \leq 0, $
	which holds obviously.
	
	%	Considering the average value $ \bar{\alpha}=\frac{3}{2}, $
	%	\begin{align*}
		%		& (\mu_f -  1) \cdot \mathrm{Var}[\mathrm{Sp^{\tau^{\mathrm{max}} }}(f)] \\
		%		=& (\frac{9}{10}-\bar{\alpha})^2+(\frac{31}{30}-\bar{\alpha})^2+ (\frac{7}{6}-\bar{\alpha})^2+ (\frac{37}{30} - \bar{\alpha})^2+(\frac{13}{10}  - \bar{\alpha})^2 +  ( \frac{4}{3} -\bar{\alpha})^2 \\
		%		&+ (\frac{41}{30} - \bar{\alpha})^2 + (\frac{43}{30}- \bar{\alpha})^2 + (\frac{3}{2} -\bar{\alpha})^2 + (\frac{47}{30} - \bar{\alpha})^2  +(\frac{49}{30}- \bar{\alpha})^2 + ( \frac{5}{3} - \bar{\alpha})^2 \\
		%		&+ ( \frac{17}{10} - \bar{\alpha})^2 + ( \frac{53}{30} - \bar{\alpha})^2 + ( \frac{11}{6} - \bar{\alpha})^2\\
		%		=& \frac{101}{90}  .
		%	\end{align*}
	%	Then it suffices to prove $ \frac{101}{90}  - \frac{15}{12} \cdot \frac{28}{30} \leq 0,  $ i.e., $ - \frac{2}{45} \leq 0, $
	%	which holds obviously.
	%	
	%	
	
	\noindent\textbf{(24) $Q_{18}$.}
	
	Since $ \tau_{f_0} = \mu_f - 2 = 16, $ we need to verify the cases of $\tau^{\mathrm{max}} = 16 $ or $ 17. $ 
	If  $\tau^{\mathrm{max}} = 16, $ 
	\begin{align*}
		& \bar{\alpha} =\frac{1}{\mu_f - 2}\cdot ( \frac{43}{48}+\frac{49}{48}+ \frac{55}{48} + \frac{59}{48} 
		+\frac{61}{48} + \frac{4}{3} + \frac{65}{48} + \frac{67}{48}  + \frac{71}{48} +\frac{73}{48} + \frac{77}{48} + \frac{79}{48} + \frac{5}{3} + \frac{83}{48} \\
		&+ \frac{85}{48} + \frac{89}{48})\\
		=&\frac{275}{192}.
	\end{align*}
	\begin{align*}
		& (\mu_f -  2) \cdot \mathrm{Var}[\mathrm{Sp^{\tau^{\mathrm{max}} }}(f)] \\
		=& (\frac{43}{48}-\bar{\alpha})^2+(\frac{49}{48}-\bar{\alpha})^2+ (\frac{55}{48}-\bar{\alpha})^2+ (\frac{59}{48} - \bar{\alpha})^2+(\frac{61}{48}  - \bar{\alpha})^2 +  ( \frac{4}{3} -\bar{\alpha})^2 \\
		&+ (\frac{65}{48} - \bar{\alpha})^2 + (\frac{67}{48}- \bar{\alpha})^2 + (\frac{71}{48} -\bar{\alpha})^2  + (\frac{73}{48} - \bar{\alpha})^2  +(\frac{77}{48}- \bar{\alpha})^2 + ( \frac{79}{48} - \bar{\alpha})^2 \\
		&+ ( \frac{5}{3} - \bar{\alpha})^2 + ( \frac{83}{48} - \bar{\alpha})^2 + ( \frac{85}{48} - \bar{\alpha})^2 + ( \frac{89}{48} - \bar{\alpha})^2\\
		=& \frac{293}{256}  .
	\end{align*}
	And $ \alpha_{\mu_f -2}-\alpha_1= \frac{89}{48} - \frac{43}{48} =\frac{46}{48} .  $
	Then it suffices to prove $  \frac{293}{256}   - \frac{16}{12} \cdot \frac{46}{48} \leq 0,  $ i.e., $ - \frac{307}{2304} \leq 0, $
	which holds obviously.

	If  $\tau^{\mathrm{max}} = 17, $ 
	\begin{align*}
		& \bar{\alpha} =\frac{1}{\mu_f - 1}\cdot ( \frac{43}{48}+\frac{49}{48}+ \frac{55}{48} + \frac{59}{48} 
		+\frac{61}{48} + \frac{4}{3} + \frac{65}{48} + \frac{67}{48}  + \frac{71}{48} +\frac{73}{48} + \frac{77}{48} + \frac{79}{48} + \frac{5}{3} + \frac{83}{48} \\
		&+ \frac{85}{48} + \frac{89}{48} + \frac{95}{48})\\
		=&\frac{1195}{816}.
	\end{align*}
	\begin{align*}
		& (\mu_f -  1) \cdot \mathrm{Var}[\mathrm{Sp^{\tau^{\mathrm{max}} }}(f)] \\
		=& (\frac{43}{48}-\bar{\alpha})^2+(\frac{49}{48}-\bar{\alpha})^2+ (\frac{55}{48}-\bar{\alpha})^2+ (\frac{59}{48} - \bar{\alpha})^2+(\frac{61}{48}  - \bar{\alpha})^2 +  ( \frac{4}{3} -\bar{\alpha})^2 \\
		&+ (\frac{65}{48} - \bar{\alpha})^2 + (\frac{67}{48}- \bar{\alpha})^2 + (\frac{71}{48} -\bar{\alpha})^2  + (\frac{73}{48} - \bar{\alpha})^2  +(\frac{77}{48}- \bar{\alpha})^2 + ( \frac{79}{48} - \bar{\alpha})^2 \\
		&+ ( \frac{5}{3} - \bar{\alpha})^2 + ( \frac{83}{48} - \bar{\alpha})^2 + ( \frac{85}{48} - \bar{\alpha})^2 + ( \frac{89}{48} - \bar{\alpha})^2 +(\frac{95}{48} -\bar{\alpha})^2\\
		=& \frac{3103}{2176}  .
	\end{align*}
	And $ \alpha_{\mu_f -1}-\alpha_1= \frac{95}{48} - \frac{43}{48} =\frac{52}{48} .  $
	Then it suffices to prove $ \frac{3103}{2176} - \frac{17}{12} \cdot \frac{52}{48} \leq 0,  $ 
	i.e., $ - \frac{2129}{19584} \leq 0, $
	which holds obviously.
	
	%	Considering the average value $ \bar{\alpha}=\frac{3}{2}, $
	%	\begin{align*}
		%		& (\mu_f -  1) \cdot \mathrm{Var}[\mathrm{Sp^{\tau^{\mathrm{max}} }}(f)] \\
		%		=& (\frac{43}{48}-\bar{\alpha})^2+(\frac{49}{48}-\bar{\alpha})^2+ (\frac{55}{48}-\bar{\alpha})^2+ (\frac{59}{48} - \bar{\alpha})^2+(\frac{61}{48}  - \bar{\alpha})^2 +  ( \frac{4}{3} -\bar{\alpha})^2\\
		%		& + (\frac{65}{48} - \bar{\alpha})^2 + (\frac{67}{48}- \bar{\alpha})^2 + (\frac{71}{48} -\bar{\alpha})^2  + (\frac{73}{48} - \bar{\alpha})^2  +(\frac{77}{48}- \bar{\alpha})^2 + ( \frac{79}{48} - \bar{\alpha})^2\\
		%		& + ( \frac{5}{3} - \bar{\alpha})^2 + ( \frac{83}{48} - \bar{\alpha})^2 + ( \frac{85}{48} - \bar{\alpha})^2 + ( \frac{89}{48} - \bar{\alpha})^2\\
		%		=& \frac{1403}{1152}  .
		%	\end{align*}
	%	Then it suffices to prove $  \frac{1403}{1152} - \frac{16}{12} \cdot \frac{46}{48} \leq 0,  $ i.e., $ - \frac{23}{384} \leq 0, $
	%	which holds obviously.
	%	
	
	\noindent\textbf{(25) $S_{16}$.}
	
	Since $  \tau_{f_0} = \mu_f - 2 = 14, $ we need to verify the cases of $\tau^{\mathrm{max}} = 14 $ or $ 15. $ 
	If  $\tau^{\mathrm{max}} = 14, $ 
	\begin{align*}
		& \bar{\alpha} =\frac{1}{\mu_f -2}\cdot ( \frac{15}{17}+\frac{18}{17}+ \frac{20}{17} + \frac{21}{17} 
		+\frac{22}{17} + \frac{23}{17} + \frac{24}{17} + \frac{25}{17}  +\frac{26}{17} + \frac{27}{17} + \frac{28}{17} + \frac{29}{17} + \frac{30}{17} + \frac{31}{17} )\\
		=&\frac{339}{238}.
	\end{align*}
	\begin{align*}
		& (\mu_f -  2) \cdot \mathrm{Var}[\mathrm{Sp^{\tau^{\mathrm{max}} }}(f)] \\
		=& (\frac{15}{17}-\bar{\alpha})^2+(\frac{18}{17}-\bar{\alpha})^2+ (\frac{20}{17}-\bar{\alpha})^2+ (\frac{21}{17} - \bar{\alpha})^2+(\frac{22}{17}  - \bar{\alpha})^2 +  ( \frac{23}{17} -\bar{\alpha})^2 + (\frac{24}{17} - \bar{\alpha})^2 \\
		&+ (\frac{25}{17}- \bar{\alpha})^2  + (\frac{26}{17} - \bar{\alpha})^2  +(\frac{27}{17}- \bar{\alpha})^2 + ( \frac{28}{17} - \bar{\alpha})^2 + ( \frac{29}{17} - \bar{\alpha})^2 + ( \frac{30}{17} - \bar{\alpha})^2 + ( \frac{31}{17} - \bar{\alpha})^2\\
		=& \frac{4009}{4046}  .
	\end{align*}
	And $ \alpha_{\mu_f -2}-\alpha_1= \frac{31}{17} - \frac{15}{17} =\frac{16}{17} .  $
	Then it suffices to prove $  \frac{4009}{4046}  - \frac{14}{12} \cdot\frac{16}{17} \leq 0,  $ i.e., $ - \frac{1301}{12138} \leq 0, $
	which holds obviously.

	If  $\tau^{\mathrm{max}} = 15, $ 
	\begin{align*}
		& \bar{\alpha} =\frac{1}{\mu_f -1}\cdot ( \frac{15}{17}+\frac{18}{17}+ \frac{20}{17} + \frac{21}{17} 
		+\frac{22}{17} + \frac{23}{17} + \frac{24}{17} + \frac{25}{17}  +\frac{26}{17} + \frac{27}{17} + \frac{28}{17} + \frac{29}{17} + \frac{30}{17} \\
		&+ \frac{31}{17}  + \frac{33}{17})\\
		=&\frac{124}{85}.
	\end{align*}
	\begin{align*}
		& (\mu_f -  1) \cdot \mathrm{Var}[\mathrm{Sp^{\tau^{\mathrm{max}} }}(f)] \\
		=& (\frac{15}{17}-\bar{\alpha})^2+(\frac{18}{17}-\bar{\alpha})^2+ (\frac{20}{17}-\bar{\alpha})^2+ (\frac{21}{17} - \bar{\alpha})^2+(\frac{22}{17}  - \bar{\alpha})^2 +  ( \frac{23}{17} -\bar{\alpha})^2 + (\frac{24}{17} - \bar{\alpha})^2 \\
		&+ (\frac{25}{17}- \bar{\alpha})^2  + (\frac{26}{17} - \bar{\alpha})^2  +(\frac{27}{17}- \bar{\alpha})^2 + ( \frac{28}{17} - \bar{\alpha})^2 + ( \frac{29}{17} - \bar{\alpha})^2 + ( \frac{30}{17} - \bar{\alpha})^2 \\
		&+ ( \frac{31}{17} - \bar{\alpha})^2 +(\frac{33}{17} -\bar{\alpha})^2\\
		=& \frac{1792}{1445}  .
	\end{align*}
	And $ \alpha_{\mu_f -1}-\alpha_1= \frac{33}{17} - \frac{15}{17} =\frac{18}{17} .  $
	Then it suffices to prove $  \frac{1792}{1445} - \frac{15}{12} \cdot\frac{18}{17} \leq 0,  $ i.e., $ - \frac{241}{2890} \leq 0, $
	which holds obviously.
	%	Considering the average value $ \bar{\alpha}=\frac{3}{2}, $
	%	\begin{align*}
		%		& (\mu_f -  1) \cdot \mathrm{Var}[\mathrm{Sp^{\tau^{\mathrm{max}} }}(f)] \\
		%		=& (\frac{15}{17}-\bar{\alpha})^2+(\frac{18}{17}-\bar{\alpha})^2+ (\frac{20}{17}-\bar{\alpha})^2+ (\frac{21}{17} - \bar{\alpha})^2+(\frac{22}{17}  - \bar{\alpha})^2 +  ( \frac{23}{17} -\bar{\alpha})^2 + (\frac{24}{17} - \bar{\alpha})^2 \\
		%		&+ (\frac{25}{17}- \bar{\alpha})^2  + (\frac{26}{17} - \bar{\alpha})^2  +(\frac{27}{17}- \bar{\alpha})^2 + ( \frac{28}{17} - \bar{\alpha})^2 + ( \frac{29}{17} - \bar{\alpha})^2\\
		%		& + ( \frac{30}{17} - \bar{\alpha})^2 + ( \frac{31}{17} - \bar{\alpha})^2\\
		%		=& \frac{619}{578}  .
		%	\end{align*}
	%	Then it suffices to prove $  \frac{619}{578}  - \frac{16}{12} \cdot \frac{16}{17}  \leq 0,  $ i.e., $ - \frac{47}{1734} \leq 0, $
	%	which holds obviously.
	%	
	
	\noindent\textbf{(26) $S_{17}$.}
	
	Since $  \tau_{f_0}  = \mu_f - 2 = 15, $ we need to verify the cases of $\tau^{\mathrm{max}} = 15 $ or $ 16. $ 
	If  $\tau^{\mathrm{max}} = 15, $ 
	\begin{align*}
		& \bar{\alpha} =\frac{1}{\mu_f - 2}\cdot ( \frac{7}{8}+\frac{25}{24}+ \frac{7}{6} + \frac{29}{24} 
		+\frac{31}{24} + \frac{4}{3} + \frac{11}{8} + \frac{35}{24} + \frac{3}{2}  +\frac{37}{24} + \frac{13}{8} + \frac{5}{3} + \frac{41}{24} + \frac{43}{24} + \frac{11}{6} )\\
		=&\frac{257}{180}.
	\end{align*}
	\begin{align*}
		& (\mu_f -  2) \cdot \mathrm{Var}[\mathrm{Sp^{\tau^{\mathrm{max}} }}(f)] \\
		=& (\frac{7}{8}-\bar{\alpha})^2+(\frac{25}{24}-\bar{\alpha})^2+ (\frac{7}{6}-\bar{\alpha})^2+ (\frac{29}{24} - \bar{\alpha})^2+(\frac{31}{24}  - \bar{\alpha})^2 +  ( \frac{4}{3} -\bar{\alpha})^2 + (\frac{11}{8} - \bar{\alpha})^2\\
		& + (\frac{35}{24}- \bar{\alpha})^2 + (\frac{3}{2} - \bar{\alpha})^2+ (\frac{37}{24} - \bar{\alpha})^2  +(\frac{13}{8}- \bar{\alpha})^2 + ( \frac{5}{3} - \bar{\alpha})^2 + ( \frac{41}{24} - \bar{\alpha})^2 \\
		&+ ( \frac{43}{24} - \bar{\alpha})^2 + ( \frac{11}{6} - \bar{\alpha})^2\\
		=& \frac{4717}{4320}  .
	\end{align*}
	And $ \alpha_{\mu_f -2}-\alpha_1= \frac{11}{6} - \frac{7}{8} =\frac{23}{24} .  $
	Then it suffices to prove $  \frac{4009}{4046}  - \frac{15}{12} \cdot\frac{23}{24} \leq 0,  $ i.e., $ - \frac{229}{2160} \leq 0, $
	which holds obviously.

	If  $\tau^{\mathrm{max}} = 16, $ 
	\begin{align*}
		& \bar{\alpha} =\frac{1}{\mu_f - 1}\cdot ( \frac{7}{8}+\frac{25}{24}+ \frac{7}{6} + \frac{29}{24} 
		+\frac{31}{24} + \frac{4}{3} + \frac{11}{8} + \frac{35}{24} + \frac{3}{2}  +\frac{37}{24} + \frac{13}{8} + \frac{5}{3} + \frac{41}{24} + \frac{43}{24} \\
		&+ \frac{11}{6} + \frac{47}{24})\\
		=&\frac{187}{128}.
	\end{align*}
	\begin{align*}
		& (\mu_f -  1) \cdot \mathrm{Var}[\mathrm{Sp^{\tau^{\mathrm{max}} }}(f)] \\
		=& (\frac{7}{8}-\bar{\alpha})^2+(\frac{25}{24}-\bar{\alpha})^2+ (\frac{7}{6}-\bar{\alpha})^2+ (\frac{29}{24} - \bar{\alpha})^2+(\frac{31}{24}  - \bar{\alpha})^2 +  ( \frac{4}{3} -\bar{\alpha})^2 + (\frac{11}{8} - \bar{\alpha})^2\\
		& + (\frac{35}{24}- \bar{\alpha})^2 + (\frac{3}{2} - \bar{\alpha})^2+ (\frac{37}{24} - \bar{\alpha})^2  +(\frac{13}{8}- \bar{\alpha})^2 + ( \frac{5}{3} - \bar{\alpha})^2 + ( \frac{41}{24} - \bar{\alpha})^2 \\
		&+ ( \frac{43}{24} - \bar{\alpha})^2 + ( \frac{11}{6} - \bar{\alpha})^2 +(  \frac{47}{24} -\bar{\alpha})^2\\
		=& \frac{4165}{3072}  .
	\end{align*}
	And $ \alpha_{\mu_f -1}-\alpha_1= \frac{47}{24} - \frac{7}{8} =\frac{26}{24} .  $
	Then it suffices to prove $ \frac{4165}{3072}- \frac{16}{12} \cdot\frac{26}{24} \leq 0,  $ 
	i.e., $ - \frac{817}{9216} \leq 0, $
	which holds obviously.
	%	Considering the average value $ \bar{\alpha}=\frac{3}{2}, $
	%	\begin{align*}
		%		& (\mu_f -  1) \cdot \mathrm{Var}[\mathrm{Sp^{\tau^{\mathrm{max}} }}(f)] \\
		%		=& (\frac{7}{8}-\bar{\alpha})^2+(\frac{25}{24}-\bar{\alpha})^2+ (\frac{7}{6}-\bar{\alpha})^2+ (\frac{29}{24} - \bar{\alpha})^2+(\frac{31}{24}  - \bar{\alpha})^2 +  ( \frac{4}{3} -\bar{\alpha})^2 + (\frac{11}{8} - \bar{\alpha})^2 \\
		%		&+ (\frac{35}{24}- \bar{\alpha})^2 + (\frac{3}{2} - \bar{\alpha})^2+ (\frac{37}{24} - \bar{\alpha})^2  +(\frac{13}{8}- \bar{\alpha})^2 + ( \frac{5}{3} - \bar{\alpha})^2 + ( \frac{41}{24} - \bar{\alpha})^2\\
		%		& + ( \frac{43}{24} - \bar{\alpha})^2 + ( \frac{11}{6} - \bar{\alpha})^2\\
		%		=& \frac{337}{288}  .
		%	\end{align*}
	%	Then it suffices to prove $  \frac{337}{288} - \frac{15}{12} \cdot\frac{23}{24} \leq 0,  $ i.e., $ - \frac{1}{36} \leq 0, $
	%	which holds obviously.
	%	
	%	
	%	
	
	\noindent\textbf{(27) $U_{16}$.}
	
	Since $  \tau_{f_0} = \mu_f - 2 = 14, $ we need to verify the cases of $\tau^{\mathrm{max}} = 14 $ or $ 15. $ 
	If  $\tau^{\mathrm{max}} = 14, $ 
	\begin{align*}
		& \bar{\alpha} =\frac{1}{\mu_f -2}\cdot ( \frac{13}{15}+\frac{16}{15}+ 2\cdot\frac{6}{5} + \frac{19}{15} 
		+2\cdot \frac{7}{5} + \frac{22}{15}  +\frac{23}{15} + 2\cdot \frac{8}{5} + \frac{26}{15} + 2 \cdot \frac{9}{5} )\\
		=&\frac{299}{210}.
	\end{align*}
	\begin{align*}
		& (\mu_f -  2) \cdot \mathrm{Var}[\mathrm{Sp^{\tau^{\mathrm{max}} }}(f)] \\
		=& (\frac{13}{15}-\bar{\alpha})^2+(\frac{16}{15}-\bar{\alpha})^2+ 2\cdot (\frac{6}{5}-\bar{\alpha})^2+ (\frac{19}{15} - \bar{\alpha})^2+2\cdot (\frac{7}{5}  - \bar{\alpha})^2 +  ( \frac{22}{15} -\bar{\alpha})^2 \\
		&+ (\frac{23}{15} - \bar{\alpha})^2  +2\cdot (\frac{8}{5}- \bar{\alpha})^2 + ( \frac{26}{15} - \bar{\alpha})^2 + 2\cdot ( \frac{9}{5} - \bar{\alpha})^2\\
		=& \frac{3209}{3150}  .
	\end{align*}
	And $ \alpha_{\mu_f -2}-\alpha_1= \frac{9}{5} - \frac{13}{15} =\frac{14}{15} .  $
	Then it suffices to prove $  \frac{3209}{3150}  - \frac{14}{12} \cdot \frac{14}{15}  \leq 0,  $ i.e., $ - \frac{221}{3150} \leq 0, $
	which holds obviously.

	If  $\tau^{\mathrm{max}} = 15, $ 
	\begin{align*}
		& \bar{\alpha} =\frac{1}{\mu_f -1}\cdot ( \frac{13}{15}+\frac{16}{15}+ 2\cdot\frac{6}{5} + \frac{19}{15} 
		+2\cdot \frac{7}{5} + \frac{22}{15}  +\frac{23}{15} + 2\cdot \frac{8}{5} + \frac{26}{15} + 2 \cdot \frac{9}{5} + \frac{29}{15} )\\
		=&\frac{328}{225}.
	\end{align*}
	\begin{align*}
		& (\mu_f -  1) \cdot \mathrm{Var}[\mathrm{Sp^{\tau^{\mathrm{max}} }}(f)] \\
		=& (\frac{13}{15}-\bar{\alpha})^2+(\frac{16}{15}-\bar{\alpha})^2+ 2\cdot (\frac{6}{5}-\bar{\alpha})^2+ (\frac{19}{15} - \bar{\alpha})^2+2\cdot (\frac{7}{5}  - \bar{\alpha})^2 +  ( \frac{22}{15} -\bar{\alpha})^2 \\
		&+ (\frac{23}{15} - \bar{\alpha})^2  +2\cdot (\frac{8}{5}- \bar{\alpha})^2 + ( \frac{26}{15} - \bar{\alpha})^2 + 2\cdot ( \frac{9}{5} - \bar{\alpha})^2 +( \frac{29}{15} -\bar{\alpha})^2\\
		=& \frac{4256}{3375}  .
	\end{align*}
	And $ \alpha_{\mu_f -1}-\alpha_1= \frac{29}{15} - \frac{13}{15} =\frac{16}{15} .  $
	Then it suffices to prove $   \frac{4256}{3375}  - \frac{15}{12} \cdot \frac{16}{15}  \leq 0,  $ 
	i.e., $ - \frac{244}{3375} \leq 0, $
	which holds obviously.
	
	%	Considering the average value $ \bar{\alpha}=\frac{3}{2}, $
	%	\begin{align*}
		%		& (\mu_f -  1) \cdot \mathrm{Var}[\mathrm{Sp^{\tau^{\mathrm{max}} }}(f)] \\
		%		=& (\frac{13}{15}-\bar{\alpha})^2+(\frac{16}{15}-\bar{\alpha})^2+ 2\cdot (\frac{6}{5}-\bar{\alpha})^2+ (\frac{19}{15} - \bar{\alpha})^2+2\cdot (\frac{7}{5}  - \bar{\alpha})^2 +  ( \frac{22}{15} -\bar{\alpha})^2 \\
		%		&+ (\frac{23}{15} - \bar{\alpha})^2  +2\cdot (\frac{8}{5}- \bar{\alpha})^2 + ( \frac{26}{15} - \bar{\alpha})^2 + 2\cdot ( \frac{9}{5} - \bar{\alpha})^2\\
		%		=& \frac{11}{10}  .
		%	\end{align*}
	%	Then it suffices to prove $   \frac{11}{10}  - \frac{14}{12} \cdot \frac{14}{15}  \leq 0,  $ i.e., $ \frac{1}{90} \leq 0, $
	%	which holds obviously.
	%	%猜想对U_{16}不成立
\end{proof}

\subsection{Generalized Hertling Conjecture for Tjurina Spectrum  for Trimodal Singularities}

\begin{theorem}\label{HCTS3}
	\textbf{\textit{\textbf{Generalized Hertling Conjecture for Tjurina Spectrum}}} holds for singularities of modality 3 with related to Tjurina Spectrum.
\end{theorem}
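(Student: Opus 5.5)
The proof will follow the same pattern as Theorems \ref{HCTS1} and \ref{HCTS2}, running class by class through the trimodal normal forms of Tables 1 and 2. The exceptional classes $NC_{19},NC_{20},NF_{20},NF_{21},VC_{18},VC_{19},VC^\#_{18},VC^\#_{19},VF_{19},VF_{20}$ have $\mu-\tau=0$, since they are quasihomogeneous. For these, $\tau^{\max}=\mu$ and the Tjurina spectrum equals the spectrum. The inequality (GHCTS) is then exactly Hertling's equality from Theorem \ref{3.15}, because by symmetry (Theorem \ref{symmetry of spectrum}) the average is $\frac{n+1}{2}$.

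The remaining series are $NA,NB,VA,VA^\#,VB,VB^\#$. The tables of Subsection \ref{Computation for Tjurina spectrum} give $\mu-\tau_{f_0}=2$ for all of them, so $\tau^{\max}\in\{\mu-2,\mu-1,\mu\}$. If $\tau^{\max}=\mu$, Theorem \ref{3.15} applies again. If $\tau^{\max}=\mu-1$, Theorem \ref{difference conjecture for sptau and sp} shows that the only missing exponent is $\alpha_\mu$. I then need the Hertling inequality for $\alpha_1,\dots,\alpha_{\mu-1}$ with their own mean. If $\tau^{\max}=\mu-2$, the missing exponents are $\alpha_\mu$ and some $\alpha_k$, and Lemma \ref{a simple reduction} reduces the problem to the worst case where the two removed exponents are $\alpha_\mu,\alpha_{\mu-1}$. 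To use that lemma I must check its hypothesis $b\ge\frac12(1-\frac4m)^{-1}(x_m+x_{m-1})$ from the tabulated spectra. Here $b$ is roughly $\frac{n+1}{2}$ and the two top exponents are close to $\frac{n+1}{2}+\frac{\mathrm{range}}{2}$, so the hypothesis amounts to $m$ being large enough relative to a bounded ratio, and I will verify it directly.

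For each series the spectrum consists of a fixed finite block of exponents together with one or two arithmetic progressions. Examples are $\frac{2k+r+4}{2(r+5)}$, $1\le k\le r+5$, and $\frac{2v+2s+7}{2(s+4)}$. Deleting the top one or two exponents leaves a fixed block plus progressions. I compute $\sum(\beta_i-\bar\beta)^2=\sum\beta_i^2-\frac1\tau(\sum\beta_i)^2$ in closed form using $\sum k$ and $\sum k^2$, exactly as in Theorem \ref{main theorem A}. This gives rational functions of $r$ (and $s$). The range is $\alpha_{\mu-1}-\alpha_1$ or $\alpha_{\mu-2}-\alpha_1$, which can be read off the table. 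The second-largest exponent is either a fixed block element (for example $\frac{7}{4}$, or $\frac{3}{2}$ for the curves) or the top term of a progression. Which one occurs depends on the parameter, so for small $r,s$ I split into cases, as was done for $W_{1,p}$. Clearing denominators turns each case into a polynomial inequality whose leading coefficient is negative. I then confirm the finitely many small parameter values directly, or by bounding each term from below as in Theorem \ref{main theorem A}.

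The main obstacle is the two-parameter series $NA_{r,s}$, $VA_{r,s}$, $VA^\#_{2k(+1),s}$. There the centered variance does not separate into independent $r$- and $s$-contributions, because $\bar\beta$ couples the two progressions. My plan is to write $\bar\beta=\frac{n+1}{2}+\varepsilon$, with $\varepsilon=O(1/\tau)$ coming from the deleted exponents. The centered variance is then the variance about $\frac{n+1}{2}$ minus $\tau\varepsilon^2$, and that subtracted term can be discarded for an upper bound. The remaining quantity splits as a sum of one-variable terms, as in the proof of Theorem \ref{main theorem A}. Each term is handled by AM--GM-type bounds like $\frac{a}{48}+\frac{1}{12a}$ with $a\ge5$. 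If the discarded slack turns out too small at the smallest parameters, I will fall back on checking those finitely many cases explicitly.
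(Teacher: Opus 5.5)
\textbf{Gap in the $\tau^{\max}=\mu-2$ step.} Your route is the paper's route, and most of it is fine. Replacing the true mean by $c=\frac{n+1}{2}$ is legitimate, because $\sum(\beta_i-\bar\beta)^2=\sum(\beta_i-c)^2-\tau(\bar\beta-c)^2$. The problem is the reduction via Lemma~\ref{a simple reduction}, which fails in two ways.

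\emph{Its hypothesis can never be verified.} The two largest of $x_1,\dots,x_m$ average at least the mean $b$. Since $(1-\frac4m)^{-1}>1$ for $m>4$, we always have $\frac12(1-\frac4m)^{-1}(x_m+x_{m-1})>b$. Your own heuristic shows this. Substituting gives $\frac{n+1}{2}\ge(1-\frac4m)^{-1}\bigl(\frac{n+1}{2}+\frac{\mathrm{range}}{2}\bigr)$, which is false for every $m$, not a condition on $m$ being large.

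\emph{Its conclusion $V_1\ge V_2$ is false in cases that occur.} If $\alpha_{\mu-1}=\alpha_{\mu-2}$, the range terms cancel. This happens for $NA_{r,r}$ and $VA_{r,r}$, where both progressions end at the same value. Then removing the outermost value $\alpha_{\mu-1}$ lowers the variance more than removing any $\alpha_k$ nearer the mean, so $V_1<V_2$.

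The paper applies the lemma in exactly the same unchecked way, so you inherit this flaw rather than create it. Still, the step must be replaced. The Tjurina spectrum is not constant along $\tau$-constant strata, so for a $\tau$-max form other than $f_0$ you do not know which exponent besides $\alpha_\mu$ is missing.

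\textbf{A repair within your framework.} By Theorem~\ref{f induces fitration lemma}, the other missing exponent satisfies $\alpha_k\ge\alpha_1+1$. For every trimodal class, $\alpha_1+1\ge c$. Hence
\[
\sum_{j\ne k,\mu}(\alpha_j-\bar\beta)^2\le\sum_{j\ne\mu}(\alpha_j-c)^2-(\alpha_k-c)^2 .
\]
Now split into two cases.
\begin{itemize}
\item If $\alpha_k=\alpha_{\mu-1}$, the range is at least $\alpha_{\mu-2}-\alpha_1$, and you subtract $(\alpha_{\mu-1}-c)^2$. This is the case the paper actually computes.
\item If $\alpha_k<\alpha_{\mu-1}$, the range is $\alpha_{\mu-1}-\alpha_1$, and you subtract at least $(\alpha_1+1-c)^2$.
\end{itemize}
Both cases give explicit one- or two-parameter inequalities of the kind you already planned to check. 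Keep the cases separate. If you pair the smaller subtraction with the smaller range, the bound fails for $VB^{\#,1}_{(1)}$: you get $\frac{2002}{1296}$ against $\frac{1998}{1296}$. Each case on its own holds there.
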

\begin{proof}
	% By \textbf{Theorem \ref{Hertling Conjecture for curve singularities}}, 
	% we only need to consider surface cases. Furthermore,
	%  by \textbf{Theorem \ref{Varchenko's constant deformation}} and 
	%  \textbf{Theorem \ref{Hertlin Conjecture for quasihomogeneous singularities}},
	%   exceptional cases, say $VC,VF$ series satisfies Hertling Conjecture.
	%    Hence, it suffices to check the conjecture for $VA,VB$ series. 
	%    We still use $f_0$ to refer to the representative germ for each type.
	By \textbf{Theorem 3.8} in \cite{MR4448602}, we only need to consider those cases which are not quasihomogeneous.

	\noindent\textbf{(1) $NA_{r,0}$.}
	
	Since $  \tau_{f_0} = \mu_f - 2 = 14 + r, $ we need to verify the cases of $\tau^{\mathrm{max}} = 14 +r $ or $ 15+r. $ 
	If  $\tau^{\mathrm{max}} = 14+r, $ 
	\begin{align*}
		&\bar{\alpha}  = \frac{1}{\mu_f -2}\cdot( \frac{2}{5} +\frac{3}{5}+\frac{4}{5} + \frac{4}{5} + 1 + 1 +1 + \frac{6}{5} +\frac{6}{5} + \frac{7}{5} + \sum_{ k=1 } ^{ r+4} \frac{2k+r+4}{ 2(r+5)}) \\
		=&\frac{(10r^2 + 179r + 650)}{(10(r + 5)(14 + r))}.
	\end{align*}
	\begin{align*}
		& (\mu_f -  2) \cdot \mathrm{Var}[\mathrm{Sp^{\tau^{\mathrm{max}} }}(f)] \\
		=&(\frac{2}{5}-\bar{\alpha})^2+(\frac{3}{5}-\bar{\alpha})^2+2\cdot(\frac{4}{5}-\bar{\alpha})^2+3\cdot(1-\bar{\alpha})^2+2\cdot(\frac{6}{5}-\bar{\alpha})^2+(\frac{7}{5}-\bar{\alpha})^2\\
		& + \sum_{u = 1}^{r+4} (\frac{2u+r+4}{2(r+5)}-\bar{\alpha})^2 \\
		=& \frac{25r^4+902r^3+11135r^2+57580r+105900}{300(r + 5)^2(14 + r)}.
	\end{align*}
	And  
	\begin{equation*}
		\alpha_{\mu_f -2}-\alpha_1= \begin{cases}
			\frac{7}{5}-\frac{2}{5}=1, \qquad \forall 0 \leq r \leq 10 \\
			\frac{3(r+4)}{2(r+5)}-\frac{2}{5}, \qquad \forall r \geq 11 
		\end{cases}.
	\end{equation*}
	Then it suffices to prove
	\begin{equation*}
		\begin{cases}
			\frac{25r^4+902r^3+11135r^2+57580r+105900}{300(r + 5)^2(14 + r)}-\frac{14+r}{12} \cdot 1\leq 0, \qquad \forall 0 \leq r \leq 10 \\
			\frac{25r^4+902r^3+11135r^2+57580r+105900}{300(r + 5)^2(14 + r)}-\frac{14+r}{12}\cdot (\frac{3(r+4)}{2(r+5)}-\frac{2}{5}) \leq 0, \qquad \forall r \geq 11 
		\end{cases}.
	\end{equation*}
	Equivalently,
	\begin{equation*}
		\begin{cases}
			\frac{-24r^3 - 695r^2 - 4460r - 8300}{150(r + 5)^2(14 + r)} \leq 0, \qquad \forall 0 \leq r \leq 10 \\
			\frac{-5r^4 - 211r^3 - 2810r^2 - 5940r + 15800}{600(r + 5)^2(14 + r)} \leq 0, \qquad \forall r \geq 11 
		\end{cases}
	\end{equation*}
	which hold obviously.

	If  $\tau^{\mathrm{max}} = 15+r, $ 
	\begin{align*}
		&\bar{\alpha}  = \frac{1}{\mu_f -1}\cdot( \frac{2}{5} +\frac{3}{5}+\frac{4}{5} + \frac{4}{5} + 1 + 1 +1 + \frac{6}{5} +\frac{6}{5} + \frac{7}{5} + \sum_{ k=1 } ^{ r+5} \frac{2k+r+4}{ 2(r+5)}) \\
		=&\frac{72 + 5r}{75 + 5r}.
	\end{align*}
	\begin{align*}
		& (\mu_f -  1) \cdot \mathrm{Var}[\mathrm{Sp^{\tau^{\mathrm{max}} }}(f)] \\
		=&(\frac{2}{5}-\bar{\alpha})^2+(\frac{3}{5}-\bar{\alpha})^2+2\cdot(\frac{4}{5}-\bar{\alpha})^2+3\cdot(1-\bar{\alpha})^2+2\cdot(\frac{6}{5}-\bar{\alpha})^2+(\frac{7}{5}-\bar{\alpha})^2\\
		& + \sum_{u = 1}^{r+5} (\frac{2u+r+4}{2(r+5)}-\bar{\alpha})^2 \\
		=& \frac{25r^3 + 877r^2 + 9282r + 27360}{300(r + 5)(15 + r)}.
	\end{align*}
	And  
	\begin{equation*}
		\alpha_{\mu_f -1}-\alpha_1= \frac{3r+14}{2(r+5)}-\frac{2}{5}.
	\end{equation*}
	Then it suffices to prove
	\begin{equation*}
		\frac{25r^3 + 877r^2 + 9282r + 27360}{300(r + 5)(15 + r)}-\frac{15+r}{12}\cdot (\frac{3r+14}{2(r+5)}-\frac{2}{5}) \leq 0.
	\end{equation*}
	Equivalently,
	\begin{equation*}
		\frac{-5r^3 - 146r^2 - 1311r - 1530}{600(r + 5)(15 + r)} \leq 0,
	\end{equation*}
	which holds obviously.
	%	
	%	Considering the average value $ \bar{\alpha}=1, $
	%	\begin{align*}
		%		& (\mu_f -  1) \cdot \mathrm{Var}[\mathrm{Sp^{\tau^{\mathrm{max}} }}(f)] \\
		%		=&(\frac{2}{5}-\bar{\alpha})^2+(\frac{3}{5}-\bar{\alpha})^2+2\cdot(\frac{4}{5}-\bar{\alpha})^2 +3\cdot(1-\bar{\alpha})^2+2\cdot(\frac{6}{5}-\bar{\alpha})^2+(\frac{7}{5}-\bar{\alpha})^2 \\
		%		&+ \sum_{u = 1}^{r+4} (\frac{2u+r+4}{2(r+5)}-\bar{\alpha})^2 \\
		%		=& \frac{25r^3 + 552r^2 + 3770r + 8100}{300(r + 5)^2}.
		%	\end{align*}
	%	The range of $\mathrm{Sp^\tau}(f_0)$ is 
	%	\begin{equation*}
		%		\begin{cases}
			%			\frac{7}{5}-\frac{2}{5}=1, \qquad \forall 0 \leq r \leq 10 \\
			%			\frac{3(r+4)}{2(r+5)}-\frac{2}{5}, \qquad \forall r \geq 11 
			%		\end{cases}.
		%	\end{equation*}
	%	Then it suffices to prove
	%	\begin{equation*}
		%		\begin{cases}
			%			\frac{25r^3 + 552r^2 + 3770r + 8100}{300(r + 5)^2}-\frac{14+r}{12} \cdot 1\leq 0, \qquad \forall 0 \leq r \leq 10 \\
			%			\frac{25r^3 + 552r^2 + 3770r + 8100}{300(r + 5)^2}-\frac{14+r}{12}\cdot (\frac{3(r+4)}{2(r+5)}-\frac{2}{5}) \leq 0, \qquad \forall r \geq 11 
			%		\end{cases}.
		%	\end{equation*}
	%	Equivalently,
	%	\begin{equation*}
		%		\begin{cases}
			%			\frac{-48r^2 - 355r - 650}{300(r + 5)^2} \leq 0, \qquad \forall 0 \leq r \leq 10 \\
			%			\frac{-5r^3 - 141r^2 - 110r + 2200}{600(r + 5)^2} \leq 0, \qquad \forall r \geq 11 
			%		\end{cases},
		%	\end{equation*}
	%	which hold obviously.
	%	
	
	\noindent\textbf{(2) $NA_{r,s}$.}
	
	Since $ \tau_{f_0} = \mu_f - 2 = 14 + r +s, $ we need to verify the cases of $\tau^{\mathrm{max}} = 14 +r +s$ or $ 15+r+s. $ For  $ r \leq s, $ if  $\tau^{\mathrm{max}} = 14+r +s, $ 
	\begin{align*}
		& \bar{\alpha} = \frac{1}{\mu_f-2}\cdot( \frac{2}{5} +\frac{4}{5}+1+1+\frac{6}{5} + \sum_{ u=1 } ^{ r+5} \frac{2u+r+4}{ 2(r+5)}+\sum_{ v=1 } ^{ s+4} \frac{2v+s+4}{ 2(s+5)})\\
		=& \frac{10r^2 + (10s + 179)r + 50s + 650}{10(r + 5)(14 + r + s)}.
	\end{align*}
	\begin{align*}
		& (\mu_f -  2) \cdot \mathrm{Var}[\mathrm{Sp^{\tau^{\mathrm{max}} }}(f)] \\
		= & (\frac{2}{5}-\bar{\alpha})^2+(\frac{4}{5}-\bar{\alpha})^2+2\cdot(1-\bar{\alpha})^2+2\cdot(\frac{6}{5}-\bar{\alpha})^2 + \sum_{u = 1}^{r+5} (\frac{2u+r+4}{2(r+5)}-\bar{\alpha})^2\\
		&  +\sum_{v = 1}^{s+4} (\frac{2v+s+4}{2(s+5)}-\bar{\alpha})^2 \\
		= & \frac{1}{300(14 + r + s)(r + 5)^2(s + 5)^2} \cdot ( 25r^4(s + 5)^2 + (50s^3 + 1407s^2 + 10445s + 23225)r^3 \\
		&+ (25s^4 + 1407s^3 + 23425s^2 + 143855s + 292475)r^2 + (250s^4 + 10295s^3 \\
		&+ 140750s^2 + 777150s + 1489000)r + 625s^4 + 22550s^3 + 278450s^2 + 1439500s + 2647500).
	\end{align*}
	And $ \alpha_{\mu_f -2}-\alpha_1 = \frac{3r+14}{2(r+5)}-\frac{2}{5}. $ 
	Then it suffices to prove
	\begin{align*}
		&\frac{1}{300(14 + r + s)(r + 5)^2(s + 5)^2} \cdot ( 25r^4(s + 5)^2 + (50s^3 + 1407s^2 + 10445s + 23225)r^3\\
		& + (25s^4 + 1407s^3 + 23425s^2 + 143855s + 292475)r^2 + (250s^4 + 10295s^3\\
		& + 140750s^2 + 777150s + 1489000)r + 625s^4 + 22550s^3 + 278450s^2 + 1439500s + 2647500)\\
		&- \frac{14+r+s}{12} \cdot ( \frac{3r+14}{2(r+5)}-\frac{2}{5}) \leq 0.
	\end{align*}
	Equivalently,
	\begin{align*}
		&\frac{1}{600(14 + r + s)(r + 5)^2(s + 5)^2} \cdot (-5r^4(s + 5)^2 + (-10s^3 - 351s^2 - 2510s - 5175)r^3\\
		& + (-5s^4 - 326s^3 - 7155s^2 - 44340s - 83300)r^2 + (-25s^4 - 1860s^3 - 41525s^2 \\
		&- 254700s - 469500)r - 2400s^3 - 69350s^2 - 446000s - 830000)\leq 0,
	\end{align*}
	which  holds obviously.

	If  $\tau^{\mathrm{max}} = 15+r +s, $ 
	\begin{align*}
		& \bar{\alpha} = \frac{1}{\mu_f-1}\cdot( \frac{2}{5} +\frac{4}{5}+1+1+\frac{6}{5} + \sum_{ u=1 } ^{ r+5} \frac{2u+r+4}{ 2(r+5)}+\sum_{ v=1 } ^{ s+5} \frac{2v+s+4}{ 2(s+5)})\\
		=& \frac{72 + 5r + 5s}{75 + 5r + 5s}.
	\end{align*}
	\begin{align*}
		& (\mu_f -  1) \cdot \mathrm{Var}[\mathrm{Sp^{\tau^{\mathrm{max}} }}(f)] \\
		= & (\frac{2}{5}-\bar{\alpha})^2+(\frac{4}{5}-\bar{\alpha})^2+2\cdot(1-\bar{\alpha})^2+2\cdot(\frac{6}{5}-\bar{\alpha})^2 + \sum_{u = 1}^{r+5} (\frac{2u+r+4}{2(r+5)}-\bar{\alpha})^2\\
		&  +\sum_{v = 1}^{s+5} (\frac{2v+s+4}{2(s+5)}-\bar{\alpha})^2 \\
		= & \frac{1}{300(s + 5)(r + 5)(15 + r + s)} \cdot ( (25s + 125)r^3 + (50s^2 + 1132s + 4385)r^2 \\
		&+ (25s^3 + 1132s^2 + 14392s + 46410)r + 125s^3 + 4385s^2 + 46410s + 136800).
	\end{align*}
	And $ \alpha_{\mu_f -1}-\alpha_1=\frac{3s+14}{2(s+5)}-\frac{2}{5}. $ 
	Then it suffices to prove
	\begin{align*}
		& \frac{1}{300(s + 5)(r + 5)(15 + r + s)} \cdot ( (25s + 125)r^3 + (50s^2 + 1132s + 4385)r^2\\
		&+ (25s^3 + 1132s^2 + 14392s + 46410)r  + 125s^3 + 4385s^2 + 46410s + 136800)\\
		&- \frac{14+r+s}{12} \cdot (\frac{3s+14}{2(s+5)}-\frac{2}{5}) \leq 0.
	\end{align*}
	Equivalently,
	\begin{align*}
		&\frac{1}{600(s + 5)(r + 5)(15 + r + s)} \cdot ((-5r - 25)s^3 + (-10r^2 - 186r - 730)s^2 \\
		&+ (-5r^3 - 161r^2 - 1841r - 6555)s + 20r^2 - 930r - 7650)\leq 0,
	\end{align*}
	which  holds obviously.

	\noindent\textbf{(3) $NB_{(-1)} ^r$.}
	
	Since $ \tau_{f_0} = \mu_f - 2 = 16 + r, $ we need to verify the cases of $\tau^{\mathrm{max}} = 16 +r$ or $ 17+r. $ If  $\tau^{\mathrm{max}} = 16+r, $ 
	for $ r \leq 3, $
	\begin{align*}
		&\bar{\alpha} = \frac{1}{\mu_f -2}\cdot( \frac{7}{18} +\frac{5}{9}+\frac{13}{18} + \frac{7}{9} + \frac{8}{9} + \frac{17}{18} +1+\frac{19}{18} + \frac{10}{9} +\frac{11}{9} + \frac{23}{18} + \sum_{ u=1 } ^{ r+5} \frac{2u+r+4}{ 2(r+5)})\\
		&= \frac{269 + 18r}{288 + 18r}.
	\end{align*}
	\begin{align*}
		& (\mu_f -2) \cdot \mathrm{Var}[\mathrm{Sp^{\tau^{\mathrm{max}} }}(f)] \\
		& = (\frac{7}{18}-\bar{\alpha})^2+(\frac{5}{9}-\bar{\alpha})^2+(\frac{13}{18}-\bar{\alpha})^2+(\frac{7}{9}-\bar{\alpha})^2+(\frac{8}{9}-\bar{\alpha})^2+( \frac{17}{18}-\bar{\alpha})^2+(1-\bar{\alpha})^2\\
		&+(\frac{19}{18}-\bar{\alpha})^2+(\frac{10}{9}-\bar{\alpha})^2+(\frac{11}{9}-\bar{\alpha})^2+(\frac{23}{18}-\bar{\alpha})^2+\sum_{u = 1}^{r+5} (\frac{2u+r+4}{2(r+5)}-\bar{\alpha})^2 \\
		&= \frac{27r^3 + 979r^2 + 10424r + 30723}{324(r + 5)(16 + r)}.
	\end{align*}
	
	And $ \alpha_{\mu_f -2}-\alpha_1= \frac{3r+14}{2(r+5)}-\frac{7}{18}. $
	Then it suffices to prove
	\begin{equation*}
		\frac{27r^3 + 979r^2 + 10424r + 30723}{324(r + 5)(16 + r)}-\frac{16+r}{12} \cdot (\frac{3r+14}{2(r+5)}-\frac{7}{18}) \leq 0.
	\end{equation*}
	Equivalently,
	\begin{equation*}
		\frac{-6r^3 - 235r^2 - 3248r - 8442}{648(r + 5)(16 + r)} \leq 0,
	\end{equation*}
	which holds obviously.

	For $ r \geq 4, $
	\begin{align*}
		&\bar{\alpha} = \frac{1}{\mu_f -2}\cdot( \frac{7}{18} +\frac{5}{9}+\frac{13}{18} + \frac{7}{9} + \frac{8}{9} + \frac{17}{18} +1+\frac{19}{18} + \frac{10}{9} +\frac{11}{9} + \frac{23}{18} + \frac{13}{9}+\sum_{ u=1 } ^{ r+4} \frac{2u+r+4}{ 2(r+5)})\\
		&= \frac{(18r^2 + 358r + 1349)}{(18(r + 5)(16 + r))}.
	\end{align*}
	\begin{align*}
		& (\mu_f -2) \cdot \mathrm{Var}[\mathrm{Sp^{\tau^{\mathrm{max}} }}(f)] \\
		& = (\frac{7}{18}-\bar{\alpha})^2+(\frac{5}{9}-\bar{\alpha})^2+(\frac{13}{18}-\bar{\alpha})^2+(\frac{7}{9}-\bar{\alpha})^2+(\frac{8}{9}-\bar{\alpha})^2+( \frac{17}{18}-\bar{\alpha})^2+(1-\bar{\alpha})^2\\
		&+(\frac{19}{18}-\bar{\alpha})^2+(\frac{10}{9}-\bar{\alpha})^2+(\frac{11}{9}-\bar{\alpha})^2+(\frac{23}{18}-\bar{\alpha})^2+(\frac{13}{9}-\bar{\alpha})^2+ \sum_{u = 1}^{r+4} (\frac{2u+r+4}{2(r+5)}-\bar{\alpha})^2 \\
		&= \frac{27r^4 + 1097r^3 + 15000r^2 + 82989r + 159223}{324(r + 5)^2(16 + r)}.
	\end{align*}
	And 
	\begin{equation*}
		\alpha_{\mu_f -2}-\alpha_1= \begin{cases}
			\frac{13}{9}-\frac{7}{18}=\frac{19}{18}, \qquad \forall 4 \leq r \leq 21 \\
			\frac{3(r+4)}{2(r+5)}-\frac{7}{18}, \qquad \forall r \geq 22 
		\end{cases}.
	\end{equation*}
	Then it suffices to prove
	\begin{equation*}
		\begin{cases}
			\frac{27r^4 + 1097r^3 + 15000r^2 + 82989r + 159223}{324(r + 5)^2(16 + r)}-\frac{16+r}{12} \cdot \frac{19}{18} \leq 0, \qquad \forall 4 \leq r \leq 21 \\
			\frac{27r^4 + 1097r^3 + 15000r^2 + 82989r + 159223}{324(r + 5)^2(16 + r)}-\frac{16+r}{12}\cdot (\frac{3(r+4)}{2(r+5)}-\frac{7}{18}) \leq 0, \qquad \forall r \geq 22 
		\end{cases}
	\end{equation*}
	Equivalently,
	\begin{equation*}
		\begin{cases}
			\frac{-3r^4 - 200r^3 - 4257r^2 - 25542r - 46354}{648(r + 5)^2(16 + r)} \leq 0, \qquad \forall 4 \leq r \leq 21 \\
			\frac{-6r^4 - 245r^3 - 3063r^2 - 1926r + 38126}{648(r + 5)^2(16 + r)} \leq 0, \qquad \forall r \geq 22 
		\end{cases},
	\end{equation*}
	which hold obviously.
	
	If  $\tau^{\mathrm{max}} = 17+r, $ 
	\begin{align*}
		&\bar{\alpha} = \frac{1}{\mu_f -1}\cdot( \frac{7}{18} +\frac{5}{9}+\frac{13}{18} + \frac{7}{9} + \frac{8}{9} + \frac{17}{18} +1+\frac{19}{18} + \frac{10}{9} +\frac{11}{9} + \frac{23}{18} + \frac{13}{9}+\sum_{ u=1 } ^{ r+5} \frac{2u+r+4}{ 2(r+5)})\\
		&= \frac{295 + 18r}{306 + 18r}.
	\end{align*}
	\begin{align*}
		& (\mu_f -1) \cdot \mathrm{Var}[\mathrm{Sp^{\tau^{\mathrm{max}} }}(f)] \\
		& = (\frac{7}{18}-\bar{\alpha})^2+(\frac{5}{9}-\bar{\alpha})^2+(\frac{13}{18}-\bar{\alpha})^2+(\frac{7}{9}-\bar{\alpha})^2+(\frac{8}{9}-\bar{\alpha})^2+( \frac{17}{18}-\bar{\alpha})^2+(1-\bar{\alpha})^2\\
		&+(\frac{19}{18}-\bar{\alpha})^2+(\frac{10}{9}-\bar{\alpha})^2+(\frac{11}{9}-\bar{\alpha})^2+(\frac{23}{18}-\bar{\alpha})^2+(\frac{13}{9}-\bar{\alpha})^2+ \sum_{u = 1}^{r+5} (\frac{2u+r+4}{2(r+5)}-\bar{\alpha})^2 \\
		&= \frac{27r^3 + 1070r^2 + 12619r + 39396}{324(r + 5)(17 + r)}.
	\end{align*}
	
	And 
	\begin{equation*}
		\alpha_{\mu_f -1}-\alpha_1= \begin{cases}
			\frac{13}{9}-\frac{7}{18}=\frac{19}{18}, \qquad \forall 0 \leq r \leq 3 \\
			\frac{3r+14}{2(r+5)}-\frac{7}{18}, \qquad \forall r \geq 4 
		\end{cases}.
	\end{equation*}
	Then it suffices to prove
	\begin{equation*}
		\begin{cases}
			\frac{27r^3 + 1070r^2 + 12619r + 39396}{324(r + 5)(17 + r)}-\frac{17+r}{12} \cdot \frac{19}{18} \leq 0, \qquad \forall 0 \leq r \leq 3, \\
			\frac{27r^3 + 1070r^2 + 12619r + 39396}{324(r + 5)(17 + r)}-\frac{17+r}{12}\cdot (\frac{3r+14}{2(r+5)}-\frac{7}{18}) \leq 0, \qquad \forall r \geq 4
		\end{cases}.
	\end{equation*}
	Equivalently,
	\begin{equation*}
		\begin{cases}
			\frac{-3r^3 - 83r^2 - 925r - 3573}{648(r + 5)(17 + r)} \leq 0, \qquad \forall 0 \leq r \leq 3\\
			\frac{-6r^3 - 173r^2 - 1384r - 105}{648(r + 5)(17 + r)} \leq 0, \qquad \forall r \geq 4 
		\end{cases},
	\end{equation*}
	which hold obviously.
	%	Considering the average value $ \bar{\alpha}=1, $
	%	\begin{align*}
		%		& (\mu_f -  1) \cdot \mathrm{Var}[\mathrm{Sp^{\tau^{\mathrm{max}} }}(f)] \\
		%		&=  (\frac{7}{18}-1)^2+(\frac{5}{9}-1)^2+(\frac{13}{18}-1)^2+(\frac{7}{9}-1)^2+(\frac{8}{9}-1)^2+( \frac{17}{18}-1)^2+(1-1)^2\\
		%		&+(\frac{19}{18}-1)^2+(\frac{10}{9}-1)^2+(\frac{11}{9}-1)^2+(\frac{23}{18}-1)^2+(\frac{13}{9}-1)^2 + \sum_{u = 1}^{r+4} (\frac{2u+r+4}{2(r+5)}-1)^2 \\
		%		&= \frac{27r^3 + 665r^2 + 4760r + 10469}{324(r + 5)^2}.
		%	\end{align*}
	%	Then it suffices to prove
	%	\begin{center}
		%		$\begin{cases}
			%			\frac{27r^3 + 665r^2 + 4760r + 10469}{324(r + 5)^2}-\frac{16+r}{12}\cdot \frac{19}{18}, \qquad \forall 0 \leq r \leq 21 \\
			%			\frac{27r^3 + 665r^2 + 4760r + 10469}{324(r + 5)^2}-\frac{16+r}{12}\cdot (\frac{3(r+4)}{2(r+5)}-\frac{7}{18}) \leq 0, \qquad \forall r \geq 22 
			%		\end{cases}
		%		$.
		%	\end{center}
	%	Equivalently,
	%	\begin{center}
		%		$\begin{cases}
			%			\frac{-3r^3 - 152r^2 - 1025r - 1862}{648(r + 5)^2}, \qquad \forall 0 \leq r \leq 21 \\
			%			\frac{-6r^3 - 149r^2 + 121r + 3418}{648(r + 5)^2} \leq 0, \qquad \forall r \geq 22 
			%		\end{cases}
		%		$,
		%	\end{center}
	%	which hold obviously.
	%	

	\noindent\textbf{(4) $NB_{(0)} ^r$.}
	
	Since $  \tau_{f_0}  = \mu_f - 2 = 17 + r, $ we need to verify the cases of $\tau^{\mathrm{max}} = 17 +r$ or $ 18+r. $ If  $\tau^{\mathrm{max}} = 17+r, $
	for $ r \leq 7, $ 
	\begin{align*}
		&\bar{\alpha} = \frac{1}{\mu_f -2}\cdot( \frac{5}{13} +\frac{7}{13}+\frac{9}{13} + \frac{10}{13} + \frac{11}{13} + \frac{12}{13} +1+1 +\frac{14}{13} + \frac{15}{13} +\frac{16}{13} + \frac{17}{13} +\sum_{ u=1 } ^{ r+5} \frac{2u+r+4}{ 2(r+5)})\\
		&= \frac{207 + 13r}{221 + 13r}.
	\end{align*}
	\begin{align*}
		& (\mu_f -2) \cdot\mathrm{Var}[\mathrm{Sp^{\tau^{\mathrm{max}} }}(f)] \\
		& =(\frac{5}{13}-\bar{\alpha})^2+(\frac{7}{13}-\bar{\alpha})^2+(\frac{9}{13}-\bar{\alpha})^2+(\frac{10}{13}-\bar{\alpha})^2+(\frac{11}{13}-\bar{\alpha})^2+( \frac{12}{13}-\bar{\alpha})^2 +2\cdot (1-\bar{\alpha})^2\\
		&+(\frac{14}{13}-\bar{\alpha})^2+(\frac{15}{13}-\bar{\alpha})^2+(\frac{16}{13}-\bar{\alpha})^2+(\frac{17}{13}-\bar{\alpha})^2  + \sum_{u = 1}^{r+5} (\frac{2u+r+4}{2(r+5)}-\bar{\alpha})^2 \\
		&= \frac{169r^3 + 6483r^2 + 72674r + 220392}{2028(r + 5)(17 + r)}.
	\end{align*}
	And $ \alpha_{\mu_f -2}-\alpha_1= \frac{3r+14}{2(r+5)}-\frac{5}{13}.$
	Then it suffices to prove
	\begin{equation*}
		\frac{169r^3 + 6483r^2 + 72674r + 220392}{2028(r + 5)(17 + r)}- \frac{17+r}{12} \cdot (\frac{3r+14}{2(r+5)}-\frac{5}{13})\leq 0.
	\end{equation*}
	Equivalently,
	\begin{equation*}
		\frac{-39r^3 - 1568r^2 - 21949r - 55140}{4056(r + 5)(17 + r)} \leq 0,
	\end{equation*}
	which holds obviously.

	for $ r \geq 8, $ 
	\begin{align*}
		&\bar{\alpha} = \frac{1}{\mu_f -2}\cdot( \frac{5}{13} +\frac{7}{13}+\frac{9}{13} + \frac{10}{13} + \frac{11}{13} + \frac{12}{13} +1+1 +\frac{14}{13} + \frac{15}{13} +\frac{16}{13} + \frac{17}{13} +\frac{19}{13}\\
		& +\sum_{ u=1 } ^{ r+4} \frac{2u+r+4}{ 2(r+5)})\\
		&= \frac{26r^2 + 543r + 2078}{26(r + 5)(17 + r)}.
	\end{align*}
	\begin{align*}
		& (\mu_f -2) \cdot\mathrm{Var}[\mathrm{Sp^{\tau^{\mathrm{max}} }}(f)] \\
		& =(\frac{5}{13}-\bar{\alpha})^2+(\frac{7}{13}-\bar{\alpha})^2+(\frac{9}{13}-\bar{\alpha})^2+(\frac{10}{13}-\bar{\alpha})^2+(\frac{11}{13}-\bar{\alpha})^2+( \frac{12}{13}-\bar{\alpha})^2 +2\cdot (1-\bar{\alpha})^2\\
		&+(\frac{14}{13}-\bar{\alpha})^2+(\frac{15}{13}-\bar{\alpha})^2+(\frac{16}{13}-\bar{\alpha})^2+(\frac{17}{13}-\bar{\alpha})^2+(\frac{19}{13}-\bar{\alpha})^2  + \sum_{u = 1}^{r+4} (\frac{2u+r+4}{2(r+5)}-\bar{\alpha})^2 \\
		&= \frac{169r^4 + 7253r^3 + 103907r^2 + 591490r + 1154184}{2028(r + 5)^2(17 + r)}.
	\end{align*}
	And 
	\begin{equation*}
		\alpha_{\mu_f -2}-\alpha_1= \begin{cases}
			\frac{19}{13}-\frac{5}{13}=\frac{14}{13}, \qquad \forall 8 \leq r \leq 33 \\
			\frac{3(r+4)}{2(r+5)}-\frac{5}{13}, \qquad \forall r \geq 34 
		\end{cases}.
	\end{equation*}
	Then it suffices to prove
	\begin{equation*}
		\begin{cases}
			\frac{169r^4 + 7253r^3 + 103907r^2 + 591490r + 1154184}{2028(r + 5)^2(17 + r)}- \frac{17+r}{12} \cdot \frac{14}{13} \leq 0, \qquad \forall 8 \leq r \leq 33 \\
			\frac{169r^4 + 7253r^3 + 103907r^2 + 591490r + 1154184}{2028(r + 5)^2(17 + r)} -\frac{17+r}{12}\cdot (\frac{3(r+4)}{2(r+5)}-\frac{7}{18}) \leq 0, \qquad \forall r \geq 34 
		\end{cases}.
	\end{equation*}
	Equivalently,
	\begin{equation*}
		\begin{cases}
			\frac{-13r^4 - 755r^3 - 15121r^2 - 89190r - 160766}{2028(r + 5)^2(17 + r)} \leq 0, \qquad \forall 8 \leq r \leq 33 \\
			\frac{-39r^4 - 1575r^3 - 18971r^2 + 5713r + 317158}{4056(r + 5)^2(17 + r)} \leq 0, \qquad \forall r \geq 34 
		\end{cases},
	\end{equation*}
	which hold obviously.

	If  $\tau^{\mathrm{max}} = 18+r, $ 
	\begin{align*}
		&\bar{\alpha} = \frac{1}{\mu_f -1}\cdot( \frac{5}{13} +\frac{7}{13}+\frac{9}{13} + \frac{10}{13} + \frac{11}{13} + \frac{12}{13} +1+1 +\frac{14}{13} + \frac{15}{13} +\frac{16}{13} + \frac{17}{13} +\frac{19}{13}\\
		& +\sum_{ u=1 } ^{ r+5} \frac{2u+r+4}{ 2(r+5)})\\
		&= \frac{226 + 13r}{234 + 13r}.
	\end{align*}
	\begin{align*}
		& (\mu_f -1) \cdot\mathrm{Var}[\mathrm{Sp^{\tau^{\mathrm{max}} }}(f)] \\
		& =(\frac{5}{13}-\bar{\alpha})^2+(\frac{7}{13}-\bar{\alpha})^2+(\frac{9}{13}-\bar{\alpha})^2+(\frac{10}{13}-\bar{\alpha})^2+(\frac{11}{13}-\bar{\alpha})^2+( \frac{12}{13}-\bar{\alpha})^2 +2\cdot (1-\bar{\alpha})^2\\
		&+(\frac{14}{13}-\bar{\alpha})^2+(\frac{15}{13}-\bar{\alpha})^2+(\frac{16}{13}-\bar{\alpha})^2+(\frac{17}{13}-\bar{\alpha})^2+(\frac{19}{13}-\bar{\alpha})^2  + \sum_{u = 1}^{r+5} (\frac{2u+r+4}{2(r+5)}-\bar{\alpha})^2 \\
		&= \frac{169r^3 + 7084r^2 + 87804r + 280848}{2028(r + 5)(18 + r)}.
	\end{align*}
	And 
	\begin{equation*}
		\alpha_{\mu_f -1}-\alpha_1= \begin{cases}
			\frac{19}{13}-\frac{5}{13}=\frac{14}{13}, \qquad \forall 0 \leq r \leq 7 \\
			\frac{3r+14}{2(r+5)}-\frac{5}{13}, \qquad \forall r \geq 8
		\end{cases}.
	\end{equation*}
	Then it suffices to prove
	\begin{equation*}
		\begin{cases}
			\frac{169r^3 + 7084r^2 + 87804r + 280848}{2028(r + 5)(18 + r)}- \frac{18+r}{12} \cdot \frac{14}{13} \leq 0, \qquad \forall 0 \leq r \leq 14 \\
			\frac{169r^3 + 7084r^2 + 87804r + 280848}{2028(r + 5)(18 + r)} -\frac{18+r}{12}\cdot (\frac{3r+14}{2(r+5)}-\frac{7}{18}) \leq 0, \qquad \forall r \geq 15
		\end{cases}.
	\end{equation*}
	Equivalently,
	\begin{equation*}
		\begin{cases}
			\frac{-13r^3 - 378r^2 - 3924r - 13992}{2028(r + 5)(18 + r)} \leq 0, \qquad \forall 0 \leq r \leq 14 \\
			\frac{-39r^3 - 1120r^2 - 8316r + 5712}{4056(r + 5)(18 + r)} \leq 0, \qquad \forall r \geq 15 
		\end{cases},
	\end{equation*}
	which hold obviously.

	%	Considering the average value $ \bar{\alpha}=1, $
	%	\begin{align*}
		%		& (\mu_f -  1) \cdot \mathrm{Var}[\mathrm{Sp^{\tau^{\mathrm{max}} }}(f)] \\
		%		&= (\frac{5}{13}-\bar{\alpha})^2+(\frac{7}{13}-\bar{\alpha})^2+(\frac{9}{13}-\bar{\alpha})^2+(\frac{10}{13}-\bar{\alpha})^2+(\frac{11}{13}-\bar{\alpha})^2+( \frac{12}{13}-\bar{\alpha})^2 +2\cdot (1-\bar{\alpha})^2\\
		%		&+(\frac{14}{13}-\bar{\alpha})^2+(\frac{15}{13}-\bar{\alpha})^2+(\frac{16}{13}-\bar{\alpha})^2+(\frac{17}{13}-\bar{\alpha})^2+(\frac{19}{13}-\bar{\alpha})^2+ \sum_{u = 1}^{r+4} (\frac{2u+r+4}{2(r+5)}-\bar{\alpha})^2 \\
		%		= & \frac{169r^3 + 4380r^2 + 31970r + 70968}{2028(r + 5)^2}.
		%	\end{align*}
	%	Then it suffices to prove
	%	\begin{center}
		%		$\begin{cases}
			%			\frac{169r^3 + 4380r^2 + 31970r + 70968}{2028(r + 5)^2} -\frac{17+r}{12} \cdot \frac{14}{13}, \qquad \forall 0 \leq r \leq 33 \\
			%			\frac{169r^3 + 4380r^2 + 31970r + 70968}{2028(r + 5)^2}-\frac{17+r}{12}\cdot (\frac{3(r+4)}{2(r+5)}-\frac{5}{13}) \leq 0, \qquad \forall r \geq 34 
			%		\end{cases}
		%		$.
		%	\end{center}
	%	Equivalently,
	%	\begin{center}
		%		$\begin{cases}
			%			\frac{-13r^3 - 534r^2 - 3520r - 6382}{2028(r + 5)^2}, \qquad \forall 0 \leq r \leq 33 \\
			%			\frac{-39r^3 - 912r^2 + 1579r + 24806}{4056(r + 5)^2} \leq 0, \qquad \forall r \geq 34
			%		\end{cases}
		%		$,
		%	\end{center}
	%	which hold obviously.
	%	
	
	\noindent\textbf{(5) $NB_{(1)} ^r$.}
	
	Since $ \tau_{f_0} = \mu_f - 2 = 18 + r, $ we need to verify the cases of $\tau^{\mathrm{max}} = 18 +r$ or $ 19+r. $ If  $\tau^{\mathrm{max}} = 18+r, $ 
	for $ r \leq 15,$
	\begin{align*}
		&\bar{\alpha} = \frac{1}{\mu_f -2}\cdot( \frac{8}{21} +\frac{11}{21}+\frac{2}{3} + \frac{16}{21} + \frac{17}{21} + \frac{19}{21} +\frac{20}{21}+1+\frac{22}{21} + \frac{23}{21} +\frac{25}{21} + \frac{26}{21}+ \frac{4}{3}\\
		&+\sum_{ u=1 } ^{ r+5} \frac{2u+r+4}{ 2(r+5)})\\
		&= \frac{355 + 21r}{378 + 21r}.
	\end{align*}
	\begin{align*}
		& (\mu_f -2) \cdot \mathrm{Var}[\mathrm{Sp^{\tau^{\mathrm{max}} }}(f)] \\
		& = (\frac{8}{21}-\bar{\alpha})^2+(\frac{11}{21}-\bar{\alpha})^2+(\frac{2}{3}-\bar{\alpha})^2+(\frac{16}{21}-\bar{\alpha})^2+(\frac{17}{21}-\bar{\alpha})^2+( \frac{19}{21}-\bar{\alpha})^2\\
		&+(\frac{20}{21}-\bar{\alpha})^2+(1-\bar{\alpha})^2+(\frac{22}{21}-\alpha)^2+(\frac{23}{21}-\alpha)^2+(\frac{25}{21}-\alpha)^2+(\frac{26}{21}-\alpha)^2+(\frac{4}{3}-\alpha)^2\\
		&+ \sum_{u = 1}^{r+5} (\frac{2u+r+4}{2(r+5)}-\bar{\alpha})^2 \\
		= & \frac{147r^3 + 5952r^2 + 70100r + 218164}{1764(r + 5)(18 + r)}.
	\end{align*}
	And $ \alpha_{\mu_f -2}-\alpha_1= \frac{3r+14}{2(r+5)}-\frac{8}{21}. $
	Then it suffices to prove
	\begin{equation*}
		\frac{147r^3 + 5952r^2 + 70100r + 218164}{1764(r + 5)(18 + r)}- \frac{18+r}{12} \cdot (\frac{3r+14}{2(r+5)}-\frac{8}{21}) \leq 0.
	\end{equation*}
	Equivalently,
	\begin{equation*}
		\frac{-35r^3 - 1438r^2 - 20324r - 49024}{3528(r + 5)(18 + r)} \leq 0, 
	\end{equation*}
	which holds obviously.

	For $ r \geq 16,$
	\begin{align*}
		&\bar{\alpha} = \frac{1}{\mu_f -2}\cdot( \frac{8}{21} +\frac{11}{21}+\frac{2}{3} + \frac{16}{21} + \frac{17}{21} + \frac{19}{21} +\frac{20}{21}+1+\frac{22}{21} + \frac{23}{21} +\frac{25}{21} + \frac{26}{21}+ \frac{4}{3}+\frac{31}{21}\\
		&+\sum_{ u=1 } ^{ r+4} \frac{2u+r+4}{ 2(r+5)})\\
		&= \frac{26r^2 + 543r + 2078}{26(r + 5)(17 + r)}.
	\end{align*}
	\begin{align*}
		& (\mu_f -2) \cdot \mathrm{Var}[\mathrm{Sp^{\tau^{\mathrm{max}} }}(f)] \\
		& = (\frac{8}{21}-\bar{\alpha})^2+(\frac{11}{21}-\bar{\alpha})^2+(\frac{2}{3}-\bar{\alpha})^2+(\frac{16}{21}-\bar{\alpha})^2+(\frac{17}{21}-\bar{\alpha})^2+( \frac{19}{21}-\bar{\alpha})^2\\
		&+(\frac{20}{21}-\bar{\alpha})^2+(1-\bar{\alpha})^2+(\frac{22}{21}-\alpha)^2+(\frac{23}{21}-\alpha)^2+(\frac{25}{21}-\alpha)^2+(\frac{26}{21}-\alpha)^2+(\frac{4}{3}-\alpha)^2\\
		&+(\frac{31}{21}-\alpha)^2 + \sum_{u = 1}^{r+4} (\frac{2u+r+4}{2(r+5)}-\bar{\alpha})^2 \\
		= & \frac{147r^4 + 6646r^3 + 99501r^2 + 581148r + 1150916}{1764(r + 5)^2(18 + r)}.
	\end{align*}
	And 
	\begin{equation*}
		\alpha_{\mu_f -2}-\alpha_1= \begin{cases}
			\frac{31}{21}-\frac{8}{21}=\frac{23}{21}, \qquad \forall 16 \leq r \leq 57 \\
			\frac{3(r+4)}{2(r+5)}-\frac{8}{21}, \qquad \forall r \geq 58
		\end{cases}.
	\end{equation*}
	Then it suffices to prove
	\begin{equation*}
		\begin{cases}
			\frac{147r^4 + 6646r^3 + 99501r^2 + 581148r + 1150916}{1764(r + 5)^2(18 + r)}- \frac{18+r}{12} \cdot \frac{23}{21} \leq 0, \qquad \forall 16 \leq r \leq 57 \\
			\frac{147r^4 + 6646r^3 + 99501r^2 + 581148r + 1150916}{1764(r + 5)^2(18 + r)} -\frac{18+r}{12}\cdot (\frac{3(r+4)}{2(r+5)}-\frac{8}{21}) \leq 0, \qquad \forall r \geq 58 
		\end{cases}.
	\end{equation*}
	Equivalently,
	\begin{equation*}
		\begin{cases}
			\frac{-7r^4 - 380r^3 - 7324r^2 - 42696r - 76592}{882(r + 5)^2(18 + r)} \leq 0, \qquad \forall 16 \leq r \leq 57 \\
			\frac{-35r^4 - 1401r^3 - 16178r^2 + 22500r + 351352}{3528(r + 5)^2(18 + r)} \leq 0, \qquad \forall r \geq 58 
		\end{cases},
	\end{equation*}
	which hold obviously.

	If  $\tau^{\mathrm{max}} = 19+r, $ 
	\begin{align*}
		&\bar{\alpha} = \frac{1}{\mu_f -1}\cdot( \frac{8}{21} +\frac{11}{21}+\frac{2}{3} + \frac{16}{21} + \frac{17}{21} + \frac{19}{21} +\frac{20}{21}+1+\frac{22}{21} + \frac{23}{21} +\frac{25}{21} + \frac{26}{21}+ \frac{4}{3}+\frac{31}{21}\\
		&+\sum_{ u=1 } ^{ r+5} \frac{2u+r+4}{ 2(r+5)})\\
		&= \frac{386 + 21r}{399 + 21r}.
	\end{align*}
	\begin{align*}
		& (\mu_f -1) \cdot \mathrm{Var}[\mathrm{Sp^{\tau^{\mathrm{max}} }}(f)] \\
		& = (\frac{8}{21}-\bar{\alpha})^2+(\frac{11}{21}-\bar{\alpha})^2+(\frac{2}{3}-\bar{\alpha})^2+(\frac{16}{21}-\bar{\alpha})^2+(\frac{17}{21}-\bar{\alpha})^2+( \frac{19}{21}-\bar{\alpha})^2\\
		&+(\frac{20}{21}-\bar{\alpha})^2+(1-\bar{\alpha})^2+(\frac{22}{21}-\alpha)^2+(\frac{23}{21}-\alpha)^2+(\frac{25}{21}-\alpha)^2+(\frac{26}{21}-\alpha)^2+(\frac{4}{3}-\alpha)^2\\
		&+(\frac{31}{21}-\alpha)^2 + \sum_{u = 1}^{r+5} (\frac{2u+r+4}{2(r+5)}-\bar{\alpha})^2 \\
		= & \frac{147r^3 + 6499r^2 + 84446r + 276072}{1764(r + 5)(19 + r)}.
	\end{align*}
	And 
	\begin{equation*}
		\alpha_{\mu_f -1}-\alpha_1= \begin{cases}
			\frac{31}{21}-\frac{8}{21}=\frac{23}{21}, \qquad \forall 0 \leq r \leq 15 \\
			\frac{3r+14}{2(r+5)}-\frac{8}{21}, \qquad \forall r \geq 16
		\end{cases},
	\end{equation*}
	Then it suffices to prove
	\begin{equation*}
		\begin{cases}
			\frac{147r^3 + 6499r^2 + 84446r + 276072}{1764(r + 5)(19 + r)} - \frac{19+r}{12} \cdot \frac{23}{21} \leq 0, \qquad \forall 0 \leq r \leq 15 \\
			\frac{147r^3 + 6499r^2 + 84446r + 276072}{1764(r + 5)(19 + r)} -\frac{19+r}{12}\cdot (\frac{3r+14}{2(r+5)}-\frac{8}{21}) \leq 0, \qquad \forall r \geq 16 
		\end{cases}.
	\end{equation*}
	Equivalently,
	\begin{equation*}
		\begin{cases}
			\frac{-14r^3 - 424r^2 - 4265r - 14533}{1764(r + 5)(19 + r)} \leq 0, \qquad \forall 0 \leq r \leq 15 \\
			\frac{-35r^3 - 1002r^2 - 6801r + 11366}{3528(r + 5)(19 + r)} \leq 0, \qquad \forall r \geq 16 
		\end{cases},
	\end{equation*}
	which hold obviously.

	%	
	%	Considering the average value $ \bar{\alpha}=1, $
	%	\begin{align*}
		%		& \tau \cdot \mathrm{Var}[\mathrm{Sp^\tau}(f_0)]\\
		%		& = (\frac{8}{21}-\bar{\alpha})^2+(\frac{11}{21}-\bar{\alpha})^2+(\frac{2}{3}-\bar{\alpha})^2+(\frac{16}{21}-\bar{\alpha})^2+(\frac{17}{21}-\bar{\alpha})^2+( \frac{19}{21}-\bar{\alpha})^2\\
		%		&+(\frac{20}{21}-\bar{\alpha})^2+(1-\bar{\alpha})^2+(\frac{22}{21}-\alpha)^2+(\frac{23}{21}-\alpha)^2+(\frac{25}{21}-\alpha)^2+(\frac{26}{21}-\alpha)^2+(\frac{4}{3}-\alpha)^2\\
		%		&+(\frac{31}{21}-\alpha)^2 + \sum_{u = 1}^{r+4} (\frac{2u+r+4}{2(r+5)}-\bar{\alpha})^2 \\
		%		&= \frac{147r^3 + 4000r^2 + 29710r + 66484}{1764(r + 5)^2}.
		%	\end{align*}
	%	Then it suffices to prove
	%	\begin{center}
		%		$\begin{cases}
			%			\frac{147r^3 + 4000r^2 + 29710r + 66484}{1764(r + 5)^2}- \frac{18+r}{12} \cdot \frac{23}{21} \leq 0, \qquad \forall 0 \leq r \leq 57 \\
			%			\frac{147r^3 + 4000r^2 + 29710r + 66484}{1764(r + 5)^2} -\frac{18+r}{12}\cdot (\frac{3(r+4)}{2(r+5)}-\frac{8}{21}) \leq 0, \qquad \forall r \geq 58 
			%		\end{cases}
		%		$
		%	\end{center}
	%	Equivalently,
	%	\begin{center}
		%		$\begin{cases}
			%			\frac{-14r^3 - 508r^2 - 3295r - 5966}{1764(r + 5)^2} \leq 0, \qquad \forall 0 \leq r \leq 33 \\
			%			\frac{-35r^3 - 771r^2 + 2118r + 24608}{3528(r + 5)^2} \leq 0, \qquad \forall r \geq 34
			%		\end{cases}
		%		$,
		%	\end{center}
	%	which hold obviously.
	%	
	%	
	
	\noindent\textbf{(6) $VA_{r,s} $.}
	
	Since $  \tau_{f_0}  = \mu_f - 2 = 13+ r+s, $ we need to verify the cases of $\tau^{\mathrm{max}} = 13 +r+s$ or $ 14+r+s. $ If  $\tau^{\mathrm{max}} = 18+r, $ 
	For $ r \leq  s,$  if  $\tau^{\mathrm{max}} = 13+r +s, $ 
	\begin{align*}
		&\bar{\alpha} =\frac{1}{\mu_f -2}\cdot( \frac{7}{8}+\frac{5}{4}+\frac{11}{8} + \frac{3}{2} + \frac{13}{8} + \frac{7}{4}+\sum_{ u=1 } ^{ r+4} \frac{2u+2r+7}{ 2(r+4)}+\sum_{ v=1 } ^{ s+3} \frac{2v+2s+7}{ 2(s+4)})\\
		&=\frac{12r^2 + (12s + 195)r + 48s + 592}{8(r + 4)(13 + r + s)}.
	\end{align*}
	\begin{align*}
		& (\mu_f -  2) \cdot \mathrm{Var}[\mathrm{Sp^{\tau^{\mathrm{max}} }}(f)] \\
		&= (\frac{7}{8}-\bar{\alpha})^2+(\frac{5}{4}-\bar{\alpha})^2+(\frac{11}{8}-\bar{\alpha})^2+(\frac{3}{2}-\bar{\alpha})^2+(\frac{13}{8}-\bar{\alpha})^2+( \frac{7}{4}-\bar{\alpha})^2\\
		& + \sum_{u = 1}^{r+4} (\frac{2u+2r+7}{2(r+4)}-\bar{\alpha})^2 +\sum_{v =1 } ^{s+3} (\frac{2v+2s+7}{ 2(s+4)}-\bar{\alpha})^2\\
		&=\frac{1}{192(13 + r + s)(s + 4)^2(r + 4)} \cdot ((16r + 64)s^4 + (32r^2 + 649r + 2068)s^3\\
		& + (16r^3 + 713r^2 + 8222r + 22232)s^2 + (128r^3 + 4248r^2 + 39928r + 98528)s\\
		& + 256r^3 + 7584r^2 + 64928r + 152448).
	\end{align*}
	And $ \alpha_{\mu_f -2}-\alpha_1= \frac{4r+15}{2(r+4)}-\frac{7}{8}. $
	Then it suffices to prove
	\begin{align*}
		&\frac{1}{192(13 + r + s)(s + 4)^2(r + 4)} \cdot ((16r + 64)s^4 + (32r^2 + 649r + 2068)s^3 \\
		&+ (16r^3 + 713r^2 + 8222r + 22232)s^2 + (128r^3 + 4248r^2 + 39928r + 98528)s \\
		&+ 256r^3 + 7584r^2 + 64928r + 152448) - \frac{13+r+s}{12} \cdot (\frac{4r+15}{2(r+4)}-\frac{7}{8}) \leq 0.
	\end{align*}
	Equivalently,
	\begin{align*}
		&\frac{1}{192(13 + r + s)(s + 4)^2(r + 4)} \cdot ( -2(s + 4)^2r^3 + (-4s^3 - 107s^2 - 584s - 928)r^2\\
		& + (-2s^4 - 91s^3 - 1540s^2 - 7256s - 10368)r - 108s^3 - 2920s^2 - 14624s - 20608) \leq 0,
	\end{align*}
	which holds obviously.
	
	If  $\tau^{\mathrm{max}} = 14+r +s, $ 
	\begin{align*}
		&\bar{\alpha} =\frac{1}{\mu_f -1}\cdot( \frac{7}{8}+\frac{5}{4}+\frac{11}{8} + \frac{3}{2} + \frac{13}{8} + \frac{7}{4}+\sum_{ u=1 } ^{ r+4} \frac{2u+2r+7}{ 2(r+4)}+\sum_{ v=1 } ^{ s+4} \frac{2v+2s+7}{ 2(s+4)})\\
		&=\frac{12r^2 + (12s + 195)r + 48s + 592}{8(r + 4)(13 + r + s)}.
	\end{align*}
	\begin{align*}
		& (\mu_f -  1) \cdot \mathrm{Var}[\mathrm{Sp^{\tau^{\mathrm{max}} }}(f)] \\
		&= (\frac{7}{8}-\bar{\alpha})^2+(\frac{5}{4}-\bar{\alpha})^2+(\frac{11}{8}-\bar{\alpha})^2+(\frac{3}{2}-\bar{\alpha})^2+(\frac{13}{8}-\bar{\alpha})^2+( \frac{7}{4}-\bar{\alpha})^2\\
		& + \sum_{u = 1}^{r+3} (\frac{2u+2r+7}{2(r+4)}-\bar{\alpha})^2 +\sum_{v =1 } ^{s+4} (\frac{2v+2s+7}{ 2(s+4)}-\bar{\alpha})^2\\
		&=\frac{1}{192(r + 4)(s + 4)(14 + r + s)} \cdot ((16s + 64)r^3 + (32s^2 + 649s + 2068)r^2 \\
		&+ (16s^3 + 649s^2 + 7323s + 19708)r + 64s^3 + 2068s^2 + 19708s + 49200).
	\end{align*}
	And $ \alpha_{\mu_f -1}-\alpha_1= \frac{4s+15}{2(s+4)}-\frac{7}{8}. $
	Then it suffices to prove
	\begin{align*}
		&\frac{1}{192(r + 4)(s + 4)(14 + r + s)} \cdot ((16s + 64)r^3 + (32s^2 + 649s + 2068)r^2\\
		& + (16s^3 + 649s^2 + 7323s + 19708)r + 64s^3 + 2068s^2 + 19708s + 49200)\\
		& - \frac{14+r+s}{12} \cdot (\frac{4s+15}{2(s+4)}-\frac{7}{8}).
	\end{align*}
	Equivalently,
	\begin{align*}
		&\frac{1}{192(r + 4)(s + 4)(14 + r + s)} \cdot ( (-2r - 8)s^3 + (-4r^2 - 63r - 204)s^2 \\
		&+ (-2r^3 - 55r^2 - 525r - 1572)s + 20r^2 - 4r - 976) \leq 0,
	\end{align*}
	which holds obviously.
	\noindent\textbf{(7) $VA_{2k,0}^\#$.}
	
	Since $  \tau_{f_0}  = \mu_f - 2 = 13+ 2k, $ we need to verify the cases of $\tau^{\mathrm{max}} = 13 +2k$ or $ 14+2k. $ If  $\tau^{\mathrm{max}} = 13+2k, $ 
	If  $\tau^{\mathrm{max}} = 13+2k, $ 
	\begin{align*}
		&\bar{\alpha} =\frac{1}{\mu_f -2}\cdot( \frac{7}{8}+\frac{9}{8}+\frac{11}{8} + \frac{11}{8} + \frac{13}{8} + \frac{13}{8}+\frac{15}{8}+\sum_{ u=1 } ^{ 2k+6} \frac{u+2k+8}{ 2(k+4)})\\
		&=\frac{24k^2 + 243k + 592}{16k^2 + 168k + 416}.
	\end{align*}
	\begin{align*}
		& (\mu_f -  2) \cdot \mathrm{Var}[\mathrm{Sp^{\tau^{\mathrm{max}} }}(f)] \\
		&= (\frac{7}{8}-\bar{\alpha})^2+(\frac{9}{8}-\bar{\alpha})^2+(\frac{11}{8}-\bar{\alpha})^2+(\frac{11}{8}-\bar{\alpha})^2+(\frac{13}{8}-\bar{\alpha})^2+( \frac{13}{8}-\bar{\alpha})^2 +(\frac{15}{8} - \bar{\alpha})^2\\
		& + \sum_{u = 1}^{2k+6} (\frac{2u+2k+8}{2(k+4)}-\bar{\alpha})^2 \\
		&= \frac{32k^4 + 637k^3 + 4675k^2 + 15356k + 19056}{96(13 + 2k)(k + 4)^2}.
	\end{align*}
	And \begin{equation*}
		\alpha_{\mu_f -2 } - \alpha_1 =
		\begin{cases}
			\frac{15}{8}-\frac{7}{8}=1, \qquad \forall 0 \leq k \leq 3 \\
			\frac{2k+7}{k+4}-\frac{7}{8}, \qquad \forall k \geq 4
		\end{cases}.
	\end{equation*}
	Then it suffices to prove
	\begin{equation*}
		\begin{cases}
			\frac{32k^4 + 637k^3 + 4675k^2 + 15356k + 19056}{96(13 + 2k)(k + 4)^2}- \frac{13+2k}{12} \cdot 1 \leq 0, \qquad \forall 0 \leq k \leq 3 \\
			\frac{32k^4 + 637k^3 + 4675k^2 + 15356k + 19056}{96(13 + 2k)(k + 4)^2} -\frac{13+2k}{12}\cdot (\frac{2k+7}{k+4}-\frac{7}{8}) \leq 0, \qquad \forall k \geq 4
		\end{cases}
	\end{equation*}
	Equivalently,
	\begin{equation*}
		\begin{cases}
			\frac{-35k^3 - 517k^2 - 2116k - 2576}{96(13 + 2k)(k + 4)^2} \leq 0, \qquad \forall 0 \leq k \leq 3 \\
			\frac{-4k^4 - 87k^3 - 622k^2 - 1284k + 128}{96(13 + 2k)(k + 4)^2} \leq 0, \qquad \forall k \geq 4
		\end{cases},
	\end{equation*}
	which hold obviously.

	If  $\tau^{\mathrm{max}} = 14+2k , $ 
	\begin{align*}
		&\bar{\alpha} =\frac{1}{\mu_f -1}\cdot( \frac{7}{8}+\frac{9}{8}+\frac{11}{8} + \frac{11}{8} + \frac{13}{8} + \frac{13}{8}+\frac{15}{8}+\sum_{ u=1 } ^{ 2k+7} \frac{u+2k+8}{ 2(k+4)})\\
		&=\frac{163 + 24k}{112 + 16k}.
	\end{align*}
	\begin{align*}
		& (\mu_f -  1) \cdot \mathrm{Var}[\mathrm{Sp^{\tau^{\mathrm{max}} }}(f)] \\
		&= (\frac{7}{8}-\bar{\alpha})^2+(\frac{9}{8}-\bar{\alpha})^2+(\frac{11}{8}-\bar{\alpha})^2+(\frac{11}{8}-\bar{\alpha})^2+(\frac{13}{8}-\bar{\alpha})^2+( \frac{13}{8}-\bar{\alpha})^2 +(\frac{15}{8} - \bar{\alpha})^2\\
		& + \sum_{u = 1}^{2k+7} (\frac{2u+2k+8}{2(k+4)}-\bar{\alpha})^2 \\
		&= \frac{64k^3 + 1146k^2 + 6611k + 12300}{384(k + 4)(7 + k)}.
	\end{align*}
	And $ \alpha_{\mu_f -1 } - \alpha_1 = \frac{4k+15}{2(k+4)}-\frac{7}{8}. $
	Then it suffices to prove
	\begin{equation*}
		\frac{64k^3 + 1146k^2 + 6611k + 12300}{384(k + 4)(7 + k)}- \frac{14+2k}{12} \cdot (\frac{4k+15}{2(k+4)}-\frac{7}{8}) \leq 0.
	\end{equation*}
	Equivalently,
	\begin{equation*}
		\frac{-8k^3 - 118k^2 - 501k - 244}{384(k + 4)(7 + k)} \leq 0,
	\end{equation*}
	which holds obviously.

	\noindent\textbf{(8) $VA_{2k,s}^\#$.}
	
	Since $ \tau_{f_0}  = \mu_f - 2 = 13+ 2k + s, $ we need to verify the cases of $\tau^{\mathrm{max}} = 13 +2k +s$ or $ 14+2k  +s. $ If  $\tau^{\mathrm{max}} = 13+2k, $ 
	If  $\tau^{\mathrm{max}} = 13+2k + s, $ 
	for $ k \leq  s,$
	\begin{align*}
		& \bar{\alpha} =\frac{1}{\mu_f -2}\cdot( \frac{7}{8}+\frac{11}{8}+\frac{13}{8} +\sum_{ u=1 } ^{ 2k+6} \frac{u+2k+8}{ 2(k+4)}+\sum_{ v=1 } ^{ s+4} \frac{2v+2s+7}{ 2(s+4)})\\
		&=\frac{24k^2 + (12s + 243)k + 48s + 592}{8(k + 4)(13 + 2k + s)}.
	\end{align*}
	\begin{align*}
		& (\mu_f -  2) \cdot \mathrm{Var}[\mathrm{Sp^{\tau^{\mathrm{max}} }}(f)] \\
		& =(\frac{7}{8}-\bar{\alpha})^2+(\frac{11}{8}-\bar{\alpha})^2+(\frac{13}{8}-\bar{\alpha})^2 + \sum_{u = 1}^{2k+6} (\frac{u+2k+8}{2(k+4)}-\bar{\alpha})^2 +\sum_{v =1 } ^{s+4} (\frac{2v+2s+7}{ 2(s+4)}-\bar{\alpha})^2\\
		&= \frac{1}{192(k + 4)^2(s + 4)^2(13 + 2k + s)} \cdot (64k^4(s + 4)^2 + (64s^3 + 1794s^2 + 11440s + 21056)k^3 \\
		&+ (16s^4 + 1025s^3 + 16786s^2 + 91352s + 157136)k^2+ (128s^4 + 5144s^3 + 64768s^2 \\
		&+ 313568s + 509440)k + 256s^4 + 8272s^3 + 88976s^2 + 394112s + 609792).
	\end{align*}
	And $ \alpha_{\mu_f -2}-\alpha_1= \frac{4k+15}{2(k+4)}-\frac{7}{8}. $
	Then it suffices to prove
	\begin{align*}
		&\frac{1}{192(k + 4)^2(s + 4)^2(13 + 2k + s)} \cdot (64k^4(s + 4)^2 + (64s^3 + 1794s^2 + 11440s + 21056)k^3\\
		& + (16s^4 + 1025s^3 + 16786s^2 + 91352s + 157136)k^2 + (128s^4 + 5144s^3 + 64768s^2 \\
		&+ 313568s + 509440)k + 256s^4 + 8272s^3 + 88976s^2 + 394112s + 609792)\\
		&- \frac{13+2k+s}{12}\cdot( \frac{4k+15}{2(k+4)}-\frac{7}{8}) \leq 0.
	\end{align*}
	Equivalently,
	\begin{align*}
		&\frac{1}{192(k + 4)^2(s + 4)^2(13 + 2k + s)} \cdot ( -8k^4(s + 4)^2 + (-8s^3 - 262s^2 - 1552s - 2624)k^3\\
		& + (-2s^4 - 131s^3 - 2736s^2 - 13944s - 21072)k^2 + (-8s^4 - 504s^3 - 10184s^2 \\
		&- 49760s - 71296)k - 432s^3 - 11632s^2 - 58496s - 82432),
	\end{align*}
	which holds obviously.

	If  $\tau^{\mathrm{max}} = 14+2k + s, $ 
	\begin{align*}
		& \bar{\alpha} =\frac{1}{\mu_f -1}\cdot( \frac{7}{8}+\frac{11}{8}+\frac{13}{8} +\sum_{ u=1 } ^{ 2k+7} \frac{u+2k+8}{ 2(k+4)}+\sum_{ v=1 } ^{ s+4} \frac{2v+2s+7}{ 2(s+4)})\\
		&=\frac{163 + 24k + 12s}{112 + 16k + 8s}.
	\end{align*}
	\begin{align*}
		& (\mu_f -  1) \cdot \mathrm{Var}[\mathrm{Sp^{\tau^{\mathrm{max}} }}(f)] \\
		& =(\frac{7}{8}-\bar{\alpha})^2+(\frac{11}{8}-\bar{\alpha})^2+(\frac{13}{8}-\bar{\alpha})^2 + \sum_{u = 1}^{2k+7} (\frac{u+2k+8}{2(k+4)}-\bar{\alpha})^2 +\sum_{v =1 } ^{s+4} (\frac{2v+2s+7}{ 2(s+4)}-\bar{\alpha})^2\\
		&= \frac{1}{384(s + 4)(k + 4)(7 + k + s/2)} \cdot ((64s + 256)k^3 + (64s^2 + 1410s + 4584)k^2\\
		& + (16s^3 + 769s^2 + 9503s + 26444)k + 64s^3 + 2068s^2 + 19708s + 49200).
	\end{align*}
	And $ \alpha_{\mu_f -1}-\alpha_1= \frac{4s+15}{2(s+4)}-\frac{7}{8}. $
	Then it suffices to prove
	\begin{align*}
		&\frac{1}{384(s + 4)(k + 4)(7 + k + s/2)} \cdot ((64s + 256)k^3 + (64s^2 + 1410s + 4584)k^2\\
		& + (16s^3 + 769s^2 + 9503s + 26444)k + 64s^3 + 2068s^2 + 19708s + 49200)\\
		&- \frac{14+2k+s}{12}\cdot(\frac{4s+15}{2(s+4)}-\frac{7}{8}) \leq 0.
	\end{align*}
	Equivalently,
	\begin{align*}
		&\frac{1}{384(s + 4)(k + 4)(7 + k + s/2)} \cdot ((-8 - 2k)s^3 + (-8k^2 - 87k - 204)s^2 \\
		&+ (-8k^3 - 142k^2 - 873k - 1572)s - 24k^2 - 436k - 976),
	\end{align*}
	which holds obviously.

	\noindent\textbf{(9) $VB_{(-1)}^s$.}

	Since $  \tau_{f_0}  = \mu_f - 2 = 15+ s, $ we need to verify the cases of $\tau^{\mathrm{max}} = 15 +s$ or $ 16+s. $ 
	If  $\tau^{\mathrm{max}} = 15 + s, $ for $ s \leq 2, $
	\begin{align*}
		&\bar{\alpha} =\frac{1}{\mu_f -2}\cdot( \frac{6}{7}+\frac{15}{14}+\frac{17}{14} + \frac{9}{7} + \frac{19}{14} + \frac{10}{7}+\frac{3}{2}+\frac{11}{7}+\frac{23}{14}+\frac{12}{7}+\frac{25}{14}+\sum_{ u=1 } ^{ s+4} \frac{2u+2s+7}{ 2(s+4)})\\
		&=\frac{300 + 21s}{210 + 14s}.
	\end{align*}
	\begin{align*}
		& (\mu_f -  2) \cdot \mathrm{Var}[\mathrm{Sp^{\tau^{\mathrm{max}} }}(f)] \\
		= & (\frac{6}{7}-\bar{\alpha})^2+(\frac{15}{14}-\bar{\alpha})^2+(\frac{17}{14}-\bar{\alpha})^2+( \frac{9}{7}-\bar{\alpha})^2+(\frac{10}{7}-\bar{\alpha})^2 +(\frac{3}{2}-\bar{\alpha})^2 +(\frac{11}{7}-\bar{\alpha})^2\\
		&+(\frac{23}{14}-\bar{\alpha})^2+(\frac{12}{7}-\bar{\alpha})^2+(\frac{25}{14}-\bar{\alpha})^2+ \sum_{ u=1 } ^{ s+4} (\frac{2u+2s+7}{ 2(s+4)}-\bar{\alpha})^2\\
		= & \frac{49s^3 + 1658s^2 + 16029s + 40185}{588(s + 4)(15 + s)}.
	\end{align*}
	And $\alpha_{ \mu_f -2} -\alpha_1 = \frac{4s+15}{2(s+4)}-\frac{6}{7} .$
	Then it suffices to prove
	\begin{equation*}
		\frac{49s^3 + 1658s^2 + 16029s + 40185}{588(s + 4)(15 + s)}-\frac{15+s}{12}\cdot (\frac{4s+15}{2(s+4)}-\frac{6}{7}) \leq 0.
	\end{equation*}
	Equivalently,
	\begin{equation*}
		\frac{-14s^3 - 443s^2 - 5112s - 9405}{1176(s + 4)(15 + s)} \leq 0,
	\end{equation*}
	which holds obviously.

	for $ s \geq 3, $
	\begin{align*}
		&\bar{\alpha} =\frac{1}{\mu_f -2}\cdot( \frac{6}{7}+\frac{15}{14}+\frac{17}{14} + \frac{9}{7} + \frac{19}{14} + \frac{10}{7}+\frac{3}{2}+\frac{11}{7}+\frac{23}{14}+\frac{12}{7}+\frac{25}{14}+\frac{27}{14}+\sum_{ u=1 } ^{ s+3} \frac{2u+2s+7}{ 2(s+4)})\\
		&=\frac{21s^2 + 383s + 1203}{14(s + 4)(15 + s)}.
	\end{align*}
	\begin{align*}
		& (\mu_f -  2) \cdot \mathrm{Var}[\mathrm{Sp^{\tau^{\mathrm{max}} }}(f)] \\
		= & (\frac{6}{7}-\bar{\alpha})^2+(\frac{15}{14}-\bar{\alpha})^2+(\frac{17}{14}-\bar{\alpha})^2+( \frac{9}{7}-\bar{\alpha})^2+(\frac{10}{7}-\bar{\alpha})^2 +(\frac{3}{2}-\bar{\alpha})^2 +(\frac{11}{7}-\bar{\alpha})^2\\
		&+(\frac{23}{14}-\bar{\alpha})^2+(\frac{12}{7}-\bar{\alpha})^2+(\frac{25}{14}-\bar{\alpha})^2+(\frac{27}{14}-\bar{\alpha})^2 + \sum_{ u=1 } ^{ s+3} (\frac{2u+2s+7}{ 2(s+4)}-\bar{\alpha})^2\\
		= & \frac{49s^4 + 1815s^3 + 21965s^2 + 104364s + 167868}{588(s + 4)^2(15 + s)}.
	\end{align*}
	And 
	\begin{equation*}
		\alpha_{ \mu_f -2} -\alpha_1 = 
		\begin{cases}
			\frac{27}{14}-\frac{6}{7}=\frac{15}{14}, \qquad \forall 3 \leq s \leq 9 \\
			\frac{4s+13}{2(s+4)}-\frac{6}{7}, \qquad \forall s \geq 10
		\end{cases}.
	\end{equation*}
	Then it suffices to prove
	\begin{equation*}
		\begin{cases}
			\frac{49s^4 + 1815s^3 + 21965s^2 + 104364s + 167868}{588(s + 4)^2(15 + s)}- \frac{15+s}{12} \cdot \frac{15}{14}\leq 0, \qquad \forall 3 \leq s \leq 9 \\
			\frac{49s^4 + 1815s^3 + 21965s^2 + 104364s + 167868}{588(s + 4)^2(15 + s)}-\frac{15+s}{12}\cdot (\frac{4s+13}{2(s+4)}-\frac{6}{7}) \leq 0, \qquad \forall s \geq 10
		\end{cases}.
	\end{equation*}
	Equivalently,
	\begin{equation*}
		\begin{cases}
			\frac{-7s^4 - 360s^3 - 6575s^2 - 30672s - 42264}{1176(s + 4)^2(15 + s)} \leq 0, \qquad \forall 3 \leq s \leq 9 \\
			\frac{-14s^4 - 479s^3 - 4944s^2 + 4083s + 64836}{1176(s + 4)^2(15 + s)} \leq 0, \qquad \forall s \geq 10
		\end{cases},
	\end{equation*}
	which hold obviously.

	If  $\tau^{\mathrm{max}} = 16 + s, $ 
	\begin{align*}
		&\bar{\alpha} =\frac{1}{\mu_f -1}\cdot( \frac{6}{7}+\frac{15}{14}+\frac{17}{14} + \frac{9}{7} + \frac{19}{14} + \frac{10}{7}+\frac{3}{2}+\frac{11}{7}+\frac{23}{14}+\frac{12}{7}+\frac{25}{14}+\frac{27}{14}+\sum_{ u=1 } ^{ s+4} \frac{2u+2s+7}{ 2(s+4)})\\
		&=\frac{327 + 21s}{224 + 14s}.
	\end{align*}
	\begin{align*}
		& (\mu_f -  1) \cdot \mathrm{Var}[\mathrm{Sp^{\tau^{\mathrm{max}} }}(f)] \\
		= & (\frac{6}{7}-\bar{\alpha})^2+(\frac{15}{14}-\bar{\alpha})^2+(\frac{17}{14}-\bar{\alpha})^2+( \frac{9}{7}-\bar{\alpha})^2+(\frac{10}{7}-\bar{\alpha})^2 +(\frac{3}{2}-\bar{\alpha})^2 +(\frac{11}{7}-\bar{\alpha})^2\\
		&+(\frac{23}{14}-\bar{\alpha})^2+(\frac{12}{7}-\bar{\alpha})^2+(\frac{25}{14}-\bar{\alpha})^2+(\frac{27}{14}-\bar{\alpha})^2 + \sum_{ u=1 } ^{ s+4} (\frac{2u+2s+7}{ 2(s+4)}-\bar{\alpha})^2\\
		= & \frac{49s^3 + 1815s^2 + 19544s + 51684}{588(s + 4)(16 + s)}.
	\end{align*}
	And 
	\begin{equation*}
		\alpha_{ \mu_f -1} -\alpha_1 = 
		\begin{cases}
			\frac{27}{14}-\frac{6}{7}=\frac{15}{14}, \qquad \forall 0 \leq s \leq 2 \\
			\frac{4s+15}{2(s+4)}-\frac{6}{7}, \qquad \forall s \geq 3
		\end{cases}.
	\end{equation*}
	Then it suffices to prove
	\begin{equation*}
		\begin{cases}
			\frac{49s^3 + 1815s^2 + 19544s + 51684}{588(s + 4)(16 + s)}- \frac{16+s}{12} \cdot \frac{15}{14}\leq 0, \qquad \forall 0 \leq s \leq 2 \\
			\frac{49s^3 + 1815s^2 + 19544s + 51684}{588(s + 4)(16 + s)}-\frac{16+s}{12}\cdot (\frac{4s+15}{2(s+4)}-\frac{6}{7}) \leq 0, \qquad \forall s \geq 3
		\end{cases}.
	\end{equation*}
	Equivalently,
	\begin{equation*}
		\begin{cases}
			\frac{-7s^3 - 150s^2 - 1232s - 4152}{1176(s + 4)(16 + s)} \leq 0, \qquad \forall 0 \leq s \leq 16 \\
			\frac{-14s^3 - 353s^2 - 2352s + 1224}{1176(s + 4)(16 + s)} \leq 0, \qquad \forall s \geq 17
		\end{cases},
	\end{equation*}
	which hold obviously.
	
	%	Considering the average value $ \bar{\alpha}=\frac{3}{2}, $
	%	\begin{align*}
		%		& (\mu_f -  1) \cdot \mathrm{Var}[\mathrm{Sp^{\tau^{\mathrm{max}} }}(f)] \\
		%		= & (\frac{6}{7}-\bar{\alpha})^2+(\frac{15}{14}-\bar{\alpha})^2+(\frac{17}{14}-\bar{\alpha})^2+( \frac{9}{7}-\bar{\alpha})^2+(\frac{10}{7}-\bar{\alpha})^2 +(\frac{3}{2}-\bar{\alpha})^2+(\frac{11}{7}-\bar{\alpha})^2\\
		%		&+(\frac{23}{14}-\bar{\alpha})^2+(\frac{12}{7}-\bar{\alpha})^2+(\frac{25}{14}-\bar{\alpha})^2+(\frac{27}{14}-\bar{\alpha})^2 + \sum_{ u=1 } ^{ s+3} (\frac{2u+2s+7}{ 2(s+4)}-\bar{\alpha})^2\\
		%		= & \frac{49s^3 + 1080s^2 + 6533s + 11841}{588(s + 4)^2}.
		%	\end{align*}
	%	Then it suffices to prove
	%	\begin{center}
		%		$\begin{cases}
			%			\frac{49s^3 + 1080s^2 + 6533s + 11841}{588(s + 4)^2}- \frac{15+s}{12} \cdot \frac{15}{14}\leq 0, \qquad \forall 0 \leq s \leq 16 \\
			%			\frac{49s^3 + 1080s^2 + 6533s + 11841}{588(s + 4)^2}-\frac{15+s}{12}\cdot (\frac{4s+13}{2(s+4)}-\frac{6}{7}) \leq 0, \qquad \forall s \geq 17
			%		\end{cases}
		%		$.
		%	\end{center}
	%	Equivalently,
	%	\begin{center}
		%		$\begin{cases}
			%			\frac{-7s^3 - 255s^2 - 1214s - 1518}{1176(s + 4)^2} \leq 0, \qquad \forall 0 \leq s \leq 16 \\
			%			\frac{-49s^4 - 1650s^3 - 16409s^2 + 32292s + 284544}{4704(s + 4)^2(15 + s)} \leq 0, \qquad \forall s \geq 17
			%		\end{cases}
		%		$,
		%	\end{center}
	%	which hold obviously.
	%	

	\noindent\textbf{(10) $VB_{(0)}^s$.}
	
	Since $  \tau_{f_0}  = \mu_f - 2 = 16+ s, $ we need to verify the cases of $\tau^{\mathrm{max}} = 16 +s$ or $ 17+s. $ If  $\tau^{\mathrm{max}} = 16+s, $ 
	for $ s \leq 5, $
	\begin{align*}
		&\bar{\alpha} =\frac{1}{\mu_f -2}\cdot( \frac{17}{20}+\frac{21}{20}+\frac{6}{5} + \frac{5}{4} + \frac{27}{20} + \frac{7}{5}+\frac{29}{20}+\frac{31}{20}+\frac{8}{5}+\frac{33}{20}+\frac{7}{4}+\frac{9}{5}+\sum_{ u=1 } ^{ s+4} \frac{2u+2s+7}{ 2(s+4)})\\
		&=\frac{229 + 15s}{160 + 10s}. 
	\end{align*}
	\begin{align*}
		& (\mu_f -  2) \cdot \mathrm{Var}[\mathrm{Sp^{\tau^{\mathrm{max}} }}(f)] \\
		= & (\frac{17}{20}-\bar{\alpha})^2+(\frac{21}{20}-\bar{\alpha})^2+2 (\frac{6}{5}-\bar{\alpha})^2+( \frac{5}{4}-\bar{\alpha})^2+(\frac{27}{20}-\bar{\alpha})^2+ (\frac{7}{5}-\bar{\alpha})^2+(\frac{29}{20}-\bar{\alpha})^2\\
		&+(\frac{31}{20}-\bar{\alpha})^2+(\frac{8}{5}-\bar{\alpha})^2+(\frac{33}{20}-\bar{\alpha})^2+(\frac{7}{4}-\bar{\alpha})^2+(\frac{9}{5}-\bar{\alpha})^2\\
		& + \sum_{ u=1 } ^{ s+4} (\frac{2u+2s+7}{ 2(s+4)}-\bar{\alpha})^2\\
		= & \frac{25s^3 + 900s^2 + 9212s + 23748}{300(s + 4)(16 + s)}.
	\end{align*}
	And $ \alpha_{\mu_f -2 } - \alpha_1 =\frac{4s+15}{2(s+4)}-\frac{17}{20}. $
	Then it suffices to prove
	\begin{equation*}
		\frac{25s^3 + 900s^2 + 9212s + 23748}{300(s + 4)(16 + s)}- \frac{16+s}{12} \cdot (\frac{4s+15}{2(s+4)}-\frac{17}{20})\leq 0.
	\end{equation*}
	Equivalently,
	\begin{equation*}
		\frac{-15s^3 - 490s^2 - 5712s - 9968}{1200(s + 4)(16 + s)} \leq 0, 
	\end{equation*}
	which holds obviously.

	For $ s \geq 6, $
	\begin{align*}
		&\bar{\alpha} =\frac{1}{\mu_f -2}\cdot( \frac{17}{20}+\frac{21}{20}+\frac{6}{5} + \frac{5}{4} + \frac{27}{20} + \frac{7}{5}+\frac{29}{20}+\frac{31}{20}+\frac{8}{5}+\frac{33}{20}+\frac{7}{4}+\frac{9}{5}+\frac{39}{20}\\
		&+\sum_{ u=1 } ^{ s+3} \frac{2u+2s+7}{ 2(s+4)})\\
		&=\frac{30s^2 + 577s + 1838}{20(s + 4)(16 + s)}. 
	\end{align*}
	\begin{align*}
		& (\mu_f -  2) \cdot \mathrm{Var}[\mathrm{Sp^{\tau^{\mathrm{max}} }}(f)] \\
		= & (\frac{17}{20}-\bar{\alpha})^2+(\frac{21}{20}-\bar{\alpha})^2+2 (\frac{6}{5}-\bar{\alpha})^2+( \frac{5}{4}-\bar{\alpha})^2+(\frac{27}{20}-\bar{\alpha})^2+ (\frac{7}{5}-\bar{\alpha})^2+(\frac{29}{20}-\bar{\alpha})^2\\
		&+(\frac{31}{20}-\bar{\alpha})^2+(\frac{8}{5}-\bar{\alpha})^2+(\frac{33}{20}-\bar{\alpha})^2+(\frac{7}{4}-\bar{\alpha})^2+(\frac{9}{5}-\bar{\alpha})^2+(\frac{39}{20}-\bar{\alpha})^2\\
		& + \sum_{ u=1 } ^{ s+3} (\frac{2u+2s+7}{ 2(s+4)}-\bar{\alpha})^2\\
		= & \frac{100s^4 + 3943s^3 + 50345s^2 + 246176s + 402036}{1200(s + 4)^2(16 + s)}.
	\end{align*}
	And 
	\begin{equation*}
		\begin{cases}
			\frac{39}{20}-\frac{17}{20}=\frac{11}{10}, \qquad \forall 6 \leq s \leq 15  \\
			\frac{4s+13}{2(s+4)}-\frac{17}{20}, \qquad \forall s \geq 16
		\end{cases}.
	\end{equation*}
	Then it suffices to prove
	\begin{equation*}
		\begin{cases}
			\frac{100s^4 + 3943s^3 + 50345s^2 + 246176s + 402036}{1200(s + 4)^2(16 + s)}- \frac{16+s}{12} \cdot \frac{11}{10}\leq 0, \qquad \forall 6 \leq s \leq 15 \\
			\frac{100s^4 + 3943s^3 + 50345s^2 + 246176s + 402036}{1200(s + 4)^2(16 + s)}-\frac{16+s}{12}\cdot (\frac{4s+13}{2(s+4)}-\frac{17}{20}) \leq 0, \qquad \forall s \geq 16
		\end{cases}.
	\end{equation*}
	Equivalently,
	\begin{equation*}
		\begin{cases}
			\frac{-10s^4 - 457s^3 - 7735s^2 - 35424s - 48524}{1200(s + 4)^2(16 + s)} \leq 0, \qquad \forall 6 \leq s \leq 15\\
			\frac{-15s^4 - 507s^3 - 4975s^2 + 9376s + 84596}{1200(s + 4)^2(16 + s)} \leq 0, \qquad \forall s \geq 16
		\end{cases},
	\end{equation*}
	which hold obviously.

	If  $\tau^{\mathrm{max}} = 17+s, $ 
	\begin{align*}
		&\bar{\alpha} =\frac{1}{\mu_f -1}\cdot( \frac{17}{20}+\frac{21}{20}+\frac{6}{5} + \frac{5}{4} + \frac{27}{20} + \frac{7}{5}+\frac{29}{20}+\frac{31}{20}+\frac{8}{5}+\frac{33}{20}+\frac{7}{4}+\frac{9}{5}+\frac{39}{20}\\
		&+\sum_{ u=1 } ^{ s+4} \frac{2u+2s+7}{ 2(s+4)})\\
		&=\frac{497 + 30s}{340 + 20s}. 
	\end{align*}
	\begin{align*}
		& (\mu_f -  1) \cdot \mathrm{Var}[\mathrm{Sp^{\tau^{\mathrm{max}} }}(f)] \\
		= & (\frac{17}{20}-\bar{\alpha})^2+(\frac{21}{20}-\bar{\alpha})^2+2 (\frac{6}{5}-\bar{\alpha})^2+( \frac{5}{4}-\bar{\alpha})^2+(\frac{27}{20}-\bar{\alpha})^2+ (\frac{7}{5}-\bar{\alpha})^2+(\frac{29}{20}-\bar{\alpha})^2\\
		&+(\frac{31}{20}-\bar{\alpha})^2+(\frac{8}{5}-\bar{\alpha})^2+(\frac{33}{20}-\bar{\alpha})^2+(\frac{7}{4}-\bar{\alpha})^2+(\frac{9}{5}-\bar{\alpha})^2+(\frac{39}{20}-\bar{\alpha})^2\\
		& + \sum_{ u=1 } ^{ s+4} (\frac{2u+2s+7}{ 2(s+4)}-\bar{\alpha})^2\\
		= & \frac{100s^3 + 3943s^2 + 44896s + 121596}{1200(s + 4)(17 + s)}.
	\end{align*}
	And 
	\begin{equation*}
		\begin{cases}
			\frac{39}{20}-\frac{17}{20}=\frac{11}{10}, \qquad \forall 0 \leq s \leq 5 \\
			\frac{4s+15}{2(s+4)}-\frac{17}{20}, \qquad \forall s \geq 6
		\end{cases}.
	\end{equation*}
	Then it suffices to prove
	\begin{equation*}
		\begin{cases}
			\frac{100s^3 + 3943s^2 + 44896s + 121596}{1200(s + 4)(17 + s)}- \frac{17+s}{12} \cdot \frac{11}{10}\leq 0, \qquad \forall 0 \leq s \leq 5 \\
			\frac{100s^3 + 3943s^2 + 44896s + 121596}{1200(s + 4)(17 + s)}-\frac{17+s}{12}\cdot (\frac{4s+15}{2(s+4)}-\frac{17}{20}) \leq 0, \qquad \forall s \geq 6
		\end{cases}.
	\end{equation*}
	Equivalently,
	\begin{equation*}
		\begin{cases}
			\frac{-10s^3 - 237s^2 - 1854s - 5564}{1200(s + 4)(17 + s)} \leq 0, \qquad \forall 0 \leq s \leq 5\\
			\frac{-15s^3 - 377s^2 - 2279s + 3106}{1200(s + 4)(17 + s)} \leq 0, \qquad \forall s \geq 6
		\end{cases},
	\end{equation*}
	which hold obviously.
	
	%	Considering the average value $ \bar{\alpha}=\frac{3}{2}, $
	%	\begin{align*}
		%		& (\mu_f -  1) \cdot \mathrm{Var}[\mathrm{Sp^{\tau^{\mathrm{max}} }}(f)] \\
		%		= & (\frac{17}{20}-\bar{\alpha})^2+(\frac{21}{20}-\bar{\alpha})^2+2 (\frac{6}{5}-\bar{\alpha})^2+( \frac{5}{4}-\bar{\alpha})^2+(\frac{27}{20}-\bar{\alpha})^2+(\frac{7}{5}-\bar{\alpha})^2+(\frac{29}{20}-\bar{\alpha})^2\\
		%		&+(\frac{31}{20}-\bar{\alpha})^2+(\frac{8}{5}-\bar{\alpha})^2+(\frac{33}{20}-\bar{\alpha})^2+(\frac{7}{4}-\bar{\alpha})^2+(\frac{9}{5}-\bar{\alpha})^2+(\frac{39}{20}-\bar{\alpha})^2\\
		%		& + \sum_{ u=1 } ^{ s+3} (\frac{2u+2s+7}{ 2(s+4)}-\bar{\alpha})^2\\
		%		= & \frac{100s^3 + 2343s^2 + 14444s + 26388}{1200(s + 4)^2}.
		%	\end{align*}
	%	Then it suffices to prove
	%	\begin{center}
		%		$\begin{cases}
			%			\frac{100s^3 + 2343s^2 + 14444s + 26388}{1200(s + 4)^2}- \frac{15+s}{12} \cdot \frac{11}{10}\leq 0, \qquad \forall 0 \leq s \leq 25 \\
			%			\frac{100s^3 + 2343s^2 + 14444s + 26388}{1200(s + 4)^2}-\frac{15+s}{12}\cdot (\frac{4s+13}{2(s+4)}-\frac{17}{20}) \leq 0, \qquad \forall s \geq 26
			%		\end{cases}
		%		$.
		%	\end{center}
	%	Equivalently,
	%	\begin{center}
		%		$\begin{cases}
			%			\frac{-10s^3 - 297s^2 - 1396s - 1772}{1200(s + 4)^2} \leq 0, \qquad \forall 0 \leq s \leq 25\\
			%			\frac{-15s^3 - 267s^2 + 884s + 6548}{1200(s + 4)^2} \leq 0, \qquad \forall s \geq 26
			%		\end{cases}
		%		$,
		%	\end{center}
	%	which hold obviously.
	%	

	\noindent\textbf{(11) $VB_{(-1)}^{\#,2k}$.}

	Since $  \tau_{f_0} = \mu_f - 2 = 15+ 2k, $ we need to verify the cases of $\tau^{\mathrm{max}} = 15 +2k$ or $ 16+2k. $ If  $\tau^{\mathrm{max}} = 15+2k, $ 
	for $ k \leq 3, $
	\begin{align*}
		&\bar{\alpha} =\frac{1}{\mu_f -2}\cdot( \frac{13}{15}+\frac{16}{15}+\frac{19}{15} + \frac{4}{3} + \frac{22}{15} +\sum_{ u=1 } ^{ 2k+7} \frac{u+2k+8}{ 2(k+4)})\\
		&=\frac{643 + 90k}{450 + 60k}.
	\end{align*}
	\begin{align*}
		& (\mu_f -  2) \cdot \mathrm{Var}[\mathrm{Sp^{\tau^{\mathrm{max}} }}(f)] \\
		= & (\frac{13}{15}-\bar{\alpha})^2+(\frac{16}{15}-\bar{\alpha})^2+(\frac{19}{15}-\bar{\alpha})^2+( \frac{4}{3}-\bar{\alpha})^2+(\frac{22}{15}-\bar{\alpha})^2 + (\frac{23}{15}-\bar{\alpha})^2\\
		&+(\frac{5}{3}-\bar{\alpha})^2+(\frac{26}{15}-\bar{\alpha})^2 + \sum_{ u=1 } ^{ 2k+7} (\frac{u+2k+8}{ 2(k+4)}-\bar{\alpha})^2\\
		= & \frac{300k^3 + 5560k^2 + 32391k + 60329}{1800k^2 + 20700k + 54000}.
	\end{align*}
	And $ \alpha_{\mu_f -2} - \alpha_1 = \frac{4k+15}{2(k+4)}-\frac{13}{15}.$
	Then it suffices to prove
	\begin{equation*}
		\frac{300k^3 + 5560k^2 + 32391k + 60329}{1800k^2 + 20700k + 54000}- \frac{15+2k}{12} \cdot (\frac{4k+15}{2(k+4)}-\frac{13}{15})\leq 0.
	\end{equation*}
	Equivalently,
	\begin{equation*}
		\frac{-80k^3 - 1500k^2 - 9768k - 15467}{3600k^2 + 41400k + 108000} \leq 0,
	\end{equation*}
	which holds obviously.
	
	for $ k \geq 4, $
	\begin{align*}
		&\bar{\alpha} =\frac{1}{\mu_f -2}\cdot( \frac{13}{15}+\frac{16}{15}+\frac{19}{15} + \frac{4}{3} + \frac{22}{15} + \frac{23}{15}+\frac{5}{3}+\frac{26}{15}+\frac{29}{15}+\sum_{ u=1 } ^{ 2k+6} \frac{u+2k+8}{ 2(k+4)})\\
		&=\frac{90k^2 + 1001k + 2579}{60k^2 + 690k + 1800}.
	\end{align*}
	\begin{align*}
		& (\mu_f -  2) \cdot \mathrm{Var}[\mathrm{Sp^{\tau^{\mathrm{max}} }}(f)] \\
		= & (\frac{13}{15}-\bar{\alpha})^2+(\frac{16}{15}-\bar{\alpha})^2+(\frac{19}{15}-\bar{\alpha})^2+( \frac{4}{3}-\bar{\alpha})^2+(\frac{22}{15}-\bar{\alpha})^2 + (\frac{23}{15}-\bar{\alpha})^2\\
		&+(\frac{5}{3}-\bar{\alpha})^2+(\frac{26}{15}-\bar{\alpha})^2+(\frac{29}{15}-\bar{\alpha})^2 + \sum_{ u=1 } ^{ 2k+6} (\frac{u+2k+8}{ 2(k+4)}-\bar{\alpha})^2\\
		= & \frac{300k^4 + 6648k^3 + 53663k^2 + 191245k + 253244}{900(15 + 2k)(k + 4)^2}.
	\end{align*}
	And 
	\begin{equation*}
		\alpha_{\mu_f -2} - \alpha_1 =
		\begin{cases}
			\frac{29}{15}-\frac{13}{15}=\frac{16}{15}, \qquad \forall 4 \leq k \leq 10 \\
			\frac{2k+7}{k+4}-\frac{13}{15}, \qquad \forall k \geq 11
		\end{cases}.
	\end{equation*}
	Then it suffices to prove
	\begin{equation*}
		\begin{cases}
			\frac{300k^4 + 6648k^3 + 53663k^2 + 191245k + 253244}{900(15 + 2k)(k + 4)^2}- \frac{15+2k}{12} \cdot \frac{16}{15}\leq 0, \qquad \forall 4 \leq k \leq 10 \\
			\frac{300k^4 + 6648k^3 + 53663k^2 + 191245k + 253244}{900(15 + 2k)(k + 4)^2}-\frac{15+2k}{12}\cdot (\frac{2k+7}{k+4}-\frac{13}{15}) \leq 0, \qquad \forall k \geq 11
		\end{cases}.
	\end{equation*}
	Equivalently,
	\begin{equation*}
		\begin{cases}
			\frac{-20k^4 - 712k^3 - 7857k^2 - 29555k - 34756}{900(15 + 2k)(k + 4)^2} \leq 0, \qquad \forall 4 \leq k \leq 10\\
			\frac{-20k^4 - 436k^3 - 3001k^2 - 4240k + 7372}{450(15 + 2k)(k + 4)^2} \leq 0, \qquad \forall k \geq 11
		\end{cases},
	\end{equation*}
	which hold obviously.

	If  $\tau^{\mathrm{max}} = 16+2k, $ 
	\begin{align*}
		&\bar{\alpha} =\frac{1}{\mu_f -1}\cdot( \frac{13}{15}+\frac{16}{15}+\frac{19}{15} + \frac{4}{3} + \frac{22}{15} + \frac{23}{15}+\frac{5}{3}+\frac{26}{15}+\frac{29}{15}+\sum_{ u=1 } ^{ 2k+7} \frac{u+2k+8}{ 2(k+4)})\\
		&=\frac{701 + 90k}{480 + 60k}.
	\end{align*}
	\begin{align*}
		& (\mu_f -  1) \cdot \mathrm{Var}[\mathrm{Sp^{\tau^{\mathrm{max}} }}(f)] \\
		= & (\frac{13}{15}-\bar{\alpha})^2+(\frac{16}{15}-\bar{\alpha})^2+(\frac{19}{15}-\bar{\alpha})^2+( \frac{4}{3}-\bar{\alpha})^2+(\frac{22}{15}-\bar{\alpha})^2 + (\frac{23}{15}-\bar{\alpha})^2\\
		&+(\frac{5}{3}-\bar{\alpha})^2+(\frac{26}{15}-\bar{\alpha})^2+(\frac{29}{15}-\bar{\alpha})^2 + \sum_{ u=1 } ^{ 2k+7} (\frac{u+2k+8}{ 2(k+4)}-\bar{\alpha})^2\\
		= & \frac{300k^3 + 6048k^2 + 38765k + 78092}{1800(k + 4)(8 + k)}.
	\end{align*}
	And 
	\begin{equation*}
		\alpha_{\mu_f -1} - \alpha_1 =
		\begin{cases}
			\frac{29}{15}-\frac{13}{15}=\frac{16}{15}, \qquad \forall 0 \leq k \leq 3 \\
			\frac{4k+15}{2(k+4)}-\frac{13}{15}, \qquad \forall k \geq 4
		\end{cases}.
	\end{equation*}
	Then it suffices to prove
	\begin{equation*}
		\begin{cases}
			\frac{300k^3 + 6048k^2 + 38765k + 78092}{1800(k + 4)(8 + k)}- \frac{16+2k}{12} \cdot \frac{16}{15}\leq 0, \qquad \forall 0 \leq k \leq 3 \\
			\frac{300k^3 + 6048k^2 + 38765k + 78092}{1800(k + 4)(8 + k)}-\frac{16+2k}{12}\cdot (\frac{4k+15}{2(k+4)}-\frac{13}{15}) \leq 0, \qquad \forall k \geq 4
		\end{cases}.
	\end{equation*}
	Equivalently,
	\begin{equation*}
		\begin{cases}
			\frac{-20k^3 - 352k^2 - 2195k - 3828}{1800(k + 4)(8 + k)} \leq 0, \qquad \forall 0 \leq k \leq 3\\
			\frac{-40k^3 - 602k^2 - 2355k + 652}{1800(k + 4)(8 + k)} \leq 0, \qquad \forall k \geq 4
		\end{cases},
	\end{equation*}
	which hold obviously.

	\noindent\textbf{(12) $VB_{(0)}^{\#,2k}$.}

	Since $ \tau_{f_0}  = \mu_f - 2 = 16+ 2k, $ we need to verify the cases of $\tau^{\mathrm{max}} = 16 +2k$ or $ 17+2k. $ If  $\tau^{\mathrm{max}} = 16+2k, $ 
	for $ k \leq 6, $
	\begin{align*}
		&\bar{\alpha} =\frac{1}{\mu_f -2}\cdot( \frac{19}{22}+\frac{23}{22}+\frac{27}{22} + \frac{29}{22} + \frac{31}{22} + \frac{3}{2}+\frac{35}{22}+\frac{37}{22}+\frac{39}{22}+\sum_{ u=1 } ^{ 2k+7} \frac{u+2k+8}{ 2(k+4)})\\
		&=\frac{252 + 33k}{176 + 22k}.
	\end{align*}
	\begin{align*}
		& (\mu_f -  2) \cdot \mathrm{Var}[\mathrm{Sp^{\tau^{\mathrm{max}} }}(f)] \\
		= & (\frac{19}{22}-\bar{\alpha})^2+(\frac{23}{22}-\bar{\alpha})^2+(\frac{27}{22}-\bar{\alpha})^2+( \frac{29}{22}-\bar{\alpha})^2+(\frac{31}{22}-\bar{\alpha})^2 + (\frac{3}{2}-\bar{\alpha})^2+(\frac{35}{22}-\bar{\alpha})^2 \\
		&+(\frac{37}{22}-\bar{\alpha})^2+(\frac{39}{22}-\bar{\alpha})^2+(\frac{3}{2}-\bar{\alpha})^2+(\frac{35}{22}-\bar{\alpha})^2 + \sum_{ u=1 } ^{ 2k+7} (\frac{u+2k+8}{ 2(k+4)}-\bar{\alpha})^2\\
		= & \frac{242k^3 + 4733k^2 + 28949k + 56040}{1452(k + 4)(8 + k)}.
	\end{align*}
	And $ \alpha_{\mu_f -2 } - \alpha_1 = \frac{2k+7}{k+4}-\frac{19}{22}.$
	Then it suffices to prove
	\begin{equation*}
		\frac{242k^3 + 4733k^2 + 28949k + 56040}{1452(k + 4)(8 + k)}- \frac{16+2k}{12} \cdot (\frac{2k+7}{k+4}-\frac{19}{22})\leq 0.
	\end{equation*}
	Equivalently,
	\begin{equation*}
		\frac{-33k^3 - 646k^2 - 4315k - 6616}{1452(k + 4)(8 + k)} \leq 0,
	\end{equation*}
	which holds obviously.

	for $ k \geq 7, $
	\begin{align*}
		&\bar{\alpha} =\frac{1}{\mu_f -2}\cdot( \frac{19}{22}+\frac{23}{22}+\frac{27}{22} + \frac{29}{22} + \frac{31}{22} + \frac{3}{2}+\frac{35}{22}+\frac{37}{22}+\frac{39}{22}+\frac{43}{22}+\sum_{ u=1 } ^{ 2k+6} \frac{u+2k+8}{ 2(k+4)})\\
		&=\frac{66k^2 + 767k + 2023}{44(k + 4)(8 + k)}.
	\end{align*}
	\begin{align*}
		& (\mu_f -  2) \cdot \mathrm{Var}[\mathrm{Sp^{\tau^{\mathrm{max}} }}(f)] \\
		= & (\frac{19}{22}-\bar{\alpha})^2+(\frac{23}{22}-\bar{\alpha})^2+(\frac{27}{22}-\bar{\alpha})^2+( \frac{29}{22}-\bar{\alpha})^2+(\frac{31}{22}-\bar{\alpha})^2 + (\frac{3}{2}-\bar{\alpha})^2+(\frac{35}{22}-\bar{\alpha})^2 \\
		&+(\frac{37}{22}-\bar{\alpha})^2+(\frac{39}{22}-\bar{\alpha})^2+(\frac{43}{22}-\bar{\alpha})^2+(\frac{3}{2}-\bar{\alpha})^2+(\frac{35}{22}-\bar{\alpha})^2 + \sum_{ u=1 } ^{ 2k+6} (\frac{u+2k+8}{ 2(k+4)}-\bar{\alpha})^2\\
		= & \frac{484k^4 + 11276k^3 + 95051k^2 + 350764k + 476733}{2904(k + 4)^2(8 + k)}.
	\end{align*}
	And 
	\begin{equation*}
		\alpha_{ \mu_f -2 } -\alpha_1 =
		\begin{cases}
			\frac{43}{22}-\frac{19}{22}=\frac{12}{11}, \qquad \forall 7 \leq k \leq 17 \\
			\frac{2k+7}{k+4}-\frac{19}{22}, \qquad \forall k \geq 18
		\end{cases}.
	\end{equation*}
	Then it suffices to prove
	\begin{equation*}
		\begin{cases}
			\frac{484k^4 + 11276k^3 + 95051k^2 + 350764k + 476733}{2904(k + 4)^2(8 + k)}- \frac{16+2k}{12} \cdot \frac{12}{11}\leq 0, \qquad \forall 7 \leq k \leq 17 \\
			\frac{484k^4 + 11276k^3 + 95051k^2 + 350764k + 476733}{2904(k + 4)^2(8 + k)}-\frac{16+2k}{12}\cdot (\frac{2k+7}{k+4}-\frac{19}{22}) \leq 0, \qquad \forall k \geq 18
		\end{cases}.
	\end{equation*}
	Equivalently,
	\begin{equation*}
		\begin{cases}
			\frac{-44k^4 - 1396k^3 - 14773k^2 - 54740k - 63939}{2904(k + 4)^2(8 + k)} \leq 0, \qquad \forall 7 \leq k \leq 17\\
			\frac{-22k^4 - 480k^3 - 3223k^2 - 3228k + 12479}{968(k + 4)^2(8 + k)} \leq 0, \qquad \forall k \geq 18
		\end{cases},
	\end{equation*}
	which hold obviously.
	
	If  $\tau^{\mathrm{max}} = 17+2k, $ 
	\begin{align*}
		&\bar{\alpha} =\frac{1}{\mu_f -1}\cdot( \frac{19}{22}+\frac{23}{22}+\frac{27}{22} + \frac{29}{22} + \frac{31}{22} + \frac{3}{2}+\frac{35}{22}+\frac{37}{22}+\frac{39}{22}+\frac{43}{22}+\sum_{ u=1 } ^{ 2k+7} \frac{u+2k+8}{ 2(k+4)})\\
		&=\frac{547 + 66k}{374 + 44k}.
	\end{align*}
	\begin{align*}
		& (\mu_f -  1) \cdot \mathrm{Var}[\mathrm{Sp^{\tau^{\mathrm{max}} }}(f)] \\
		= & (\frac{19}{22}-\bar{\alpha})^2+(\frac{23}{22}-\bar{\alpha})^2+(\frac{27}{22}-\bar{\alpha})^2+( \frac{29}{22}-\bar{\alpha})^2+(\frac{31}{22}-\bar{\alpha})^2 + (\frac{3}{2}-\bar{\alpha})^2+(\frac{35}{22}-\bar{\alpha})^2 \\
		&+(\frac{37}{22}-\bar{\alpha})^2+(\frac{39}{22}-\bar{\alpha})^2+(\frac{43}{22}-\bar{\alpha})^2+(\frac{3}{2}-\bar{\alpha})^2+(\frac{35}{22}-\bar{\alpha})^2 + \sum_{ u=1 } ^{ 2k+6} (\frac{u+2k+8}{ 2(k+4)}-\bar{\alpha})^2\\
		= & \frac{484k^3 + 10308k^2 + 69335k + 144477}{2904k^2 + 36300k + 98736}.
	\end{align*}
	And 
	\begin{equation*}
		\alpha_{ \mu_f -1 } -\alpha_1 =
		\begin{cases}
			\frac{43}{22}-\frac{19}{22}=\frac{12}{11}, \qquad \forall 0 \leq k \leq 6 \\
			\frac{4k+15}{k+4}-\frac{19}{22}, \qquad \forall k \geq 7
		\end{cases}.
	\end{equation*}
	Then it suffices to prove
	\begin{equation*}
		\begin{cases}
			\frac{484k^3 + 10308k^2 + 69335k + 144477}{2904k^2 + 36300k + 98736}- \frac{17+2k}{12} \cdot \frac{12}{11}\leq 0, \qquad \forall 0 \leq k \leq 6 \\
			\frac{484k^3 + 10308k^2 + 69335k + 144477}{2904k^2 + 36300k + 98736}-\frac{17+2k}{12}\cdot (\frac{4k+15}{k+4}-\frac{19}{22}) \leq 0, \qquad \forall k \geq 7
		\end{cases}.
	\end{equation*}
	Equivalently,
	\begin{equation*}
		\begin{cases}
			\frac{-44k^3 - 780k^2 - 4717k - 8115}{2904k^2 + 36300k + 98736} \leq 0, \qquad \forall 0 \leq k \leq 6\\
			\frac{-132k^3 - 2000k^2 - 7377k + 6023}{5808k^2 + 72600k + 197472} \leq 0, \qquad \forall k \geq 7
		\end{cases},
	\end{equation*}
	which hold obviously.

	\noindent\textbf{(13) $VB_{(1)}^{\#,2k}$.}
	
	Since $ \tau_{f_0}  = \mu_f - 2 = 17+ 2k, $ we need to verify the cases of $\tau^{\mathrm{max}} = 17 +2k$ or $ 18+2k. $ If  $\tau^{\mathrm{max}} = 17+2k, $ 
	for $ k \leq 13, $
	\begin{align*}
		&\bar{\alpha} =\frac{1}{\mu_f -2}\cdot( \frac{31}{36}+\frac{37}{36}+\frac{43}{36} + \frac{47}{36} + \frac{49}{36} + \frac{53}{36}+\frac{55}{36}+\frac{59}{36}+\frac{61}{36}+\frac{65}{36}+\sum_{ u=1 } ^{ 2k+7} \frac{u+2k+8}{ 2(k+4)})\\
		&=\frac{439 + 54k}{306 + 36k}. 
	\end{align*}
	\begin{align*}
		& (\mu_f -  2) \cdot \mathrm{Var}[\mathrm{Sp^{\tau^{\mathrm{max}} }}(f)] \\
		= & (\frac{31}{36}-\bar{\alpha})^2+(\frac{37}{36}-\bar{\alpha})^2+(\frac{43}{36}-\bar{\alpha})^2+(\frac{47}{36}-\bar{\alpha})^2+( \frac{49}{36}-\bar{\alpha})^2 +(\frac{53}{36}-\bar{\alpha})^2+(\frac{55}{36}-\bar{\alpha})^2\\
		&+(\frac{59}{36}-\bar{\alpha})^2+(\frac{61}{36}-\bar{\alpha})^2+(\frac{65}{36}-\bar{\alpha})^2+ \sum_{ u=1 } ^{ 2k+7} (\frac{u+2k+8}{ 2(k+4)}-\bar{\alpha})^2\\
		= & \frac{216k^3 + 4450k^2 + 28527k + 57218}{1296k^2 + 16200k + 44064}.
	\end{align*}
	And $\alpha_{\mu_f -2} -\alpha_1 = \frac{4k+15}{2(k+4)}-\frac{31}{36}. $
	Then it suffices to prove
	\begin{equation*}
		\frac{216k^3 + 4450k^2 + 28527k + 57218}{1296k^2 + 16200k + 44064}- \frac{17+2k}{12} \cdot (\frac{4k+15}{2(k+4)}-\frac{31}{36})\leq 0.
	\end{equation*}
	Equivalently,
	\begin{equation*}
		\frac{-60k^3 - 1216k^2 - 8277k - 12146}{2592k^2 + 32400k + 88128} \leq 0, 
	\end{equation*}
	which holds obviously.

	for $ k \geq 14, $
	\begin{align*}
		&\bar{\alpha} =\frac{1}{\mu_f -2}\cdot( \frac{31}{36}+\frac{37}{36}+\frac{43}{36} + \frac{47}{36} + \frac{49}{36} + \frac{53}{36}+\frac{55}{36}+\frac{59}{36}+\frac{61}{36}+\frac{65}{36}+\frac{71}{36}+\sum_{ u=1 } ^{ 2k+6} \frac{u+2k+8}{ 2(k+4)})\\
		&=\frac{108k^2 + 1309k + 3526}{72k^2 + 900k + 2448}. 
	\end{align*}
	\begin{align*}
		& (\mu_f -  2) \cdot \mathrm{Var}[\mathrm{Sp^{\tau^{\mathrm{max}} }}(f)] \\
		= & (\frac{31}{36}-\bar{\alpha})^2+(\frac{37}{36}-\bar{\alpha})^2+(\frac{43}{36}-\bar{\alpha})^2+(\frac{47}{36}-\bar{\alpha})^2+( \frac{49}{36}-\bar{\alpha})^2 +(\frac{53}{36}-\bar{\alpha})^2+(\frac{55}{36}-\bar{\alpha})^2\\
		&+(\frac{59}{36}-\bar{\alpha})^2+(\frac{61}{36}-\bar{\alpha})^2+(\frac{65}{36}-\bar{\alpha})^2+(\frac{71}{36}-\bar{\alpha})^2+ \sum_{ u=1 } ^{ 2k+6} (\frac{u+2k+8}{ 2(k+4)}-\bar{\alpha})^2\\
		= & \frac{216k^4 + 5279k^3 + 46357k^2 + 176576k + 245532}{648(k + 4)^2(17 + 2k)}.
	\end{align*}
	And 
	\begin{equation*}
		\alpha_{\mu_f -2} -\alpha_1 = 
		\begin{cases}
			\frac{71}{36}-\frac{31}{36}=\frac{10}{9}, \qquad \forall 14 \leq k \leq 31 \\
			\frac{2k+7}{k+4}-\frac{31}{36}, \qquad \forall k \geq 32
		\end{cases}.
	\end{equation*}
	Then it suffices to prove
	\begin{equation*}
		\begin{cases}
			\frac{216k^4 + 5279k^3 + 46357k^2 + 176576k + 245532}{648(k + 4)^2(17 + 2k)}- \frac{17+2k}{12} \cdot \frac{10}{9}\leq 0, \qquad \forall 14 \leq k \leq 31 \\
			\frac{216k^4 + 5279k^3 + 46357k^2 + 176576k + 245532}{648(k + 4)^2(17 + 2k)}-\frac{17+2k}{12}\cdot (\frac{2k+7}{k+4}-\frac{31}{36}) \leq 0, \qquad \forall k \geq 32
		\end{cases}.
	\end{equation*}
	Equivalently,
	\begin{equation*}
		\begin{cases}
			\frac{-24k^4 - 721k^3 - 7463k^2 - 27424k - 31908}{648(k + 4)^2(17 + 2k)} \leq 0, \qquad \forall 14 \leq k \leq 31\\
			\frac{-5(12k^4 + 262k^3 + 1709k^2 + 892k - 9432)}{1296(k + 4)^2(17 + 2k)} \leq 0, \qquad \forall k \geq 32
		\end{cases}.
	\end{equation*}
	which hold obviously.

	If  $\tau^{\mathrm{max}} = 18+2k, $ 
	\begin{align*}
		&\bar{\alpha} =\frac{1}{\mu_f -1}\cdot( \frac{31}{36}+\frac{37}{36}+\frac{43}{36} + \frac{47}{36} + \frac{49}{36} + \frac{53}{36}+\frac{55}{36}+\frac{59}{36}+\frac{61}{36}+\frac{65}{36}+\frac{71}{36}+\sum_{ u=1 } ^{ 2k+7} \frac{u+2k+8}{ 2(k+4)})\\
		&=\frac{949 + 108k}{648 + 72k}. 
	\end{align*}
	\begin{align*}
		& (\mu_f -  1) \cdot \mathrm{Var}[\mathrm{Sp^{\tau^{\mathrm{max}} }}(f)] \\
		= & (\frac{31}{36}-\bar{\alpha})^2+(\frac{37}{36}-\bar{\alpha})^2+(\frac{43}{36}-\bar{\alpha})^2+(\frac{47}{36}-\bar{\alpha})^2+( \frac{49}{36}-\bar{\alpha})^2 +(\frac{53}{36}-\bar{\alpha})^2+(\frac{55}{36}-\bar{\alpha})^2\\
		&+(\frac{59}{36}-\bar{\alpha})^2+(\frac{61}{36}-\bar{\alpha})^2+(\frac{65}{36}-\bar{\alpha})^2+(\frac{71}{36}-\bar{\alpha})^2+ \sum_{ u=1 } ^{ 2k+7} (\frac{u+2k+8}{ 2(k+4)}-\bar{\alpha})^2\\
		= & \frac{432k^3 + 9694k^2 + 68253k + 146636}{2592(k + 4)(9 + k)}.
	\end{align*}
	And 
	\begin{equation*}
		\alpha_{\mu_f -1} -\alpha_1 = 
		\begin{cases}
			\frac{71}{36}-\frac{31}{36}=\frac{10}{9}, \qquad \forall 0 \leq k \leq 13 \\
			\frac{4k+15}{k+4}-\frac{31}{36}, \qquad \forall k \geq 14
		\end{cases}.
	\end{equation*}
	Then it suffices to prove
	\begin{equation*}
		\begin{cases}
			\frac{432k^3 + 9694k^2 + 68253k + 146636}{2592(k + 4)(9 + k)}- \frac{18+2k}{12} \cdot \frac{10}{9}\leq 0, \qquad \forall 0 \leq k \leq 13 \\
			\frac{432k^3 + 9694k^2 + 68253k + 146636}{2592(k + 4)(9 + k)}-\frac{18+2k}{12}\cdot (\frac{4k+15}{k+4}-\frac{31}{36}) \leq 0, \qquad \forall k \geq 14
		\end{cases}.
	\end{equation*}
	Equivalently,
	\begin{equation*}
		\begin{cases}
			\frac{-48k^3 - 866k^2 - 5187k - 8884}{2592(k + 4)(9 + k)} \leq 0, \qquad \forall 0 \leq k \leq 13\\
			\frac{-60k^3 - 914k^2 - 3135k + 4724}{2592(k + 4)(9 + k)} \leq 0, \qquad \forall k \geq 14
		\end{cases},
	\end{equation*}
	which hold obviously.
	\end{proof}

\end{document}